\setlist[enumerate]{leftmargin=25pt}
\setlist[itemize]{leftmargin=25pt}
\newtheorem{thm}{Theorem}[section]
\newtheorem{lemma}[thm]{Lemma}
\newtheorem{prop}[thm]{Proposition}
\newtheorem{cor}[thm]{Corollary}
\newtheorem*{conj*}{Conjecture}
\newtheorem{introthm}{Theorem}
\newtheorem{introthmb}{Theorem}
\newtheorem{introconj}{Conjecture}
\newtheorem{introconjb}{Conjecture}
\theoremstyle{definition}
\newtheorem{rem}[thm]{Remark}
\newtheorem*{rem*}{Remark}
\theoremstyle{definition}
\newtheorem{defn}[thm]{Definition}
\newcommand{\emphd}[1]{{\color{blue} #1}}
\DeclareMathOperator{\eval}{eval}
\DeclareMathOperator{\dep}{dep}
\DeclareMathOperator{\vol}{vol}
\DeclareMathOperator{\length}{length}
\DeclareMathOperator{\lin}{lin}
\DeclareMathOperator{\conv}{conv}
\newcommand{\bra}[1]{{\left({#1}\right)}}
\newcommand{\floor}[1]{{\lfloor{#1}\rfloor}}
\newcommand{\ceil}[1]{{\lceil{#1}\rceil}}
\newcommand{\ent}[1]{{\left[{#1}\right]}}
\newcommand{\set}[1]{{\left\{{#1}\right\}}}
\DeclarePairedDelimiter\abs{\lvert}{\rvert}
\DeclarePairedDelimiter\norm{\lVert}{\rVert}
\newcommand{\Ga}{\ensuremath{\Gamma}}
\newcommand{\la}{\ensuremath{\lambda}}
\newcommand{\ZZ}{\ensuremath{\mathbb{Z}}}
\newcommand{\QQ}{\ensuremath{\mathbb{Q}}}
\newcommand{\RR}{\ensuremath{\mathbb{R}}}
\newcommand{\NN}{\ensuremath{\mathbb{N}}}
\newcommand{\sm}{\ensuremath{\setminus}}
\newcommand{\vn}{\ensuremath{\varnothing}}
\newcommand{\bs}[1]{\ensuremath{\mathbf{#1}}}
\definecolor{redi}{RGB}{255,38,0}
\definecolor{redii}{RGB}{200,50,30}
\definecolor{yellowi}{RGB}{255,251,0}
\definecolor{bluei}{RGB}{0,150,255}
\definecolor{blueii}{RGB}{135,247,210}
\definecolor{blueiii}{RGB}{91,205,250}
\definecolor{blueiv}{RGB}{115,244,253}
\definecolor{bluev}{RGB}{1,58,215}
\definecolor{orangei}{RGB}{240,143,50}
\definecolor{yellowii}{RGB}{222,247,100}
\definecolor{greeni}{RGB}{166,247,166}
\tikzset{ 
table/.style={
	matrix of nodes,
	row sep=-\pgflinewidth,
	column sep=-\pgflinewidth,
	nodes={rectangle,draw=black,text width=1.25ex,align=center},
	text depth=0.25ex,
	text height=1ex,
	nodes in empty cells
},
texto/.style={font=\footnotesize\sffamily},
title/.style={font=\small\sffamily}
}
\numberwithin{equation}{section}
\begin{document}

\title[Linearly-exponential checking for the Lonely Runner Conjecture]{Linearly-exponential checking is enough for the Lonely Runner Conjecture and some of its variants}

\author[R. Malikiosis]{Romanos Diogenes Malikiosis}
\address{Department of Mathematics\\
		 Aristotle University of Thessaloniki\\
         Thessaloniki\\
         Greece}
\email{rwmanos@gmail.com}

\author[F. Santos]{Francisco Santos}
\address{Departmento de Matem\'aticas, Estad\'istica y Computaci\'on\\
		  Universidad de Cantabria\\
         Santander\\
         Spain}
\email{santosf@unican.es}

\author[M. Schymura]{Matthias Schymura}
\address{University of Rostock\\
  Rostock\\
  Germany}
\email{matthias.schymura@uni-rostock.de}

\thanks{R. D. Malikiosis was granted a \emph{renewed research stay} by the Alexander von Humboldt Foundation for the completion of this project; R. D. Malikiosis also acknowledges that this project is carried out within the framework of the National Recovery and Resilience Plan Greece 2.0, funded by the European Union, NextGenerationEU (Implementation body: HFRI, Project Name: HANTADS, No. 14770).}
\thanks{Work of F. Santos is partially supported by grant PID2022-137283NB-C21
    funded by MCIN/AEI/10.13039/501100011033.}

\keywords{Lonely runner conjecture, covering radius, cosimple zonotopes}


\begin{abstract}
	Tao (2018) showed that in order to prove the Lonely Runner Conjecture (LRC) up to $n+1$ runners it suffices to consider positive integer velocities in the order of $n^{O(n^2)}$. Using the zonotopal reinterpretation of the conjecture due to the first and third authors (2017) we here drastically improve this result, showing that velocities up to $\binom{n+1}{2}^{n-1} \le n^{2n}$ are enough.
	
	We prove the same finite-checking result, with the same bound, for the more general \emph{shifted} Lonely Runner Conjecture (sLRC),
	except in this case our result depends on the solution of a question, that we dub the \emph{Lonely Vector Problem} (LVP), about sumsets of $n$ rational vectors in dimension two. We also prove the same finite-checking bound for a further generalization of sLRC that concerns cosimple zonotopes with $n$ generators, a class of lattice zonotopes that we introduce.
	
	In the last sections we look at dimensions two and three. In dimension two we prove our generalized version of sLRC (hence we reprove the sLRC for four runners), and in dimension three we show that to prove sLRC for five runners it suffices to look at velocities adding up to $195$.
\end{abstract}

\maketitle
		
\setcounter{tocdepth}{1}	
\tableofcontents


\section{Introduction and statement of results}

\subsection{The Lonely Runner Conjectures}

Let $\norm{x} = \min\{x - \floor{x}, \ceil{x} - x\}$ denote the distance from a real number~$x \in \RR$ to a nearest integer.
Dirichlet's approximation theorem states that for every $t\in\RR$ and $n\in \NN$, there is some $v\in\set{1,2,\dotsc,n}$
such that $\norm{vt}\leq\frac{1}{n+1}$.
A natural question is whether the set $\set{1,2,\dotsc,n}$ can be replaced by any other set of~$n$ distinct natural numbers, such that we obtain a better bound than~$\frac{1}{n+1}$. 
The claim that this question has a \emph{negative} answer is the content of the following famous problem:
 
 \begin{introconj}[Lonely Runner Conjecture]\label{lrc}
  Let $v_1,\dotsc,v_n$ be non-zero real numbers. Then, there is some $t\in\RR$ such that $\norm{v_it}\geq\frac{1}{n+1}$ for every $i$ with $1\leq i\leq n$.
 \end{introconj}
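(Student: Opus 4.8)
The Lonely Runner Conjecture is a famous open problem (currently settled only for $n \le 6$, i.e.\ up to seven runners), so what follows is a program rather than a routine argument; I describe the route I would take, exploiting the reduction established in this paper. First I would reduce to the case of positive integer velocities: by the symmetry of $\norm{\cdot}$ one may assume all $v_i>0$; clearing denominators and rescaling $t$ reduces the rational case to the integer case; and a compactness argument on the circle $\RR/\ZZ$ together with the density of $\QQ$ (and the openness of the set of admissible times $t$) reduces the real case to the rational one. This is classical, but it is the hypothesis under which the machinery of the present paper applies. Next I would invoke the main finite-checking theorem: to prove LRC for $n$ velocities it then suffices to verify it for all integer tuples with $\max_i v_i \le \binom{n+1}{2}^{n-1}$, or, in the zonotopal reinterpretation, to bound below the covering radius of every cosimple lattice zonotope with $n$ generators of bounded size. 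For each fixed $n$ this is a finite computation — indeed this is essentially how the small cases have been resolved.

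The heart of the matter is Step~3: replacing the finite, but superexponentially large, check by a single argument uniform in $n$. Here I would work entirely in the covering-radius picture and attempt an induction on the number of generators. A codimension-one face of a cosimple zonotope with $n$ generators is governed by a cosimple zonotope with $n-1$ generators, so one would try to combine (i) the inductive lower bound on the covering radius of all such faces, (ii) a ``shift'' normalizing one coordinate, in the spirit of the passage from LRC to the shifted conjecture sLRC, and (iii) an averaging estimate over $t\in\RR/\ZZ$ to propagate the bound from the boundary of the zonotope into its interior. The conjectured extremal configuration — velocities essentially $\{1,2,\dots,n\}$ — should be the \emph{unique} worst case, and examining the finitely many near-extremal tuples for small $n$ produced by Step~2 is meant to expose the structural mechanism that forces this, which one would then try to turn into the inductive step.

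I expect the obstacle to be exactly this uniformization. All known general techniques — Dirichlet-type pigeonhole, averaging over $t$, the ``many short intervals'' argument, Fourier bounds — yield only a loneliness threshold of order $1/n$ with a constant strictly smaller than the conjectured one, and closing the gap to the exact value $\tfrac{1}{n+1}$ is precisely where every attempt has stalled. The finite-checking result removes the infinitude of the problem but not this gap: one still needs a genuinely structural reason why no configuration can beat $\{1,\dots,n\}$, valid in every dimension, and supplying that reason is the crux. A realistic intermediate goal would be to push the induction far enough to recover the conjecture for $n=7,8,\dots$ beyond the current range, using the present bound to make each such case a finite, and now much smaller, check.
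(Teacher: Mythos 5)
You were asked about Conjecture~\ref{lrc} itself, which is an open problem; the paper does not prove it (and does not claim to), it only establishes \emph{conditional} finite-checking reductions such as Theorem~\ref{thm:main}. Your submission is, as you candidly say, a research program rather than a proof, and so it has a genuine and unavoidable gap: your Steps~1--2 (reduction to positive integer velocities, then to finitely many velocity vectors for each fixed $n$) are standard respectively taken from this paper, but they do not prove anything by themselves, and your Step~3 --- a uniform-in-$n$ induction with an ``averaging/shift'' mechanism forcing $\{1,\dots,n\}$ to be the unique extremal configuration --- is precisely the missing mathematical content, for which you offer only a heuristic direction and an honest acknowledgement that all known techniques stall short of the threshold $\frac1{n+1}$. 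There is therefore no argument here that could be checked against, or substituted for, a proof.

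Beyond that, your description of the paper's reduction needs correcting on several points. Theorem~\ref{thm:main} is conditional: it assumes LRC for $n$ runners in order to restrict the check for $n+1$ runners, so even the ``finite computation for each fixed $n$'' you invoke is an inductive statement, not an absolute one. The bound is not $\max_i v_i \le \binom{n+1}{2}^{n-1}$ but $\sum_{S\subseteq[n]} v_S \le \binom{n+1}{2}^{n-1}$, i.e.\ a bound on the number of lattice points of the associated zonotope $Z_{\bs v}$ (Corollary~\ref{cor:lrc-points}), which in turn bounds each velocity. Finally, the zonotopal criterion for LRC is the lattice-point containment \eqref{eq:lrc}, not a covering-radius condition; the covering radius enters for the shifted conjecture via the \emph{upper} bound $\mu(Z_{\bs v})\le \frac{n-1}{n+1}$ (not a lower bound, as you wrote), and cosimple zonotopes are introduced as a generalization of sLRC, not of LRC itself. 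If you want a realistic contribution in the spirit of your last paragraph, the concrete open task suggested by the paper is the computational verification of small cases using the $\binom{n+1}{2}^{n-1}$ bound (as was done for $n=4$ in the shifted setting), not a uniform induction, which remains out of reach.
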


This was initially formulated by Wills in '68~\cite{willslrc}, and then an equivalent formulation as a geometric view-obstruction problem was given by Cusick in '73~\cite{cusickviewob}.
The name of the conjecture was given by Goddyn~\cite{NamingLRC}, who reinterpreted the problem in the following fashion:
Suppose that $n+1$ runners are running indefinitely on a circular track of length $1$ with constant, pairwise distinct, velocities and a common starting point; then the conjecture that each runner becomes ``lonely'' at some point in time, meaning that their distance to every other runner is at least $\frac{1}{n+1}$, is equivalent to 
Conjecture \ref{lrc}, by letting $v_1,\dots,v_n$ be the velocities of the other $n$ runners relative to the one we are looking at.

For a recent and comprehensive survey on the Lonely Runner Conjecture, we refer the reader to~\cite{perarnauserra2024thelonely}.
Following this survey and other sources on the problem, we refer to Conjecture~\ref{lrc} as ``LRC for $n+1$ runners'', since in Goddyn's interpretation there is an extra runner, which can be assumed to have velocity zero with no loss of generality.
LRC has been proven to hold for up to $7$ runners \cite{7lrc}.

Weaker versions of LRC have also been considered, where $\frac{1}{n+1}$ is replaced by a smaller fraction; the problem is trivially true if
we use $\frac{1}{2n}$, but it is still open if $\frac{1}{n+1}$ is replaced by $\frac{c}{n}$, for any $c>\frac{1}{2}$. The best effort in this direction is by Tao in '18 \cite{tao}, 
who showed that there is a time $t$ such that
\[
\norm{v_it}\geq\frac{1}{2n}+\frac{c\log n}{n^2(\log\log n)^2}, \;\;\;\text{ for all }i \text{ with }1\leq i\leq n,
\]
for some explicit absolute constant $c > 0$.

Usually the LRC is stated for \emph{distinct} velocities; this is no loss of generality, in the sense that if a counter-example to Conjecture~\ref{lrc} has a repeated velocity, then removing such a velocity provides a counter-example for smaller~$n$.
Distinctness of the velocities becomes crucial in certain variants of the conjecture, as we shall see further below.
We may also assume that all velocities~$v_i$ are positive \emph{integers}.
This reduction appeared first in german language as a combination of the arguments in the proofs of Wills~\cite[Lemma~5]{willslrc} and Betke \& Wills~\cite[Lemma~1)]{betkewills1972untere}.
Alternative proofs were given in~\cite{sixrunners} and~\cite{zonorunners}.
 
Excitingly, a much stronger reduction than to integer velocities was achieved by Tao~\cite{tao}.
In order to confirm the veracity of the LRC for up to $\leq n$ runners for a fixed integer $n \in \NN$, it suffices to check finitely many velocity vectors $(v_1,\dots,v_n) \in \ZZ_{>0}^n$.

\begin{thm}[\protect{\cite[Theorem 1.4]{tao}}]
\label{thm:tao-finite-checking-lrc}
There exists an absolute and explicitly computable constant $C > 0$, such that the following assertions are equivalent for every natural number $n \geq 1$:
\begin{enumerate}[(i)]
 \item The Lonely Runner Conjecture~\ref{lrc} holds for every number $m \leq n$ of velocities.
 \item The statement in the Lonely Runner Conjecture~\ref{lrc} holds
for all velocities $v_1,\ldots,v_m \in \ZZ_{>0}$ with $m \leq n$ and $v_i \leq m^{C m^2}$, for every $1 \leq i \leq m$.
\end{enumerate}
\end{thm}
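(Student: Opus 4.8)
The implication $(i)\Rightarrow(ii)$ is immediate, so the whole content is $(ii)\Rightarrow(i)$, which I would establish in contrapositive form: assuming Conjecture~\ref{lrc} fails for some $m\leq n$, I would manufacture a counterexample with $m'\leq n$ velocities that are positive integers bounded by $(m')^{C(m')^2}$ for a suitable absolute constant $C$. The first move is the classical reduction to positive integer velocities recalled above (Wills, Betke--Wills, or~\cite{sixrunners,zonorunners}): a real counterexample yields an integral one with no more velocities. Dividing out the $\gcd$ we may assume the velocities are coprime, and we may further pass to a \emph{minimal} such counterexample, since if a proper subfamily $\{v_i:i\in S\}$ of a counterexample for $m+1$ runners already has the property that the arcs $A_i:=\{t\in\TT:\norm{v_it}<\tfrac1{m+1}\}$, $i\in S$, cover $\TT=\RR/\ZZ$, then the wider arcs $\{t:\norm{v_it}<\tfrac1{\abs{S}+1}\}$ cover $\TT$ as well, so $\{v_i:i\in S\}$ is itself a counterexample for $\abs{S}+1\leq n$ runners. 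The problem is thus reduced to: bound the velocities in a minimal, primitive integral counterexample.

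For this I would encode the failure of the conjecture as a circle-covering and then as a finite linear feasibility problem. Writing $v_1<\dots<v_m$ for the velocities, the counterexample condition is exactly that the $A_i$ cover $\TT$, equivalently that no point lies in all of the finitely many closed arcs composing the complements $\TT\setminus A_i$. Sorting the endpoints of all arcs cyclically around $\TT$ yields a \emph{combinatorial type}; once this type is fixed, the statement ``these arcs cover $\TT$'', together with the constraints placing the endpoints in the prescribed cyclic order, becomes a system of linear inequalities in the parameters $\tfrac1{v_i}$, because the endpoints of $A_i$ are $\tfrac{j(m+1)\pm1}{v_i(m+1)}$, which are linear in $\tfrac1{v_i}$ once $j$ and the sign are fixed by the type. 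So, for each admissible type, the feasible primitive integral velocity vectors form a rational polyhedron cut out by linear inequalities of bounded height, and a standard bound on integer solutions of such a system shows that if it has an integer solution at all, then it has one whose coordinates are bounded in terms of the dimension and the coefficient sizes; summing over all types and using $m'\leq n$ produces a bound of the announced shape $m^{O(m^2)}$.

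The main obstacle is precisely making this last step non-circular: a priori the number of arc-endpoints, hence of combinatorial types and of linear constraints, is controlled by $\sum_i v_i$, which is exactly the quantity we wish to bound. The crux is therefore a device that collapses the covering to boundedly much data \emph{before} one extracts a small solution. I expect this to be done either by reducing the velocities modulo a well-chosen prime $p$ of size polynomial in $m$ and arguing that, for a good choice of $p$, the reduced vector $(v_1\bmod p,\dots,v_m\bmod p)$ --- after deleting zero and repeated entries, which only decreases $m$ --- remains a counterexample; or by an induction on the number of runners in which the largest velocity is constrained in terms of the covering structure built from the remaining ones. Arranging such a reduction so that it genuinely preserves the covering is the delicate point, and the need to do it with only a polynomial-size modulus (rather than an exponential one) is what is responsible for the $m^{O(m^2)}$ bound instead of something cleaner. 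It is exactly this step that the $2017$ zonotopal reinterpretation used in the present paper allows one to bypass and sharpen dramatically.
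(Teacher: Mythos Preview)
The paper does not prove this theorem: it is stated as a citation of Tao~\cite[Theorem~1.4]{tao} and serves only as motivation for the paper's improved bound (Theorem~\ref{thm:main}). There is therefore no proof in the paper to compare your proposal against.

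That said, your sketch is not a proof but an outline that stops precisely at the hard step. You correctly identify that encoding the covering condition as a linear feasibility problem runs into circularity, since the number of arc-endpoints is governed by $\sum_i v_i$, the very quantity you wish to bound. Your two proposed escapes --- reducing modulo a small prime or an inductive control of the largest velocity --- are stated as hopes (``I expect this to be done either by\dots'') rather than arguments, and in fact neither works as stated: reducing the $v_i$ modulo a prime~$p$ does \emph{not} in general preserve the property that the arcs $A_i$ cover~$\TT$, since the arc structure depends on the actual sizes of the~$v_i$, not their residues. Tao's actual argument is substantially more involved: it proceeds by a structural dichotomy showing that a putative counterexample is either already bounded or exhibits a ``generalized arithmetic progression'' structure in the velocities that allows a genuine reduction in size, and the $n^{O(n^2)}$ bound arises from iterating this dichotomy. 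Your proposal captures the setup but not the mechanism.
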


\noindent Our first main result is an improvement of this bound from Tao's $n^{O(n^2)}$ to roughly~$n^{2n}$.
To state our bound, for each subset $S \subseteq [n]:=\{1,\dots,n\}$ let us denote $v_S:=\gcd(v_i: i \in S)$. 

\begin{introthm}\label{thm:main}
Suppose that LRC is true for $n$ runners.
Then, LRC is true for $n+1$ runners with integral positive velocities $0,v_1,\ldots,v_n$ satisfying $\gcd(v_1,\dotsc,v_n)=1$ and
\[
\sum_{S \subseteq [n]} v_S > \binom{n+1}{2}^{n-1}.
\]
\end{introthm}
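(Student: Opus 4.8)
The plan is to translate the statement into the zonotopal language of the first and third authors and then peel off one Minkowski generator at a time, using the truth of LRC for $n$ runners --- hence for every number $\le n$ of runners --- as the input of an induction on the dimension. First I record the reformulation together with the combinatorial meaning of $\sum_{S}v_S$. To a primitive vector $v=(v_1,\dots,v_n)\in\ZZ_{>0}^n$ one attaches the rank-$(n-1)$ lattice $\Lambda_v:=\ZZ^n/\ZZ v$ and the lattice zonotope $\mc Z_v:=\pi([0,1]^n)=\sum_{i=1}^n[0,w_i]$, where $\pi\colon\RR^n\to\RR^n/\RR v$ is the quotient projection and $w_i:=\pi(e_i)$. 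The generators satisfy the single linear relation $\sum_i v_iw_i=0$, so $\mc Z_v$ is cosimple (any $n-1$ of the $w_i$ are linearly independent), and the inequality $\mu(\mc Z_v,\Lambda_v)\le\frac{n-1}{n+1}$ implies LRC for the velocities $v$, because then the ``central'' point $-\pi\bigl(\tfrac1{n+1}\mathbf 1\bigr)$ lies in $\tfrac{n-1}{n+1}\mc Z_v+\Lambda_v$, which unwinds to a time $t$ with $\norm{v_it}\ge\frac1{n+1}$ for all $i$. Hence it suffices to bound this covering radius, and the quantity in the statement is precisely the number of lattice points of the zonotope:
\[
  \sum_{S\subseteq[n]}v_S \;=\; \#\bigl(\mc Z_v\cap\Lambda_v\bigr).
\]
I would obtain this from Stanley's formula for the Ehrhart polynomial of a lattice zonotope: the coefficient attached to a linearly independent generator set $T\subseteq[n]$ is the greatest common divisor of the maximal minors of $(w_i)_{i\in T}$, and a short computation inside $\ZZ^n/\ZZ v$ --- which is where $\gcd(v_1,\dots,v_n)=1$ enters --- identifies it with $v_{[n]\setminus T}$; since the independent sets are exactly those with $|T|\le n-1$, evaluating the polynomial at $1$ and substituting $S=[n]\setminus T$ gives the identity.

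The core is then the claim: \emph{if LRC holds for $n$ runners, then $\mu(\mc Z_v,\Lambda_v)\le\frac{n-1}{n+1}$ whenever $\#(\mc Z_v\cap\Lambda_v)>\binom{n+1}{2}^{n-1}$}, which I would prove by induction on $d:=n-1$. For the inductive step I select a generator $w_n$ with $v_n$ maximal and distinguish two regimes. If $v_n\ge\binom{n+1}{2}\cdot\max_{i<n}v_i$, I contract $w_n$: the image of $\mc Z_v$ in $\RR^{n-1}/\RR w_n\cong\RR^{n-2}$ is, after dividing the remaining velocities by $g:=\gcd(v_i:i<n)$, exactly the cosimple pair $(\mc Z_{v'},\Lambda_{v'})$ attached to the primitive vector $v'=(v_i/g)_{i<n}\in\ZZ_{>0}^{n-1}$ --- an instance of LRC for $n$ runners. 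Covering $\RR^{n-1}$ by $\tfrac{n-1}{n+1}\mc Z_v+\Lambda_v$ then amounts to covering the $(n-2)$-torus associated with the contracted data together with a cyclic ``sweep'' of length $v_n$ in the $w_n$-direction; because $v_n$ is large this sweep is fine enough that, using LRC for $v'$ in dimension $n-2$ and the slack $\frac1n-\frac1{n+1}=\frac1{n(n+1)}$, the covering property lifts from dimension $n-2$ to dimension $n-1$.

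If instead $v_n<\binom{n+1}{2}\max_{i<n}v_i$, then no single generator carries the elongation of $\mc Z_v$; the zonotope is ``balanced'', and I would bound $\mu(\mc Z_v,\Lambda_v)$ directly from its normalized volume $v_1+\dots+v_n$ together with an elongation estimate expressed through the ratios $v_j/v_i$, which is again strong enough once $\#(\mc Z_v\cap\Lambda_v)$ exceeds $\binom{n+1}{2}^{n-1}$. In both regimes one loses a single factor of $\binom{n+1}{2}$ per dimension, and iterating over the $n-1$ dimensions produces the bound $\binom{n+1}{2}^{n-1}\le n^{2n}$. Translating the conclusion back through the reformulation yields Theorem~A.

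The hard part will be the quantitative covering-radius estimate in both regimes, since a large lattice-point count never on its own controls a covering radius: one must genuinely exploit cosimplicity (the generator matroid is the uniform $U_{n-1,n}$ --- this is exactly what keeps the per-dimension loss at $\binom{n+1}{2}\approx\tfrac{n^2}{2}$ and not larger) together with either the dominant-generator structure, where the contraction error has to be pinned down to $O(1/v_n)$ with the optimal constant, or the balanced structure, where the elongation must be tamed. A secondary nuisance is the bookkeeping at the reduction step: after dividing the contracted velocities by $g=\gcd(v_i:i<n)$ one must check that the new vector is still primitive and its zonotope still cosimple, and that the hypothesis $\#(\mc Z_v\cap\Lambda_v)>\binom{n+1}{2}^{n-1}$ descends, in the appropriately weakened form, to dimension $n-2$.
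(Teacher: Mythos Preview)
Your translation to the zonotopal setting and the identity $\sum_S v_S = \#(\mc Z_v \cap \Lambda_v)$ are correct and match the paper (Corollary~\ref{cor:lrc-points}). The inductive mechanism you propose, however, diverges from the paper's and has genuine gaps.

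The paper performs a \emph{single} reduction from dimension $n-1$ to $n-2$, not an iteration. The hypothesis $\#(\mc Z_v\cap\Lambda_v)>\binom{n+1}{2}^{n-1}$ is used once, via pigeonhole in dimension $n-1$ (Lemma~\ref{lemma:minkowski-first-points}), to force $\lambda_1(\mc Z_v-\mc Z_v)\le 2/(n(n+1))$; the exponent $n-1$ is the ambient dimension in that single pigeonhole, not a count of steps. One then projects along a primitive lattice vector $\bs w$ \emph{attaining} $\lambda_1$ --- not along a generator. The smallness of $\lambda_1$ is precisely what makes the fiber over the lower-dimensional solution have lattice length $\ge 1$ and hence contain a lattice point (the final lemma in Section~\ref{finitenessLRC-points}). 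The cost is that the projection $T(\mc Z_v)$ has $n$ generators in dimension $n-2$ and is not itself an LRZ; the key technical step (Theorem~\ref{thm:LRZinside}) shows that $T(\mc Z_v)$ nevertheless \emph{contains} an LRZ of type~\eqref{eq:typeIIa} or~\eqref{eq:typeIIb}, formed by merging two of the projected generators into one, with center congruent modulo $\ZZ^{n-2}$ to the projected center. This last clause is what lets the single-point LRC argument descend and return.

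Your sketch has two concrete failures. First, you set out to bound the covering radius $\mu(\mc Z_v)\le\frac{n-1}{n+1}$, which is sLRC, yet the hypothesis is only LRC for $n$ runners --- a statement about the \emph{single} center point. Invoking ``LRC for $v'$ in dimension $n-2$'' yields one covered point, not a covering of the $(n-2)$-torus, so ``the covering property lifts'' has no meaning here. Second, your Case~2 is a placeholder: ``balanced'' zonotopes with many lattice points need not have small covering radius, no mechanism is offered, and no dimension is shed there, so the advertised per-dimension factor of $\binom{n+1}{2}$ is not being earned. Even Case~1 is quantitatively off: when you project along the generator $\bs u_n$, the lattice length of $[\bs 0,\bs u_n]$ --- hence the fineness of your ``sweep'' --- equals $\gcd(v_i:i<n)$ by Lemma~\ref{lemma:gcd-geom}, not $v_n$, and that gcd can be~$1$ however large $v_n$ is.
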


We note that, independently of our paper, a similar result has been proved by Giri \& Kravitz~\cite{girikravitz2023structurelonelyrunnerspectra} with a bound of the order of $O(n^{\frac{5}{2}n})$.
The condition $\gcd(v_1,\dotsc,v_n)=1$ in Theorem~\ref{thm:main} is needed for our proof but is not relevant in practice: every minimal (with respect, e.g., to the sum of velocities) counter-example to LRC must automatically satisfy it, because simultaneously scaling a given set of velocities does not change the problem instance.

We are also interested in the \emphd{shifted} variant of the Lonely Runner Conjecture, which allows each runner to have an individual starting point~\cite{sLRC}.

\begin{introconj}[Shifted Lonely Runner Conjecture]\label{slrc}
Let $v_1,\dotsc,v_n$ be distinct non-zero real numbers. Then, for every $s_1,\dotsc,s_n\in\RR$ there is some $t\in\RR$ such that for every~$i$ with $1\leq i\leq n$ it holds $\norm{v_it+s_i}\geq\frac{1}{n+1}$.
\end{introconj}

This has been proved up to $n=3$ in~\cite{cslovjecsekmalikiosisnaszodischymura2022computing} and again in~\cite[Section~4.2]{fansun2023amending}.
As happened with Conjecture~\ref{lrc}, Conjecture~\ref{slrc} has also been reduced to the case where the velocities are positive integers (see~\cite[Section 4.1]{cslovjecsekmalikiosisnaszodischymura2022computing}).
However, in this conjecture one has to insist that all velocities be distinct
 because otherwise~$n$ runners with the same velocity $v$ and with equally spaced starting points (that is, $s_i = i/n$ for each $i$) would provide a counter-example:
at every time $t$ there is a runner~$i$ with $\norm{vt+i/n}\leq\frac{1}{2n}$. That the bound $\frac{1}{2n}$ is optimal for sLRC when equal velocities are allowed was proven by Schoenberg in an interpretation of the problem as billiard ball motions inside the unit cube~\cite{Schoenberg76} (see~\cite[Theorem~4]{sLRC} for an alternative proof).

In our attempt to prove a result similar to Theorem~\ref{thm:main} for sLRC we encountered an obstruction in the form
of a two-dimensional problem, which we dub the ``Lonely Vector Problem''.
Let $\bs P=\set{\bs p_1,\dotsc,\bs p_n}\subseteq \RR^2$ be a collection of~$n$ non-zero vectors, not all parallel and no two being equal or opposite to one another.
Let $S_{\bs P}$ be the following associated multiset of cardinality $n^2$:
\[
S_{\bs P} = \bs P \cup \set{\bs p_i+\bs p_j : 1\leq i<j\leq n} \cup \set{\bs p_i-\bs p_j : 1\leq i<j\leq n}.
\]

\begin{defn}
\label{defn:LVP}
We say that a set $\bs P$ as above satisfies the \emphd{Lonely Vector Property (LVP)} if the multiset~$S_{\bs P}$ contains a vector that is not parallel to any other vector in~$S_{\bs P}$.
We say that the Lonely Vector Property
holds for a certain~$n\in \NN$ if every such set of $n$ \emph{rational} vectors, not all parallel and no two of them equal or opposite to one another, satisfies the Lonely Vector Property.
\end{defn}

With this language our version of Theorem~\ref{thm:main} for the sLRC is as follows:

\begin{introthm}\label{thm:mainslrc}
Suppose that sLRC is true for $n$ runners  and that the LVP holds for every natural number $\leq n$. 
Then, sLRC is true for $n+1$ runners with integral positive \emph{distinct} velocities $0,v_1,\ldots,v_n$ satisfying $\gcd(v_1,\dotsc,v_n)=1$ and
\[
\sum_{S \subseteq [n]} v_S > \binom{n+1}{2}^{n-1}.
\]
\end{introthm}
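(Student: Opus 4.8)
The plan is to follow the argument behind Theorem~\ref{thm:main}, isolating the single step where the shifted setting forces in the Lonely Vector Property. I first pass to the zonotopal picture of Malikiosis and Schymura. Fix a primitive $v=(v_1,\dots,v_n)\in\ZZ_{>0}^n$ with distinct entries, write $\bar e_1,\dots,\bar e_n$ for the images of the standard basis in the quotient lattice $\Lambda_v:=\ZZ^n/\ZZ v\cong\ZZ^{n-1}$, and put $Z_v:=\sum_{i=1}^n[0,\bar e_i]\subseteq\RR^{n-1}$. The primitive linear dependence among the $\bar e_i$ is $\sum_i v_i\bar e_i=0$, so that $[\Lambda_v:\sum_{i\neq j}\ZZ\bar e_i]=v_j$; feeding this into the standard lattice-point count for zonotopes yields $\#(Z_v\cap\Lambda_v)=\sum_{S\subseteq[n]}v_S$, so the quantity in the theorem is precisely the number of lattice points of $Z_v$. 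In these terms, sLRC for the runners $0,v_1,\dots,v_n$ becomes the covering-radius statement that $\tfrac{n-1}{n+1}$ times the symmetrization of $Z_v$, translated over $\Lambda_v$, fills $\RR^{n-1}$, the shift vectors $s$ parametrizing the points that must be covered; a counterexample thus provides an uncovered point, and we may take the deepest such \emph{hole}.

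Next I would run a descent. Among all counterexamples to sLRC for $n+1$ runners — primitive $v$, distinct positive entries — choose one minimizing $\#(Z_v\cap\Lambda_v)=\sum_S v_S$. The aim is to extract from it either a counterexample with strictly fewer lattice points, or a counterexample to sLRC for merely $n$ runners (forbidden by the first hypothesis); either way we contradict the choice as soon as $\sum_S v_S>\binom{n+1}{2}^{n-1}$. The move I want is Euclidean: replace some $\bar e_j$ by $\bar e_j-\bar e_k$ — equivalently, decrease one $v_j$ via a unimodular change of coordinates — in a way that preserves the deep hole and keeps the velocities distinct. Whether such a move exists is a local question at the hole, which I would analyze by projecting $\Lambda_v$ and the generators onto a suitable rank-two quotient attached to the hole. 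This produces a configuration $\bs p_1,\dots,\bs p_n$ of rational planar vectors — the images of the $\bar e_i$ — and the crux is that the obstructions the deep hole can run into, as one tries to slide it back to a lattice point, are controlled precisely by $\bs p_1,\dots,\bs p_n$ together with the combinations $\bs p_i\pm\bs p_j$, that is, by the multiset $S_{\bs P}$ of Definition~\ref{defn:LVP}.

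The Lonely Vector Property, invoked for the relevant value $\leq n$, then supplies a vector $\bs q\in S_{\bs P}$ parallel to no other vector of $S_{\bs P}$. The degenerate configurations excluded from the LVP (all $\bs p_i$ parallel, or two of them equal or opposite) correspond to a genuine drop in the effective complexity of the instance and are disposed of by the ``sLRC for $n$ runners'' hypothesis, after first setting aside the harmless case in which a reduction has driven some velocity to $0$. The isolation of $\bs q$ is exactly what makes the reduction unobstructed: sliding the hole one lattice step in the direction prescribed by $\bs q$ cannot carry it across any facet coming from the other directions, and that step is realized by decreasing the corresponding velocity; because $\bs q$ is \emph{strictly} isolated, the step is forced rather than one among competitors, and this rigidity also prevents two velocities from collapsing to a common value. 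Iterating, in a minimal counterexample every velocity above the threshold implicit in the argument has already been reduced, so all $v_j$ — hence $\sum_S v_S=\#(Z_v\cap\Lambda_v)$ — are bounded, and a volume/lattice-point estimate for the zonotope on the $\binom{n+1}{2}$ ``edge directions'' $\{\bs p_i\}\cup\{\bs p_i\pm\bs p_j\}$ in dimension $n-1$ converts this into $\sum_S v_S\le\binom{n+1}{2}^{n-1}$. It is worth noting that in the unshifted LRC the deep hole sits at the centre of symmetry of $Z_v$, and that symmetry alone forces an isolated direction in the associated planar configuration — which is why Theorem~\ref{thm:main} needs no counterpart of the LVP.

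The genuine difficulty, I expect, is the localization step: choosing the right rank-two quotient, proving that it is the combinations $\bs p_i\pm\bs p_j$ (and not a larger family) that govern the obstructions near the hole, and checking that the move licensed by a lonely vector really is complexity-decreasing while respecting distinctness of the velocities. Pinning the constant down to exactly $\binom{n+1}{2}^{n-1}$, rather than a cruder bound, will likewise demand care with how the \emph{sums} $\bs p_i+\bs p_j$, and not only the differences, enter the count.
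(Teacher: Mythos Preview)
Your proposal is not a proof but a speculative plan, and the mechanism you sketch diverges substantially from the paper's argument in ways that leave real gaps.

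The paper does \emph{not} run an iterative Euclidean reduction on the velocities, nor does it ``slide the hole'' step by step. Instead it makes a single projection: take a primitive $\bs w$ attaining $\lambda_1(Z_{\bs v}-Z_{\bs v})$, project $Z_{\bs v}$ along $\bs w$ to a zonotope $T(Z_{\bs v})\subseteq\RR^{n-2}$ with $n$ generators, and then show (Proposition~\ref{prop:sLRZinside}) that $T(Z_{\bs v})$ contains an sLRZ $Z'$ with $n-1$ generators. The inductive hypothesis gives $\mu(Z')\le\frac{n-2}{n}$, and the covering-radius inequality $\mu(Z)\le\lambda_1(Z-Z)+\mu(T(Z))$ (Proposition~\ref{prop:CSS-sum-bound}) closes the argument, since the lattice-point hypothesis forces $\lambda_1(Z-Z)\le\frac{2}{n(n+1)}$ by pigeonhole.

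The planar vectors $\bs p_i$ are not obtained from a ``rank-two quotient attached to the hole''; they are $\bs p_i=(\rho_i,v_i)$ where $\bs w=\sum_i\rho_i\bs u_i$. That is, $(\bs p_1,\dots,\bs p_n,-\bs e_1)$ is the Gale dual of $(\bs u_1,\dots,\bs u_n,\bs w)$. The role of LVP is then purely algebraic: the volume-vector entries of the candidate sub-zonotopes of $T(Z_{\bs v})$ (those obtained by dropping a generator or merging two into a sum or difference) are $2\times2$ determinants $\det(\bs p_i,\bs p_j)$ and $\det(\bs p_i,\bs p_j\pm\bs p_k)$, and LVP guarantees that for some such sub-zonotope these are all nonzero and pairwise distinct in absolute value --- i.e.\ that it is an sLRZ. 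Your account of LVP as providing an ``unobstructed sliding direction'' does not match this, and you have not identified the quotient that would make your picture precise.

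Your explanation of why the unshifted case avoids LVP is also off: it is not the central symmetry of the hole that helps, but the weaker requirement. For LRC one only needs the projected sub-zonotope to be an LRZ (nonzero volume entries), and the paper shows directly (proof of Theorem~\ref{thm:LRZinside}) that taking $\bs p_{n-1},\bs p_n$ of extremal slope always makes $\bs p_{n-1}-\bs p_n$ non-parallel to every $\bs p_k$; no hypothesis like LVP is needed for that.
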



\begin{rem}
\label{rem:ngon}
Consider the set $\bs P$ consist of vertices of a regular $m$-gon centered at the origin (taking only one from each pair of opposite vertices if $m=2n$ is even).
These vectors are not all parallel and no two equal or opposite, so we can ask ourselves whether they satisfy the LVP.
It turns out that they do not, unless the polygon is a triangle, square, or hexagon.
The fact that these three regular polygons are precisely the ones that can be linearly transformed to have \emph{rational} coordinates shows the importance of rationality in the LVP, and illustrates the difficulty of proving it; see Section~\ref{sec:LVP} for details.
\end{rem}

 In Section~\ref{sec:LVP}, we prove the LVP for up to $n=4$ points.
With this, Theorem~\ref{thm:mainslrc} implies that the sLRC holds for $n=4$  if it holds for all integer velocities with $v_1 + v_2 + v_3 + v_4 < 1000$. But we also  improve this bound to $195$ (see Theorem~\ref{thm:dim3} below).


\subsection{Zonotopal restatements of LRC and sLRC}
\label{subsec:zonotopes}

We here recall the reformulation of Conjectures~\ref{lrc} and \ref{slrc} in terms of zonotopes, developed in \cite{zonorunners}. 
We start with the following definition:

\begin{defn}
Let $Z = \sum_{i=1}^n\ent{\bs 0,\bs u_i} \subseteq \RR^{n-1}$ be a lattice zonotope in $\RR^{n-1}$ with~$n$ generators.
If the generators $\{\bs u_i : 1\leq i\leq n\}$ are in \emphd{linear general position}, that is, every $n-1$ of them are linearly independent,
we say that $Z$ is a \emphd{Lonely Runner Zonotope (LRZ)}.
\end{defn}

\begin{introconjb}[Equivalent to Conjecture~\ref{lrc}]\label{lrcgeom}
For every Lonely Runner Zonotope~$Z = \sum_{i=1}^n\ent{\bs 0,\bs u_i}$ of dimension $n-1$ we have
\begin{align}
\frac{n-1}{n+1}(Z-\bs x)\cap(\bs x+\ZZ^{n-1}) \neq \vn,
\label{eq:lrc}
\end{align}
where $\bs x=\frac{1}{2}(\bs u_1+\dotsb+\bs u_n)$ is the center of $Z$.
\end{introconjb}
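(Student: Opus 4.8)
The statement to establish is the equivalence of Conjecture~\ref{lrc} (for $n+1$ runners, i.e.\ for $n$ velocities) with Conjecture~\ref{lrcgeom}. The plan is to pass from a tuple of velocities to a line through the origin in $\RR^n$, to quotient by that line, and to recognize that the ``loneliness region'' is a cube whose image under the quotient map is a zonotope. First I would invoke the classical reduction (see \cite{willslrc,betkewills1972untere,zonorunners}) by which Conjecture~\ref{lrc} for arbitrary real velocities follows from the case of positive \emph{integer} velocities; after dividing by the gcd it suffices to treat $\bs v=(v_1,\dots,v_n)\in\ZZ_{>0}^n$ with $\gcd(v_1,\dots,v_n)=1$. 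For such $\bs v$ and any $t\in\RR$, the condition ``$\norm{v_it}\geq\frac1{n+1}$ for all $i$'' says exactly that $t\bs v$ lies, modulo $\ZZ^n$, in the cube
$C:=\frac{n-1}{n+1}[0,1]^n+\frac1{n+1}\bs 1=\bigl[\frac1{n+1},\frac n{n+1}\bigr]^n$
(the points all of whose coordinates are at distance $\geq\frac1{n+1}$ from $\ZZ$, with $\bs 1$ the all-ones vector). Hence Conjecture~\ref{lrc} for $\bs v$ is equivalent to $(\RR\bs v)\cap(C+\ZZ^n)\neq\vn$.

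Next I would let $\pi\colon\RR^n\to\RR^n/L$ be the quotient by $L:=\RR\bs v$; coprimality makes $L\cap\ZZ^n=\ZZ\bs v$ primitive, so $\Lambda:=\pi(\ZZ^n)$ is a full-rank lattice in $\RR^n/L\cong\RR^{n-1}$. Since $L$ meets $C+\ZZ^n$ iff $0\in\pi(C)+\Lambda$ iff $\pi(C)\cap\Lambda\neq\vn$ (using $\Lambda=-\Lambda$), and since $\pi(C)=\frac{n-1}{n+1}Z_{\bs v}+\frac1{n+1}\pi(\bs 1)$ where $Z_{\bs v}:=\sum_{i=1}^n\ent{\bs 0,\pi(e_i)}$ is a zonotope with center $\bs x_{\bs v}=\tfrac12\pi(\bs 1)$, a short computation (using central symmetry of $Z_{\bs v}-\bs x_{\bs v}$ and of $\Lambda$) rewrites this as
\[
\tfrac{n-1}{n+1}\bigl(Z_{\bs v}-\bs x_{\bs v}\bigr)\cap\bigl(\bs x_{\bs v}+\Lambda\bigr)\neq\vn .
\]
Moreover $Z_{\bs v}$ is a Lonely Runner Zonotope with respect to $\Lambda$: any $n-1$ of its generators $\pi(e_i)$ are linearly independent, since a nontrivial dependence among them would be a nonzero element of $L=\RR\bs v$ supported on fewer than $n$ coordinates, which is impossible because every $v_i\neq 0$. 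Applying a linear isomorphism $\phi\in\GL_{n-1}(\RR)$ with $\phi(\Lambda)=\ZZ^{n-1}$ identifies $(Z_{\bs v},\Lambda)$ with a genuine lattice Lonely Runner Zonotope together with the lattice $\ZZ^{n-1}$; so the above display is precisely the instance of Conjecture~\ref{lrcgeom} for $\phi(Z_{\bs v})$. This shows Conjecture~\ref{lrcgeom}$\Rightarrow$Conjecture~\ref{lrc}.

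For the converse I would reverse the construction: given a Lonely Runner Zonotope $Z=\sum_{i=1}^n\ent{\bs 0,\bs u_i}\subseteq\RR^{n-1}$ with $\bs u_i\in\ZZ^{n-1}$ in linear general position, the matrix $U=(\bs u_1\mid\dots\mid\bs u_n)$ has rank $n-1$, so its kernel is spanned by a primitive $\bs v\in\ZZ^n$, and linear general position forces every $v_i\neq 0$. Replacing $\bs u_i$ by $-\bs u_i$ at the indices with $v_i<0$ leaves both $Z-\bs x$ and $\bs x+\ZZ^{n-1}$ unchanged (as $\bs u_i\in\ZZ^{n-1}$), so I may assume $\bs v\in\ZZ_{>0}^n$ with $\gcd(v_i)=1$. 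The isomorphism that $U$ induces between $\RR^n/L$ and $\RR^{n-1}$ sends $\pi(e_i)\mapsto\bs u_i$, hence carries $Z_{\bs v}$ to $Z$, $\bs x_{\bs v}$ to $\bs x$, and $\Lambda=\pi(\ZZ^n)$ to $\Lambda_U:=\sum_i\ZZ\bs u_i\subseteq\ZZ^{n-1}$. Because $\Lambda_U\subseteq\ZZ^{n-1}$, the condition of the previous paragraph — which is Conjecture~\ref{lrc} for the velocity tuple $\bs v$ — yields $\frac{n-1}{n+1}(Z-\bs x)\cap(\bs x+\ZZ^{n-1})\neq\vn$, i.e.\ Conjecture~\ref{lrcgeom} for $Z$. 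This gives Conjecture~\ref{lrc}$\Rightarrow$Conjecture~\ref{lrcgeom}.

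There is no genuinely hard step here; the points that need care are all bookkeeping. First, ``linear general position'' of the generators is exactly what makes the kernel direction $\bs v$ have full support, which is what allows the velocities to be taken positive after sign flips and guarantees that the projected generators $\pi(e_i)$ are again in linear general position. Second, the lattice $\Lambda_U$ produced by the lift may be a \emph{proper} sublattice of $\ZZ^{n-1}$; this is the subtle-looking point, but it is harmless — it only makes the implication Conjecture~\ref{lrc}$\Rightarrow$Conjecture~\ref{lrcgeom} easier, since intersecting $\bs x+\Lambda_U$ a fortiori implies intersecting $\bs x+\ZZ^{n-1}$. The geometric core, namely identifying the loneliness region with the cube $C$ and computing its image $\pi(C)$ as a scaled, translated zonotope, is routine linear algebra, and no input beyond the known integer-velocity reduction is needed. (Equivalently, the displayed condition can be phrased as a bound on the covering radius of the centrally symmetric zonotope $Z-\bs x$ with respect to $\ZZ^{n-1}$, which is the language in which this equivalence was first recorded in \cite{zonorunners}.)
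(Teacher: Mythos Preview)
Your argument is correct and follows essentially the same route as the paper's sketch (and the original argument in \cite{zonorunners}): reinterpret loneliness as the line $\RR\bs v$ meeting the lattice arrangement of cubes, pass to the quotient by $\RR\bs v$, and identify the image of the cube as the scaled zonotope. Your handling of the converse direction is in fact slightly more explicit than the paper's sketch, in particular the observation that when $\Lambda_U=\sum_i\ZZ\bs u_i$ is a proper sublattice of $\ZZ^{n-1}$ the inclusion $\bs x+\Lambda_U\subseteq\bs x+\ZZ^{n-1}$ still gives the implication; this is a nice point to make explicit.

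One small correction: your final parenthetical remark is inaccurate. The displayed condition $\frac{n-1}{n+1}(Z-\bs x)\cap(\bs x+\ZZ^{n-1})\neq\vn$ is \emph{not} a covering-radius bound --- it asserts only that one specific translate of the scaled zonotope contains a lattice point. The covering-radius bound $\mu(Z)\le\frac{n-1}{n+1}$ is the (strictly stronger) zonotopal form of the \emph{shifted} LRC (Conjecture~\ref{slrcgeom}), not of the LRC itself.
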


Let us describe in short how this equivalence works. 
As mentioned in the previous section, there is no loss of generality in assuming that the velocities $v_1,\dotsc,v_n$ are integers and that $\gcd(v_1,\dots,v_n)=1$. We associate with such velocities a set of  vectors
\[\bs u_1,\dotsc,\bs u_n\in\ZZ^{n-1},\]
that span $\ZZ^{n-1}$ as a lattice and satisfy
\begin{equation}\label{eq:lindepui}
\begin{pmatrix}
   | & \ldots & |\\
   \bs u_1 & \ldots & \bs u_n\\
   | & \ldots & |
  \end{pmatrix}
  \begin{pmatrix}
   v_1\\
   v_2\\
   \vdots\\
   v_n
  \end{pmatrix}=\bs 0.
\end{equation}
We will denote by
\[
Z_{\bs v}:=\sum_{i=1}^n\ent{\bs 0,\bs u_i} \subseteq \RR^{n-1}
\]
the lonely runner zonotope associated in this way with the velocity vector
\[
\bs v = (v_1,\dotsc,v_n) \in \ZZ_{>0}^n.
\]
$Z_{\bs v}$ is unique modulo unimodular equivalence.
One way to construct it is to start with $\bs u'_i=-v_n \bs e_i$ for $i=1,\dots,n-1$ and $\bs u'_n=(v_1,\dots,v_{n-1})$, and then 
let $\bs u_i= T(\bs u'_i)$ where $T:\RR^{n-1}\to\RR^{n-1}$ is an arbitrary linear transformation sending a lattice basis of $\ZZ\bs u'_1 + \dotsc + \ZZ \bs u'_n$ to the standard basis.

Conversely, starting with a zonotope $Z = \sum_{i=1}^n\ent{\bs 0,\bs u_i}$ with vectors $\bs u_1, \dots , \bs u_n\in \ZZ^{n-1}$ in linear general position that generate the lattice $\ZZ^{n-1}$, let $v_1,\dots,v_n$ be the 
coefficients of the unique linear dependence among them, suitably scaled so that the $v_i$'s are integers and have no trivial common factor.
These coefficients satisfy \eqref{eq:lindepui} and are nonzero by general position of the $\bs u_i$'s. 
In fact, they are the minors of size $n-1$ of the matrix 
$\begin{pmatrix}
   \bs u_1 & \ldots & \bs u_n\\
\end{pmatrix}$;
this implies that
\begin{align}
\vol(Z_{\bs v}) = v_1 + \dotsc + v_n.\label{eqn:volZ-velocity-sum}
\end{align}
For this reason we call $\bs v$ the \emphd{volume vector} of $Z_{\bs v}$. 
See Section~\ref{subsec:lat-zonotopes} for more details.

The equivalence of  Conjectures~\ref{lrc} and~\ref{lrcgeom} follows then from the following more precise statement:

\begin{prop}[\cite{zonorunners}]
\label{prop:lrcgeom}
For each $\bs v=(v_1,\dots,v_n) \in \ZZ_{> 0}^n$ with $\gcd(v_1,\dots,v_n)=1$, a time $t$ as required in 
Conjecture~\ref{lrc} exists for ${\bs v}$ if and only if 
$Z_{\bs v}$ satisfies~\eqref{eq:lrc}.
\end{prop}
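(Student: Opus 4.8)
The plan is to unwind both sides of the claimed equivalence into a condition on a single real parameter and then match them term by term. I will use only the features of $Z_{\bs v}$ recalled in the excerpt: its generators $\bs u_1,\dots,\bs u_n$ generate $\ZZ^{n-1}$ as a lattice, so the linear map $\ZZ^n\to\ZZ^{n-1}$ sending $\bs e_i$ to $\bs u_i$ is surjective; they satisfy $\sum_{i=1}^n v_i\bs u_i=\bs 0$; and, since the $\bs u_i$ are in linear general position, the space $\{\bs w\in\RR^n:\sum_i w_i\bs u_i=\bs 0\}$ of real linear relations is one-dimensional, hence spanned by $\bs v$.

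\emph{Step 1 (reduce \eqref{eq:lrc} to a one-parameter statement).} Since $\ZZ^{n-1}=-\ZZ^{n-1}$, condition~\eqref{eq:lrc} is equivalent to $\bs x\in\tfrac{n-1}{n+1}(Z_{\bs v}-\bs x)+\ZZ^{n-1}$. Writing $Z_{\bs v}=\{\sum_i w_i\bs u_i:w_i\in[0,1]\}$ and using $\bs x=\tfrac12\sum_i\bs u_i$, one gets $\tfrac{n-1}{n+1}(Z_{\bs v}-\bs x)=\{\sum_i y_i\bs u_i:\max_i|y_i|\le\tfrac{n-1}{2(n+1)}\}$, and every lattice point is of the form $\sum_i m_i\bs u_i$ with $\bs m\in\ZZ^n$ by surjectivity. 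So \eqref{eq:lrc} holds iff there exist $\bs m\in\ZZ^n$ and a tuple $(y_i)$ with $\max_i|y_i|\le\tfrac{n-1}{2(n+1)}$ and $\sum_i(\tfrac12-y_i-m_i)\bs u_i=\bs 0$. By one-dimensionality of the relation space, this last equation forces $\tfrac12-y_i-m_i=\lambda v_i$ for a common $\lambda\in\RR$, i.e.\ $y_i=\tfrac12-m_i-\lambda v_i$; conversely, any $\lambda\in\RR$ and $\bs m\in\ZZ^n$ making these $y_i$ small enough witness \eqref{eq:lrc}, because then $\sum_i(y_i+m_i)\bs u_i=\tfrac12\sum_i\bs u_i-\lambda\sum_i v_i\bs u_i=\bs x$. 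Hence \eqref{eq:lrc} holds iff there are $\lambda\in\RR$ and $\bs m\in\ZZ^n$ with $\max_i\bigl|\tfrac12-\lambda v_i-m_i\bigr|\le\tfrac{n-1}{2(n+1)}$.

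\emph{Step 2 (recognize the runner condition).} For fixed $\lambda$ and $i$, the optimal integer choice gives $\min_{m_i\in\ZZ}\bigl|\tfrac12-\lambda v_i-m_i\bigr|=\norm{\tfrac12-\lambda v_i}=\tfrac12-\norm{\lambda v_i}$, using the elementary identity $\norm{x+\tfrac12}=\tfrac12-\norm{x}$ valid for all $x\in\RR$. Thus the condition of Step 1 becomes: for some $\lambda\in\RR$, $\tfrac12-\norm{\lambda v_i}\le\tfrac{n-1}{2(n+1)}$ for every $i$, equivalently $\norm{\lambda v_i}\ge\tfrac12-\tfrac{n-1}{2(n+1)}=\tfrac1{n+1}$ for every $i$. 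Setting $t=\lambda$, this is exactly the assertion of Conjecture~\ref{lrc} for the velocity vector~$\bs v$, which completes the proof.

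\emph{Where the work is.} The argument is essentially bookkeeping and I do not expect a genuine obstacle. The two points that need a little care are the one-dimensionality of the relation space (this is precisely where linear general position of the $\bs u_i$ enters) and keeping track of the shift by the center $\bs x$, which is what converts the dilation factor $\tfrac{n-1}{2(n+1)}$ into the threshold $\tfrac1{n+1}$. One should also invoke the fact — recalled in Section~\ref{subsec:lat-zonotopes}, and relying on $\gcd(v_1,\dots,v_n)=1$ — that the construction of $Z_{\bs v}$ given in the excerpt genuinely yields generators of $\ZZ^{n-1}$ in linear general position whose linear dependence is $\bs v$, so that the three properties listed at the outset are legitimate.
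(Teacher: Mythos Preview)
Your proof is correct. It differs from the paper's argument in style rather than substance: the paper sketches a geometric proof, rewriting the LRC as the condition that the line $\RR\bs v$ meets the lattice arrangement of cubes $\ZZ^n+[\tfrac1{n+1},\tfrac{n}{n+1}]^n$, and then projecting along $\bs v$ onto $\bs v^\perp$ so that the cube becomes the zonotope $Z_{\bs v}$ and the arrangement of lines becomes the lattice $\ZZ^{n-1}$. You instead parametrize both sides directly inside the coordinate space of the generators and reduce everything to the single scalar $\lambda$ via the one-dimensionality of the relation space. Your route is more elementary and fully self-contained from the data in the excerpt; the paper's route explains \emph{why} a zonotope appears (it is the projection of a cube) and generalizes more transparently to the shifted version. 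One minor remark: the one-dimensionality of the relation space already follows from the fact that the $\bs u_i$ span $\RR^{n-1}$ (rank--nullity), so you do not actually need linear general position at that step; general position is what guarantees that the coefficients of the unique relation are all nonzero, i.e.\ that $\bs v$ spans it.
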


\begin{proof}[Sketch of proof]
	For a given velocity vector $\bs v$, Conjecture~\ref{lrc} holds if and only if the line $\RR\bs v$ spanned by $\bs v$ in $\RR^n$ intersects the lattice arrangement of cubes $\ZZ^n+[\frac{1}{n+1},\frac{n}{n+1}]^n$ or, equivalently, if the cube $[\frac{1}{n+1},\frac{n}{n+1}]^n$ intersects the lattice arrangement of lines $\ZZ^n+\RR\bs v$. Projecting orthogonally onto $H=\bs v^{\perp}$, the lattice arrangement $\ZZ^n+\RR\bs v$ collapses into a lattice $\Ga$, whereas the cube $[\frac{1}{n+1},\frac{n}{n+1}]^n$ becomes a zonotope, generated by $n$ vectors of $H$ in linear general position. Then, there exists a linear isomorphism between $H$ and $\RR^{n-1}$, which transforms said zonotope to $Z_{\bs v}$ and $\Ga$ to $\ZZ^{n-1}$. Hence, if Conjecture~\ref{lrc} holds for $\bs v$, then $Z_{\bs v}$ satisfies~\eqref{eq:lrc}. 
	
	It is not hard to show the converse as well. Suppose that $Z_{\bs v}$ satisfies~\eqref{eq:lrc}; $Z_{\bs v}$ is paved by $n$ parallelepipeds whose volumes are exactly the coordinates of $\bs v$. This fact implies that there is a linear isomorphism between $\RR^{n-1}$ and the hyperplane $H=\bs v^{\perp}$ in $\RR^n$, such that the above arguments may be reversed. See more details  in \cite{zonorunners}.
\end{proof}

The sum $\sum_{S \subseteq [n]} v_S$ in our Theorems~\ref{thm:main} and~\ref{thm:mainslrc} is nothing but the number of lattice points in the lattice zonotope $Z_{\bs v}$ (see Corollary~\ref{cor:lrc-points}).%
\footnote{Observe that for $n \geq 2$ 
\[
|Z_{\bs v} \cap \ZZ^{n-1}| = \sum_{S \subseteq [n]} v_S \ge v_1 + \dotsb + v_n + 2^n - n-1 > v_1 + \dotsb + v_n.
\]
In view of~\eqref{eqn:volZ-velocity-sum}, this implies that in Theorems~\ref{thm:main-points} and~\ref{thm:mainslrc-points} one can replace ``lattice points'' by ``volume'', although this gives weaker statements.}
Hence, Theorem~\ref{thm:main} is equivalent to the following:

\begin{introthmb}
\label{thm:main-points}
Let $n\in \NN$.
If Conjecture~\ref{lrcgeom} holds for $n-1$,  then no counter-example to Conjecture~\ref{lrcgeom} for~$n$ can contain more than $\binom{n+1}{2}^{n-1}$ lattice points.
\end{introthmb}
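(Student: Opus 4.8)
The plan is to prove the statement in its zonotopal form and by contraposition: assuming Conjecture~\ref{lrcgeom} for all smaller numbers of generators, I show that a counter-example $Z=\sum_{i=1}^n\ent{\bs 0,\bs u_i}$ for~$n$ has at most $\binom{n+1}{2}^{n-1}$ lattice points. The first step is a clean reformulation of \eqref{eq:lrc}. Writing $\bs x=\tfrac12\sum_i\bs u_i$ and using that $\tfrac{n-1}{n+1}(Z-\bs x)$ is centrally symmetric, \eqref{eq:lrc} is equivalent to $Z'\cap\ZZ^{n-1}\ne\vn$, where
\[
Z':=\Bigl\{\,\sum\nolimits_{i=1}^n\lambda_i\bs u_i \;:\; \lambda_i\in\ent{\tfrac1{n+1},\tfrac n{n+1}}\text{ for all }i\,\Bigr\}=\bs x+\tfrac{n-1}{n+1}(Z-\bs x)
\]
is the central sub-zonotope obtained from~$Z$ by restricting each generator coefficient to $\ent{\tfrac1{n+1},\tfrac n{n+1}}$. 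Thus ``$Z$ is a counter-example'' means ``$Z'$ contains no lattice point'', and the quantity to be bounded is $|Z\cap\ZZ^{n-1}|=\sum_{\emptyset\ne S\subseteq[n]}v_S$.

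Next I would reduce to the \emph{cosimple} case. For each~$i$ let $Z_i:=\pi_i(Z)$ be the contraction killing~$\bs u_i$; this is a Lonely Runner Zonotope with the $n-1$ generators $\pi_i(\bs u_j)$, $j\ne i$, and lattice $\Lambda_i=\pi_i(\ZZ^{n-1})$, and its volume vector is $(v_j/d_i)_{j\ne i}$ with $d_i:=\gcd(v_j:j\ne i)$, which equals the gcd of the coordinates of~$\bs u_i$. By hypothesis the central sub-zonotope $Z_i'$ of~$Z_i$ contains a point $\bar{\bs q}\in\Lambda_i$; a short computation shows that the fibre of~$Z'$ over~$\bar{\bs q}$ is a segment in direction~$\bs u_i$ of length exactly $\tfrac{n-1}{n+1}\|\bs u_i\|$, whereas the lattice points over~$\bar{\bs q}$ form, on the same line, an arithmetic progression of step $\|\bs u_i\|/d_i$. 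Hence if $d_i\ge\tfrac{n+1}{n-1}$ for some~$i$ --- in particular if some $d_i\ge2$ and $n\ge3$ --- that fibre already meets $\ZZ^{n-1}$, contradicting the first step. So for $n\ge3$ every counter-example satisfies $\gcd(v_j:j\ne i)=1$ for all~$i$ (the cases $n\le3$ being treated directly), and we may assume all $d_i=1$ from now on.

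Finally, for a cosimple counter-example I would run a dimensional descent, iterating the contraction, which is legitimate because Conjecture~\ref{lrcgeom} is assumed for every smaller number of generators. Two facts drive it. First, since all $d_i=1$, the good point of each~$Z_i$ never lifts to a lattice point of~$Z'$, so the lattice coset over it must ``dodge'' the fibre of~$Z'$; unwinding, over every lattice point of the central sub-zonotope $Z_i'$ the fibre of~$Z$ contains essentially a single lattice point (equivalently, in velocity terms, $\norm{v_it}<\tfrac1{n+1}$ for every~$t$ with $\norm{v_jt}\ge\tfrac1n$ for all $j\ne i$). Second, the $\bs u_i$-fibres of~$Z$ over points of~$Z_i$ are short, of $\bs u_i$-length at most $1+\tfrac{v_i}{\max_{j\ne i}v_j}$, because the parallelepiped $\sum_{j\ne i}\ent{\bs 0,\bs u_j}$ has thin cross-sections in direction~$\bs u_i$. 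Combining the two and contracting greedily along the smallest remaining velocity gives an inequality $|Z\cap\ZZ^{n-1}|\le c_n\,|Z_i\cap\Lambda_i|$ with a controlled per-step loss; iterating down to dimension~$1$ and closing the two-velocity base case with the dodging conditions above should accumulate to the bound $\binom{n+1}{2}^{n-1}$. I expect this last part --- fixing the contraction order, bookkeeping the losses so that the product telescopes to exactly $\binom{n+1}{2}^{n-1}$, and extracting the base-case estimate from the constraints forced by being a counter-example --- to be the genuine difficulty; the reduction to cosimple zonotopes in the previous step is comparatively routine.
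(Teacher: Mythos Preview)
Your Step~3 is not a proof, and the strategy there cannot be completed as stated. The iteration breaks immediately: once you contract along~$\bs u_i$, the projection $Z_i$ is an LRZ with $n-1$ generators and, by the very hypothesis you are assuming, it \emph{satisfies} Conjecture~\ref{lrcgeom}. So $Z_i$ is not a counter-example, and none of the ``dodging'' constraints you extracted from $Z$ being a counter-example pass to~$Z_i$. You therefore have no inequality of the form $|Z\cap\ZZ^{n-1}|\le c_n\,|Z_i\cap\Lambda_i|$ that can be iterated, and no mechanism linking ``$Z$ has many lattice points'' to anything downstream. Your ``second fact'' about fibre lengths, even if made precise, bounds lattice points of~$Z$ in terms of lattice points of~$Z_i$, but~$Z_i$ can itself be arbitrarily large since nothing constrains it.

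The paper's argument is structurally different and avoids this trap. It does not project along a generator at all. Instead it uses a pigeon-hole lemma: if $|Z\cap\ZZ^{n-1}|>\binom{n+1}{2}^{n-1}$ then $\lambda_1(Z-Z)\le 2/(n(n+1))$, so there is a primitive lattice vector $\bs w$ along which $Z$ is very long. One then projects along~$\bs w$, shows (this is the nontrivial step, Theorem~\ref{thm:LRZinside}) that the image contains an LRZ of type~\eqref{eq:typeIIa} or~\eqref{eq:typeIIb} with the correct center parity, applies the inductive hypothesis \emph{once} to obtain a good point in dimension~$n-2$, and lifts it to a lattice point in the central sub-zonotope of~$Z$ because the fibre over it has lattice length $\ge 1$ thanks to the $\lambda_1$ bound. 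The bound $\binom{n+1}{2}^{n-1}$ is exactly what makes that last segment long enough; it is not the output of a telescoping product. Your reduction to primitive generators in Step~2 is fine (and appears in the paper as Proposition~\ref{prop:allbut1gcd} for the shifted version), but it is orthogonal to the actual mechanism.
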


\begin{rem}
Our arguments in Section~\ref{sec:LRC} leading to Theorem~\ref{thm:main-points} are not dependent on the precise value~$\frac{n-1}{n+1}$ in Conjecture~\ref{lrcgeom}.
In fact, 
\begin{align}
 (1-2\eta)(Z-\bs x)\cap(\bs x+\ZZ^{n-1}) \neq \vn \label{eq:lrcrelaxed}
\end{align}
holds for every Lonely Runner Zonotope~$Z = \sum_{i=1}^n\ent{\bs 0,\bs u_i}$ of dimension $n-1$ for some $\eta>0$, if and only if the version of Conjecture~\ref{lrc} for $n$ where the bound $\frac{1}{n+1}$ is replaced by $\eta$ holds.
Hence, assuming Conjecture~\ref{lrcgeom} to hold in dimension $n-1$ with a constant $\frac{n-1}{n+1} + \varepsilon$, for some small $\varepsilon > 0$, results in a finite checking result (depending on~$\varepsilon$) for a weakened version of the LRC in dimension~$n$.
\end{rem}

We now look at the sLRC. Since it assumes different velocities, it involves the following class of zonotopes:

\begin{defn}
Let $Z_{\bs v} \subseteq \RR^{n-1}$ be a LRZ associated with the velocity vector $\bs v = (v_1,\dots,v_n) \in \ZZ_{>0}^n$.
When the entries of $\bs v $ are pairwise distinct we say that~$Z_{\bs v}$ is a \emphd{Strong Lonely Runner Zonotope (sLRZ)}.
\end{defn}

Recall that the covering radius $\mu(M)$ of a convex body $M \subseteq \RR^d$, is the smallest dilation factor $\rho>0$, such that $\rho M+\ZZ^d = \bigcup_{\bs z \in \ZZ^d} (\rho M + \bs z)$ covers the entire space~$\RR^d$ (see more details about it in Section~\ref{sec:prelim}). The reformulation of sLRC is as  follows:

\begin{introconjb}[Equivalent to Conjecture~\ref{slrc}]\label{slrcgeom}
Every Strong Lonely Runner Zonotope $Z_{\bs v}$ of dimension $n-1$ satisfies 
\begin{align}
\mu(Z_{\bs v}) \leq 
\frac{n-1}{n+1}.
\label{eq:slrc}
\end{align}
\end{introconjb}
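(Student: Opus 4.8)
We must show that Conjecture~\ref{slrcgeom} is equivalent to the shifted Lonely Runner Conjecture~\ref{slrc}. The plan is to rerun the orthogonal-projection argument behind Proposition~\ref{prop:lrcgeom}, with the single incidence condition occurring there replaced by the covering condition that the additional ``for every shift'' quantifier of the sLRC produces. I would first invoke the reduction of \cite[Section~4.1]{cslovjecsekmalikiosisnaszodischymura2022computing} to the case of a velocity vector $\bs v=(v_1,\dots,v_n)\in\ZZ_{>0}^n$ with pairwise distinct entries and $\gcd(v_1,\dots,v_n)=1$; as recalled in Section~\ref{subsec:zonotopes}, such vectors are, modulo unimodular equivalence, exactly the volume vectors of sLRZs. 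Hence it suffices to prove, for each such $\bs v$, that the statement of the sLRC holds for $\bs v$ and all real shifts $s_1,\dots,s_n$ if and only if $\mu(Z_{\bs v})\le\frac{n-1}{n+1}$.

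Fix such a $\bs v$ and put $\bs s=(s_1,\dots,s_n)$. The inequalities $\norm{v_it+s_i}\ge\frac1{n+1}$ for $1\le i\le n$ say exactly that $t\bs v+\bs s$ lies in the cube arrangement $\ZZ^n+C$, where $C:=[\frac1{n+1},\frac{n}{n+1}]^n=\frac12\bs 1+\frac{n-1}{n+1}[-\frac12,\frac12]^n$. Therefore the sLRC holds for $\bs v$ and all shifts precisely when every line parallel to $\RR\bs v$ meets $\ZZ^n+C$, and, projecting orthogonally via $\pi\colon\RR^n\to H:=\bs v^\perp$ (whose kernel is $\RR\bs v$), precisely when $\pi(\ZZ^n)+\pi(C)=H$. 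Now $\pi(\ZZ^n)$ is a lattice $\Gamma$ of full rank $n-1$ in $H$, and $\pi(C)$ is a translate of $\frac{n-1}{n+1}W$, where $W=\sum_{i=1}^n[-\tfrac12\pi(\bs e_i),\tfrac12\pi(\bs e_i)]$ is a centrally symmetric zonotope whose generators $\pi(\bs e_1),\dots,\pi(\bs e_n)$ lie in linear general position in $H$. Picking the linear isomorphism $L\colon H\to\RR^{n-1}$ that carries $\Gamma$ to $\ZZ^{n-1}$, one gets $L(W)=Z_{\bs v}-\bs x$ with $\bs x$ the center of $Z_{\bs v}$, exactly as in the unshifted case. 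So the condition $\pi(\ZZ^n)+\pi(C)=H$ turns into ``$\ZZ^{n-1}$ plus a translate of $\frac{n-1}{n+1}(Z_{\bs v}-\bs x)$ equals $\RR^{n-1}$''; since covering all of $\RR^{n-1}$ is unaffected by translating the covered body, this is equivalent to $\ZZ^{n-1}+\frac{n-1}{n+1}Z_{\bs v}=\RR^{n-1}$, i.e.\ to $\mu(Z_{\bs v})\le\frac{n-1}{n+1}$ --- here using that $Z_{\bs v}$ is a convex body, so its covering radius is attained, and that $\mu$ is invariant under unimodular maps and under translations of the body. Quantifying over all admissible $\bs v$ gives the equivalence of Conjectures~\ref{slrc} and~\ref{slrcgeom}.

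I do not expect a serious obstacle here: up to the new quantifier the argument coincides with the one for the unshifted conjecture, and the only care required is bookkeeping. The three points to verify are: (i) $\pi(\bs e_1),\dots,\pi(\bs e_n)$ are in linear general position, since a linearly dependent $(n-1)$-subset of them would force a vanishing coordinate of $\bs v$, contradicting $v_i\ne0$; (ii) $\pi(\ZZ^n)$ is a genuine rank-$(n-1)$ lattice rather than a dense subgroup, which is exactly where it is used that, after the initial reduction, $\RR\bs v$ is a rational line, so that $L$ exists; and (iii) the fixed translation that in the incidence version appears as the shifted lattice ``$\bs x+\ZZ^{n-1}$'' of Conjecture~\ref{lrcgeom} simply drops out of the covering version, since we are now asking for a covering of all of $\RR^{n-1}$ rather than for an incidence with a prescribed point. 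The degenerate case $n=1$, in which $Z_{\bs v}$ is a point and both conjectures are vacuous, I would treat separately.
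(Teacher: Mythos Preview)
Your proposal is correct and follows the same orthogonal-projection argument that the paper sketches for the unshifted case (Proposition~\ref{prop:lrcgeom}) and cites from~\cite{zonorunners} for the shifted case (Proposition~\ref{prop:slrcgeom}); the only change, as you note, is that the ``for every shift'' quantifier turns the single incidence condition into the covering condition $\Gamma+\pi(C)=H$, which becomes $\mu(Z_{\bs v})\le\frac{n-1}{n+1}$ after transporting via~$L$.
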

\noindent Again the equivalence follows from the following more precise statement:

\begin{prop}[\cite{zonorunners}]
\label{prop:slrcgeom}
Fix $\bs v=(v_1,\dots,v_n) \in \ZZ_{>0}^n$ with pairwise distinct entries and $\gcd(v_1,\dots,v_n)=1$. A time $t$ as required in 
Conjecture~\ref{slrc} exists for every choice of $(s_1,\dots,s_n)$ if and only if $\mu(Z_{\bs v}) \leq \frac{n-1}{n+1}$.
\end{prop}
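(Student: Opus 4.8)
The plan is to run exactly the same dictionary as in the proof of Proposition~\ref{prop:lrcgeom}, but upgraded from a non-emptiness statement to a covering statement: the quantifier ``for every choice of $(s_1,\dots,s_n)$'' is precisely what turns ``the line $\RR\bs v$ meets the cube arrangement'' into ``\emph{all translates} of $\RR\bs v$ meet it''. Concretely, first I would translate the shifted condition into $\RR^n$: writing $\bs s=(s_1,\dots,s_n)$ and $C=[\tfrac1{n+1},\tfrac{n}{n+1}]^n$, the inequalities $\norm{v_it+s_i}\geq\tfrac1{n+1}$ for all $i$ say precisely that $t\bs v+\bs s\in\ZZ^n+C$. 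Hence a valid time $t$ exists for a given $\bs s$ if and only if the line $\bs s+\RR\bs v$ meets the cube arrangement $\ZZ^n+C$, and a valid $t$ exists for \emph{every} $\bs s$ if and only if $\ZZ^n+C+\RR\bs v=\RR^n$ (using $\RR\bs v=-\RR\bs v$).

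Second, I would project orthogonally onto $H=\bs v^{\perp}$ by $\pi\colon\RR^n\to H$. Since $\ZZ^n+C+\RR\bs v$ is a union of fibres of $\pi$ — it equals $\pi^{-1}\big(\pi(\ZZ^n+C)\big)$ — the identity $\ZZ^n+C+\RR\bs v=\RR^n$ is equivalent to $\pi(\ZZ^n+C)=H$, i.e.\ to $\Gamma+\pi(C)=H$, where $\Gamma:=\pi(\ZZ^n)$ is a full-rank lattice in $H$ (here rationality of $\bs v$ is used) and $\pi(C)$ is the zonotopal image of the cube, generated by the $\pi(\bs e_i)$, which are in linear general position because every $v_i\neq 0$. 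Writing $C=\tfrac{n-1}{n+1}\,[0,1]^n+\tfrac1{n+1}\bs 1$ and dropping the harmless translation, this becomes $\Gamma+\tfrac{n-1}{n+1}\,\pi([0,1]^n)=H$, that is, $\mu\big(\pi([0,1]^n);\Gamma\big)\leq\tfrac{n-1}{n+1}$, where I use that the covering radius of a convex body with respect to a lattice is attained, so that ``$\le$'' is equivalent to the exact covering identity.

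Third, I would invoke the very same linear isomorphism $H\to\RR^{n-1}$ used in the proof of Proposition~\ref{prop:lrcgeom} and described in Section~\ref{subsec:zonotopes}, which sends $\Gamma$ to $\ZZ^{n-1}$ and $\pi([0,1]^n)$ to a lattice translate of $Z_{\bs v}$. Since the covering radius is invariant under a linear isomorphism matching the two lattices and under lattice translations, $\mu\big(\pi([0,1]^n);\Gamma\big)=\mu(Z_{\bs v})$, and the chain of equivalences closes: a time $t$ as in Conjecture~\ref{slrc} exists for every choice of $(s_1,\dots,s_n)$ if and only if $\mu(Z_{\bs v})\leq\tfrac{n-1}{n+1}$.

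I expect the only real care to be needed in the bookkeeping: that the quantifier over all $\bs s$ passes cleanly through the projection (the elementary fact $A+\ker\pi=\RR^n\iff\pi(A)=H$), that $\pi(\ZZ^n)$ is a genuine lattice rather than a dense subgroup — which is exactly where rationality of $\bs v$ enters — and that the identification used here is \emph{the same} isomorphism provided by the unshifted case, so that $\pi([0,1]^n)$ really becomes $Z_{\bs v}$ rather than merely an unnamed unimodular copy of it. None of these is a substantial obstacle: the geometry is entirely inherited from Proposition~\ref{prop:lrcgeom}, and the single new ingredient is the linear- and translation-invariance of the covering radius.
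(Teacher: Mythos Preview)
The paper does not actually give its own proof of this proposition; it is stated with a citation to~\cite{zonorunners} and no argument is supplied beyond the sketch given for the unshifted Proposition~\ref{prop:lrcgeom}. Your proposal is correct and is precisely the natural extension of that sketch: you promote the single-line intersection statement to a covering statement by quantifying over all shifts $\bs s$, pass through the orthogonal projection $\pi$ onto $\bs v^{\perp}$, and identify the resulting covering condition $\Gamma+\tfrac{n-1}{n+1}\pi([0,1]^n)=H$ with $\mu(Z_{\bs v})\le\tfrac{n-1}{n+1}$ via the same linear isomorphism. The bookkeeping points you flag (that $\pi(\ZZ^n)$ is a lattice thanks to $\bs v\in\ZZ^n$, and that the covering radius is invariant under the lattice isomorphism and under translation) are exactly the right ones, and none of them causes trouble.
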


For the sake of comparison, we note that for each LRZ~$Z \subseteq \RR^{n-1}$ with center~$\bs x$, Equation~\eqref{eq:lrc} is equivalent to
\[
\frac{2n}{n+1}\bs x \in
\frac{n-1}{n+1}Z + \ZZ^{n-1},
\]
while Equation~\eqref{eq:slrc} is equivalent to the obviously stronger assertion
\[
\RR^{n-1} \subseteq \frac{n-1}{n+1}Z + \ZZ^{n-1}.
\]

In view of Proposition~\ref{prop:slrcgeom}, 
the following is equivalent to Theorem~\ref{thm:mainslrc}.

\begin{introthmb}
\label{thm:mainslrc-points}
Let $n\in \NN$.
If Conjecture~\ref{slrcgeom} holds for $n-1$ and the LVP holds for every natural number at most~$n$, then no counter-example to Conjecture~\ref{slrcgeom} for~$n$ can contain more than $\binom{n+1}{2}^{n-1}$ lattice points.
\end{introthmb}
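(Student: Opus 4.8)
The plan is to imitate the descent argument behind Theorem~\ref{thm:main-points}, but to carry along the additional constraint that the velocities of an sLRZ are pairwise distinct --- this is exactly the place where the LVP will be needed. So I would fix $n$, assume Conjecture~\ref{slrcgeom} for $n-1$ and the LVP for every natural number $\le n$, and argue by contradiction: suppose there is an sLRZ $Z_{\bs v}\subseteq\RR^{n-1}$ with $\mu(Z_{\bs v}) > \frac{n-1}{n+1}$ and with more than $\binom{n+1}{2}^{n-1}$ lattice points, and among all such choose one minimizing $|Z_{\bs v}\cap\ZZ^{n-1}| = \sum_{S\subseteq[n]} v_S$. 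From this minimal bad example I would aim to produce either (a) an sLRZ of dimension $n-2$ with covering radius exceeding $\frac{n-2}{n}$, contradicting the standing hypothesis, or (b) an sLRZ of dimension $n-1$ whose covering radius still exceeds $\frac{n-1}{n+1}$ but which has strictly fewer lattice points, contradicting minimality.

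The engine of the descent should be a generator reduction along a suitably chosen ``lonely'' lattice direction. First I would observe that, for a primitive lattice vector $\bs w$ that is the primitivization of a generator $\bs u_i$ or of a pairwise combination $\bs u_i\pm\bs u_j$, the quotient $\RR^{n-1}\to\RR^{n-1}/\RR\bs w\cong\RR^{n-2}$ sends $Z_{\bs v}$ to a lattice zonotope one of whose $n$ generators is either zero (when $\bs w\parallel\bs u_i$) or cancels against another in an antipodal pair (when $\bs w\parallel\bs u_i\pm\bs u_j$); after collapsing, this is a zonotope with $n-1$ generators in dimension $n-2$, precisely of the shape governed by Conjecture~\ref{slrcgeom} for $n-1$ --- as long as its generators remain in linear general position and its volume vector has distinct entries. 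Since covering radius is monotone under inclusion and is controlled under rational projection, one should be able to show: if this quotient zonotope satisfies Conjecture~\ref{slrcgeom} for $n-1$ (which it must, by hypothesis) and $Z_{\bs v}$ is ``wide enough'' in the direction $\bs w$, then $\mu(Z_{\bs v})\le\frac{n-1}{n+1}$, a contradiction. Hence $Z_{\bs v}$ must be ``short'' along every admissible $\bs w$, and quantifying this shortness in $n-1$ independent such directions is what should yield the numerical bound $\binom{n+1}{2}^{n-1}$ on $\sum_{S\subseteq[n]} v_S$, via \eqref{eqn:volZ-velocity-sum} and the lattice-point formula --- unless the reduction can actually be carried out, in which case it delivers a counter-example of type (a) or (b).

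The obstruction to carrying out the reduction is two-dimensional, and this is where the LVP enters. I would pick a rational projection $\RR^{n-1}\to\RR^2$, let $\bs p_i$ denote the image of $\bs u_i$, and note that the candidate reduction directions are exactly the vectors of the multiset $S_{\bs P}$ from Definition~\ref{defn:LVP}; degenerate projections --- where some $\bs p_i$ vanishes or two of the $\bs p_i$ become parallel --- are either avoidable by a generic choice or else already provide a reduction. The $\bs p_i$ are rational, not all parallel, and pairwise neither equal nor opposite, so the LVP for $n$ produces a vector of $S_{\bs P}$ parallel to no other vector of $S_{\bs P}$; the generator $\bs u_i$ or pair $\bs u_i\pm\bs u_j$ lying over it is the one whose reduction keeps the quotient's volume vector repetition-free, which is what prevents a generator move from creating equal velocities and destroying the sLRZ structure. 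Feeding this lonely direction into the reduction step of the previous paragraph should close the descent.

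The hard part will be precisely this two-dimensional obstruction. Rationality is essential and the statement is subtle: regular pentagons, octagons and their kin are genuine counterexamples to an irrational version, see Remark~\ref{rem:ngon}, so the LVP cannot be bypassed. Even assuming it, one must still verify that the lonely direction it returns simultaneously (i) keeps the surviving $n-1$ generators in linear general position, (ii) produces pairwise distinct volumes downstairs, and (iii) is compatible with a deep hole witnessing $\mu(Z_{\bs v}) > \frac{n-1}{n+1}$, so that the covering radius genuinely does not drop. Arranging all three conditions at once, uniformly in $n$, and bookkeeping the arithmetic so that the losses telescope exactly to $\binom{n+1}{2}^{n-1}$, is the crux; everything else should be the same monotonicity-of-covering-radius and volume-counting machinery already used for Theorem~\ref{thm:main-points}.
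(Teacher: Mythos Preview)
Your outline has the right high-level picture (project to dimension $n-2$, use LVP to retain the sLRZ structure, apply the inductive hypothesis), but two of your concrete choices are wrong and would not produce a proof.

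First, the projection direction $\bs w$ is \emph{not} taken to be a generator $\bs u_i$ or a combination $\bs u_i\pm\bs u_j$. It is the direction attaining the first successive minimum of $Z_{\bs v}-Z_{\bs v}$. This is the only way the hypothesis ``more than $\binom{n+1}{2}^{n-1}$ lattice points'' enters: via Lemma~\ref{lemma:minkowski-first-points} it forces $\lambda_1(Z_{\bs v}-Z_{\bs v})\le \tfrac{2}{n(n+1)}$, which is exactly what Proposition~\ref{prop:CSS-sum-bound} needs to turn $\mu(T(Z_{\bs v}))\le\tfrac{n-2}{n}$ into $\mu(Z_{\bs v})\le\tfrac{n-1}{n+1}$. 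There is no reason a generator or a sum/difference of generators should be short in $Z_{\bs v}-Z_{\bs v}$, so your scheme of ``quantifying shortness in $n-1$ independent such directions'' does not get off the ground; the paper uses a single direction, and no minimal-counterexample argument is needed.

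Second, your use of the LVP is misplaced. The vectors $\bs p_i$ to which LVP is applied are \emph{not} images of the $\bs u_i$ under some projection $\RR^{n-1}\to\RR^2$; they are $\bs p_i=(\rho_i,v_i)$, where $v_i$ are the velocities and $\rho_i$ are the coefficients in $\bs w=\sum_i\rho_i\bs u_i$ (a Gale-dual construction, not a projection). The point is that after projecting $Z_{\bs v}$ along $\bs w$ one gets a zonotope with $n$ generators $\bs y_i$ in $\RR^{n-2}$, and the candidate sLRZs inside it are the sub-zonotopes of types~\eqref{eq:TypeI}--\eqref{eq:typeIIb}; by Propositions~\ref{prop:TypeI} and~\ref{prop:TypeII} the volume vector of such a sub-zonotope consists of $2\times 2$ determinants $\det(\bs p_k,\bs q)$ with $\bs q\in S_{\bs P}$. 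A lonely vector $\bs q$ is precisely one for which these determinants are all nonzero and pairwise of distinct absolute value, i.e., the sub-zonotope is an sLRZ. So LVP does not select the projection direction; it selects which sub-zonotope of the already-fixed projection is an sLRZ.
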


The reason why we need the Lonely Vector Property to prove  Theorem~\ref{thm:mainslrc-points} can be traced down to the fact that the projection of an sLRZ is, in general, not an sLRZ. To circumvent this we introduce the following definition. 

\begin{defn}
\label{defn:cosimple}
A finite collection of lattice vectors spanning $\RR^d$ is called \emphd{cosimple}, if there is a linear dependence involving them and having coefficients that are all non-zero and with pairwise different absolute values.
A lattice zonotope that is generated by a cosimple collection of vectors is called a \emphd{cosimple zonotope}.
\end{defn}

In this definition, and in what follows, we switch to represent dimension by~$d$ (instead of $n-1$) in order to reserve $n$ for the number of generators of our zonotopes.
Since every sLRZ with $n$ generators is a cosimple $(n-1)$-zonotope, the following conjecture that we introduce in this paper generalizes the sLRC:

\begin{introconj}
\label{conj:cosimple}
Every cosimple zonotope with generators in~$\ZZ^d$ has covering radius upper bounded by $d/(d+2)$.
\end{introconj}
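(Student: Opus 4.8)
The plan is to reduce the general case to the known cases by an induction on the number of generators $n$, mirroring the strategy that the paper uses to prove Theorems~\ref{thm:main-points} and~\ref{thm:mainslrc-points}. The base case $n = d+1$ is a simplex (up to unimodular equivalence the zonotope is a parallelepiped when $n=d$, but cosimplicity forces $n \geq d+1$ genuinely, and $n=d+1$ gives a zonotope combinatorially equivalent to a ``generalized permutohedron'' slice), where the covering radius can be computed directly and checked against $d/(d+2)$. For the inductive step, given a cosimple collection $\bs u_1,\dots,\bs u_n$ spanning $\ZZ^d$ with linear dependence $\sum \la_i \bs u_i = \bs 0$ having all $|\la_i|$ distinct, I would pick a generator — say $\bs u_n$ — and project $\ZZ^d$ along $\bs u_n$ onto $\ZZ^{d-1}$ (after a unimodular change of coordinates making $\bs u_n$ primitive and a coordinate vector). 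The image zonotope is generated by the images of $\bs u_1,\dots,\bs u_{n-1}$, and one hopes it is again cosimple so that the inductive hypothesis applies, yielding a bound $\mu \leq (d-1)/(d+1)$ on the projection; a Rogers–Shephard / slab-covering argument then lifts this to a bound on $\mu(Z)$ in terms of $(d-1)/(d+1)$ and the ``width'' contributed by $\bs u_n$.

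The crux — and the reason the paper isolates the Lonely Vector Problem in the sLRC case — is precisely whether the projected collection remains cosimple. The linear dependence among the projected vectors $\bar{\bs u}_1,\dots,\bar{\bs u}_{n-1}$ is inherited, but two projected vectors $\bar{\bs u}_i, \bar{\bs u}_j$ may become parallel (or zero), forcing us to merge generators; merging generators $\bs u_i$ and $\bs u_j$ replaces the two coefficients $\la_i, \la_j$ by a single coefficient that is some integer combination $a\la_i + b\la_j$, and there is no reason this has absolute value distinct from the remaining $|\la_k|$. So the inductive step does not go through for a fixed choice of which generator to project along. The fix I would pursue is to show that there is \emph{always} a good choice: among the $n$ possible projection directions (or more cleverly, among directions that are integer combinations of a few generators), at least one produces a cosimple projected collection. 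This is exactly a higher-dimensional analogue of the LVP — one is asking that in the multiset of pairwise sums/differences of the relevant data there is a ``lonely'' element that avoids the bad coincidences — and I expect this to be the main obstacle, quite possibly requiring its own combinatorial input just as the LVP does for $n=4$.

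If that combinatorial step is granted, the rest is the covering-radius bookkeeping that already appears in Section~\ref{sec:LRC}: one writes $\mu(Z) \leq \mu(\bar Z) + (\text{correction from }\bs u_n)$ using the standard estimate that covering $\RR^d$ by copies of $\rho Z + \ZZ^d$ reduces, via the fibration over the projection, to covering $\RR^{d-1}$ by $\rho \bar Z + \ZZ^{d-1}$ together with covering each fiber-interval, and then checks that $\tfrac{d-1}{d+1} + (\text{correction}) \leq \tfrac{d}{d+2}$. Since the excerpt's Remark after Theorem~\ref{thm:main-points} emphasizes that these arguments are not sensitive to the exact constant, I would expect the numeric inequality to close with room to spare, the entire difficulty being concentrated in guaranteeing cosimplicity of the projection. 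A cleaner alternative worth attempting is to bypass the induction entirely and bound $\mu$ of a cosimple zonotope directly via a volume or lattice-point argument — using that the lattice-point count $\sum_{S} v_S$ and the volume $v_1 + \dots + v_n$ are controlled by the cosimple dependence — but I suspect this loses too much and the inductive route, obstruction and all, is the one that actually reaches $d/(d+2)$.
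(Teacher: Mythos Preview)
The statement is a \emph{conjecture}; the paper does not prove it in general (only in dimension two, Theorem~\ref{thm:cosimple-dim2}, and the conditional finite-checking result Theorem~\ref{thm:finite-checking-cosimple-conditional}). So there is no ``paper's proof'' to compare against, and your proposal is an attempt at an open problem.

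Your plan has two fatal gaps. First, your base case $n=d+1$ is not a simplex and cannot be ``computed directly'': a cosimple zonotope in $\RR^d$ with $d+1$ generators is precisely an sLRZ, and the claim $\mu \le d/(d+2)$ for it is exactly Conjecture~\ref{slrcgeom}, the shifted Lonely Runner Conjecture, which is open for $d\ge 4$. So your induction rests on an unproven base.

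Second, you have the projection obstacle backwards. The whole reason the paper introduces cosimple zonotopes is that cosimplicity \emph{is} preserved under arbitrary linear projection (Section~\ref{finiteness-cosimple}: ``any projection of a cosimple zonotope is cosimple''); the Lonely Vector Problem arises for sLRZs precisely because that smaller class is \emph{not} closed under projection, and enlarging to cosimple zonotopes removes the LVP hypothesis entirely. So your ``crux'' is a non-issue here. The actual obstruction to an inductive proof is numerical: from $\mu(\pi(Z)) \le (d-1)/(d+1)$ one gets via Proposition~\ref{prop:CSS-sum-bound} only $\mu(Z) \le \lambda_1(Z-Z) + (d-1)/(d+1)$, and closing the gap to $d/(d+2)$ requires $\lambda_1(Z-Z) \le 2/((d+1)(d+2))$, which one has no control over in general. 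This is exactly why the paper obtains only a finite-checking result (assuming enough lattice points forces $\lambda_1$ small) rather than a proof of the conjecture. Projecting along a generator instead, as you suggest, is worse: a primitive generator contributes a fiber of covering radius~$1$, so neither Proposition~\ref{prop:CSS-sum-bound} nor Proposition~\ref{prop:max-bound} yields anything useful.
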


The advantage of this generalization is that, using that every projection of a cosimple zonotope is cosimple, 
in this generalized setting we can prove a finite checking result that does not need the LVP.
In fact, the proof of the following statement is much simpler that those of Theorems~\ref{thm:main-points} and~\ref{thm:mainslrc-points}:

\begin{introthm}
\label{thm:finite-checking-cosimple-conditional}
If Conjecture~\ref{conj:cosimple} holds in dimension $d-1$ for all cosimple zonotopes, then no counter-example to Conjecture \ref{conj:cosimple} in dimension $d$ can contain more than $\binom{d+2}{2}^d$ lattice points.
\end{introthm}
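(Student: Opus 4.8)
The plan is to project a hypothetical counterexample along one of its generators, use that such projections stay cosimple, and feed in the conjecture in dimension $d-1$.

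\smallskip\noindent\textbf{Setup and reductions.} Let $Z=\sum_{i=1}^{n}[\bs 0,\bs u_i]\subseteq\RR^{d}$ be a cosimple zonotope with $\bs u_i\in\ZZ^{d}$ and $\mu(Z)>\tfrac{d}{d+2}$, and fix a linear dependence $\sum_i\lambda_i\bs u_i=\bs 0$ with all $\lambda_i\neq 0$ and $|\lambda_i|$ pairwise distinct. After a standard reduction I would assume that the generators generate $\ZZ^{d}$ as a lattice (passing to a proper sublattice only makes the covering radius larger and hence keeps $Z$ a counterexample, so the extremal instances are of this form). Note that for a generator $\bs u_i$, writing $\bs w_i$ for its primitive part, the projection $\pi_i\colon\RR^{d}\to\RR^{d}/\RR\bs u_i$ sends $\ZZ^{d}$ onto a lattice of rank $d-1$ (since $\bs w_i$ is primitive), and $\pi_i(Z)=\sum_{j\ne i}[\bs 0,\pi_i(\bs u_j)]$; applying $\pi_i$ to the dependence kills the $\bs u_i$-term, leaving $\sum_{j\ne i}\lambda_j\,\pi_i(\bs u_j)=\bs 0$, a dependence with non-zero, pairwise-distinct-in-absolute-value coefficients. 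Hence $\pi_i(Z)$ is a cosimple zonotope in dimension $d-1$, and by the hypothesis $\mu(\pi_i(Z))\le\tfrac{d-1}{d+1}$. The slack we exploit is
\[
\frac{d}{d+2}-\frac{d-1}{d+1}=\frac{2}{(d+1)(d+2)}=\binom{d+2}{2}^{-1}.
\]

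\smallskip\noindent\textbf{Confinement in each generator direction.} The core step is to show that, because $\mu(Z)>\tfrac{d}{d+2}$ while $\mu(\pi_i(Z))\le\tfrac{d-1}{d+1}$, every line parallel to $\bs u_i$ meets $Z$ in at most $\binom{d+2}{2}$ lattice points. Starting from a point $\bs y$ not covered by $\tfrac{d}{d+2}Z+\ZZ^{d}$, one covers $\pi_i(\bs y)$ by the dilate $\tfrac{d-1}{d+1}\pi_i(Z)+\pi_i(\ZZ^{d})$, lifts this back to $\RR^{d}$, and is then left to adjust a single coordinate (along $\bs u_i$); the failure of $\bs y$ to be covered, compared with the segment $[\bs 0,\bs u_i]\subseteq Z$, forces the corresponding fiber of $Z$ to be short, and one converts shortness of that fiber into the bound on the number of lattice layers using exactly the slack $\binom{d+2}{2}^{-1}$. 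The delicate point — which I expect to be the \emph{main obstacle} — is that one has freedom in how to perform the lift, and a crude choice only controls the thinnest fiber of $Z$ along $\bs u_i$, which is vacuous (it is $\ge 1$ but can be exactly $1$). One has to choose the lift over a region of $\pi_i(Z)$ where the fiber of $\sum_{j\ne i}[\bs 0,\bs u_j]$ in the direction $\bs u_i$ is long — this fiber-width is a concave function which is positive in the interior precisely because $\bs u_i$ lies in the linear span of the remaining generators — and guaranteeing that a valid lift lands in that region is itself a covering-radius estimate, on a superlevel set of that concave function.

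\smallskip\noindent\textbf{Assembling the bound.} Since the $\bs u_i$ span $\RR^{d}$, I would select $d$ linearly independent generators $\bs u_{i_1},\dots,\bs u_{i_d}$. Counting $|Z\cap\ZZ^{d}|$ by grouping lattice points into lines parallel to $\bs u_{i_1}$ and using the confinement bound gives $|Z\cap\ZZ^{d}|\le\binom{d+2}{2}\cdot\bigl|\pi_{i_1}(Z)\cap\pi_{i_1}(\ZZ^{d})\bigr|$, and then iterating the same grouping across the directions $\pi_{i_1}(\bs u_{i_2}),\dots$ in the successive quotient lattices yields $|Z\cap\ZZ^{d}|\le\binom{d+2}{2}^{d}$. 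Here it is crucial that each quotient of $Z$ along a generator is again a cosimple zonotope, so the confinement estimate of the previous paragraph applies verbatim at every stage without having to invoke the Lonely Vector Problem; making this iteration bookkeeping precise (tracking the change of lattice at each step, and verifying the confinement bound survives for the projected generators) is the remaining routine work. An alternative for this last step, avoiding the iteration, is to use that a counterexample has a lattice-free homothet $\tfrac{d}{d+2}Z$ (zonotopes are centrally symmetric), locate one thin direction via the flatness theorem, slice into $O(1)$ lattice layers, and finish with the confinement bound across the remaining generator directions.
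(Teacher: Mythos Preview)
Your confinement step is correct and in fact easier than you make it: translating~$Z$ so its center is at the origin, the general inequality $\mu(Z)\le\mu(Z\cap\RR\bs u_i)+\mu(\pi_i(Z))$ (the CSS lemma underlying Proposition~\ref{prop:CSS-sum-bound}) immediately gives that the \emph{central} fiber of~$Z$ in direction~$\bs u_i$ has lattice length strictly less than $\binom{d+2}{2}$; since the central fiber is the longest one in a centrally symmetric body, every parallel line meets~$Z$ in at most $\binom{d+2}{2}$ lattice points. There is no ``main obstacle'' here.

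The genuine gap is in your iteration. Your confinement estimate uses two inputs: that $\mu(Z)>\tfrac{d}{d+2}$ and that $\mu(\pi_i(Z))\le\tfrac{d-1}{d+1}$. After you project along $\bs u_{i_1}$, the zonotope $\pi_{i_1}(Z)$ is cosimple in dimension $d-1$, so by the very hypothesis you are assuming it satisfies $\mu(\pi_{i_1}(Z))\le\tfrac{d-1}{d+1}$ --- it is \emph{not} a counterexample, and the first input fails. Hence confinement does not apply ``verbatim'' to $\pi_{i_1}(Z)$, and you have no control on the fiber lengths of $\pi_{i_1}(Z)$ in direction $\pi_{i_1}(\bs u_{i_2})$; they can be arbitrarily long. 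Your alternative via flatness has the same defect: slicing into layers produces sections that are neither zonotopes nor counterexamples, so confinement gives nothing there either.

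The paper avoids this entirely by not projecting along a generator. Instead it argues contrapositively: if $|Z\cap\ZZ^d|>\binom{d+2}{2}^d$ then a pigeonhole (Lemma~\ref{lemma:minkowski-first-points}) yields $\lambda_1(Z-Z)\le\binom{d+2}{2}^{-1}$, and one projects along the $\lambda_1$-direction~$\bs w$. The key point is that \emph{any} projection of a cosimple zonotope is cosimple (the same dependence descends), so $\mu(\pi(Z))\le\tfrac{d-1}{d+1}$, and Proposition~\ref{prop:CSS-sum-bound} gives $\mu(Z)\le\binom{d+2}{2}^{-1}+\tfrac{d-1}{d+1}=\tfrac{d}{d+2}$ in one line. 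No iteration is needed, and there is no reduction to generators spanning the lattice.
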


Since Conjecture~\ref{conj:cosimple} is stronger than Conjectures~\ref{lrcgeom} and~\ref{slrcgeom}, this implies the following:

\begin{cor}
\label{cor:finite-checking-cosimple-conditional}
If Conjecture~\ref{conj:cosimple} holds in dimension $d-1$ for all cosimple zonotopes, then no counter-example to Conjectures~\ref{slrcgeom} or~\ref{lrcgeom} in dimension~$d$ can have more than $\binom{d+2}{2}^d$ lattice points.
\end{cor}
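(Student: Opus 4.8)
The plan is to read Corollary~\ref{cor:finite-checking-cosimple-conditional} off from Theorem~\ref{thm:finite-checking-cosimple-conditional}. Observe first that the hypothesis of the corollary---that Conjecture~\ref{conj:cosimple} holds in dimension $d-1$ for all cosimple zonotopes---is \emph{verbatim} the hypothesis of that theorem, and that for $d=n-1$ one has $\binom{d+2}{2}^d=\binom{n+1}{2}^{n-1}$, so the two bounds agree. Hence it suffices to show that every counter-example to Conjecture~\ref{slrcgeom} or to Conjecture~\ref{lrcgeom} in dimension $d$ is also a counter-example to Conjecture~\ref{conj:cosimple} in dimension $d$; the lattice-point bound then transfers directly from Theorem~\ref{thm:finite-checking-cosimple-conditional}.

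First I would record that the covering-radius thresholds coincide: a (Strong) Lonely Runner Zonotope of dimension $d$ has $n=d+1$ generators, and $\frac{n-1}{n+1}=\frac{d}{d+2}$, which is precisely the bound in Conjecture~\ref{conj:cosimple}. For the sLRC, recall that an sLRZ $Z_{\bs v}\subseteq\RR^d$ is generated by $d+1$ vectors in linear general position---in particular spanning $\RR^d$---whose unique linear dependence is the volume vector $\bs v$, and that the entries of $\bs v$ are positive and pairwise distinct, hence of pairwise distinct absolute value. By Definition~\ref{defn:cosimple}, $Z_{\bs v}$ is therefore a cosimple $d$-zonotope, so if it violates Conjecture~\ref{slrcgeom}, that is $\mu(Z_{\bs v})>\frac{d}{d+2}$, then it is a counter-example to Conjecture~\ref{conj:cosimple} in dimension $d$. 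For the LRC, a counter-example to Conjecture~\ref{lrcgeom} in dimension $d$ is a LRZ $Z$ with $\frac{d}{d+2}(Z-\bs x)\cap(\bs x+\ZZ^d)=\vn$; as the excerpt records, \eqref{eq:lrc} is equivalent to a single explicit point lying in $\frac{d}{d+2}Z+\ZZ^d$, so its failure forces $\frac{d}{d+2}Z+\ZZ^d\neq\RR^d$, i.e.\ $\mu(Z)>\frac{d}{d+2}$. Since, by the standard reduction alluded to in the excerpt, we may assume that the entries of the volume vector of $Z$ are pairwise distinct, $Z$ is then an sLRZ, hence---as above---a cosimple $d$-zonotope violating Conjecture~\ref{conj:cosimple}.

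Granting Theorem~\ref{thm:finite-checking-cosimple-conditional}, I do not expect a genuine obstacle; the content is careful bookkeeping. The two points I would handle with some care are: (i) the elementary identity $\frac{n-1}{n+1}=\frac{d}{d+2}$ for $n=d+1$ generators, which is exactly what makes Conjecture~\ref{conj:cosimple} stronger than Conjectures~\ref{lrcgeom} and~\ref{slrcgeom} in the same dimension; and (ii) the reduction to pairwise distinct velocities in the LRC case, which should be spelled out so that the hypothesis transfers correctly and Conjecture~\ref{conj:cosimple} is never invoked in a dimension exceeding $d-1$. I would also double-check that ``cosimple'' as defined in Definition~\ref{defn:cosimple} asks the generators to span $\RR^d$ but \emph{not} to generate the full lattice $\ZZ^d$, so that every dimension-$d$ sLRZ genuinely lies in the class of cosimple $d$-zonotopes used above.
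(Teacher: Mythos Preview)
Your approach is exactly the paper's: the paper's entire justification is the single sentence ``Since Conjecture~\ref{conj:cosimple} is stronger than Conjectures~\ref{lrcgeom} and~\ref{slrcgeom}, this implies the following,'' and you unpack precisely that implication via Theorem~\ref{thm:finite-checking-cosimple-conditional}, matching thresholds $\frac{n-1}{n+1}=\frac{d}{d+2}$ and checking that sLRZs are cosimple. For the sLRC half your argument is complete and matches the paper.

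There is, however, a genuine wrinkle in your LRC half, which you correctly flag in point~(ii) but do not actually close. The sentence ``we may assume that the entries of the volume vector of $Z$ are pairwise distinct'' is not a legal move here: the standard reduction for repeated velocities does not modify $Z$ in place---it produces a counter-example in a \emph{strictly smaller} dimension. So a dimension-$d$ LRZ counter-example with $v_i=v_j$ is not cosimple, Theorem~\ref{thm:finite-checking-cosimple-conditional} says nothing about \emph{its} lattice-point count, and you cannot simply replace it by something that is. Moreover, the hypothesis gives you Conjecture~\ref{conj:cosimple} only in dimension $d-1$, not in all dimensions $\le d-1$, so iterating the reduction does not obviously terminate in a contradiction either. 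The paper's one-line proof glosses over this same point; the cleanest honest reading of the corollary is that for Conjecture~\ref{lrcgeom} it concerns the essential (distinct-velocity) counter-examples, which are exactly the sLRZs and hence cosimple.
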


Theorems~\ref{thm:main-points} (hence Theorem~\ref{thm:main}), Theorem~\ref{thm:mainslrc-points} (hence Theorem~\ref{thm:mainslrc}), and Theorem~\ref{thm:finite-checking-cosimple-conditional} are proved, respectively, in  Sections~\ref{sec:LRC}, \ref{sec:LVP} and~\ref{sec:cosimple}, after introducing in Section~\ref{sec:prelim} some concepts and results from zonotope theory and from the geometry of numbers that we need for the proofs.

In Section~\ref{sec:LVP} we additionally prove the LVP for $n\leq 4$, and in Section~\ref{sec:cosimple} we give some properties of cosimple zonotopes.
For instance, we show that every cosimple zonotope has lattice-width at least three and that the converse almost holds~(Corollary~\ref{cor:widthge3}). Here, the \emphd{lattice-width} of a convex body $C \subseteq \RR^d$ is the minimum length of a segment of the form~$f(C)$, the minimum being taken over all integer linear functionals~$f$, that is, linear functionals $f : \RR^d \to \RR$ with $f(\ZZ^d) \subseteq \ZZ$ (see Section~\ref{sec:prelim} for more details).

\subsection*{Small dimension}

In the final two Sections~\ref{sec:dim2} and~\ref{sec:dim3} we look at Conjectures~\ref{slrcgeom} and~\ref{conj:cosimple} in low dimension,
proving them for $d=2$ and providing a volume bound for $d=3$. In fact, both results are proved for lattice zonotopes of lattice-width at least three, a setting more general than that of sLR or even cosimple zonotopes.

\begin{introthm}[See more precise Theorem~\ref{thm:cosimple-dim2}]
\label{thm:dim2}
Every lattice $2$-zonotope~$Z$ of lattice-width at least three has $\mu(Z)\le \frac12$, except for a certain parallelogram $P_{2,5}$ of area $5$ and with $\mu(P_{2,5})=\frac35$.
\end{introthm}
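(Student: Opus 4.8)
\textit{Proof idea.}
The plan is to recast the statement as a classification problem about centrally symmetric lattice polygons and then dispose of the resulting cases.

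First I would use that lattice $2$-zonotopes coincide, up to translation, with the centrally symmetric lattice polygons in $\RR^2$: the edges of such a polygon fall into antipodal pairs, each pair is a lattice segment, and summing one segment from each pair recovers the polygon as a lattice zonotope. I will also use two elementary properties of the covering radius: it is invariant under $\GL_2(\ZZ)$ and under translations, and it is monotone non-increasing under inclusion, i.e.\ if $Z$ contains a lattice translate of $Z'$ then $\mu(Z)\le\mu(Z')$. So the task becomes: for every centrally symmetric lattice polygon $Z$ of lattice-width $\ge 3$, show $\mu(Z)\le\tfrac12$, with the single exception $P_{2,5}$.

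The heart of the argument is a structural analysis of these polygons. Normalising by $\GL_2(\ZZ)$ so that the lattice-width $w\ge 3$ is attained in the vertical direction, $Z$ is confined to a horizontal strip of lattice-height $w$; tracking how the generators of $Z$ sit relative to this strip and exploiting the width constraints in the remaining primitive directions, one organises all such polygons into a few infinite families — parametrised by how long and slender they are, the prototype being the hexagon $\ent{\bs 0,\bs e_1}+\ent{\bs 0,\bs e_2}+\ent{\bs 0,(k,k+1)}$ for $k\ge 2$ and its relatives (including the stretches $\ent{\bs 0,k\bs u_1}+\ent{\bs 0,\bs u_2}$ of a fixed parallelogram) — together with a finite list of sporadic small polygons (parallelograms, hexagons and octagons of bounded area), among which $P_{2,5}$ occurs. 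For the sporadic polygons $\mu$ is computed directly: each is a linear image of a unit square or of a small fixed zonotope, so $\mu$ reduces to a covering-radius computation for an explicit finite-index sublattice of $\ZZ^2$ with respect to the $\ell^\infty$- or a hexagonal norm, which one finishes by locating the deep holes inside a fundamental domain. For $P_{2,5}$, the square of area $5$ spanned by $(2,1)$ and $(-1,2)$, the relevant index-$5$ sublattice has $\ell^\infty$-covering radius $\tfrac32$, giving $\mu(P_{2,5})=\tfrac25\cdot\tfrac32=\tfrac35$ with deepest hole at $\tfrac12(\bs e_1+\bs e_2)$; every other sporadic polygon returns a value $\le\tfrac12$.

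For the infinite families the same covering-radius reductions apply, but one now needs an estimate uniform over all members. Here one uses that such a polygon is essentially one-dimensional — tight in a single direction and long in the complementary one — so one slices $\RR^2$ by the family of lattice lines orthogonal to the tight direction and checks that the slices of the appropriately scaled polygon, together with their lattice translates, cover each such line; in this check the length of the strip drops out, leaving a bounded computation (which, correctly, fails only for the smallest member of the family containing $P_{2,5}$, namely $P_{2,5}$ itself). This yields $\mu\le\tfrac12$ for every member of every family. I expect the main obstacle to be precisely this structural step: setting up the normal form, proving that the infinite families together with the finite sporadic list exhaust all centrally symmetric lattice polygons of lattice-width $\ge 3$, and then carrying out the (uniform, respectively finite) covering-radius computations cleanly enough that $P_{2,5}$ — and only $P_{2,5}$ — survives with $\mu>\tfrac12$.
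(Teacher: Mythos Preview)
Your approach is genuinely different from the paper's, and the difference is instructive. You propose to classify \emph{all} centrally symmetric lattice polygons of lattice-width $\ge 3$ into infinite families plus sporadics, and then bound $\mu$ family by family. The paper instead invokes a volume inequality of Averkov--Wagner (Lemma~\ref{lemma:AverkovWagner}): a hollow planar convex body of lattice-width $w>1$ has area at most $w^2/(2(w-1))$. Applied to $\mu Z$ (which is hollow up to translation), this forces any lattice $2$-zonotope with $w\ge 3$ and $\mu>\tfrac12$ to have $w=3$ and $\vol(Z)\le 8$. So the infinite families simply disappear: only finitely many zonotopes remain, and the paper disposes of them by the number of generators (at most four, since $\binom{5}{2}>8$), using the classification of primitive parallelograms, a short list of hexagonal volume vectors, and a Pl\"ucker-relation argument for octagons.

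What this buys the paper is that no uniform estimate over an infinite family is ever needed. What your approach would buy, if it went through, is a proof independent of the Averkov--Wagner inequality --- more elementary in its inputs. But the price is exactly the step you flag as the main obstacle: you assert that the polygons ``organise into a few infinite families together with a finite list of sporadics'' without saying what the families are, how many there are, or why the list is exhaustive. For width $\ge 3$ there is no a priori bound on the number of generators or on the area, so this classification is a genuine combinatorial task (not just a normalisation), and your sketch does not indicate how it is to be carried out. Your slicing argument for the long-and-thin families is essentially Proposition~\ref{prop:max-bound} and is sound once the families are pinned down; the missing piece is the enumeration itself. If you want to pursue your route, the cleanest fix is to import the Averkov--Wagner bound first --- then there are no infinite families and your case analysis collapses to the paper's.
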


\begin{introthm}[See more precise Theorem~\ref{thm:dim3-b}]
\label{thm:dim3}
Every lattice $3$-zonotope $Z$ of lattice-width at least three and volume at least $196$ has $\mu(Z) < \frac35$, except for parallelepipeds projecting onto $P_{2,5}$.
\end{introthm}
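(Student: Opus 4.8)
The plan is to derive Theorem~\ref{thm:dim3} from the two-dimensional Theorem~\ref{thm:dim2} by the same projection/descent that proves Theorem~\ref{thm:finite-checking-cosimple-conditional}, now exploiting that in dimension two the bound $\mu\le\tfrac12$ (valid away from $P_{2,5}$) is a theorem rather than a conjecture, and optimizing the constants for $d=3$. Two ingredients are needed. The first, to be recorded in Section~\ref{sec:prelim}, is a fibration inequality for the covering radius: for a lattice zonotope $Z\subseteq\RR^3$ with center $\bs c$ and a primitive vector $\bs e\in\ZZ^3$, let $\pi_{\bs e}\colon\RR^3\to\RR^3/\RR\bs e$ be the quotient map, $\Lambda_{\bs e}=\pi_{\bs e}(\ZZ^3)$ the projected lattice, and $\ell_{\bs e}(Z)$ the length, in units of $\bs e$, of the central fiber $Z\cap(\bs c+\RR\bs e)$ — which, by central symmetry and concavity of the fiber-length function, is the longest fiber of $Z$ over $\pi_{\bs e}(Z)$. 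Then
\[
\mu(Z)\ \le\ \mu\bigl(\pi_{\bs e}(Z),\Lambda_{\bs e}\bigr)\ +\ \frac{1}{\ell_{\bs e}(Z)},
\]
the proof being to lift an almost-optimal lattice covering of $\pi_{\bs e}(Z)$ and absorb the residual $\bs e$-coordinate using the freedom inside the central fiber (optimizing the lift yields exactly the additive term $1/\ell_{\bs e}(Z)$). The second ingredient is that $\pi_{\bs e}(Z)$ is again a lattice zonotope and — testing with the integral functionals that vanish on $\bs e$ — has lattice-width at least $\operatorname{width}(Z)$; hence if $\operatorname{width}(Z)\ge3$ and $\pi_{\bs e}(Z)\not\cong P_{2,5}$, Theorem~\ref{thm:dim2} gives $\mu(\pi_{\bs e}(Z),\Lambda_{\bs e})\le\tfrac12$. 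The problem thus reduces to exhibiting, for each admissible $Z$, a primitive $\bs e$ with $\pi_{\bs e}(Z)\not\cong P_{2,5}$ and $\ell_{\bs e}(Z)$ large enough that the displayed bound falls below $\tfrac35$.

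I would carry out this reduction by a case split on the shape of $Z$. If $Z$ contains an inscribed Euclidean ball of radius $>\frac{5}{2\sqrt3}$, then $\mu(Z)\le\mu(\text{that ball},\ZZ^3)<\tfrac35$ with no projection at all. Otherwise the minimal Euclidean width of $Z$ is small; since $Z$ is a lattice zonotope this width is attained in a \emph{rational} direction $\bs e^*$ (the width of $\sum_i[\bs 0,\bs u_i]$ is minimized in a direction coming from the generators, hence integral after scaling), so $Z$ is ``flat'' about the plane $(\bs e^*)^\perp$. Assuming $Z$ is not a parallelepiped (treated below), choose a primitive $\bs f$ in or near that plane, generic enough that $\pi_{\bs f}(Z)$ has at least three generator directions, so $\pi_{\bs f}(Z)\not\cong P_{2,5}$. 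Using $\vol(Z)=\int_{\pi_{\bs f}(Z)}(\text{fiber length})$ together with concavity of the fiber-length function gives $\ell_{\bs f}(Z)\ge\vol(Z)/\operatorname{area}(\pi_{\bs f}(Z))$; flatness of $Z$ keeps $\operatorname{area}(\pi_{\bs f}(Z))$ of order $\sqrt{\vol(Z)\cdot(\text{minimal width})}$, so $\ell_{\bs f}(Z)\gtrsim\sqrt{\vol(Z)/(\text{minimal width})}$, and $\vol(Z)\ge196$ pushes this past the required threshold.

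The remaining case is that $Z$ is a parallelepiped $\sum_{i=1}^3[\bs 0,\bs u_i]$ — the only type all of whose generator-direction projections can collapse to a parallelogram, and in particular to $P_{2,5}$. Writing $T=(\bs u_1\,|\,\bs u_2\,|\,\bs u_3)$ and $L=T^{-1}\ZZ^3$, one has $\mu(Z,\ZZ^3)=2\mu_\infty(L)$ and $\operatorname{width}(Z)$ equals the $\ell_1$-length of a shortest nonzero vector of $L^*$, so the claim becomes a transference-type statement: a lattice $L\subseteq\RR^3$ of determinant $\le\frac1{196}$ whose dual has $\ell_1$-shortest vector of length $\ge3$ satisfies $\mu_\infty(L)<\frac{3}{10}$, unless $Z$ projects onto $P_{2,5}$. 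This I would settle by a direct geometry-of-numbers analysis of three-dimensional lattices; the extremal configurations turn out to be exactly the cylinders over $P_{2,5}$ and their shears, for which $\mu=\frac35$ precisely, while the hypothesis $\vol(Z)\ge196$ eliminates the small-determinant parallelepipeds, whose covering radius is close to $1$.

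The step I expect to be the genuine obstacle is obtaining the clean constant $196$: the crude form of the above gives only a bound of order $\binom{5}{2}^3=1000$ lattice points, as in Theorem~\ref{thm:finite-checking-cosimple-conditional}, and sharpening it to $\vol(Z)\ge196$ forces one to push every estimate. Most importantly, the two-dimensional value $\tfrac12$ is attained only on a very thin family of parallelograms, so for almost every $\pi_{\bs e}(Z)$ one has $\mu(\pi_{\bs e}(Z),\Lambda_{\bs e})$ strictly below $\tfrac12$ and hence a weaker demand on $\ell_{\bs e}(Z)$; the ``intermediate'' zonotopes — neither round nor flat enough, with $\pi_{\bs e}(Z)$ close to a bad parallelogram — must then be pinned to the parallelepiped analysis, and a short finite check of the borderline instances is likely needed. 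Making the three regimes (round, flat, parallelepiped) overlap exactly at $\vol(Z)=196$, and reading off from this the precise list of exceptions, is the content of the announced Theorem~\ref{thm:dim3-b}.
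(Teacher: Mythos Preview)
Your approach is genuinely different from the paper's and, as written, has real gaps.

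The paper does \emph{not} descend to dimension two via the fibration inequality $\mu(Z)\le \mu(\pi_{\bs e}(Z))+1/\ell_{\bs e}(Z)$. Instead it splits on the integer lattice-width $w$ of~$Z$. For $w\ge 4$ it never projects at all: it applies directly the three-dimensional hollow-body volume bound of Iglesias--Santos (Lemma~\ref{lemma:IglesiasSantos}) to $\mu Z$, obtaining $\vol(Z)\le 195$ for $w=4$, $\le 125$ for $w=5$, $\le 98$ for $w=6$, and $w\le 6$ from the flatness constant of~\cite{AverkovCMS}. The constant $196$ is thus an artifact of the Iglesias--Santos inequality at $w=4$; nothing in your scheme points to this number. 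For $w=3$ the paper does use two-dimensional information, but via Proposition~\ref{prop:max-bound} (the $\max$-bound, not the sum-bound) and a structured case analysis on how many generators are non-orthogonal to the width functional, together with a delicate comparison of $w(Z)$ and $w(Z_1)$ (Lemma~\ref{lemma:mu-width-vol}).

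Your argument, by contrast, rests on several unproven quantitative claims. The chain ``minimal Euclidean width small $\Rightarrow$ $\mathrm{area}(\pi_{\bs f}(Z))\asymp\sqrt{\vol(Z)\cdot w_{\min}}$ $\Rightarrow$ $\ell_{\bs f}(Z)\gtrsim\sqrt{\vol(Z)/w_{\min}}$'' is heuristic: you have not controlled the lattice geometry of the chosen primitive $\bs f$, nor shown that one can simultaneously keep $\pi_{\bs f}(Z)\not\cong P_{2,5}$ and $\ell_{\bs f}(Z)$ large. The parallelepiped case is asserted to follow from ``a direct geometry-of-numbers analysis'' that is never carried out; note that the paper handles parallelepipeds projecting to $P_{2,5}$ as genuine exceptions with $\mu=3/5$ and arbitrary volume, so they cannot be absorbed into a volume bound. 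Finally, you concede that the clean threshold $196$ would require a further ``short finite check'' --- but the paper needs no such check, because the Iglesias--Santos bound delivers the constant exactly.
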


In both statements the proof starts by using known bounds relating the lattice-width, volume and covering radius of convex bodies in dimensions two and three (Lemmas~\ref{lemma:AverkovWagner} and~\ref{lemma:IglesiasSantos}). In dimension two the general bound reduces our problem to zonotopes of lattice-width at most two or volume at most eight, which are relatively easy to classify. In dimension three the general bound easily proves Theorem~\ref{thm:dim3} for zonotopes of lattice-width at least four (Section~\ref{subsec:width4+}), but some additional work and a careful case-study is required for those of lattice-width three (Section~\ref{subsec:width3}).

We believe these low-dimensional results are interesting for several reasons:

On the one hand, even if Conjecture~\ref{slrcgeom} is already known for dimension two~\cite{cslovjecsekmalikiosisnaszodischymura2022computing}, our much more detailed Theorem~\ref{thm:dim2} allows us to prove part (2) of the following statement in arbitrary dimension. 

\begin{introthm}
\label{thm:special1}
Suppose that sLRC holds for $n-1$. Then it also holds for velocity vectors $(v_1,\dots,v_n)$ with $\gcd(v_1,\dots,v_n)=1$ that satisfy one of the following conditions:
\begin{enumerate}
\item All but one of them have a non-trivial common factor (Proposition~\ref{prop:allbut1gcd}). 
\item All but two of them have a common factor other than perhaps two or four. 
Moreover, if $n\geq 7$ then $\gcd=4$ cannot occur either 
(Corollary~\ref{cor:small-gcd}).
\end{enumerate}
\end{introthm}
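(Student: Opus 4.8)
The plan is to derive both items from one device --- rescaling the time variable by the common factor $g$ --- using the hypothesis for $n-1$ velocities on the ``bulk'' of runners and handling the one or two exceptional runners separately.

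\emph{Setup.} Suppose $g\mid v_i$ for all $i$ in a set $I\subseteq[n]$ with $|I|=n-1$ (for (1)) or $|I|=n-2$ (for (2)); relabel so that $I=\{1,\dots,|I|\}$ and write $v_i=gw_i$ for $i\in I$. The $w_i$ are distinct nonzero integers because the $v_i$ are, and $\gcd(v_1,\dots,v_n)=1$ forces $\gcd\bigl(g,\gcd\{v_j:j\notin I\}\bigr)=1$; moreover, in case (2), unless some prime dividing $g$ also divides one of $v_{n-1},v_n$ --- in which case we are already in the situation of (1) --- we even have $\gcd(g,v_{n-1})=\gcd(g,v_n)=1$. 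Given shifts $s_1,\dots,s_n$, I look for a time $t=(\tau+k)/g$ with $\tau\in\RR$ and $k\in\{0,1,\dots,g-1\}$. For $i\in I$ one has $v_it=w_i\tau+w_ik$, so $\norm{v_it+s_i}=\norm{w_i\tau+s_i}$ is independent of $k$; only the runners $j\notin I$ feel the choice of $k$.

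\emph{Proof of (1).} Here $I=\{1,\dots,n-1\}$, so $w_1,\dots,w_{n-1}$ is a list of $n-1$ distinct nonzero velocities, and sLRC for $n-1$ yields $\tau$ with $\norm{w_i\tau+s_i}\ge\tfrac1n\ge\tfrac1{n+1}$ for all $i\le n-1$, uniformly in $k$. As $k$ runs over $\{0,\dots,g-1\}$, the quantity $v_nt+s_n\equiv\tfrac{v_n\tau}{g}+s_n+\tfrac{v_n}{g}k\pmod1$ runs over $g$ points equally spaced with gap $1/g$ on $\RR/\ZZ$, since $\gcd(g,v_n)=1$. The ``forbidden'' arc $\{\norm{x}<\tfrac1{n+1}\}$ has length $\tfrac2{n+1}$, while $g$ equally spaced points can never all lie in an open arc of length $\le 1-\tfrac1g$ (their minimal closed spanning arc has length exactly $1-\tfrac1g$); and $\tfrac2{n+1}\le 1-\tfrac1g$ holds exactly when $g(n-1)\ge n+1$, in particular for every $g\ge2$ and $n\ge3$ (the cases $n\le2$ being classical). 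So some admissible $k$ works, and (1) holds with no exceptional velocities, giving Proposition~\ref{prop:allbut1gcd}.

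\emph{Proof of (2), and the main obstacle.} Now $|I|=n-2$. Padding $w_1,\dots,w_{n-2}$ with one auxiliary integer velocity (distinct from the $w_i$, with shift $0$) and applying sLRC for $n-1$, I get $\tau$ with $\norm{w_i\tau+s_i}\ge\tfrac1n$ for $i\le n-2$, uniformly in $k$; it remains to choose $k$ so that the two runners $n-1$ and $n$ are good as well. As $k$ runs over $\ZZ/g\ZZ$, the pair $\bigl(\tfrac{v_{n-1}}{g}(\tau+k)+s_{n-1},\,\tfrac{v_n}{g}(\tau+k)+s_n\bigr)\pmod{\ZZ^2}$ traces a coset of the cyclic subgroup of $(\RR/\ZZ)^2$ generated by $\tfrac1g(v_{n-1},v_n)$ --- of order exactly $g$, since $\gcd(g,v_{n-1})=\gcd(g,v_n)=1$ --- and we need one of its $g$ elements to avoid the cross $\{\norm{x_1}<\tfrac1{n+1}\}\cup\{\norm{x_2}<\tfrac1{n+1}\}$. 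This is a two-dimensional lattice covering problem, equivalent to a covering-radius statement for an explicit lattice $2$-zonotope $Z^*$ built from $g$ and the residues of $v_{n-1},v_n$ modulo $g$, and it is precisely here that Theorem~\ref{thm:dim2} enters: whenever $Z^*$ has lattice-width at least three it gives $\mu(Z^*)\le\tfrac12\le\tfrac{n-1}{n+1}$ for $n\ge3$, with the sole exception $Z^*\cong P_{2,5}$, where $\mu(Z^*)=\tfrac35\le\tfrac{n-1}{n+1}$ for $n\ge4$. The substantive remaining work --- and the source of the exceptional factors $g\in\{2,4\}$ --- is to make this reduction of the residual two-runner problem to $Z^*$ exact (which requires exploiting the room left by the stronger bound $\tfrac1n$, rather than $\tfrac1{n+1}$, that sLRC for $n-1$ supplies for the bulk runners), and then to run the finite, elementary classification of the possible $Z^*$ over small $g$ under the constraints $\gcd(g,v_{n-1})=\gcd(g,v_n)=1$. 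That classification shows the only obstructions are $g=2$, where $Z^*$ can have covering radius $1$, so this case is genuinely excluded for all $n$, and $g=4$, where every possible $Z^*$ has covering radius at most $\tfrac34$, so that $\tfrac{n-1}{n+1}\ge\tfrac34$, i.e.\ $n\ge7$, removes $g=4$ as well; this yields Corollary~\ref{cor:small-gcd}. I expect this two-dimensional case analysis to be the crux; everything else is the rescaling reduction above and the one-line pigeonhole estimate $\tfrac2{n+1}\le1-\tfrac1g$.
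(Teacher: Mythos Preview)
Your argument for part~(1) is correct and is the runner-language translation of the paper's zonotopal proof (Proposition~\ref{prop:allbut1gcd}): where you vary $k\in\{0,\dots,g-1\}$ to place the exceptional runner, the paper projects along the non-primitive generator~$\bs u_n$ and invokes Proposition~\ref{prop:max-bound} together with $\mu([\bs 0,\bs u_n])=1/g$.

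For part~(2), your reduction to a covering-radius question for a lattice parallelogram~$Z^*$ of area~$g$ with primitive generators is also correct and coincides, via the runner/zonotope dictionary, with the paper's use of~$Z_{\{1,2\}}$ in Proposition~\ref{prop:allbut2gcd}. The gap is in your analysis of~$Z^*$. You assert that only ``small~$g$'' require a width-$<3$ classification, the rest being handled by Theorem~\ref{thm:dim2}. This is false: for \emph{every} $g\ge 2$, the choice $v_n\equiv v_{n-1}\pmod g$ yields $Z^*\cong P_{1,g}$, which has lattice-width two, so Theorem~\ref{thm:dim2} never applies to it and the bound $\mu\le\tfrac12$ is unavailable. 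The actual mechanism (and the reason the excluded set is exactly $\{2,4\}$, with~$4$ disappearing once $n\ge 7$) is the parity-dependent formula $\mu(P_{1,g})=\tfrac12+\tfrac1g$ for even~$g$ and $\tfrac12+\tfrac1{2g}$ for odd~$g$ (Remark~\ref{rem:cosimple-dim2-covrads}); comparing this with $(n-1)/(n+1)$ gives precisely the inequalities of Proposition~\ref{prop:allbut2gcd} and hence Corollary~\ref{cor:small-gcd}. Your aside about ``exploiting the stronger bound $1/n$'' on the bulk runners is a red herring: no such slack is needed, since the projected zonotope already satisfies $\mu\le (n-3)/(n-1)<(n-1)/(n+1)$.
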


On the other hand, Theorem~\ref{thm:dim3} shows that all potential counter-examples to Conjecture~\ref{conj:cosimple} for $d=3$, hence also those to Conjecture~\ref{slrc} for $n=4$, have volume at most 195:

\begin{cor}\label{cor:mainslrc}
sLRC holds for $n=4$ for all velocity vectors $(v_1,v_2,v_3,v_4) \in \ZZ_{>0}^4$ satisfying
\[
\sum_i v_1 + v_2 + v_3 + v_4 \geq 196.
\]
\end{cor}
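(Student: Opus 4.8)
The plan is to deduce this from Theorem~\ref{thm:dim3} via the zonotopal reformulation of Section~\ref{subsec:zonotopes}. First I would put the velocity vector into normal form. By the standard reduction of the sLRC to positive integer velocities (see \cite[Section~4.1]{cslovjecsekmalikiosisnaszodischymura2022computing}), together with the scale-invariance of the running problem, it suffices to treat vectors $\bs v=(v_1,v_2,v_3,v_4)\in\ZZ_{>0}^4$ with $\gcd(v_1,v_2,v_3,v_4)=1$ --- which is in any case the normalization under which the zonotope $Z_{\bs v}$ is defined --- and, since the sLRC concerns pairwise distinct velocities, we may also assume the $v_i$ distinct. By Proposition~\ref{prop:slrcgeom}, for such a $\bs v$ the sLRC is equivalent to $\mu(Z_{\bs v})\le\tfrac{3}{5}=\tfrac{n-1}{n+1}$ with $n=4$, while by~\eqref{eqn:volZ-velocity-sum} the hypothesis $v_1+v_2+v_3+v_4\ge 196$ translates into $\vol(Z_{\bs v})\ge 196$.

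Next I would verify that $Z_{\bs v}$ falls in the scope of Theorem~\ref{thm:dim3}. Being an sLRZ with $4$ generators, $Z_{\bs v}$ is a cosimple $3$-zonotope, so Corollary~\ref{cor:widthge3} ensures it has lattice-width at least three; combined with $\vol(Z_{\bs v})\ge 196$, the hypotheses of Theorem~\ref{thm:dim3} are met. It then remains only to rule out its exceptional case, namely that $Z_{\bs v}$ be a parallelepiped projecting onto $P_{2,5}$. But $Z_{\bs v}$ cannot be a parallelepiped at all: its four generators are pairwise non-parallel (two parallel generators, together with any third one, would contradict linear general position in $\RR^3$), so $Z_{\bs v}$ has four distinct edge directions, whereas a parallelepiped has only three. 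Hence Theorem~\ref{thm:dim3} applies and gives $\mu(Z_{\bs v})<\tfrac{3}{5}$, and Proposition~\ref{prop:slrcgeom} then yields the sLRC for $\bs v$.

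I do not expect a genuine obstacle here: all the difficulty is already absorbed into Theorem~\ref{thm:dim3} (proved in Section~\ref{sec:dim3}), and the corollary is essentially a translation step. The one point worth isolating is that the exceptional family in Theorem~\ref{thm:dim3} consists of parallelepipeds, and that an sLRZ with four generators --- having four distinct edge directions --- is never a parallelepiped; this is exactly what guarantees that the exception is never triggered, so that every velocity vector with $v_1+v_2+v_3+v_4\ge 196$ is accounted for.
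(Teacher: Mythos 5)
Your proof is correct and takes essentially the same route as the paper: the corollary is just the zonotopal translation (Proposition~\ref{prop:slrcgeom} together with $\vol(Z_{\bs v}) = v_1+v_2+v_3+v_4$ and the width bound of Corollary~\ref{cor:widthge3}) of the volume bound in Theorem~\ref{thm:dim3}. The only cosmetic difference is how the exceptional case is dispatched: you rule out the parallelepiped exception by observing that a four-generator sLRZ has four pairwise non-parallel edge directions, whereas the paper's more precise Theorem~\ref{thm:dim3-b} already records $\vol(Z)\le 80$ for sLRZs of lattice-width three, making the exception moot; both arguments are valid.
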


This result opens the door for a computational proof of the sLRC for five runners.
\begin{rem}
\label{rem:alcantara}
After a preprint of this manuscript appeared, such a computational proof was carried out successfully in~\cite{alcantaracriadosantos2025covering}, so that we now know that sLRC holds for $n=4$.
Certifying that the covering radius of the $2\, 133\, 561$ relevant sLR $3$-zonotopes is bounded by~$\frac35$ required the authors of~\cite{alcantaracriadosantos2025covering} to devise a new (and compared with~\cite{cslovjecsekmalikiosisnaszodischymura2022computing}, more efficient) algorithm for bounding covering radii of rational polytopes in general.

As a by-product of their computations,~\cite{alcantaracriadosantos2025covering} also shows that there are only three velocity vectors $(v_1,v_2,v_3,v_4)$ that are tight for the sLRC, meaning that for them no bound larger than $\tfrac1{5}$ works in Conjecture~\ref{slrc} (equivalently, their associated sLR zonotopes have covering radius exactly $\frac35$). These are the vectors $(1,2,3,4)$, $(1,3,4,7)$, and $(1,3,4,6)$. The first two were known to be tight for the original LRC (which obviously implies tightness for sLRC) but $(1,3,4,6)$ is tight only for sLRC.
\end{rem}



\section{Preliminaries}
\label{sec:prelim}

\subsection{Volume and lattice points in lattice zonotopes}
\label{subsec:lat-zonotopes}

We here recall some known facts about zonotopes in general and lonely runner zonotopes in particular.

Let $Z = \sum_{i=1}^n\ent{\bs 0,\bs u_i}\subseteq \RR^d$ be a zonotope of dimension $d$ with $n$ generators. For each subset $S\subseteq [n]$ let us denote $Z_S:= \sum_{i\in S} \ent{\bs 0,\bs u_i}$.
If $|S|=d$, then $Z_S$ is a parallelepiped of volume $\abs{\det(\bs u_i: i \in S)}$ (degenerating to volume zero if $\{\bs u_i: i \in S\}$ is linearly dependent). It is well-known (see, e.g., \cite[Eq.~(57)]{shephard1974combinatorial}) that
\begin{align}
\vol(Z) = \sum_{S \subseteq [n], |S|=d} \vol(Z_S).
\label{eq:volZ}
\end{align}

Suppose now that $\bs u_i \in \ZZ^d$ for every $i$. Then it still makes sense to define $\vol(Z_S)$
for a subset $S \subseteq [n]$ of size other than~$d$, as the \emph{relative} $|S|$-dimensional volume of  $Z_S$, normalized to the lattice $\ZZ^d \cap (\sum_{i\in S} \RR \bs u_i)$.
Observe that we insist on $\vol(Z_S)$ to denote an $|S|$-dimensional volume, so again $\vol(Z_S)=0$ if $\{\bs u_i: i \in S\}$ is linearly dependent; for example, if $|S| > d$.
With this convention, $\vol(Z_S)$ coincides with the $\gcd$ of all the $(|S|\times |S|)$-minors of the $d \times |S|$ matrix with columns $\{\bs u_i : i \in S\}$. 
It turns out that the sum of all these volumes for parallelepipeds of dimensions from 0 to $d$ equals the number of lattice points in~$Z$.

\begin{thm}[see \protect{\cite[Corollary~9.3 \& Theorem~9.9]{BeckRobins}}]
\label{thm:zono-points}
Let $Z=\sum_{i=1}^n\ent{\bs 0,\bs u_i}\subseteq \RR^d$ be a lattice zonotope. 
Then
\begin{align}
|Z \cap \ZZ^d| &=\sum_{S \subseteq [n]} \vol(Z_S).
\label{eq:numZ}
\end{align}
\end{thm}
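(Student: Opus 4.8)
The plan is to deduce the identity from a combinatorial decomposition of $Z$ into relatively half-open parallelepipeds, and then to count lattice points one cell at a time. First I would record (or invoke) the classical \emph{half-open zonotopal tiling} of $Z$: after fixing an auxiliary generic datum — say a linear ordering of the generators together with a generic linear functional on $\RR^d$ — one can write $Z$ as a \emph{disjoint} union
\[
Z \;=\; \bigsqcup_{S}\, H_S ,
\]
where $S$ ranges over all subsets of $[n]$ with $\{\bs u_i : i \in S\}$ linearly independent, and each $H_S$ is a translate of a parallelepiped of the form $\{\sum_{i\in S}\lambda_i\bs u_i : \lambda_i \in I_i\}$ with every $I_i$ a half-open unit interval ($[0,1)$ or $(0,1]$, the precise choice being dictated by the auxiliary datum). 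Here $\dim H_S = |S|$ and $H_{\vn}$ is a single point; restricting attention to the top-dimensional cells $|S| = d$ already reproves the volume formula~\eqref{eq:volZ}.

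Granting this tiling, the lattice-point count is a short computation. Fix an independent subset $S$ and set $\Lambda_S := \ZZ^d \cap \lin\{\bs u_i : i \in S\}$ and $\Lambda_S' := \sum_{i \in S}\ZZ\bs u_i \subseteq \Lambda_S$. By Smith normal form the index $[\Lambda_S : \Lambda_S']$ equals the $\gcd$ of the $(|S|\times|S|)$-minors of the matrix with columns $\{\bs u_i : i\in S\}$, which is precisely $\vol(Z_S)$ as normalized in the statement. Now any set $\{\sum_{i\in S}\lambda_i\bs u_i : \lambda_i \in I_i\}$ with the $I_i$ half-open unit intervals is a fundamental domain for $\Lambda_S'$ inside $\lin\{\bs u_i : i \in S\}$ — it is the image, under the isomorphism $\ZZ^S \to \Lambda_S'$ sending the standard basis vectors to the $\bs u_i$, of the product fundamental domain $\prod_{i\in S} I_i$ for $\ZZ^S$ — and a translate of a fundamental domain is again a fundamental domain. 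Hence $H_S$ contains exactly one representative of each coset of $\Lambda_S'$ in $\Lambda_S$, that is, exactly $[\Lambda_S : \Lambda_S'] = \vol(Z_S)$ points of $\Lambda_S$ — equivalently of $\ZZ^d$, since a lattice point lying in $\lin\{\bs u_i : i\in S\}$ automatically lies in $\Lambda_S$. Because the union $Z = \bigsqcup_S H_S$ is disjoint and covers $Z$, summing over $S$ gives $|Z \cap \ZZ^d| = \sum_S \vol(Z_S)$; the subsets $S$ with $\{\bs u_i : i\in S\}$ dependent (which do not occur as indices of cells) contribute $0$ to the right-hand side and may be harmlessly thrown in.

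The one genuinely nontrivial ingredient is the half-open tiling, and this is where I expect the work to lie. I would prove it by induction on the number of generators: writing $Z = Z' + \ent{\bs 0,\bs u_n}$ with $Z' = \sum_{i=1}^{n-1}\ent{\bs 0,\bs u_i}$ and sweeping $Z'$ along $\bs u_n$, the point is that a tiling of $Z'$ supplies the cells of $Z$ that are ``unaffected'' by the sweep, while the part of $\partial Z'$ visible from $-\bs u_n$ carries an induced zonotopal tiling whose cells, extended by the segment $\ent{\bs 0,\bs u_n}$, furnish exactly the new cells $H_{S\cup\{n\}}$ with $\bs u_n \notin \lin\{\bs u_i : i \in S\}$. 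The delicate bookkeeping is choosing, compatibly across all cells, which bounding faces are open and which are closed, so that the result is an \emph{exact} disjoint cover — this is precisely what makes the half-openness (rather than mere closed parallelepipeds) indispensable. Should one prefer to avoid reproving this, the identity also drops out of Stanley's formula for the Ehrhart polynomial of a lattice zonotope, $L_Z(t) = \sum_S \vol(Z_S)\,t^{|S|}$ (established in~\cite{BeckRobins} via essentially the above decomposition applied to the dilates $tZ$), simply by putting $t = 1$.
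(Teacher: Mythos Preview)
The paper does not give its own proof of this statement; it simply cites Beck--Robins (Corollary~9.3 and Theorem~9.9), where the result is obtained exactly as you outline: Stanley's half-open decomposition of a lattice zonotope into translated half-open parallelepipeds indexed by independent subsets, combined with the fundamental-domain count in each cell (equivalently, the $t=1$ specialization of the Ehrhart polynomial formula $L_Z(t)=\sum_S \vol(Z_S)\,t^{|S|}$). Your argument is correct and is essentially the one in the cited reference; the only point worth tightening is that when you invoke ``a translate of a fundamental domain is again a fundamental domain'' you are implicitly using that in the standard decomposition the translation vectors are lattice vectors (partial sums of the $\bs u_j$), so that the affine span of each $H_S$ meets $\ZZ^d$ in a genuine coset of $\Lambda_S$.
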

\noindent Comparing~\eqref{eq:volZ} and~\eqref{eq:numZ} one observes that every lattice zonotope contains more lattice points than its volume.

We now specialize to lonely runner zonotopes; that is, we assume $d=n-1$. Then, $(\vol(Z_{S}))_{|S|= n-1}$ equals the volume vector $(v_1,\dots, v_n)$ of $Z$, and  Equation~\eqref{eq:volZ} specializes to~\eqref{eqn:volZ-velocity-sum}.
The assumption $\gcd(v_1,\dots,v_n)=1$ is equivalent to the lattice $\ZZ^{n-1}$ being generated by the vectors~$\bs u_1,\dotsc,\bs u_n$.

We can also give a zonotopal interpretation of the greatest common divisor of subsets of the velocities.
As we did in the Introduction, for a subset $S\subseteq[n]$ we denote $v_S:= \gcd (v_i : i  \in S)$. 

\begin{lemma}
\label{lemma:gcd-geom}
Let $(v_1,\dots,v_n)$ with $\gcd(v_1,\dots,v_n)=1$ be the volume vector of an LRZ.
Then, for each $S \subseteq [n]$, we have
\[
v_{ [n] \setminus S} = \vol(Z_S).
\]
\end{lemma}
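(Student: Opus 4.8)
The plan is to realize $\vol(Z_S)$ as a lattice index and then evaluate that index through the exact sequence linking the generators $\bs u_i$ with the velocities. Write $U=(\bs u_1\mid\dots\mid\bs u_n)$ for the $(n-1)\times n$ generator matrix, so that $\bs v=(v_1,\dots,v_n)$ spans its kernel. The two hypotheses say precisely that $U\colon\ZZ^n\to\ZZ^{n-1}$ is surjective (the $\bs u_i$ generate the lattice) and that $\bs v$ is primitive ($\gcd(v_1,\dots,v_n)=1$, so $\ZZ^n\cap\RR\bs v=\ZZ\bs v$), whence a short exact sequence
\[
0\longrightarrow\ZZ\bs v\longrightarrow\ZZ^n\xrightarrow{\ U\ }\ZZ^{n-1}\longrightarrow 0 .
\]
I would first dispose of the degenerate case $|S|=n$: then $Z_{[n]}$ is $(n-1)$-dimensional, so $\vol(Z_{[n]})=0$ by the convention for $|S|>n-1$, matching $v_{\varnothing}=\gcd(\varnothing)=0$. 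Assume henceforth $S\subsetneq[n]$. Then $\{\bs u_i:i\in S\}$ is linearly independent by linear general position, so $Z_S$ is a parallelepiped and $\vol(Z_S)=[L_S:M_S]$ (equivalently, the $\gcd$ of the $|S|\times|S|$ minors of the matrix with columns $\bs u_i$, $i\in S$, as recalled above), where $M_S:=\sum_{i\in S}\ZZ\bs u_i$ and $L_S:=\ZZ^{n-1}\cap\langle M_S\rangle_\RR$ is the lattice induced on the linear span of $M_S$.

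Next I would transport $L_S$ and $M_S$ through $U$. Identifying $\ZZ^S\subseteq\ZZ^n$ with the coordinate sublattice supported on $S$, one has $M_S=U(\ZZ^S)$, and, using surjectivity of $U$, also $L_S=U(N_S)$ where $N_S:=\ZZ^n\cap(\RR^S+\RR\bs v)$ is the integer preimage of $\langle M_S\rangle_\RR$. Since $\ker(U|_{N_S})=N_S\cap\ZZ\bs v=\ZZ\bs v$ is contained in both $N_S$ and $\ZZ^S\oplus\ZZ\bs v$, and since $\ZZ^S\cap\ZZ\bs v=0$ (here one uses that the entries $v_i$ with $i\notin S$ are nonzero and that $[n]\setminus S\neq\varnothing$), passing to the quotient by $\ZZ\bs v$ gives
\[
\vol(Z_S)=[L_S:M_S]=[N_S:\ZZ^S\oplus\ZZ\bs v].
\]
Now $N_S$ is exactly the saturation of $\ZZ^S\oplus\ZZ\bs v$ in $\ZZ^n$, so this last index is the order of the torsion subgroup of $\ZZ^n/(\ZZ^S\oplus\ZZ\bs v)$.

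Finally I would compute that torsion order directly. Quotienting out $\ZZ^S$ first identifies $\ZZ^n/(\ZZ^S\oplus\ZZ\bs v)$ with $\ZZ^{[n]\setminus S}/\ZZ\bar{\bs v}$, where $\bar{\bs v}=(v_i)_{i\in[n]\setminus S}$. Writing $\bar{\bs v}=g\,\bs w$ with $\bs w$ primitive, so that $g=\gcd(v_i:i\in[n]\setminus S)=v_{[n]\setminus S}$, and completing $\bs w$ to a $\ZZ$-basis of $\ZZ^{[n]\setminus S}$, one reads off that the torsion subgroup is cyclic of order $g$. Comparing the two computations yields $\vol(Z_S)=v_{[n]\setminus S}$.

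The argument is essentially bookkeeping; the points that need care are the exactness of the displayed sequence (this is exactly where the two hypotheses are used), the identification of $N_S$ with the saturation of $\ZZ^S\oplus\ZZ\bs v$, and the triviality $\ZZ^S\cap\ZZ\bs v=0$, which is what forces the separate treatment of $S=[n]$. A purely matrix-theoretic alternative would obtain one divisibility cheaply: expanding each $v_i=\pm\det(\bs u_j:j\neq i)$ (for $i\notin S$, so that $S\subseteq[n]\setminus\{i\}$) by generalized Laplace along the columns indexed by $S$ exhibits $v_i$ as a $\ZZ$-combination of the $|S|\times|S|$ minors of the columns in $S$, giving $\vol(Z_S)\mid v_{[n]\setminus S}$. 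However, the reverse divisibility $v_{[n]\setminus S}\mid\vol(Z_S)$ is far less transparent from the matrix viewpoint, and I expect it to be the main obstacle unless one packages everything through the saturation/torsion description above, which delivers both divisibilities at once.
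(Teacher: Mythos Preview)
Your proof is correct and takes a genuinely different route from the paper's. The paper argues entirely in matrix terms: after a unimodular change of coordinates putting the columns indexed by~$S$ into block form $\left(\begin{smallmatrix}\bs S & \bs X\\ \bs 0 & \bs Y\end{smallmatrix}\right)$ with $|\det(\bs S)|=\vol(Z_S)$, each $v_i$ with $i\notin S$ factors as $\det(\bs S)$ times a maximal minor of~$\bs Y$, giving the easy divisibility $\vol(Z_S)\mid v_{[n]\setminus S}$; for the reverse, any extra factor $r\ge 2$ of $v_{[n]\setminus S}/\vol(Z_S)$ would divide all maximal minors of~$\bs Y$, and a Laplace expansion along the first~$|S|$ rows then forces $r$ to divide \emph{every} maximal minor of~$\bs U$, contradicting $\gcd(v_1,\dots,v_n)=1$. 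So, contrary to your closing remark, the matrix viewpoint does yield the ``hard'' direction quite cleanly once the block form is in place. Your exact-sequence argument is more intrinsic: it avoids choosing coordinates and delivers both divisibilities simultaneously by identifying $\vol(Z_S)$ with the torsion order of $\ZZ^{[n]\setminus S}/\ZZ\bar{\bs v}$, at the price of a few lattice-index identifications (all of which you carry out correctly, including the need to separate the case $S=[n]$ via $\ZZ^S\cap\ZZ\bs v=0$). Either approach is short; yours is perhaps the more conceptual explanation of why the hypothesis $\gcd(v_1,\dots,v_n)=1$ is exactly what is needed.
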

\noindent Note the special case $v_\emptyset = \vol(Z_{[n]})=0$ of this equality.

\begin{proof}
One direction is easy: the volume of $Z_S$ divides the volume of all the full-dimensional parallelepipeds of which $Z_S$ is a face, and those are precisely the ones of volumes $v_i$, $i\not\in S$. Thus, $\vol(Z_S)$ divides 
$v_{ [n] \setminus S} =\gcd (v_i : i \not \in S)$.

For the other direction, suppose that we do not have equality.
That is, there is a non-trivial factor $r \geq 2$ such that 
$ \gcd (v_i : i \not \in S) = r \vol(Z_S)$.
%
Let $k= |S|$. Without loss of generality suppose that the generators of $Z_S$ are $\bs u_1,\dotsc,\bs u_k$ and that they span the linear subspace of the first $k$ coordinates (the latter is no loss of generality since it can be achieved by a unimodular transformation).
That is, writing $\bs u_i = (\bs x_i, \bs y_i)$ with $\bs x_i\in \ZZ^k$ and $\bs y_i \in \ZZ^{n-k}$,
the matrix of the vectors $\bs u_i$ has the following block structure:
\[
\bs U:=
\begin{pmatrix}
   | & \ldots & |\\
   \bs u_1 & \ldots & \bs u_n\\
   | & \ldots & |
  \end{pmatrix}
  =
\begin{pmatrix}
   \bs x_1 \ \ldots \ \bs x_k &|&   \bs x_{k+1} \ \ldots \ \bs x_n\\
   \hline
   \bs 0 \ \ \ldots \ \ \bs 0 &|&   \bs y_{k+1} \ \ldots \ \bs y_n\\
  \end{pmatrix}  
=
\begin{pmatrix}
\bs S &|& \bs X \\
\hline
\bs 0 &|& \bs Y \\
\end{pmatrix},
\]
where $\bs S$ is a square $k\times k$ matrix of determinant $\vol(Z_S)$ and $\bs Y$ is an $(n-k)\times (n-k)$ matrix whose maximal minors are all divisible by $r$.
The contradiction is that then all the maximal minors of $\bs U$ are divisible by $r$. 
For those obtained by removing one of the last $n-k$ columns this is obvious (in fact, those minors are the $v_i$, for $i \not\in S$); for those obtained by removing one of the first~$k$ columns it follows by Laplace multirow expansion along the first~$k$ rows.
\end{proof}

\begin{cor}
\label{cor:lrc-points}
Let $Z\subseteq \RR^{n-1}$ be an LRZ with volume vector $(v_1,\dots,v_n)$ and for each $S\subseteq [n]$ let $v_S:= \gcd(v_i \in S)$. Then,
\[
|Z \cap \ZZ^{n-1}| = \sum_{S\subseteq [n]}  v_S.
\]
\end{cor}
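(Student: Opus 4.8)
The statement is an immediate consequence of the two results that precede it, namely Theorem~\ref{thm:zono-points} and Lemma~\ref{lemma:gcd-geom}, so the plan is simply to combine them and reindex the resulting sum.

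First I would apply Theorem~\ref{thm:zono-points} in the special case $d = n-1$, which is legitimate since~$Z$ is by hypothesis a lattice zonotope with~$n$ generators in~$\RR^{n-1}$. This gives
\[
|Z \cap \ZZ^{n-1}| = \sum_{S \subseteq [n]} \vol(Z_S),
\]
where $\vol(Z_S)$ is the relative $|S|$-dimensional volume of $Z_S$ as normalized in Section~\ref{subsec:lat-zonotopes} (in particular $\vol(Z_S)=0$ whenever $\{\bs u_i : i \in S\}$ is linearly dependent, e.g.\ for $S=[n]$). Next, for each fixed $S \subseteq [n]$ I would invoke Lemma~\ref{lemma:gcd-geom}, whose hypothesis $\gcd(v_1,\dots,v_n)=1$ is built into the notion of volume vector, to rewrite $\vol(Z_S) = v_{[n]\setminus S}$. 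Substituting this into the displayed identity yields
\[
|Z \cap \ZZ^{n-1}| = \sum_{S \subseteq [n]} v_{[n]\setminus S}.
\]

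To finish, I would observe that the map $S \mapsto [n]\setminus S$ is an involution, hence a bijection, on the power set of $[n]$; reindexing the sum along this bijection replaces $v_{[n]\setminus S}$ by $v_T$ with $T$ ranging over all subsets of $[n]$, giving exactly $\sum_{T \subseteq [n]} v_T$, as claimed. As a sanity check one notes that the term $S=[n]$ on the one side and $T=\emptyset$ on the other both contribute $0$, consistently with the remark $v_\emptyset = \vol(Z_{[n]}) = 0$ already recorded after Lemma~\ref{lemma:gcd-geom}.

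There is essentially no obstacle here: the only points requiring (minimal) care are checking that the hypotheses of the two cited results are met — that $Z$ is a genuine lattice zonotope for Theorem~\ref{thm:zono-points}, and that $\gcd(v_1,\dots,v_n)=1$ for Lemma~\ref{lemma:gcd-geom} — both of which hold by the standing assumptions on an LRZ and its volume vector. All the mathematical content has already been done in proving Lemma~\ref{lemma:gcd-geom}, whose Laplace-expansion argument was the actual work; the corollary is just the bookkeeping that turns that lemma plus the Beck--Robins lattice-point formula into a statement purely about the velocities.
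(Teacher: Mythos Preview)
Your proposal is correct and follows exactly the paper's own proof: combine Theorem~\ref{thm:zono-points} (specialized to $d=n-1$) with Lemma~\ref{lemma:gcd-geom}, and reindex via $S \mapsto [n]\setminus S$. The paper compresses this into a single displayed chain of equalities, but the content is identical.
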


\begin{proof}
Combining Theorem~\ref{thm:zono-points} and Lemma~\ref{lemma:gcd-geom} we have:
\[
|Z \cap \ZZ^{n-1}| = \sum_{S\subseteq [n]} \vol(Z_S) = \sum_{S\subseteq [n]} v_S.\qedhere
\]
\end{proof}

\subsection{Concepts from the Geometry of Numbers}
The geometric ideas in our proofs rest on some fundamental principles in the Geometry of Numbers
describing how the volume of a convex body~$C \subseteq \RR^d$ relates to whether $C$ contains lattice points, that is, points from~$\ZZ^d$, or whether it can be translated to avoid containing them.
Specific variants of such principles can be conveniently formulated by three well-studied parameters that we now introduce.

The first parameter is the \emphd{first successive minimum} and concerns \emphd{$\bs 0$-symmetric} convex bodies $C$, that is, those that satisfy $C = -C$.
It is defined as
\[
\lambda_1(C) := \min\left\{ \lambda > 0 : \lambda C \cap \ZZ^d \neq \left\{\bs 0\right\} \right\},
\]
and the lattice points in $\ZZ^d  \cap \lambda_1(C) C) \setminus \{\bs 0\}$
are said to  \emphd{attain} the first successive minimum of~$C$.
This parameter was introduced in the seminal works of Minkowski, whose first fundamental theorem says:

\begin{thm}[See \protect{\cite[Section~22]{gruber2007convex}}]
\label{thm:minkowski-first}
Let $C \subseteq \RR^d$ be a $\bs 0$-symmetric convex body.
If $C$ does not contain a non-zero point of $\ZZ^d$ in its interior, then $\vol(C) \leq 2^d$.

Equivalently, for every convex body $C$ one has
\[
\lambda_1(C)^d \vol(C) \leq 2^d.
\]
\end{thm}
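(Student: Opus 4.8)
The plan is to reduce everything to \emph{Blichfeldt's lemma}: if $S\subseteq\RR^d$ is measurable with $\vol(S)>1$, then $S$ contains two distinct points whose difference lies in $\ZZ^d$. I would first recall (or, since the paper already cites Gruber's book, simply quote) this lemma, whose proof is a pigeonhole argument on the quotient torus $\RR^d/\ZZ^d$: cutting $S$ into the pieces $S\cap(\bs z+[0,1)^d)$ for $\bs z\in\ZZ^d$ and translating each back into $[0,1)^d$, the total measure exceeds $1=\vol([0,1)^d)$, so two of the translated pieces overlap, and an overlap point witnesses distinct $\bs p,\bs q\in S$ with $\bs p-\bs q\in\ZZ^d$.

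Next I would prove the contrapositive of the first assertion: \emph{if $\vol(C)>2^d$ then $C$ has a non-zero lattice point in its interior}. Since $C$ is $\bs 0$-symmetric with non-empty interior, $\bs 0\in\interior(C)$ (it is the midpoint of $\bs p$ and $-\bs p$ for any $\bs p\in\interior(C)$). Choose $\varepsilon>0$ with $(1-\varepsilon)^d\vol(C)>2^d$, so that $S:=\tfrac12(1-\varepsilon)C$ has $\vol(S)>1$. Blichfeldt's lemma gives distinct $\bs p,\bs q\in S$ with $\bs 0\neq\bs g:=\bs p-\bs q\in\ZZ^d$. Writing $\bs p=\tfrac12\bs a$ and $\bs q=\tfrac12\bs b$ with $\bs a,\bs b\in(1-\varepsilon)C$, symmetry gives $-\bs b\in(1-\varepsilon)C$ and convexity gives $\bs g=\tfrac12(\bs a+(-\bs b))\in(1-\varepsilon)C\subseteq\interior(C)$, the last inclusion because $\bs 0\in\interior(C)$ and $1-\varepsilon<1$. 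So $\bs g$ is the desired interior lattice point, which proves the geometric statement.

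Then I would deduce the inequality $\lambda_1(C)^d\vol(C)\le 2^d$ (for $\bs 0$-symmetric $C$, the case in which $\lambda_1$ was defined). For any $\lambda<\lambda_1(C)$, the $\bs 0$-symmetric body $\lambda C$ contains no non-zero lattice point by definition of $\lambda_1$, hence none in its interior, so the geometric statement yields $\lambda^d\vol(C)=\vol(\lambda C)\le 2^d$; letting $\lambda\uparrow\lambda_1(C)$ gives the claim. Conversely the inequality recovers the geometric statement, since if $C$ has no non-zero lattice point in $\interior(C)$ then $\lambda_1(C)\ge 1$ (a scaling factor below $1$ would push a non-zero lattice point into $\interior(C)$), whence $\vol(C)\le\lambda_1(C)^d\vol(C)\le 2^d$.

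The only genuinely non-formal ingredient is Blichfeldt's lemma; everything else is routine manipulation with central symmetry, convexity, and a limiting argument. I expect the fussiest point to be keeping track of open versus closed bodies and the boundary case $\vol(C)=2^d$, which the $(1-\varepsilon)$-scaling handles cleanly.
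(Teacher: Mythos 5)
Your proof is correct. The paper does not prove this statement at all --- it is quoted as Minkowski's first fundamental theorem with a citation to Gruber's book --- and your Blichfeldt-plus-halving argument is precisely the classical proof found in that source: the pigeonhole lemma, the scaling $S=\tfrac12(1-\varepsilon)C$, the symmetry-and-convexity step producing $\bs g=\tfrac12(\bs a-\bs b)\in(1-\varepsilon)C\subseteq\interior(C)$ (valid since $\bs 0\in\interior(C)$), and the limiting argument $\lambda\uparrow\lambda_1(C)$ for the equivalence are all sound, including the boundary case $\vol(C)=2^d$ handled by the $(1-\varepsilon)$-scaling.
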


The following related result is a version of~\cite[Theorem~2.1]{betkehenkwills1993successive}.
Observe that we no longer assume~$C$ to be $\bs 0$-symmetric, but $C-C$ always is.
The parameter $\lambda_1(C-C)^{-1}$ equals the greatest lattice length of a segment contained in~$C$:

\begin{lemma}
\label{lemma:minkowski-first-points}
Let $C \subseteq \RR^d$ be a convex body and $\ell\in \NN$.
If $|C\cap \ZZ^d| >\ell^d$, then
\[
\lambda_1(C-C) \leq \frac{1}{\ell}.
\]
\end{lemma}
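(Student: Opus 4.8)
The statement I want to prove is: if $C \subseteq \RR^d$ is a convex body with $|C \cap \ZZ^d| > \ell^d$ for some $\ell \in \NN$, then $\lambda_1(C-C) \le 1/\ell$. The plan is to reduce this to a pigeonhole argument and an application of Minkowski's first theorem (Theorem~\ref{thm:minkowski-first}), applied to the $\bs 0$-symmetric body $C-C$. The key observation is that $\lambda_1(C-C) \le 1/\ell$ is equivalent to saying that $\frac{1}{\ell}(C-C)$ contains a non-zero lattice point, i.e.\ there exist two points $\bs a, \bs b \in C$ with $\bs a - \bs b \in \ell \ZZ^d \setminus \{\bs 0\}$. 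So it suffices to find two lattice points of $C$ that are congruent modulo $\ell$ and distinct; the difference of such points lies in $\ell\ZZ^d$, hence $(1/\ell)(C-C)$ contains a nonzero lattice point, giving $\lambda_1(C-C)\le 1/\ell$.

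First I would consider the quotient map $\pi : \ZZ^d \to (\ZZ/\ell\ZZ)^d$. The target has exactly $\ell^d$ elements. Since by hypothesis $|C \cap \ZZ^d| > \ell^d$, the pigeonhole principle gives two distinct lattice points $\bs p, \bs q \in C \cap \ZZ^d$ with $\pi(\bs p) = \pi(\bs q)$, that is $\bs p - \bs q \in \ell \ZZ^d \setminus \{\bs 0\}$. Then $\frac{1}{\ell}(\bs p - \bs q)$ is a nonzero element of $\ZZ^d$, and since $\bs p, \bs q \in C$, the vector $\bs p - \bs q$ belongs to $C - C$, so $\frac{1}{\ell}(\bs p - \bs q) \in \frac{1}{\ell}(C-C) \cap \ZZ^d$. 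By definition of the first successive minimum this forces $\lambda_1(C-C) \le 1/\ell$.

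One subtlety to address: the definition of $\lambda_1$ as stated in the excerpt uses $\lambda_1(C) := \min\{\lambda > 0 : \lambda C \cap \ZZ^d \neq \{\bs 0\}\}$, which presupposes the minimum is attained; this is standard for convex bodies (closedness and boundedness guarantee it), and here I only need the inequality $\lambda_1(C-C) \le 1/\ell$, which follows from exhibiting a single scale $\lambda = 1/\ell$ at which $\lambda(C-C)$ contains a nonzero lattice point — no attainment issue arises. I should also note $C - C$ is indeed a $\bs 0$-symmetric convex body (a Minkowski sum of convex bodies is a convex body, and $-(C-C) = C-C$), so the parameter $\lambda_1(C-C)$ is well-defined in the sense of the first paragraph of this subsection.

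This argument is completely elementary and I do not anticipate any real obstacle; the only thing worth being careful about is that the two pigeonhole points are \emph{distinct}, which is exactly why we need the \emph{strict} inequality $|C \cap \ZZ^d| > \ell^d$ rather than $\ge \ell^d$ — with $\ell^d$ points the pigeonhole conclusion could fail (e.g.\ the $\ell^d$ points could be pairwise incongruent mod $\ell$). Note that this lemma does not actually invoke Theorem~\ref{thm:minkowski-first} after all; the pigeonhole reduction is self-contained, and Minkowski's theorem is only the conceptual ancestor (indeed this is the ``Minkowski via pigeonhole'' philosophy). If a cleaner exposition is desired, one could alternatively phrase the whole thing through the $\bs 0$-symmetric body $\frac{1}{2}(C-C)$ and a direct covering/volume count, but the mod-$\ell$ pigeonhole is the shortest route and matches the phrasing of the statement.
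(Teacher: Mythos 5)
Your proof is correct and coincides with the paper's argument: the same pigeonhole step modulo $\ell\ZZ^d$ produces two distinct lattice points of $C$ whose difference, scaled by $1/\ell$, is a nonzero lattice vector in $\frac{1}{\ell}(C-C)$, forcing $\lambda_1(C-C)\leq 1/\ell$. Your added remarks on attainment and the necessity of the strict inequality are fine but not needed.
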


\begin{proof}
By the pigeon-hole principle. $|C\cap \ZZ^d| >\ell^d$ implies that  there are two distinct lattice points $\bs x,\bs y\in C\cap \ZZ^d$ in the same class modulo $\ell\ZZ^d$. Then, $\frac1\ell(\bs x-\bs y) \in \ZZ^d \cap \frac1\ell(C-C)$ implies $\lambda_1(C-C) \leq 1/\ell$.
\end{proof}

\begin{rem}
Suppose that $C$ itself is centrally symmetric, that is,  $C - \bs x = \bs x - C$, for some point $\bs x$, not necessarily the origin.
Then, $\vol(C-C) = 2^d \vol(C)$, so Minkowski's theorem (Theorem~\ref{thm:minkowski-first}) gives
\[
\lambda_1(C-C)  \leq 1 / \sqrt[d]{\vol(C)}.
\]
Since ``number of  lattice points'' can be considered a discrete analogue of ``volume'', Lemma~\ref{lemma:minkowski-first-points}, restricted to centrally symmetric bodies, is a discrete version of Theorem~\ref{thm:minkowski-first}. In fact, in the case of interest to us, $C$ will be a lattice zonotope so, as noted right after Theorem~\ref{thm:zono-points}, its number of lattice points is larger than its volume.
\end{rem}

The second parameter important for us is the \emphd{covering radius} of a convex body $C \subseteq \RR^d$, which appeared already in the zonotopal reformulation of the sLRC (Conjecture~\ref{slrcgeom}).
For convenience, we define it with respect to an arbitrary lattice $\Lambda \subseteq \RR^d$ as
\[
\mu(C,\Lambda) := \min\left\{ \mu > 0 : \mu C + \Lambda = \RR^d \right\}.
\]
That is, the covering radius is the minimal positive dilation factor~$\mu$ such that every point in~$\RR^d$ is contained in some lattice translation of~$\mu C$.
When $\Lambda= \ZZ^d$ and then we write~$\mu(C)$ instead of $\mu(C, \ZZ^d)$ for brevity.

It is easy to prove (cf.~\cite[Chapter 2, \S 13]{gruberlekkerkerker1987geometry}) that $\mu(C,\Lambda)$ coincides with the maximum dilation factor $\rho > 0$ such that  $\rho C$ has a hollow translate, where we say that a convex body $C'$ is \emphd{hollow} if its interior contains no lattice points.
For example,  $\mu(C,\Lambda) \geq 1$ is equivalent to~$C$ admitting a hollow translate.

The following result is fundamental in our proofs, since it allows us to induct on the dimension when $\lambda_1$ is sufficiently small (which will be guaranteed by Lemma~\ref{lemma:minkowski-first-points}).

\begin{prop}
\label{prop:CSS-sum-bound}
Let $C \subseteq \RR^d$ be a convex body containing the origin, and let $\pi : \RR^d \to \RR^{d-1}$ be the projection along a direction that attains the first successive minimum of $C-C$. Then,
\[
\mu(C) \leq \lambda_1(C-C) + \mu\left(\pi(C),\pi(\ZZ^d)\right).
\]
\end{prop}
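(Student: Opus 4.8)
The plan is to prove the covering statement directly. Write $\lambda := \lambda_1(C-C)$ and $\mu' := \mu(\pi(C),\pi(\ZZ^d))$. By the very definition of $\mu(C)$ as a minimum, it suffices to show that $(\lambda+\mu')\,C + \ZZ^d = \RR^d$. Let $\bs v \in \ZZ^d\setminus\{\bs 0\}$ be a lattice vector attaining $\lambda_1(C-C)$, so that $\pi$ is the projection along $\RR\bs v$ and $\bs v/\lambda \in C-C$. I would record two preliminary facts. First, $\bs v$ is necessarily primitive: if $\bs v = k\bs w$ with $\bs w \in \ZZ^d$ and $k \ge 2$, then $\bs w = \tfrac1k\bs v \in \tfrac{\lambda}{k}(C-C)$ would be a nonzero lattice point contradicting the minimality of $\lambda$; consequently $\pi(\ZZ^d)$ is a lattice in $\RR^{d-1}$ onto which $\ZZ^d$ maps with kernel exactly $\ZZ\bs v$. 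Second, since $C$ is compact we may write $\bs v/\lambda = \bs a - \bs b$ with $\bs a,\bs b\in C$, so that $C$ contains the segment $[\bs b,\bs a]$, which points in the direction of $\bs v$ and satisfies $\lambda(\bs a-\bs b)=\bs v$.

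The heart of the argument is the following inclusion, which uses the convex-splitting identity $(\lambda+\mu')\,C = \lambda C + \mu' C$:
\[
(\lambda+\mu')\,C \ \supseteq\ \lambda[\bs b,\bs a] + \mu' C \ =\ K + \{\,s\bs v : 0\le s\le 1\,\},
\]
where $K := \mu' C + \lambda\bs b$ is a translate of $\mu'C$; here the equality is simply $\lambda[\bs b,\bs a]=\lambda\bs b+[0,1]\bs v$. Thus $(\lambda+\mu')C$ contains the ``prism'' obtained by sweeping the translate $K$ of $\mu'C$ across one full $\bs v$-period. Projecting along $\bs v$, the set $\pi(K)=\mu'\pi(C)+\lambda\pi(\bs b)$ is a translate of $\mu'\pi(C)$, and since the covering radius is unchanged by translating the body, $\pi(K)+\pi(\ZZ^d)=\mu'\pi(C)+\pi(\ZZ^d)=\RR^{d-1}$. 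Now, given an arbitrary $\bs x\in\RR^d$, I would choose $\bs z_0\in\ZZ^d$ with $\pi(\bs x-\bs z_0)\in\pi(K)$; then $\bs x-\bs z_0-\bs k=s\bs v$ for some $\bs k\in K$ and $s\in\RR$. Writing $s=m+\tau$ with $m\in\ZZ$ and $\tau\in[0,1)$, one gets $\bs x-(\bs z_0+m\bs v)=\bs k+\tau\bs v\in K+[0,1]\bs v\subseteq(\lambda+\mu')C$, with $\bs z_0+m\bs v\in\ZZ^d$. Hence $\bs x\in(\lambda+\mu')C+\ZZ^d$, which is the required covering.

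I expect the delicate point to be precisely the displayed inclusion: the extra dilation factor $\lambda$ must buy \emph{exactly} one lattice period in the direction $\bs v$, neither more nor less, which is why one needs the equality $\lambda(\bs a-\bs b)=\bs v$ rather than an inequality, why the convex splitting $(\lambda+\mu')C=\lambda C+\mu'C$ is the right tool, and why $\bs v$ has to be the primitive vector realizing $\lambda_1(C-C)$. The remaining ingredients — primitivity of $\bs v$, the surjection $\ZZ^d\twoheadrightarrow\pi(\ZZ^d)$ with kernel $\ZZ\bs v$, reducing $s$ modulo $1$, and the fact that $\mu'\pi(C)+\pi(\ZZ^d)=\RR^{d-1}$ holds on the nose (because $\pi(C)$ is a convex body, so the covering radius is attained) — are routine bookkeeping, and I do not foresee any further obstacle beyond assembling them carefully.
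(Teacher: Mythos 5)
Your proof is correct, but it takes a genuinely different route from the paper. The paper deduces the proposition as a special case of a general projection inequality from an earlier work (\cite[Lemma~2.1]{codenottisantosschymura2019the}), namely $\mu(C) \leq \mu(Q,\ZZ^d \cap L) + \mu(\pi(C),\pi(\ZZ^d))$ with $Q = C \cap \pi^{-1}(\bs 0)$, and then identifies the fiber term with $\lambda_1(C-C)$; the hypothesis that $C$ contains the origin is there to make that fiber usable. You instead give a self-contained covering argument: you take the longest lattice-direction segment $[\bs b,\bs a] \subseteq C$ with $\lambda(\bs a-\bs b)=\bs v$ (wherever it sits, not necessarily through the origin), use the convexity identity $\lambda C + \mu' C \subseteq (\lambda+\mu')C$ to see that $(\lambda+\mu')C$ contains the prism $K+[0,1]\bs v$ with $K$ a translate of $\mu' C$, and then cover $\RR^d$ by first covering the base $\RR^{d-1}$ via $\mu'\pi(C)+\pi(\ZZ^d)=\RR^{d-1}$ and then reducing the residual $\bs v$-coordinate modulo the integer vector $\bs v$. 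All the steps check out: $\lambda_1$ and the covering radius are attained (compactness, closedness of the covering condition), $\pi(\ZZ^d)$ is a lattice since $\bs v$ is integral, and the final reduction only needs $\bs v \in \ZZ^d$ (so your primitivity observation, while correct, is not actually needed). What your approach buys is independence from the external lemma and from the position of the origin, and it sidesteps the question of whether the central fiber $C\cap\pi^{-1}(\bs 0)$ realizes the full length $1/\lambda_1(C-C)$; what the paper's approach buys is brevity and a pointer to a more general inequality (arbitrary codimension of the projection) that it reuses elsewhere, e.g.\ in the spirit of Proposition~\ref{prop:max-bound}.
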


\begin{proof}
This is a special case of~\cite[Lemma~2.1]{codenottisantosschymura2019the}, which reads as follows:
Let $\pi: \RR^d \to \RR^l$ be a rational linear projection, so that $\pi(\ZZ^d)$ is a lattice.
Let $Q = C \cap \pi^{-1}(\bs 0)$ and let $L = \pi^{-1}(\bs 0)$ be the linear subspace spanned by~$Q$.
Then,
\[
\mu(C) \leq \mu(Q,\ZZ^d \cap L) + \mu(\pi(C),\pi(\ZZ^d)).
\]
Now, specializing the dimension to $l = d-1$, and the direction of the projection~$\pi$ to be along some $\bs w \in \lambda_1(C-C)(C-C) \cap \ZZ^d \setminus \{ \bs 0\}$, we find that $Q = [\bs x,\bs y]$, for some $\bs x,\bs y \in C$, and $\bs w = \lambda_1(C-C) (\bs x - \bs y)$.
Since we further have $\ZZ^d \cap \pi^{-1}(\bs 0) = \ZZ \bs w$, this implies
\[
\lambda_1(C-C) = \mu\left(\lambda_1(C-C)^{-1} [\bs 0,\bs w],\ZZ \bs w\right) = \mu(Q,\ZZ^d \cap \pi^{-1}(\bs 0)),
\]
and the statement follows.
\end{proof}

\noindent The second bound for $\mu$ that we need is apparently new and formulates as follows:

\begin{prop}
\label{prop:max-bound}
Let $C \subseteq \RR^d$ be a convex body and let $\pi: \RR^d \to \RR^{d-s}$ be a rational linear projection.
Then,
\[
\mu(C) \leq \max\left\{\mu(\pi(C),\pi(\ZZ^d)) , \max_{\bs y \in \pi(C)}\mu(C \cap \pi^{-1}(\bs y)) \right\},
\]
where $\mu(C \cap \pi^{-1}(\bs y))$ is considered  with respect to the lattice $\ZZ^d \cap \pi^{-1}(\bs 0)$ translated to the affine subspace $\pi^{-1}(\bs y)$.
\end{prop}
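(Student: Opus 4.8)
The plan is to prove the equivalent assertion that $\rho C+\ZZ^d=\RR^d$, where
\[
\rho:=\max\left\{\mu(\pi(C),\pi(\ZZ^d))\,,\ \max_{\bs y\in\pi(C)}\mu\!\left(C\cap\pi^{-1}(\bs y)\right)\right\};
\]
this immediately yields $\mu(C)\le\rho$. We may assume $\rho<\infty$, since otherwise there is nothing to prove, and then $\rho>0$ automatically. Write $L:=\pi^{-1}(\bs 0)=\ker\pi$ and $\Lambda_L:=\ZZ^d\cap L$, which is a lattice in $L$ because $\pi$ is rational and which acts by translation on every fibre $\pi^{-1}(\bs y)$. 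Throughout we use that the covering radius is translation invariant (this is implicit in the hollow-translate description of $\mu$ recalled in Section~\ref{sec:prelim}) and that, consequently, $\mu(M,\Lambda')\le\rho$ for a convex body $M$ spanning the linear space of the lattice $\Lambda'$ forces $\rho M+\Lambda'$ to equal that space; after translating to a base point, the same holds for bodies living in an affine translate of such a space.

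Fix an arbitrary $\bs z\in\RR^d$; I must produce $\bs w'\in\ZZ^d$ with $\bs z-\bs w'\in\rho C$. First work downstairs in $\RR^{d-s}$. Since $\rho\ge\mu(\pi(C),\pi(\ZZ^d))$, we get $\rho\,\pi(C)+\pi(\ZZ^d)=\RR^{d-s}$, so there are $\bs w\in\ZZ^d$ and $\bs y_2\in\pi(C)$ with $\pi(\bs z-\bs w)=\rho\,\bs y_2$. In particular $\bs z-\bs w$ lies on the fibre $\pi^{-1}(\rho\,\bs y_2)$ of $\rho C$.

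Now work along that fibre. By linearity of $\pi$ and $\rho\ne0$, dilation by $\rho$ centred at the origin of $\RR^d$ carries the fibre $K:=C\cap\pi^{-1}(\bs y_2)$ of $C$ exactly onto $(\rho C)\cap\pi^{-1}(\rho\,\bs y_2)$. Choosing any base point $\bs o\in\pi^{-1}(\bs y_2)$, translation invariance gives $\mu(K)=\mu(K-\bs o,\Lambda_L)\le\rho$, hence $\rho(K-\bs o)+\Lambda_L=L$, which rearranges to $\rho K+\Lambda_L=L+\rho\,\bs o=\pi^{-1}(\rho\,\bs y_2)$. Since $\bs z-\bs w\in\pi^{-1}(\rho\,\bs y_2)$, there is $\bs v\in\Lambda_L\subseteq\ZZ^d$ with $\bs z-\bs w-\bs v\in\rho K\subseteq\rho C$. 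Taking $\bs w':=\bs w+\bs v\in\ZZ^d$ finishes the argument, and as $\bs z$ was arbitrary we conclude $\rho C+\ZZ^d=\RR^d$.

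The step that needs the most care is the second one: one has to track precisely how the dilation $C\mapsto\rho C$, centred at the ambient origin, moves the parallel affine fibres $\pi^{-1}(\bs y)$ around, and one must invoke translation invariance of the covering radius in both stages so as never to require that $C$ (or any of its fibres) contain the origin. A minor side issue is the presence of degenerate fibres $C\cap\pi^{-1}(\bs y)$ that fail to be full-dimensional inside $\pi^{-1}(\bs y)$ for $\bs y$ on the boundary of $\pi(C)$; such fibres have infinite covering radius, so they are harmless once we have reduced to the case $\rho<\infty$ at the outset.
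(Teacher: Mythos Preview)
Your proof is correct and follows essentially the same two-step strategy as the paper: first use the covering of the projection to land on the right fibre, then use the fibre's covering radius (and the sublattice $\Lambda_L$) to hit the target point. Your version is in fact a bit cleaner, since dilating by $\rho=\max\{\mu_\pi,\max_{\bs y}\mu_{\bs y}\}$ from the outset avoids the case split and the somewhat informal ``adding to $\mu_\pi C$ a dilation of $C$ by a factor $\mu_{\bs y}-\mu_\pi$'' step in the paper's argument.
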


\begin{proof}
For the sake of brevity we write 
\[
\mu_{\bs y} = \mu(C \cap \pi^{-1}(\bs y)) \quad\textrm{ and }\quad \mu_\pi = \mu(\pi(C),\pi(\ZZ^d)).
\]
Let $L= \pi^{-1}(\bs 0)$, so that $s=\dim L$.
After we dilate $C$ by the factor $\mu_\pi$, every $s$-dimensional affine plane~$H$ parallel to $L$ intersects some integer translation of $\mu_\pi C$.
Hence, $H$ contains an integer translation of $ \mu_\pi C \cap\pi^{-1}(\bs y) $ for some $\bs y \in \pi(C)$.

If $\mu_\pi \geq \mu_{\bs y}$, then, by $\RR^d = L + \pi(\RR^d)$, we get $\mu(C) \leq \mu_\pi$.
So, now we assume that $\mu_\pi < \mu_{\bs y}$.
The argument above implies that adding to $\mu_\pi C$ a dilation of~$C$ by a factor $\mu_{\bs y} - \mu_\pi$ will guarantee a complete covering of~$H$.
This implies that $\mu(C) \leq \mu_\pi + \max_{\bs y}\{\mu_{\bs y} - \mu_\pi\} = \max_{\bs y} \mu_{\bs y}$, finishing the proof.
\end{proof}

\begin{prop}\label{prop:allbut1gcd}
Let $n \geq 3$ and suppose that Conjecture~\ref{slrcgeom} (hence Conjecture~\ref{slrc}) holds for $n-1$ velocities.
Then, it holds for~$n$ velocities that satisfy $\gcd(v_1,\dots, v_{n})=1$ if~$n-1$ of them have a common non-trivial factor. Equivalently, if the corresponding sLRC zonotope has a non-primitive generator.
\end{prop}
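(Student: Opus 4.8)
The plan is to reduce the $n$-runner instance to an $(n-1)$-runner instance by a projection argument, exploiting the non-primitive generator. Suppose without loss of generality that $v_2,\dots,v_n$ have a common factor $r\ge 2$, so that $\gcd(v_1,\dots,v_n)=1$ forces $\gcd(r,v_1)=1$. In the zonotopal language this means the LRZ $Z_{\bs v}=\sum_{i=1}^n[\bs 0,\bs u_i]\subseteq\RR^{n-1}$ has generators with $v_{[n]\setminus\{1\}}=\gcd(v_2,\dots,v_n)=r$, and by Lemma~\ref{lemma:gcd-geom} this equals $\vol(Z_{\{1\}})$, i.e.\ the single generator $\bs u_1$ is $r$ times a primitive lattice vector; equivalently $\bs u_1$ is \emph{non-primitive}. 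So the two formulations in the statement are indeed equivalent, and it suffices to work with the claim about a non-primitive generator.

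The main step is to choose the projection $\pi:\RR^{n-1}\to\RR^{n-2}$ along the direction $\bs u_1$ and apply Proposition~\ref{prop:CSS-sum-bound} (or rather a variant of its proof). Writing $\bs u_1 = r\bs w$ with $\bs w$ primitive, the segment $[\bs 0,\bs u_1]$ has lattice length $r$ in the line $\RR\bs w$, so $\lambda_1(Z_{\bs v}-Z_{\bs v})\le \lambda_1([\bs 0,\bs u_1]-[\bs 0,\bs u_1])=\frac{1}{r}\le\frac12$. The projected zonotope $\pi(Z_{\bs v})=\sum_{i=2}^n[\bs 0,\pi(\bs u_i)]$ is a lattice $(n-2)$-zonotope with $n-1$ generators, and because the original $\bs u_i$ were in linear general position, the $\pi(\bs u_i)$ for $i\ge 2$ are again in linear general position in $\RR^{n-2}$; moreover their linear dependence coefficients are still (proportional to) $v_2,\dots,v_n$, which are distinct. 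Hence $\pi(Z_{\bs v})$ is a strong LRZ for $n-1$ runners (after rescaling $v_2,\dots,v_n$ by $\gcd$), and the inductive hypothesis — Conjecture~\ref{slrcgeom} in dimension $n-2$ — gives $\mu(\pi(Z_{\bs v}),\pi(\ZZ^{n-1}))\le\frac{n-2}{n}$. Plugging in,
\[
\mu(Z_{\bs v}) \le \lambda_1(Z_{\bs v}-Z_{\bs v}) + \mu\bigl(\pi(Z_{\bs v}),\pi(\ZZ^{n-1})\bigr) \le \frac12 + \frac{n-2}{n} = \frac{n-1}{n} \ ?
\]
— which is not yet good enough, since we need $\frac{n-1}{n+1}$. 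So the crude bound $\lambda_1\le\frac12$ must be replaced: the honest value is $\lambda_1(Z_{\bs v}-Z_{\bs v})=\frac1r$, and one needs to combine this with the sharper inductive estimate more carefully, or instead observe that the relevant quantity is $\frac1r + \frac{n-2}{n} \le \frac{n-1}{n+1}$ only when $r$ is large enough, and treat small $r$ separately.

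The step I expect to be the real obstacle is exactly this arithmetic balancing: $\frac1r+\frac{n-2}{n}\le\frac{n-1}{n+1}$ fails for $r=2$. The way around it is to not use Proposition~\ref{prop:CSS-sum-bound} with the global $\lambda_1$, but to use the more refined structure: the line $\pi^{-1}(\bs 0)$ meets $Z_{\bs v}$ in the segment $[\bs 0,\bs u_1]$ of lattice length $r$, so $\mu([\bs 0,\bs u_1],\ZZ\bs u_1\text{-lattice}) = \frac1r$; but one should instead bound $\mu(Z_{\bs v})$ using the fact that the fibers $Z_{\bs v}\cap\pi^{-1}(\bs y)$ are \emph{longer} segments than $[\bs 0,\bs u_1]$ for interior $\bs y$, together with the $\frac35$-type improvement available because the $(n-1)$-runner conjecture is being assumed with its exact constant. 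Concretely, I would apply Proposition~\ref{prop:max-bound} with $s=1$ and projection $\pi$ along $\bs w$: this gives $\mu(Z_{\bs v})\le\max\{\mu(\pi(Z_{\bs v}),\pi(\ZZ^{n-1})),\ \max_{\bs y}\mu(Z_{\bs v}\cap\pi^{-1}(\bs y))\}$, the first term is $\le\frac{n-2}{n}<\frac{n-1}{n+1}$ by induction, and the fibers are segments whose covering radius with respect to $\ZZ\bs w$ is the reciprocal of their lattice length; since the central fiber alone has lattice length $r\ge 2$ and the zonotope structure forces every fiber over $\pi(Z_{\bs v})$ to contain a translate of $[\bs 0,\bs u_1]$, each fiber has lattice length $\ge r\ge 2$, hence covering radius $\le\frac1r\le\frac12<\frac{n-1}{n+1}$ for all $n\ge 4$, and the case $n=3$ is checked by hand. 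That clean dichotomy — first term small by induction, second term small because every fiber is long — is what makes Proposition~\ref{prop:max-bound} the right tool here, and verifying the ``every fiber is long'' claim via the paving of $Z_{\bs v}$ into parallelepipeds is the one genuinely geometric point to nail down.
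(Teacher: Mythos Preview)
Your proposal is correct and, once you abandon Proposition~\ref{prop:CSS-sum-bound} in favor of Proposition~\ref{prop:max-bound}, it is exactly the paper's argument: project along the non-primitive generator, bound the projected term by $\tfrac{n-2}{n}$ via the inductive hypothesis, and bound the fiber term by $\tfrac{1}{r}\le\tfrac{1}{2}$. Two clean-ups: the ``every fiber is long'' claim needs no paving argument, since $Z_{\bs v}=[\bs 0,\bs u_1]+\sum_{i\ge 2}[\bs 0,\bs u_i]$ immediately gives that every fiber over $\pi(Z_{\bs v})$ contains a translate of $[\bs 0,\bs u_1]$; and the case split $n=3$ versus $n\ge 4$ is unnecessary, because Conjecture~\ref{slrcgeom} only asks for $\mu\le\tfrac{n-1}{n+1}$, and $\tfrac{1}{2}\le\tfrac{n-1}{n+1}$ already holds for all $n\ge 3$.
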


\begin{proof}
That a generator being not primitive is equivalent to all but one of the~$v_i$'s having a common factor is the special case of Lemma~\ref{lemma:gcd-geom}, where the set $S$ consists just of a single element.

So, let $Z = \sum_{i=1}^n\ent{\bs 0,\bs u_i}\subseteq \RR^{n-1}$ be an sLRZ and suppose that one generator is not primitive, say $\bs u_{n}=k \bs e_d$, for some $k\in \ZZ_{\ge 2}$.
Let $\pi:\RR^{n-1}\to \RR^{n-2}$ be the projection along~$\bs u_n$.
Then, Proposition~\ref{prop:max-bound} combined with the inductive hypothesis gives
\[
\mu(Z) \le \max\left\{\mu(\pi(Z)), \mu([\bs 0,\bs u_n]) \right\} \le \max\left\{\frac{n-2}{n}, \frac1k \right\} < \frac{n-1}{n+1},
\]
since $k \geq 2$ and $n \geq 3$.
\end{proof}

\medskip

For our study of the low-dimensional cases of Conjecture~\ref{conj:cosimple} in Sections~\ref{sec:dim2} and~\ref{sec:dim3}, we need some results relating $\mu(C)$ with a third parameter, the \emphd{lattice-width} of a convex body~$C$, defined as follows. For each $\bs z \in \RR^d$ define
\[
w(C, \bs z) =  \max_{\bs x \in C} {\bs z}^\intercal \bs x - \min_{\bs x \in C} {\bs z}^\intercal \bs x,
\]
and 
\[
w(C) = \min_{\bs z \in \ZZ^d \setminus \{\bs 0\}} w(C, \bs z).
\]

That is, $w(C)$ is the minimal width of~$C$ in a lattice direction, where width in each direction $\bs z \in \ZZ^d$ is normalized to the distance between lattice hyperplanes orthogonal to $\bs z$.
An observation that we frequently use below is that if $C$ is a \emphd{lattice polytope}, meaning that~$C$ is a polytope all of its vertices belong to~$\ZZ^d$, then the lattice-width $w(C)$ is an integer.

The following results bound the volume of wide hollow bodies. Equivalently, they bound the volume of an arbitrary convex body $C$ in terms of the product $w(C)\mu(C)$ (see Corollary~\ref{cor:AverkovWagner} for the latter perspective).
The versions the we need, for dimensions two and three, are due to
Averkov \& Wagner~\cite{AverkovWagner} and to Iglesias \& Santos~\cite{IglesiasSantos}, respectively.

\begin{lemma}[\protect{\cite[Theorem 2.2]{AverkovWagner}}]
\label{lemma:AverkovWagner}
Let $w > 1$ and let $C \subseteq \RR^2$ be a hollow convex body of lattice-width at least~$w$. 
Then, $\vol(C) \le \frac{w^2}{2(w-1)}$.
\end{lemma}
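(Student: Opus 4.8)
The plan is to reduce to an extremal configuration and then estimate the area by slicing along a direction of minimal width. First, note that enlarging a convex body cannot decrease either its volume or its lattice-width, since $w(C,\bs z)\le w(C',\bs z)$ for every $\bs z$ whenever $C\subseteq C'$. Hence every hollow $C$ of lattice-width at least $w$ sits inside an inclusion-maximal hollow (equivalently, maximal lattice-free) convex body $M$ with $\vol(M)\ge \vol(C)$ and $w(M)\ge w(C)\ge w$, and it suffices to bound $\vol(M)$. The function $f(t):=\tfrac{t^2}{2(t-1)}$ is decreasing on $(1,2]$, increasing on $[2,\infty)$, has $f(2)=2$, and the lattice-width of a hollow planar body never exceeds $1+2/\sqrt{3}<2.16$; so it is enough to prove $\vol(M)\le f(w(M))$ when $w(M)\le 2$ and $\vol(M)\le 2$ when $w(M)>2$, and the stated inequality for every $w\le w(C)$ then follows from the shape of $f$. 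I expect equality to be attained by (a unimodular copy of) the triangle $\conv\{\bs 0,2\bs e_1,2\bs e_2\}$, which is maximal lattice-free with $w=2$ and area $2=f(2)$.

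For the estimate, apply a unimodular transformation so that a primitive integer functional attaining $w(M)$ becomes the first coordinate; then $M$ lies in the slab $a\le x_1\le a+w_0$ with $w_0:=w(M)$, and both bounding lines support $M$. Write $\vol(M)=\int_a^{a+w_0}\ell(t)\,dt$, where $\ell(t)$ is the length of the fiber $M\cap\{x_1=t\}$ (since the $x_1$-fibers over integers carry the lattice $\{t\}\times\ZZ$, this decomposition computes the ordinary area). The function $\ell$ is concave on $[a,a+w_0]$, and for each integer $k$ in the open interval $(a,a+w_0)$ — there is at least one, because $w_0>1$ — hollowness forces $\ell(k)\le 1$: such a fiber lies in the interior of $M$, so a longer one would contain a lattice point there. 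Feeding these unit bounds on the interior integer fibers into the concavity of $\ell$, together with the bounds coming from the fact that the two extreme fibers are faces of $M$, and integrating, one should arrive at $\vol(M)\le \tfrac{w_0^2}{2(w_0-1)}$; the tight case is a triangle with a vertex on one bounding line and an edge on the other, where $\ell$ is linear and the hollowness bound on the integer line closest to the edge squeezes the edge-length to at most $w_0/(w_0-1)$.

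The step I expect to be the main obstacle is controlling the two extreme fibers $M\cap\{x_1=a\}$ and $M\cap\{x_1=a+w_0\}$: these are not directly restricted by hollowness, since the bounding lines merely support $M$, yet if an extreme fiber is a long edge it could a priori inflate the area. The clean way around this is to use maximality of $M$: for a maximal lattice-free body every facet carries a lattice point in its relative interior, so an extreme edge forces $a$ (respectively $a+w_0$) to be an integer and to host a lattice point, which together with lattice-freeness and convexity constrains $M$ severely — this is exactly the content of the Lov\'asz-type classification of maximal lattice-free convex bodies in the plane (an infinite strip of width $1$, or a triangle or quadrilateral with one lattice point in the relative interior of each edge). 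Invoking that classification turns the lemma into a short explicit check of the pair $(\vol(M),w(M))$ over finitely many combinatorial types, with $\conv\{\bs 0,2\bs e_1,2\bs e_2\}$ realizing equality. In either approach the delicate point is getting the constant sharp rather than merely $\vol(M)=O(w_0)$, and this comes down to applying the hollowness condition on the integer line nearest to the wide end of the slab.
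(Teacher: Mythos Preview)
There is nothing to compare against: the paper does not prove this lemma but merely quotes it from Averkov and Wagner, so your write-up is an independent attempt to reprove a cited result rather than a match for anything in the paper.

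As an outline your plan is sound --- pass to a maximal lattice-free body, slice along a width direction, use concavity of the fiber-length function and the unit bound $\ell(k)\le 1$ on interior integer fibers, and fall back on the Lov\'asz classification of maximal lattice-free planar bodies to handle the boundary edges --- and this is close in spirit to how such inequalities are usually established. But what you have written is a sketch, not a proof: the key integration step is replaced by ``one should arrive at'', and the side claim that $\vol(M)\le 2$ whenever $w(M)>2$ is asserted without justification (the flatness bound $w(M)\le 1+2/\sqrt 3$ alone does not give it, and since $f(1+2/\sqrt 3)>2$ you cannot read it off from the target inequality $\vol(M)\le f(w(M))$ either). To turn this into an actual proof you would need to carry out the case analysis over the Lov\'asz types that you only describe in your last paragraph.
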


\begin{lemma}[\protect{\cite[Theorem 2.1]{IglesiasSantos}}]
\label{lemma:IglesiasSantos}
Let $C \subseteq \RR^3$ be a hollow convex body of lattice-width $w\ge 2+2/\sqrt3$.
Then, $\vol(C) $ is bounded from above by
\begin{align*}
&\frac{8w^3}{(w-1)^3}, && \text{ if } w\ge \frac2{\sqrt3}(\sqrt{5}-1) + 1 \approx 2.427, \text{ and}\\
&\frac{3w^3}{4(w - (1+2/\sqrt3))}, && \text{ otherwise} .
\end{align*}
\end{lemma}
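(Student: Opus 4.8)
The plan is to slice $C$ by the lattice hyperplanes orthogonal to a lattice-width direction, control each planar slice by the two-dimensional estimate of Averkov--Wagner (Lemma~\ref{lemma:AverkovWagner}), and recombine the slice areas using the concavity supplied by Brunn--Minkowski -- but with a twist, since the crude form of this argument only gives a bound quadratic in $w$, whereas the asserted bound is, for large $w$, of bounded order. Concretely: pick a primitive $\bs z\in\ZZ^3$ with $w(C,\bs z)=w$, and after a unimodular transformation assume $\bs z=\bs e_3$, so $C$ lies in a slab $\{a\le x_3\le a+w\}$ whose two bounding hyperplanes support $C$. Set $A(t):=\vol_2(C\cap\{x_3=t\})$. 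Since $\bs e_3$ is primitive, Fubini gives $\vol(C)=\int_a^{a+w}A(t)\,dt$ with the $2$-volume in each slice normalized to $\ZZ^2$, and Brunn's theorem makes $t\mapsto A(t)^{1/2}$ concave on $[a,a+w]$. For every integer $k$ strictly between $a$ and $a+w$ the slice $C_k:=C\cap\{x_3=k\}$ has no lattice point in its relative interior -- such a point would be interior to $C$ -- so $C_k$ is a hollow planar convex body, and there are at least $\lfloor w\rfloor\ge 3$ such layers.

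If one knew that each interior slice had lattice-width bounded below by a quantity growing with $w$, Lemma~\ref{lemma:AverkovWagner} would bound $A(k)\le \frac{w_k^2}{2(w_k-1)}$ at the integer nodes, and then maximizing $\int A$ over nonnegative profiles with $A^{1/2}$ concave and these pointwise caps at (roughly) $w$ equally spaced nodes is a routine piecewise-linear optimization. The trouble is that an individual slice can be far thinner than $C$ itself, and carrying out only this reasoning yields $\vol(C)=O(w^2)$, which is much weaker than what is claimed; the essential extra input is that the lattice-width condition holds in \emph{all} integer directions, in particular in directions $(f_1,f_2,f_3)$ with $f_3\neq 0$. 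This forces the stack of hollow planar slices to be sheared together so rigidly that their areas cannot all be large -- a genuinely ``fat'' three-dimensional body of width $\ge w$ in every lattice direction would contain interior lattice points and fail to be hollow.

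The sharp bound is then obtained through a dichotomy, which is exactly why the conclusion has two cases with threshold $w\ge 2+2/\sqrt3=2\!\left(1+1/\sqrt3\right)$: either every interior slice $C_k$ has lattice-width at least $1+1/\sqrt3$ -- the generic situation once $w$ is that large -- in which case the extremal configuration is (close to) a genuinely three-dimensional near-maximal hollow polytope and optimization produces $\frac{8w^3}{(w-1)^3}$; or some slice is thinner than $1+1/\sqrt3$, which pins $C$ close to a prism over a thin hollow planar body and yields instead $\frac{3w^3}{4(w-(1+2/\sqrt3))}$. I expect the main obstacle to be precisely this thin-slice branch: one must turn ``some slice is narrow'' into an honest sandwiching of $C$ between two controllable two-dimensional hollow bodies, classify the extremal such configurations, and verify that the ensuing optimization reproduces the stated constants (and that the two regimes together exhaust all cases). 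Everything else -- the Fubini identity, the concavity of $A^{1/2}$, and the node optimization -- is elementary bookkeeping.
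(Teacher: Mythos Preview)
The paper does not prove this lemma at all: it is quoted verbatim from \cite[Theorem~2.1]{IglesiasSantos} and used as a black box (see the applications in Corollary~\ref{coro:volume-dim3} and the remark following it). So there is no ``paper's own proof'' to compare your attempt against.

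As for your sketch itself, it is a plausible outline of the standard slice-and-optimize strategy, but it is not a proof and contains at least one concrete error. You identify the dichotomy threshold with the overall hypothesis $w\ge 2+2/\sqrt3\approx 3.155$, but the case split in the statement is at $w=\tfrac{2}{\sqrt3}(\sqrt5-1)+1\approx 2.427$; these are different numbers, and your heuristic ``each slice has width $\ge 1+1/\sqrt3$'' does not account for the appearance of $\sqrt5$. More substantively, you correctly observe that naive slicing with the two-dimensional Averkov--Wagner bound only yields $\vol(C)=O(w^2)$, and you note that the cure must come from width constraints in \emph{oblique} lattice directions $(f_1,f_2,f_3)$ with $f_3\neq 0$; but you do not say how to convert such constraints into a usable inequality on the slice areas, nor how the node optimization under Brunn--Minkowski concavity actually produces the specific constants $8w^3/(w-1)^3$ and $\tfrac{3w^3}{4(w-(1+2/\sqrt3))}$. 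The phrase ``elementary bookkeeping'' is doing a great deal of work here: the extremizers in \cite{IglesiasSantos} are particular hollow tetrahedra and prisms, and recovering the exact bounds requires identifying them, not just a qualitative dichotomy.
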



\section[Projections of Zonotopes, and a finite checking result for LRC]{Projections of Lonely Runner Zonotopes, and a finite checking result for LRC}
\label{sec:LRC}

In this section, we prove Theorem~\ref{thm:main-points} (and thus Theorem \ref{thm:main}).
First, we prove an independent result that will allow us to apply induction on the dimension.

\subsection{Projections of Lonely Runner Zonotopes}
\label{sec:projections-lrz}

Let $Z_{\bs v}\subseteq \RR^{n-1}$ be an LRZ, with generators $\bs u_1, \dots, \bs u_n$
and center $\bs x_{\bs v}:=\frac{1}{2}\sum_{i=1}^n\bs u_i$. Consider a linear projection
\begin{align*}
T: \ \RR^{n-1} &\to \RR^{n-2} \\
\bs u_i &\mapsto \bs y_i
\end{align*}
satisfying $T(\ZZ^{n-1}) = \ZZ^{n-2}$.
Let $Z=T(Z_{\bs v})$, which is itself a lattice zonotope,  
\[
Z = \sum_{i=1}^n\ent{\bs 0,\bs y_i},
\]
with its center given by $\bs y_{\bs v}:=T(\bs x_{\bs v}) = \frac12\left(\bs y_1 + \dotsc + \bs y_n\right)$.

What we want to show is that

\begin{thm}
\label{thm:LRZinside}
The zonotope $Z$ contains a lattice translation of a lonely runner zonotope $Z' \subseteq \RR^{n-2}$ with center $\bs z\equiv\bs y_{\bs v}\bmod\ZZ^{n-2}$.
\end{thm}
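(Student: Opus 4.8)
The plan is to produce $Z'$ by \emph{merging} two of the generators of $Z$. For indices $i \neq j$ put
\[
Z'_{ij} := \sum_{l \in [n]\setminus\{i,j\}} \ent{\bs 0, \bs y_l} \;+\; \ent{\bs 0, \bs y_i + \bs y_j} \ \subseteq\ \RR^{n-2}.
\]
For \emph{any} choice of $i,j$ this is a lattice zonotope with $n-1$ generators, it satisfies $Z'_{ij} \subseteq Z$ (because $\ent{\bs 0, \bs y_i + \bs y_j}$ is a diagonal of the parallelogram $\ent{\bs 0, \bs y_i} + \ent{\bs 0, \bs y_j}$, and Minkowski summation is monotone), and its center is $\tfrac12(\sum_{l\neq i,j}\bs y_l + \bs y_i + \bs y_j) = \tfrac12\sum_{l=1}^{n}\bs y_l = \bs y_{\bs v}$ --- exactly, so the center congruence in the statement holds with the trivial lattice translation. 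Everything therefore reduces to choosing $i,j$ so that the generators of $Z'_{ij}$ are in linear general position, i.e.\ so that $Z'_{ij}$ is a bona fide lonely runner zonotope in $\RR^{n-2}$.

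To decide which merges work I would pass to a Gale dual of $\bs y_1,\dots,\bs y_n$. Writing $\ker T = \RR\bs w$ and $\bs w = \sum_i b_i\bs u_i$ (possible since the $\bs u_i$ span $\RR^{n-1}$), a short computation shows that the space of linear dependences among the $\bs y_i$ equals $D = \lin\{\bs v, \bs b\}$, which is two-dimensional because $\bs b \not\parallel \bs v$ (otherwise $\bs w=0$). Put $\bs g_i := (v_i, b_i) \in \RR^2$. Expanding $2\times 2$ determinants and distinguishing the $(n-2)$-subsets of the generators of $Z'_{ij}$ that omit $\bs y_i+\bs y_j$ from those that omit some $\bs y_l$ with $l\neq i,j$, one obtains the clean criterion
\[
Z'_{ij}\ \text{is a lonely runner zonotope} \iff \bs g_i - \bs g_j \ \text{is parallel to no}\ \bs g_k,\ k\in[n];
\]
the cases $k=i,j$ on the right-hand side encode exactly that $\{\bs y_l : l\neq i,j\}$ is a basis of $\RR^{n-2}$.

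It remains to exhibit such a pair, and here the hypotheses of the (unshifted) LRC are what save the day: since the velocities $v_i$ are positive, all the $\bs g_i$ lie in the open right half-plane, and they are not all parallel. I would then pick $\bs g_i,\bs g_j$ realizing the smallest and the largest polar angle among $\bs g_1,\dots,\bs g_n$ (all these angles lie in $(-\tfrac\pi2,\tfrac\pi2)$ and are not all equal). Since $\bs g_i-\bs g_j$ is a positive combination of $\bs g_i$ and $-\bs g_j$, its direction lies in the open arc of $\RR/\pi\ZZ$ complementary to the closed arc spanned by the directions of all the $\bs g_k$; hence $\bs g_i-\bs g_j$ is parallel to none of them, and $Z' := Z'_{ij}$ does the job. (When some $\bs y_i$ vanishes this prescription degenerates to $Z'=Z$, so no separate case is needed.)

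The step I expect to be the genuine obstacle is the middle one: it is not obvious a priori that \emph{any} pair of generators can be merged without spoiling linear general position. The way around it --- translating the question to the Gale dual and then using that positivity of the velocities confines the $\bs g_i$ to a half-plane --- is exactly the ingredient that has no analogue for the shifted conjecture, where merging generators can instead force two velocities to coincide; this is where, later, the Lonely Vector Problem will have to enter.
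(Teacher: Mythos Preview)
Your proposal is correct and is essentially the paper's own argument: the paper introduces the same planar vectors $\bs p_i=(\rho_i,v_i)$ (your $\bs g_i$ with coordinates swapped), proves the identical criterion that the sum-merged zonotope is an LRZ iff $\bs p_i-\bs p_j$ is parallel to no $\bs p_k$ (its Proposition~3.3), and then selects the indices of maximal and minimal ratio $\rho_i/v_i$ --- exactly your extremal polar angles --- to finish. The only cosmetic difference is that the paper phrases the last step as a slope inequality rather than an arc argument, and it also catalogues the ``drop one generator'' and ``difference-merge'' sub-zonotopes because those resurface in the shifted version; for the present theorem only the sum-merge is used, just as you do.
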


For the proof, let $\bs w$ be a \emphd{primitive} vector in the kernel of $T$, which means that its coordinates have no non-trivial common factor.
We need to know how the volumes of parallelepipeds spanned by $n-2$ vectors among $\bs y_1,\dotsc,\bs y_n$ compare to those spanned by
$n-2$ vectors among $\bs w,\bs u_1,\dotsc,\bs u_n$.
To this end, let $\bs u_1',\dotsc,\bs u_{n}'$ 
denote a relabelling of the $n$ vectors  $\bs u_1,\dotsc,\bs u_n$ and let $\bs y_1',\dotsc,\bs y_{n}'$ be the corresponding relabelling of $\bs y_1,\dotsc,\bs y_n$.

\begin{lemma}
\label{lemma:project_volumes}
\label{eq:volumes}
    \begin{align}
        \vol\bra{[\bs 0,\bs w]+\sum_{i=1}^{n-2}[\bs 0,\bs u'_i]} &= \abs{\det(\bs w,\bs u'_1,\dotsc,\bs u'_{n-2})} 
        = \abs{\det(\bs y'_1,\dotsc,\bs y'_{n-2})}.
    \end{align}
\end{lemma}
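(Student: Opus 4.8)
The first of the two displayed equalities is just the classical formula for the Euclidean volume of a parallelepiped: the zonotope $\ent{\bs 0,\bs w}+\sum_{i=1}^{n-2}\ent{\bs 0,\bs u'_i}$ is the parallelepiped spanned by the $n-1$ vectors $\bs w,\bs u'_1,\dotsc,\bs u'_{n-2}$ in $\RR^{n-1}$, so its full-dimensional volume equals $\abs{\det(\bs w,\bs u'_1,\dotsc,\bs u'_{n-2})}$, both sides being $0$ in the degenerate case where these vectors are linearly dependent. The real content is therefore the second equality, and the plan is to reduce it to a one-line determinant computation after changing coordinates in $\RR^{n-1}$ in a way adapted to the kernel direction $\bs w$.

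Concretely, I would first use primitivity of $\bs w$ to complete it to a lattice basis of $\ZZ^{n-1}$, which produces a unimodular matrix $M\in\GL_{n-1}(\ZZ)$ with $M\bs w=\bs e_1$. Let $p\colon\RR^{n-1}\to\RR^{n-2}$ be the coordinate projection forgetting the first coordinate. Then $p\circ M$ is a surjection whose kernel is $M^{-1}(\RR\bs e_1)=\RR\bs w=\ker T$ and which maps $\ZZ^{n-1}$ onto $\ZZ^{n-2}$, exactly as $T$ does; hence $T=A\circ p\circ M$ for a unique $A\in\GL_{n-2}(\RR)$. Comparing images of $\ZZ^{n-1}$ and using the hypothesis $T(\ZZ^{n-1})=\ZZ^{n-2}$ forces $A(\ZZ^{n-2})=\ZZ^{n-2}$, that is, $A\in\GL_{n-2}(\ZZ)$, so $\abs{\det A}=1$.

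With this in hand the second equality is immediate. Left-multiplying the matrix with columns $\bs w,\bs u'_1,\dotsc,\bs u'_{n-2}$ by $M$ preserves the absolute value of the determinant and turns the first column into $\bs e_1$, so Laplace expansion along that column gives
\[
\abs{\det(\bs w,\bs u'_1,\dotsc,\bs u'_{n-2})}=\abs{\det\bigl(p(M\bs u'_1),\dotsc,p(M\bs u'_{n-2})\bigr)} .
\]
Since $p\circ M=A^{-1}\circ T$ and $T(\bs u'_i)=\bs y'_i$, the right-hand side equals $\abs{\det A^{-1}}\cdot\abs{\det(\bs y'_1,\dotsc,\bs y'_{n-2})}=\abs{\det(\bs y'_1,\dotsc,\bs y'_{n-2})}$, as claimed. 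The linearly dependent case is handled separately and easily: a non-trivial relation among $\bs w,\bs u'_1,\dotsc,\bs u'_{n-2}$ must involve at least one $\bs u'_i$ because $\bs w\neq\bs 0$, and applying $T$ turns it into a non-trivial relation among $\bs y'_1,\dotsc,\bs y'_{n-2}$ (and conversely), so both determinants vanish.

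I do not expect a genuine obstacle here. The only point needing care is the identification $T=A\circ p\circ M$ with $A$ unimodular, which is precisely where both hypotheses on $T$ are used: primitivity of $\bs w$ (so that $M$ exists) and $T(\ZZ^{n-1})=\ZZ^{n-2}$ (so that $A$ is integral, not merely rational). Equivalently, and perhaps more conceptually, one could run the argument in terms of indices of sublattices: $\abs{\det(\bs w,\bs u'_1,\dotsc,\bs u'_{n-2})}$ is the index of $\ZZ\bs w+\sum_i\ZZ\bs u'_i$ in $\ZZ^{n-1}$, and, since the two hypotheses guarantee that $T$ induces an isomorphism $\ZZ^{n-1}/\ZZ\bs w\cong\ZZ^{n-2}$, this index equals that of $\sum_i\ZZ\bs y'_i$ in $\ZZ^{n-2}$, namely $\abs{\det(\bs y'_1,\dotsc,\bs y'_{n-2})}$.
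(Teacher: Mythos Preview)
Your proof is correct and follows essentially the same approach as the paper: perform a unimodular change of coordinates sending the primitive vector $\bs w$ to a standard basis vector, after which the second equality reduces to a Laplace expansion. The paper compresses all of this into a single sentence (``there is no loss of generality in assuming $\bs w=\bs e_{n-1}$, in which case the equality of the determinants is clear''), whereas you spell out explicitly why $T$ differs from the coordinate projection only by a unimodular factor $A$; this is exactly the detail the paper leaves to the reader.
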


\begin{proof}
By a unimodular transformation  sending the primitive vector $\bs w$ to the last coordinate vector $\bs e_{n-1}$, there is no loss of generality in assuming the $\bs w = \bs e_{n-1}$. In this case the equality of the determinants is clear.
\end{proof}

Since $Z$ is a zonotope with $n$ generators $\bs y_1,\dotsc,\bs y_{n}$ and dimension $n-2$, 
there are the following $n^2$ natural choices of zonotopes $Z'$ in Theorem~\ref{thm:LRZinside}.
First, we consider the~$n$ zonotopes generated by all but one of the generators of~$Z$.
That is, taking~$Z'$ to be
\begin{equation}\label{eq:TypeI}
\bs y'_n + \sum_{i=1}^{n-1}[\bs0,\bs y'_i].
\end{equation}
The other $n(n-1)$ options for~$Z'$ correspond to combining two of the generators of~$Z$ into a single one, and taking the rest of the generators individually, that is, taking~$Z'$ to be of the form
\begin{equation}\label{eq:typeIIa}
 \sum_{i=1}^{n-2}[\bs0,\bs y'_i]+[\bs0,\bs y'_{n-1}+\bs y'_n]
\end{equation}
or
\begin{equation}\label{eq:typeIIb}
 \bs y'_n+\sum_{i=1}^{n-2}[\bs0,\bs y'_i]+[\bs0,\bs y'_{n-1}-\bs y'_n].
\end{equation}
Each of these $n^2$ zonotopes has $n-1$ generators and is contained in $Z$. The only missing property in order for one of them to be an LRZ (respectively, an sLRZ, which we need in Section~\ref{sec:sLRC}) is that the corresponding volume vector contains no zero entry (respectively, contains no zero entry and no two equal or opposite entries).

Some zonotope of type~\eqref{eq:TypeI} is an LRZ if and only if the volumes of the parallelepipeds spanned by~$\bs w$ and some $n-2$ vectors among the $\bs u'_i$ are nonzero; if, in addition, such volumes are all distinct, then
it is even an sLRZ.
From now on, we will express $\bs w \in \ZZ^{n-1}$ in terms of the $\bs u_i$, namely,
\begin{equation}\label{eq:wcoords}
 \bs w=\sum_{i=1}^n\rho_i\bs u_i,
\end{equation}
where $\rho_i\in\QQ$, $1\leq i\leq n$. 
Observe that the vector $(\rho_1,\dots,\rho_n)$ is not unique, but any choice will work in what follows.
Below, the numbers $\rho_i'$ and $v_j'$ correspond to the induced relabelling of the coefficients~$\rho_i$ in~\eqref{eq:wcoords} and the velocities~$v_j$ in~\eqref{eq:lindepui}.

\begin{prop}\label{prop:TypeI}
Suppose $P=\sum_{i=1}^{n-1}[\bs0,\bs u'_i]$, that is, $P$ is the parallelepiped whose projection is~\eqref{eq:TypeI} (translated by a lattice vector).
Then, the volume of the parallelepiped spanned by $\bs w$ and all the vectors~$\bs u'_i$ except from~$\bs u'_j$ equals the absolute value of
\begin{equation*}
{\begin{vmatrix}
 \rho'_j & \rho'_n\\
 v'_j & v'_n
\end{vmatrix}}, \;\;\;
1\leq j\leq n-1.
\end{equation*}
\end{prop}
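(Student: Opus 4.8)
The strategy is to write the volume in question directly as the absolute value of an $(n-1)\times(n-1)$ determinant and then to collapse that determinant using the two linear relations available among the generators: the defining velocity relation~\eqref{eq:lindepui} and the expression~\eqref{eq:wcoords} of the kernel vector $\bs w$ in the basis $\bs u'_1,\dots,\bs u'_{n-1}$.

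First I would note that the parallelepiped spanned by $\bs w$ together with the $n-2$ vectors $\{\bs u'_i : i\in[n-1],\ i\neq j\}$ lies in $\RR^{n-1}$, so its volume is the absolute value of the determinant of these $n-1$ spanning vectors (this is precisely the mechanism behind Lemma~\ref{lemma:project_volumes}); that is, it equals $\abs{D_j}$ with
\[
D_j := \det\bigl(\bs w,\ \bs u'_1,\dots,\widehat{\bs u'_j},\dots,\bs u'_{n-1}\bigr),
\]
the hat meaning omission. By linear general position $\bs u'_1,\dots,\bs u'_{n-1}$ is a basis of $\RR^{n-1}$, so $\Delta:=\det(\bs u'_1,\dots,\bs u'_{n-1})\neq 0$, and $v'_n\neq 0$ because the velocities are non-zero.

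The computation then proceeds in two substitutions. Substituting $\bs w=\sum_{i=1}^n\rho'_i\bs u'_i$ into $D_j$ and expanding by multilinearity annihilates every term with $i\in[n-1]\setminus\{j\}$ (a column is repeated), leaving only $i=j$ and $i=n$. Reordering columns gives $\det(\bs u'_j,\bs u'_1,\dots,\widehat{\bs u'_j},\dots,\bs u'_{n-1})=(-1)^{j-1}\Delta$. For the $i=n$ term I would feed in $\bs u'_n=-\tfrac1{v'_n}\sum_{i=1}^{n-1}v'_i\bs u'_i$ from~\eqref{eq:lindepui}; once more only the summand $i=j$ survives, producing $-\tfrac{v'_j}{v'_n}(-1)^{j-1}\Delta$. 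Collecting the two contributions yields
\[
D_j=(-1)^{j-1}\frac{\Delta}{v'_n}\,\bigl(\rho'_j v'_n-\rho'_n v'_j\bigr).
\]
Finally, since the velocity vector of $Z_{\bs v}$ coincides with its volume vector, $v'_n=\abs{\det(\bs u'_1,\dots,\bs u'_{n-1})}=\abs{\Delta}$, so that $\abs{D_j}=\abs{\rho'_j v'_n-\rho'_n v'_j}$, which is the absolute value of the asserted $2\times 2$ determinant.

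I do not anticipate a genuine obstacle: the whole argument is a one-line multilinear-algebra manipulation, and the only delicate point — the signs $(-1)^{j-1}$ and the relative sign of $\Delta$ and $v'_n$ — washes out once one passes to absolute values. The only structural ingredient is to exploit both linear relations among the $\bs u'_i$ simultaneously; everything else is bookkeeping.
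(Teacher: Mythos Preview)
Your proof is correct and follows essentially the same approach as the paper: both compute $\abs{\det(\bs w,\bs u'_1,\dots,\widehat{\bs u'_j},\dots,\bs u'_{n-1})}$ by using the relations~\eqref{eq:lindepui} and~\eqref{eq:wcoords} together with multilinearity, and both finish by invoking $v'_n=\abs{\det(\bs u'_1,\dots,\bs u'_{n-1})}$. The only cosmetic difference is that the paper eliminates the $\bs u'_n$-contribution in a single stroke by subtracting the zero vector $\tfrac{\rho'_n}{v'_n}\sum_i v'_i\bs u'_i$ from $\bs w$ before expanding, whereas you expand $\bs w$ first and then substitute for $\bs u'_n$; the content is identical.
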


\begin{proof}
 Using \eqref{eq:lindepui} and~\eqref{eq:wcoords}, the volume spanned by $\bs w$ and all $\bs u'_i$ except from $\bs u'_j$, is
 \begin{align*}
  &\abs{\det(\bs w,\bs u'_1,\dotsc,\bs u'_{j-1},\bs u'_{j+1},\dotsc,\bs u'_{n-1})} \\
  &\hspace{1cm}= \abs{\det(\bs w-\tfrac{\rho'_n}{v'_n}\sum_{i=1}^nv'_i\bs u'_i,\bs u'_1,\dotsc,\bs u'_{j-1},\bs u'_{j+1},\dotsc,\bs u'_{n-1})}\\
  &\hspace{1cm}= \abs{\det((\rho'_j-\tfrac{\rho'_n}{v'_n}v'_j)\bs u'_j,\bs u'_1,\dotsc,\bs u'_{j-1},\bs u'_{j+1},\dotsc,\bs u'_{n-1})}\\
  &\hspace{1cm}= \abs{\rho'_jv'_n-\rho'_nv'_j},
 \end{align*}
as claimed.
\end{proof}

\begin{prop}\label{prop:TypeII}
Suppose $P=\sum_{i=1}^{n-2}[\bs0,\bs u'_i]+[\bs0,\bs u'_{n-1}\pm\bs u'_n]$, that is, $P$ is the parallelepiped whose projection is~\eqref{eq:typeIIa} or~\eqref{eq:typeIIb} (translated by a lattice vector). 
Then, the volume of the parallelepiped spanned by $\bs w$ and $n-2$ vectors among the generators of $P$ equals the absolute value of
\begin{equation*}
{\begin{vmatrix}
 \rho'_j & \rho'_{n-1}\mp\rho'_n\\
 v'_j & v'_{n-1}\mp v'_n
\end{vmatrix}}, \;\;\;
1\leq j\leq n-1.
\end{equation*}
\end{prop}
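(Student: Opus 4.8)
The plan is to carry out, for the parallelepiped $P$ with generators $\bs u'_1,\dots,\bs u'_{n-2}$ and $\bs g:=\bs u'_{n-1}\pm\bs u'_n$, the same kind of determinant computation as in the proof of Proposition~\ref{prop:TypeI}. By Lemma~\ref{lemma:project_volumes}, each of the $n-1$ volumes to be computed is the absolute value of the $(n-1)\times(n-1)$ determinant whose columns are $\bs w$ together with $n-2$ of the generators of $P$. There are two cases: either one deletes some $\bs u'_j$ with $1\le j\le n-2$ (this will match the index $j$ in the statement), or one deletes $\bs g$ (this will match $j=n-1$). In the latter case the determinant is $\det(\bs w,\bs u'_1,\dots,\bs u'_{n-2})$, which is literally the determinant evaluated in Proposition~\ref{prop:TypeI} for its index $j=n-1$; its absolute value is thus $|\rho'_{n-1}v'_n-\rho'_nv'_{n-1}|$, and expanding the $2\times 2$ determinant in the statement for $j=n-1$ gives the same value, so this case is immediate.

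For the first case, fix $1\le j\le n-2$ and expand the determinant whose columns are $\bs w$, the vectors $\bs u'_i$ with $i\le n-2$, $i\ne j$, and $\bs u'_{n-1}\pm\bs u'_n$ by multilinearity in the last column, writing it as $D_1\pm D_2$, where $D_1$ has last column $\bs u'_{n-1}$ and $D_2$ has last column $\bs u'_n$. Now $D_1$ is exactly the determinant appearing in Proposition~\ref{prop:TypeI}, so $D_1=\pm(\rho'_jv'_n-\rho'_nv'_j)$. For $D_2$ the identical manipulation works with the roles of $\bs u'_{n-1}$ and $\bs u'_n$ interchanged: subtracting $\tfrac{\rho'_{n-1}}{v'_{n-1}}\sum_i v'_i\bs u'_i$ --- the zero vector, by~\eqref{eq:lindepui} --- from $\bs w$ annihilates its $\bs u'_{n-1}$-coordinate, and since $\bs u'_n$ and all $\bs u'_i$ with $i\le n-2$, $i\ne j$ already occur as columns, what remains is a scalar multiple of $\bs u'_j$; this gives $D_2=\pm(\rho'_jv'_{n-1}-\rho'_{n-1}v'_j)$. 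Adding or subtracting $D_1$ and $D_2$ and factoring $\rho'_j$ and $v'_j$ out of the result produces, up to sign, the determinant $\begin{vmatrix}\rho'_j & \rho'_{n-1}\mp\rho'_n\\ v'_j & v'_{n-1}\mp v'_n\end{vmatrix}$, where the inner sign is opposite to the one occurring in $\bs g$ --- which is the claimed formula.

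The one delicate point, and the \emph{main obstacle}, is that --- unlike in Proposition~\ref{prop:TypeI}, where everything hides inside a single absolute value --- here we combine two determinants $D_1$ and $D_2$, so their relative sign matters. Concretely, one must compare the two maximal minors of $(\bs u'_1\mid\dots\mid\bs u'_n)$ obtained by deleting column $n$ and column $n-1$. Writing $m_k$ for the minor obtained by deleting column $k$, the Laplace expansion yields the identity $\sum_k(-1)^{k+1}m_k\bs u'_k=\bs 0$; comparing it with the linear dependence $\sum_k v'_k\bs u'_k=\bs 0$ --- whose kernel is one-dimensional, and in which all $v'_k>0$ while the hypothesis $\gcd(v_1,\dots,v_n)=1$ forces $v'_k=|m_k|$ --- one obtains $\sgn(m_k)=\varepsilon\,(-1)^{k+1}$ for one global sign $\varepsilon$. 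In particular the two relevant minors have opposite sign, and feeding this into the previous paragraph makes the two cases $\bs g=\bs u'_{n-1}+\bs u'_n$ and $\bs g=\bs u'_{n-1}-\bs u'_n$ collapse, after an elementary rearrangement, to the single $2\times 2$ determinant in the statement. (An alternative that sidesteps the split is to reduce $\bs w$, modulo the hyperplane spanned by the kept columns, to a rational multiple of the unique missing generator; this produces the $2\times 2$ determinant in one stroke, but it divides by $v'_{n-1}\mp v'_n$, so one must then dispose of the degenerate case $v'_{n-1}=v'_n$ --- vacuous for sLRZ but not for a general LRZ --- on its own, where $\bs u'_j$ becomes dependent on the kept columns and the volume reduces to $|\rho'_{n-1}\mp\rho'_n|\,v'_j$, still agreeing with the formula.)
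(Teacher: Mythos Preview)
Your argument is correct, but it takes a more roundabout path than the paper's. You split the last column $\bs u'_{n-1}\pm\bs u'_n$ by multilinearity into $D_1\pm D_2$, then invoke the alternating-sign pattern of the maximal minors of $(\bs u'_1|\cdots|\bs u'_n)$ to reconcile the signs; this works, and your sign analysis (kernel one-dimensional, $v'_k=|m_k|$ via the lattice-spanning hypothesis) is valid in context. The paper instead subtracts $\tfrac{\rho'_j}{v'_j}\sum_i v'_i\bs u'_i$ from $\bs w$ --- not $\tfrac{\rho'_n}{v'_n}$ or $\tfrac{\rho'_{n-1}}{v'_{n-1}}$ as in Proposition~\ref{prop:TypeI} --- so that the $\bs u'_j$-component of $\bs w$ vanishes. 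Modulo the remaining columns $\bs u'_i$ ($i\le n-2$, $i\ne j$), the first column then lies in the plane $\operatorname{span}(\bs u'_{n-1},\bs u'_n)$, and the whole determinant factors as a $2\times2$ minor in that plane times $\det(\bs u'_1,\dots,\widehat{\bs u'_j},\dots,\bs u'_n)$, all inside a single absolute value. This sidesteps the sign issue entirely and never divides by $v'_{n-1}\mp v'_n$, so there is no degenerate case to treat separately. Your ``alternative'' (reducing $\bs w$ to a multiple of the missing generator $\bs u'_j$) is close in spirit but goes one step further than the paper, which is why it picks up the division by $v'_{n-1}\mp v'_n$; stopping at the $2\times2$ stage, as the paper does, is the cleaner choice.
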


\begin{proof}
 Using \eqref{eq:lindepui} and~\eqref{eq:wcoords}, the volume spanned by $\bs w$ and all generators except from $\bs u'_j$, $1\leq j\leq n-2$, is given by
 \begin{align*}
 & \abs{\det(\bs w,\bs u'_1,\dotsc,\bs u'_{j-1},\bs u'_{j+1},\dotsc,\bs u'_{n-2},\bs u'_{n-1}\pm\bs u'_n)}\\ 
 &= \abs{\det(\bs w-\tfrac{\rho'_j}{v'_j}\sum_{i=1}^nv'_i\bs u'_i,\bs u'_1,\dotsc,\bs u'_{j-1},\bs u'_{j+1},\dotsc,\bs u'_{n-2},\bs u'_{n-1}\pm\bs u'_n)}\\
  &= \abs{\det((\rho'_{n-1}-\tfrac{\rho'_j}{v'_j}v'_{n-1})\bs u'_{n-1}+(\rho'_n-\tfrac{\rho'_j}{v'_j}v'_n)\bs u'_n,\bs u'_1,\dotsc,\bs u'_{j-1},\bs u'_{j+1},\dotsc,\bs u'_{n-2},\bs u'_{n-1}\pm\bs u'_n)}\\
  &= \abs{\begin{vmatrix}
               \rho'_{n-1}-\tfrac{\rho'_j}{v'_j}v'_{n-1} & 1\\
               \rho'_n-\tfrac{\rho'_j}{v'_j}v'_n & \pm 1\\
              \end{vmatrix}
   \det(\bs u'_1,\dotsc,\bs u'_{j-1},\bs u'_{j+1},\dotsc,\bs u'_n)           
}\\
 &= \abs{\begin{vmatrix}
               \rho'_{n-1}v'_j-{\rho'_j}{}v'_{n-1} & 1\\
               \rho'_nv'_j-{\rho'_j}{}v'_n & \pm 1\\
              \end{vmatrix}
}\\
&= \abs{\begin{vmatrix}
         \rho'_j & \rho'_{n-1}\mp\rho'_n\\
         v'_j & v'_{n-1}\mp v'_n
        \end{vmatrix}
}.
\end{align*}
The volume spanned by $\bs w$ and $\bs u'_1,\dotsc,\bs u'_{n-2}$ is, by Proposition~\ref{prop:TypeI},
 \begin{align*}
  \abs{\det(\bs w,\bs u'_1,\dotsc,\bs u'_{n-2})} 
  &= \abs{\begin{vmatrix}
 \rho'_{n-1} & \rho'_n\\
 v'_{n-1} &  v'_n
\end{vmatrix}}
  = \abs{\begin{vmatrix}
 \rho'_{n-1} & \rho'_{n-1}\mp\rho'_n\\
 v'_{n-1} & v'_{n-1}\mp v'_n
\end{vmatrix}}.
\qedhere
 \end{align*}
\end{proof}

\begin{rem}
Calling $\bs p_i =(\rho_i, v_i)$ and $\bs e_1=(1,0)$ we have that the configurations 
\[
\{\bs u_1, \dots, \bs u_n, \bs w\} \subseteq \RR^{n-1}
\qquad{ \text{and} }\qquad
\{\bs p_1, \dots, \bs p_n, -\bs e_1\} \subseteq \RR^{2}
\]
are Gale dual to one another. Indeed, as explained in Remark~\ref{rem:gale}, this amounts to saying that the row-spaces of
$
\begin{pmatrix}
 \bs u_1 &\dots & \bs u_n & \bs w
\end{pmatrix}
$
and
$
\begin{pmatrix}
  \rho_1 &\dots &  \rho_n & - 1\\
  v_1 &\dots &  v_n & 0\\
\end{pmatrix}
$
are orthogonal complements of one another. That they are orthogonal follows from Eqs.~\eqref{eq:lindepui} and~\eqref{eq:wcoords}, and complementarity from adding up the ranks of the matrices, $n-1$ and~$2$ respectively. 

Now, Gale duality implies that each maximal minor in one of the matrices equals  the complementary minor in the other (modulo a multiplicative constant, the same for all minors, and a sign depending on the indices of columns used in each minor). This provides an alternative proof of Proposition~\ref{prop:TypeI}: the minor obtained in the first matrix forgetting $\bs u_i$ and $\bs u_j$ equals (modulo sign; the constant happens to be $\pm1$ for our particular ``choice of Gale transform'') the determinant of $\bs p_i$ and $\bs p_j$.
\end{rem}

\begin{proof}[Proof of Theorem~\ref{thm:LRZinside}]
It suffices to show that a zonotope of type~\eqref{eq:typeIIa} or~\eqref{eq:typeIIb} is a lonely runner zonotope. 
The center $\bs z$ of such a zonotope, obviously satisfies the desired condition. Consider now the vectors $\bs p_i=(\rho_i,v_i)\in\QQ\times\ZZ$, $1\leq i\leq n$.
 By Proposition~\ref{prop:TypeII} and Eq.~\eqref{eq:volumes}, it suffices to show that there are two of them, say~$\bs p_i$ and~$\bs p_j$, such that $\bs p_i\pm\bs p_j$, with an appropriate choice of sign,
 is not parallel to any vector $\bs p_k$, $1\leq k\leq n$.
 
 Without loss of generality, assume that 
 \[
 \frac{\rho_{n-1}}{v_{n-1}}=\max\set{\frac{\rho_i}{v_i}:1\leq i\leq n}
 \quad\textrm{ and }\quad
 \frac{\rho_n}{v_n}=\min\set{\frac{\rho_i}{v_i}:1\leq i\leq n}.
 \]
 We note that not all $\bs p_k$ are parallel, otherwise $\bs w$ would be the zero vector.
 Hence, the inequality $\frac{\rho_{n-1}}{v_{n-1}} > \frac{\rho_n}{v_n}$ is indeed strict.

 If $v_{n-1}=v_n$, then $\bs p_{n-1}-\bs p_n=(\rho_{n-1}-\rho_n,0)$ is obviously not parallel to any of the~$\bs p_i$, as their $y$-coordinates are nonzero.
 If $v_{n-1}>v_n$, then $\bs p_{n-1}-\bs p_n$ is on the upper half plane, as all~$\bs p_i$ are, and it holds
 \[
 \frac{\rho_{n-1}-\rho_n}{v_{n-1}-v_n}>\frac{\rho_{n-1}}{v_{n-1}},
 \]
 whence by definition of~$\bs p_{n-1}$ it follows that $\bs p_{n-1}-\bs p_n$ is not parallel to any $\bs p_k$, $1\leq k\leq n$.
 A similar argument with $\bs p_n - \bs p_{n-1}$ applies if $v_{n-1}<v_n$.
\end{proof}


\subsection{Proof of Theorem~\ref{thm:main-points}}
\label{finitenessLRC-points}
To have a more symmetric description, from a given LRZ $Z_{\bs v} \subseteq \RR^{n-1}$ with center $\bs x_{\bs v}$, we define the $\bs 0$-symmetric scaled down zonotope
\[
K_{\bs v}=\frac{n-1}{n+1}(Z_{\bs v}-\bs x_{\bs v}).
\]
Conjecture~\ref{lrcgeom} is then equivalent to the statement that
\begin{equation}
K_{\bs v}\cap(\bs x_{\bs v}+\ZZ^{n-1})\neq\vn.
\end{equation}
Observe that, by Lemma~\ref{lemma:minkowski-first-points} plus the hypothesis that $Z_{\bs v}$ has more than $\binom{n+1}{2}^{n-1}$ lattice points, we have that
\begin{align}
\lambda_1(K_{\bs v}) = 2 \lambda_1(K_{\bs v}-K_{\bs v}) = 
\frac{2(n+1)}{n-1} \lambda_1(Z_{\bs v}-Z_{\bs v}) \le 
\frac{4}{(n-1)n}.
\label{eq:lambda-K}
\end{align}
Now, let $\bs w\in\ZZ^{n-1}$ be a vector attaining the first successive minimum of $K_{\bs v}$, that is,
\[
\bs w\in \la_1(K_{\bs v})K_{\bs v}\cap(\ZZ^{n-1}\sm\set{\bs 0}).
\]
The vector~$\bs w$ is necessarily primitive; hence,
if we consider a projection $T:\RR^{n-1} \to \RR^{n-2}$ with $T(\ZZ^{n-1}) = \ZZ^{n-2}$ and with $T(\bs w)=\bs 0$, 
Theorem~\ref{thm:LRZinside} tells us that the image  $Z=T(Z_{\bs v})$ contains an LRZ with the same center as $Z$. 

We keep the notation $\bs y_i:=T(\bs u_i)$, so that 
\[
Z = \sum_{i=1}^n\ent{\bs 0,\bs y_i}
\]
and the center of $Z$ is given by $\bs y_{\bs v}:=T(\bs x_{\bs v}) = \frac12\left(\bs y_1 + \dotsc + \bs y_n\right)$.
Then, the assumption that Conjecture~\ref{lrcgeom} holds in dimension $n-2$ and that $Z$ contains an LRZ imply that 
\[
\frac{n-2}{n}(Z-\bs y_{\bs v})\cap(\bs y_{\bs v}+\ZZ^{n-2})\neq \vn.
\]

Let $\bs b$ be a vector in the above intersection, and let $\bs a\in \frac{n-2}{n}(Z_{\bs v}-\bs x_{\bs v})$ be such that $T(\bs a)=\bs b$.
Since $\bs b - \bs y_{\bs v} \in \ZZ^{n-2} =T(\ZZ^{n-1})$, we have that $\bs a - \bs x_{\bs v} \in T^{-1}(\ZZ^{n-2})$. That, is, the line $\bs a + \RR \bs w=T^{-1}(\bs b)$  contains infinitely many points of $\bs x_{\bs v}+\ZZ^{n-1}$, forming an affine one-dimensional lattice.
Every segment between two consecutive lattice points on this line has lattice length equal to~$1$.
Hence, in order to ensure that the intersection $K_{\bs v}\cap(\bs x_{\bs v}+\ZZ^{n-1})$ is nonempty, it suffices to show:

\begin{lemma}
The lattice length of the segment $K_{\bs v}\cap(\bs a+\RR\bs w)$ is at least $1$.
\end{lemma}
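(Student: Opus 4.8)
The plan is to estimate from below the lattice length of the segment $K_{\bs v}\cap(\bs a+\RR\bs w)$ by comparing it, via the projection $T$, to a segment inside the smaller-dimensional zonotope $Z$, for which we already control everything. First I would observe that the point $\bs a$ lies in $\frac{n-2}{n}(Z_{\bs v}-\bs x_{\bs v})$, which is contained in $K_{\bs v}=\frac{n-1}{n+1}(Z_{\bs v}-\bs x_{\bs v})$ because $\frac{n-2}{n}<\frac{n-1}{n+1}$. So there is some slack: $\bs a$ is comfortably in the interior-scaled copy of $Z_{\bs v}-\bs x_{\bs v}$, and the difference of the two dilation factors,
\[
\frac{n-1}{n+1}-\frac{n-2}{n}=\frac{n(n-1)-(n+1)(n-2)}{n(n+1)}=\frac{2}{n(n+1)},
\]
is exactly the room I have to move away from $\bs a$ along the fiber direction $\bs w$ while staying inside $K_{\bs v}$.

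Next I would make this quantitative. The fiber $K_{\bs v}\cap(\bs a+\RR\bs w)$ contains the subsegment $\{\bs a+t\bs w : |t|\le t_0\}$ for the largest $t_0$ with $\bs a+t_0\bs w\in K_{\bs v}$. Writing $\bs a=\frac{n-2}{n}(\bs a'-\bs x_{\bs v})$ with $\bs a'\in Z_{\bs v}$ and using that $\frac{n+1}{n-1}\bs w\in\lambda_1(K_{\bs v})^{-1}\cdot$-scaled\dots more simply: since $\bs w\in\lambda_1(K_{\bs v})K_{\bs v}$, the vector $\lambda_1(K_{\bs v})^{-1}\bs w$ lies in $K_{\bs v}$, which is $\bs 0$-symmetric; combining the containment $\bs a\in\frac{n-2}{n}(Z_{\bs v}-\bs x_{\bs v})$ with convexity of $K_{\bs v}$ I can travel from $\bs a$ in the $\pm\bs w$ direction a Euclidean amount proportional to the slack $\frac{2}{n(n+1)}$ divided by $\lambda_1(K_{\bs v})$. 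Concretely, for $|t|\le \frac{2}{n(n+1)}\cdot\frac{1}{\lambda_1(K_{\bs v})}$ one has $\bs a+t\bs w\in K_{\bs v}$, because $\bs a+t\bs w$ is a convex combination of the point $\frac{n-2}{n}(\bs a'-\bs x_{\bs v})\in\frac{n-2}{n}(Z_{\bs v}-\bs x_{\bs v})$ and a point $\pm\lambda_1(K_{\bs v})^{-1}\bs w\in K_{\bs v}$, with the first coefficient at least $\frac{n-2}{n}/\frac{n-1}{n+1}$ and hence the overall point staying in $\frac{n-1}{n+1}(Z_{\bs v}-\bs x_{\bs v})=K_{\bs v}$. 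Thus the Euclidean half-length of the segment is at least $\frac{2}{n(n+1)}\lambda_1(K_{\bs v})^{-1}\|\bs w\|$, so its total \emph{lattice length} (measured in the one-dimensional lattice $\ZZ\bs w$ on the line, in which $\bs w$ has length one) is at least
\[
2\cdot\frac{2}{n(n+1)}\cdot\frac{1}{\lambda_1(K_{\bs v})}=\frac{4}{n(n+1)\lambda_1(K_{\bs v})}.
\]

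Finally I would invoke the bound~\eqref{eq:lambda-K}, namely $\lambda_1(K_{\bs v})\le\frac{4}{(n-1)n}$, which came from Lemma~\ref{lemma:minkowski-first-points} and the hypothesis $|Z_{\bs v}\cap\ZZ^{n-1}|>\binom{n+1}{2}^{n-1}$. Substituting gives lattice length at least
\[
\frac{4}{n(n+1)}\cdot\frac{(n-1)n}{4}=\frac{n-1}{n+1}<1,
\]
which unfortunately is \emph{less} than $1$ — so a naive bound is not enough, and \textbf{the main obstacle} is exactly sharpening this estimate. The fix I expect to need is that $\bs w$ need not itself attain $\lambda_1(K_{\bs v})$ at the ``worst'' place along the fiber: one should instead bound the lattice length of $K_{\bs v}\cap(\bs a+\RR\bs w)$ directly by $\lambda_1(K_{\bs v}-K_{\bs v})^{-1}$ times the ratio of dilation factors, using that the full chord of $K_{\bs v}$ in direction $\bs w$ through the \emph{center} has lattice length $\ge \lambda_1(K_{\bs v}-K_{\bs v})^{-1}=2/\lambda_1(K_{\bs v})$, and that passing through $\bs a$ instead of the center only shrinks the chord by the factor $1-\frac{n-2}{n}\big/\frac{n-1}{n+1}=\frac{2}{n-1}\cdot\frac{n+1}{n}\cdot\frac12$\dots Redoing the arithmetic carefully with the correct $\lambda_1(K_{\bs v}-K_{\bs v})=\frac12\lambda_1(K_{\bs v})$ and the sharp chord-through-$\bs a$ estimate should land the lattice length at exactly $1$ or just above, and pinning down that this final inequality is non-strict-enough (or using that $K_{\bs v}$ is a zonotope, whose central chords are genuinely long) is where the real work lies. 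The combinatorial input $\binom{n+1}{2}=\frac{(n+1)n}{2}$ is evidently calibrated so that this comes out tight, so I would trust the arithmetic to close once the correct chord estimate is in place.
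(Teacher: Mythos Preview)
Your approach is the paper's approach: both use the convexity of $K_{\bs v}$ together with $\bs a\in\frac{n-2}{n}(Z_{\bs v}-\bs x_{\bs v})$ and $\pm\lambda_1^{-1}\bs w\in K_{\bs v}$ to bound the chord through~$\bs a$. The only problem is an arithmetic slip, and fixing it removes the obstacle you flagged.

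The error is in translating the ``slack'' $\frac{n-1}{n+1}-\frac{n-2}{n}=\frac{2}{n(n+1)}$ into a distance along~$\bs w$. You implicitly treat $\lambda_1^{-1}\bs w$ as a point of $Z_{\bs v}-\bs x_{\bs v}$, but it is only a point of $K_{\bs v}=\frac{n-1}{n+1}(Z_{\bs v}-\bs x_{\bs v})$. Working in the common scale of $Z_{\bs v}-\bs x_{\bs v}$: from $t\bs w\in|t|\lambda_1\cdot\frac{n-1}{n+1}(Z_{\bs v}-\bs x_{\bs v})$ and $\bs a\in\frac{n-2}{n}(Z_{\bs v}-\bs x_{\bs v})$ one gets
\[
\bs a+t\bs w\ \in\ \Bigl(\tfrac{n-2}{n}+|t|\lambda_1\tfrac{n-1}{n+1}\Bigr)(Z_{\bs v}-\bs x_{\bs v}),
\]
so $\bs a+t\bs w\in K_{\bs v}$ whenever $|t|\lambda_1\frac{n-1}{n+1}\le\frac{2}{n(n+1)}$, i.e.\ $|t|\le\frac{2}{n(n-1)\lambda_1}$, not $\frac{2}{n(n+1)\lambda_1}$. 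The lattice length is therefore at least $\frac{4}{n(n-1)\lambda_1}$, and~\eqref{eq:lambda-K} gives exactly $\ge 1$. (The paper phrases this as the triangle with apex $\bs a':=\frac{n(n-1)}{(n-2)(n+1)}\bs a\in K_{\bs v}$ and base $[-\lambda_1^{-1}\bs w,\lambda_1^{-1}\bs w]$; your ``shrink factor'' $1-\frac{(n-2)(n+1)}{n(n-1)}=\frac{2}{n(n-1)}$ is precisely this computation, and had you multiplied it by the central chord length $2/\lambda_1$ you would have been done.) No deeper idea is needed; the constant $\binom{n+1}{2}$ is indeed calibrated so that the bound comes out exactly~$1$.
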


\begin{proof}
We abbreviate $\lambda_1(K_{\bs v})$ as $\la_1$.
If $\bs a\in\RR\bs w$, then $K_{\bs v}\cap(\bs a+\RR\bs w) = K_{\bs v}\cap \RR\bs w$, which has lattice length $2/\la_1$. Equation~\eqref{eq:lambda-K} shows that this is at least $n(n-1)/2 \ge 1$, since $n\ge 2$.

If, on the contrary, $\bs a\notin\RR\bs w$, then~$\bs w$ and~$\bs a$ generate a two-dimensional linear subspace.
Consider the point
\[
\bs a':=\frac{(n-1)n}{(n-2)(n+1)}\bs a\in\frac{n-1}{n+1}(Z_{\bs v}-\bs x_{\bs v})=K_{\bs v}.
\]
The triangle~$S$  with vertices $\bs a'$, $\frac{1}{\la_1}\bs w$, and $-\frac{1}{\la_1}\bs w$ is contained in $K_{\bs v}$, so we have
\[\ell(K_{\bs v}\cap(\bs a+\RR\bs w))\geq \ell(S\cap(\bs a+\RR\bs w)).\]
Now, 
\[
\bra{\frac{(n-2)(n+1)}{(n-1)n}} +  \bra{\frac{2}{(n-1)n}}= 1,
\]
and
\[
\bra{\frac{(n-2)(n+1)}{(n-1)n}} \bs a'  \pm  \bra{\frac{2}{(n-1)n}} \frac{1}{\la_1}\bs w= \bs a \pm \frac{2}{(n-1)n\la_1}\bs w,
\]
imply that this  intersection $S\cap(\bs a+\RR\bs w)$ is precisely the segment
\[\ent{\bs a-\frac{2}{(n-1)n\la_1}\bs w,\bs a+\frac{2}{(n-1)n\la_1}\bs w},\]
whose lattice length is precisely
\[\frac{4}{(n-1)n\la_1}.\]
This is at least $1$  by Eq.~\eqref{eq:lambda-K}.
\end{proof}


\section{The Lonely Vector Problem,  and a finite checking result for sLRC}
\label{sec:LVP}
\label{sec:sLRC}

\subsection{Proof of Theorem~\ref{thm:mainslrc-points}}
\label{subsec:finiteness-sLRC-points}

In this section, we prove Theorem~\ref{thm:mainslrc-points} (and thus Theorem~\ref{thm:mainslrc}).
We keep the notation from the previous section and, in particular, we consider the set of vectors
\[
\bs P=\set{\bs p_i=(\rho_i,v_i):1\le i\le n} \subseteq \QQ\times\ZZ
\]
from the proof of Theorem~\ref{thm:LRZinside}.
They are not all parallel to each other since that would imply the vectors $(v_1,\dots,v_n)$ and $(\rho_1,\dotsc,\rho_n)$ to be parallel, 
which cannot happen because
\[
\sum_{i=1}^n v_i \bs u_i = \bs 0,
\qquad
\sum_{i=1}^n\rho_i \bs u_i = \bs w \ne \bs 0.
\]
Since we deal with Conjecture~\ref{slrc}, or its equivalent geometric formulation, Conjecture~\ref{slrcgeom}, we may further assume that the~$v_i$ are positive and pairwise distinct. In particular, no two vectors from $\bs P$ are equal or opposite.

Our goal is to prove that  the zonotope $Z_{\bs v}$ contains an sLRZ of type~\eqref{eq:TypeI}, \eqref{eq:typeIIa} or~\eqref{eq:typeIIb}, but we are only able to do this assuming that $\bs P$ has the Lonely Vector Property (LVP) introduced in Definition~\ref{defn:LVP}. 

\goodbreak
\begin{prop}
\label{prop:sLRZinside}
 Suppose that 
 \begin{enumerate}
  \item $Z=\sum_{i=1}^n[\bs0,\bs u_i]\subseteq\RR^{n-1}$ is an sLRZ, and that
  \item the set of vectors $\bs P=\set{\bs p_i=(\rho_i,v_i):1\le i\le n}$ satisfies the Lonely Vector Property, where~$(v_1,\dots,v_n)$ is the volume 
 vector of $Z$  and $\bs w=\sum_{i=1}^n\rho_i\bs u_i\in \ZZ^{n-1}$ is a lattice vector attaining the first successive minimum of~$Z-Z$.
 \end{enumerate}
Then, $T(Z)$ contains an sLRZ of type~\eqref{eq:TypeI}, \eqref{eq:typeIIa} or~\eqref{eq:typeIIb}.
\end{prop}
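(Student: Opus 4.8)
The plan is to set up an explicit correspondence between the $n^2$ vectors of the multiset $S_{\bs P}$ and the $n^2$ candidate zonotopes of types~\eqref{eq:TypeI}, \eqref{eq:typeIIa}, \eqref{eq:typeIIb} contained in $T(Z)$, in such a way that the \emph{lonely} vector provided by the LVP hypothesis corresponds to a zonotope that is forced to be an sLRZ. Throughout we use that the $v_i$ are positive and pairwise distinct, which is exactly what makes $\bs P$ an admissible configuration for the LVP: its members are nonzero rational vectors, no two of them are equal or opposite, and they are not all parallel --- otherwise $\bs w=\sum_i\rho_i\bs u_i$ would be $\bs 0$.

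The first step is to record the dictionary. Combining Lemma~\ref{lemma:project_volumes} with Propositions~\ref{prop:TypeI} and~\ref{prop:TypeII}, one computes, for each candidate zonotope viewed as an $(n-2)$-dimensional zonotope with $n-1$ generators, its volume vector. The type~\eqref{eq:TypeI} zonotope obtained by dropping the generator $\bs y_k$ has volume vector $\bigl(\abs{\det(\bs p_k,\bs p_m)}\bigr)_{m\in[n]\setminus\{k\}}$. The type~\eqref{eq:typeIIa} or~\eqref{eq:typeIIb} zonotope that merges $\bs y_a$ and $\bs y_b$ (with $a<b$) into $\bs y_a\pm\bs y_b$ has volume vector $\bigl(\abs{\det(\bs q,\bs p_m)}\bigr)_{m\in[n]\setminus\{b\}}$, where $\bs q\in\{\bs p_a+\bs p_b,\,\bs p_a-\bs p_b\}$ is the associated sum or difference vector; here the entry coming from dropping the merged generator equals $\abs{\det(\bs p_a,\bs p_b)}=\abs{\det(\bs q,\bs p_a)}$, so it fits the same pattern. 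In short, to every $\bs r\in S_{\bs P}$ we have attached a zonotope $Z_{\bs r}\subseteq T(Z)$ --- the containment being one of the $n^2$ inclusions already noted in Section~\ref{sec:projections-lrz} --- whose volume vector is $\bigl(\abs{\det(\bs r,\bs p_m)}\bigr)_{m\in I_{\bs r}}$, where $I_{\bs r}=[n]\setminus\{k\}$ if $\bs r=\bs p_k$ and $I_{\bs r}=[n]\setminus\{b\}$ if $\bs r=\bs p_a\pm\bs p_b$ with $a<b$.

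The second step is to translate the sLRZ property. A zonotope with $n-1$ generators in $\RR^{n-2}$ is an sLRZ exactly when its volume vector has no zero entry (linear general position) and no two equal entries (distinctness of the velocities). For $Z_{\bs r}$, the first condition fails only if $\bs r$ is parallel to some $\bs p_m$ with $m\in I_{\bs r}$, while the second fails only if $\det(\bs r,\bs p_m)=\pm\det(\bs r,\bs p_{m'})$ for some $m\neq m'$ in $I_{\bs r}$, that is, only if $\bs r$ is parallel to $\bs p_m-\bs p_{m'}$ or to $\bs p_m+\bs p_{m'}$. Crucially, $\bs p_m$ and $\bs p_m\pm\bs p_{m'}$ are themselves elements of $S_{\bs P}$, and a short index check shows that each of them is a multiset element of $S_{\bs P}$ \emph{distinct} from $\bs r$: if $\bs r=\bs p_k$ this is clear, since $m,m'\neq k$ and $\bs r$ lies in the $\bs P$-part of $S_{\bs P}$ whereas $\bs p_m\pm\bs p_{m'}$ lies in its sum/difference part; if $\bs r=\bs p_a\pm\bs p_b$, then $b\notin I_{\bs r}$ forces $\{m,m'\}\neq\{a,b\}$, so $\bs p_m\pm\bs p_{m'}$ has a different index pair than $\bs r$, while $\bs p_m$ again lies in a different part of the multiset. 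Hence, if $\bs r$ is parallel to no other vector of $S_{\bs P}$, neither failure can occur and $Z_{\bs r}$ is an sLRZ.

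It remains to apply the hypothesis: the LVP for $\bs P$ provides a vector $\bs r\in S_{\bs P}$ not parallel to any other vector of $S_{\bs P}$, and by the above the associated $Z_{\bs r}$ is an sLRZ of type~\eqref{eq:TypeI}, \eqref{eq:typeIIa} or~\eqref{eq:typeIIb} contained in $T(Z)$, as required. I expect the only step needing genuine care to be the index bookkeeping just described --- verifying that when $\bs r=\bs p_a\pm\bs p_b$ and $\{m,m'\}$ meets $\{a,b\}$, the vector $\bs p_m\pm\bs p_{m'}$ (respectively $\bs p_m$) is a genuinely different element of the multiset $S_{\bs P}$ rather than a disguised copy of $\bs r$; the rest is an unwinding of Propositions~\ref{prop:TypeI} and~\ref{prop:TypeII} together with the definition of an sLRZ.
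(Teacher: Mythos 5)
Your proposal is correct and follows essentially the same route as the paper: it uses Lemma~\ref{lemma:project_volumes} together with Propositions~\ref{prop:TypeI} and~\ref{prop:TypeII} to read off the volume vectors of the candidate zonotopes as $2\times 2$ determinants against the $\bs p_i$, and then observes that a vanishing or repeated entry would make the LVP-lonely vector parallel to another element of $S_{\bs P}$. The paper phrases this as two explicit cases (lonely vector equal to some $\bs p_i$, giving type~\eqref{eq:TypeI}, or to some $\bs p_a\pm\bs p_b$, giving type~\eqref{eq:typeIIa}/\eqref{eq:typeIIb}) rather than your uniform dictionary, but the argument is the same.
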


\begin{proof}
 We distinguish two cases: assume first that one of the vectors, without loss of generality $\bs p_n$, is not parallel to any nonzero vector of the form $\bs p_k\pm\bs p_\ell$, where $(k,\ell)\neq(n,n)$. Then, the zonotope
 \[Z'=\sum_{i=1}^{n-1}[\bs 0,\bs y_i],\]
 where $T(\bs u_i)=\bs y_i$ is an sLRZ.
 Indeed, if two volumes of parallelepipeds of $Z'$ were equal, then by Lemma~\ref{lemma:project_volumes} and Proposition~\ref{prop:TypeI} we would have
 \[\begin{vmatrix}
    \rho_i & \rho_n\\
    v_i & v_n
   \end{vmatrix}=\pm\begin{vmatrix}
    \rho_j & \rho_n\\
    v_j & v_n
   \end{vmatrix},
\]
for some $1\leq i<j\leq n-1$, or equivalently, $\bs p_n$ would be parallel to $\bs p_i\pm\bs p_j$, contradicting the assumption on $\bs p_n$. If one volume were zero, then $\bs p_n$ would be parallel to some $\bs p_i$,
$1\leq i\leq n-1$, again a contradiction.

For the second case, we assume that $\bs p_{n-1}\pm\bs p_n$ is not parallel to any nonzero vector of the form $\bs p_k\pm\bs p_\ell$, where $(k,\ell)\neq (n-1,n)$. Then, the zonotope
 \[Z'=\sum_{i=1}^{n-2}[\bs 0,\bs y_i]+[\bs0,\bs y_{n-1}\mp\bs y_n],\]
 is an sLRZ.
 Indeed, if two volumes of parallelepipeds of $Z'$ were equal, then by Lemma~\ref{lemma:project_volumes} and Proposition~\ref{prop:TypeII} we would have
 \[
 \begin{vmatrix}
    \rho_i & \rho_{n-1}\pm\rho_n\\
    v_i & v_{n-1}\pm v_n
   \end{vmatrix}=\pm\begin{vmatrix}
    \rho_j & \rho_{n-1}\pm\rho_n\\
    v_j & v_{n-1}\pm v_n
   \end{vmatrix},
\]
for some $1\leq i<j\leq n-1$, or equivalently, $\bs p_{n-1}\pm\bs p_n$ would be parallel to $\bs p_i\pm\bs p_j$, contradicting the assumption on $\bs p_n$. If one volume were zero, then $\bs p_{n-1}\pm\bs p_n$ 
would be parallel to some $\bs p_i$, $1\leq i\leq n-1$, again a contradiction, completing the proof.
\end{proof}

\begin{proof}[Proof of Theorem~\ref{thm:mainslrc-points}]
We suppose that Conjecture~\ref{slrcgeom}  holds for  $n-1$, so in particular it holds for the sLRZ $Z' \subseteq T(Z)$, which exists by Proposition~\ref{prop:sLRZinside}.
On the other hand, by Lemma~\ref{lemma:minkowski-first-points}, $\la_1(Z-Z) \le \frac{2}{n(n+1)}$. Thus, Proposition~\ref{prop:CSS-sum-bound} gives
\begin{align*}
\mu(Z) &\leq \la_1(Z-Z)+\mu(T(Z),\ZZ^{n-2}) \\ 
&\le \ \la_1(Z-Z)+\mu(Z',\ZZ^{n-2})\\
&\leq \frac{2}{n(n+1)}+\frac{n-2}{n} \ = \ \frac{n-1}{n+1}.\qedhere
\end{align*}
\end{proof}

\subsection{Small cases of the LVP}
\label{subsec:LVP-small}

\noindent We now  tackle a couple of special cases for which the LVP holds.
To this end, recall from the introduction that for a point set $\bs P=\set{\bs p_1,\dotsc,\bs p_n}\subseteq \RR^2$ we associate the multiset
\[
S_{\bs P} = \bs P \cup \set{\bs p_i+\bs p_j : 1\leq i<j\leq n} \cup \set{\bs p_i-\bs p_j : 1\leq i<j\leq n}.
\]

\noindent In the first case, we do not require the vectors to be rational.

\begin{prop}\label{prop:allbuttwocollinear}
Suppose that $\bs P=\set{\bs p_1,\dotsc,\bs p_n}$ linearly spans $\RR^2$, contains no two equal or opposite elements and  $\bs p_3,\dotsc,\bs p_n$ are parallel. Then, $\bs P$ has the LVP.
\end{prop}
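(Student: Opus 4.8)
The plan is to exploit the hypothesis that $\bs p_3,\dots,\bs p_n$ all lie on a common line $\ell$ through the origin, while $\bs p_1,\bs p_2$ are needed to span $\RR^2$ (so at least one of them, say $\bs p_1$, is not on $\ell$). I would first dispose of the degenerate possibility $n\le 2$, where $\bs P$ itself has two non-parallel vectors and one of them, being the only vector in its parallel class inside $S_{\bs P}=\bs P\cup\{\bs p_1+\bs p_2,\bs p_1-\bs p_2\}$ once we check the sums, is lonely; in fact for $n=2$ one of $\bs p_1+\bs p_2$, $\bs p_1-\bs p_2$ must be lonely since the only way all four vectors of $S_{\bs P}$ pair up into parallel classes forces an equality or oppositeness among the $\bs p_i$. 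So assume $n\ge 3$.

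The key step is to look for the lonely vector among the differences $\bs p_1-\bs p_j$ and sums $\bs p_1+\bs p_j$ for $3\le j\le n$, i.e. among the vectors obtained by combining the off-line vector $\bs p_1$ with each on-line vector. Write $\bs p_1 = \bs a + \bs b$ where $\bs a$ is the component along $\ell$ and $\bs b\ne\bs 0$ the component in a fixed complementary direction; similarly $\bs p_2=\bs a'+\bs b'$. Each $\bs p_1\pm\bs p_j$ ($j\ge 3$) has the same $\bs b$-component, namely $\bs b$, and its $\ell$-component ranges over the affine line $\{\bs a \pm t_j \bs v : j\ge 3\}$ where $\bs p_j = t_j\bs v$. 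Two such combination vectors $\bs p_1\pm\bs p_j$ and $\bs p_1\pm\bs p_k$ are parallel iff their $\ell$-components are equal (since the $\bs b$-components agree and are nonzero), i.e. iff $\pm t_j = \pm t_k$; excluding the forbidden equalities/oppositenesses among $\bs p_3,\dots,\bs p_n$ (which would mean $t_j=\pm t_k$), all the $2(n-2)$ vectors $\{\bs p_1\pm\bs p_j: 3\le j\le n\}$ are pairwise non-parallel — except we must be careful, $\bs p_1+\bs p_j$ and $\bs p_1-\bs p_j$ themselves are non-parallel as long as $t_j\ne 0$, i.e. $\bs p_j\ne\bs 0$, which holds. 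So within this family each vector is already lonely; what remains is to rule out that one of them is parallel to something else in $S_{\bs P}$: to a vector $\bs p_i$ ($i\ge 1$), to $\bs p_2\pm\bs p_j$, or to $\bs p_1\pm\bs p_2$.

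The main obstacle, then, is a counting/pigeonhole argument showing that at least one of these $2(n-2)$ combination vectors avoids all those collisions. Each vector $\bs p_1\pm\bs p_j$ can be parallel to $\bs p_1$ only for at most one sign and one $j$ (again because the $\ell$-component is determined); it can be parallel to $\bs p_2$ only if $\bs b\parallel \bs b'$, which would force $\bs p_2\in\bs b$-direction or… — here one uses that $\bs p_2$ either lies off $\ell$ too (then a short argument pins down at most boundedly many collisions) or $\bs p_2\in\ell$ (then $\bs b'=\bs 0$ and no $\bs p_1\pm\bs p_j$ is parallel to $\bs p_2$ at all, since they have nonzero $\bs b$-component). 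The vectors $\bs p_2\pm\bs p_j$ ($j\ge3$) have $\bs b$-component $\bs b'$, so $\bs p_1\pm\bs p_j$ (with $\bs b$-component $\bs b$) is parallel to some $\bs p_2\pm\bs p_k$ only if $\bs b\parallel\bs b'$, again a rigid condition handled as above; and $\bs p_1\pm\bs p_2$ has $\bs b$-component $\bs b\pm\bs b'$, parallel to $\bs p_1\pm\bs p_j$ (component $\bs b$) only if $\bs b'\in\bs b$-direction, same story. In every branch the number of ``bad'' pairs $(j,\text{sign})$ is bounded by a constant (at most $3$ or $4$), whereas the family has size $2(n-2)\ge 2$; for $n\ge 3$ a direct check of the small cases plus this bound for larger $n$ finishes it. I would organize the write-up by first treating $n=3,4$ by hand (these are the only cases where $2(n-2)$ is not comfortably large), and then running the pigeonhole for $n\ge 5$; alternatively, note that the problem reduces to the two-vector subconfiguration $\{\bs p_1,\bs p_2\}$ together with the line direction, so one can normalize $\bs v=\bs e_1$ and $\bs p_1 = (a,1)$, expressing everything in coordinates to make the parallelism conditions into explicit linear equations and count solutions.
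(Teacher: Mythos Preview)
Your overall strategy---look at the $2(n-2)$ vectors $\bs p_1\pm\bs p_j$ with $j\ge 3$, note they are pairwise non-parallel, and bound the number that can collide with other elements of $S_{\bs P}$---is close in spirit to the paper's pigeonhole argument, but it contains a genuine gap.

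The problem is your treatment of collisions with the family $\{\bs p_2\pm\bs p_k : k\ge 3\}$. You write that such a collision ``only if $\bs b\parallel\bs b'$, again a rigid condition''. But $\bs b$ and $\bs b'$ are by definition the components of $\bs p_1,\bs p_2$ in a fixed one-dimensional complement of $\ell$ in $\RR^2$, so $\bs b\parallel\bs b'$ holds \emph{automatically} whenever both are nonzero. Concretely, with $\ell$ the $x$-axis, $\bs p_1=(a,b)$, $\bs p_2=(a',b')$, $\bs p_j=(t_j,0)$, the vector $(a\pm t_j,b)$ is parallel to $(a'\pm t_k,b')$ iff $(a\pm t_j)/b=(a'\pm t_k)/b'$, a single scalar equation that can have up to $2(n-2)$ solutions as $(j,k,\pm,\pm)$ vary. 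For instance, with $\bs p_1=(0,1)$, $\bs p_2=(1,1)$ and $t_j=j-2$, one finds $\bs p_1+\bs p_j$ parallel to $\bs p_2+\bs p_{j-1}$ for every $j\ge 4$. So the number of bad pairs is not bounded by $3$ or $4$; it grows with~$n$, and your pigeonhole as stated does not close.

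The paper handles the hard case ($\bs p_1,\bs p_2$ both off $\ell$ and not parallel to each other) differently: after normalising so that $\bs p_3,\dots,\bs p_n$ lie on the positive $y$-axis and $\bs p_1,\bs p_2$ lie in the first quadrant with increasing slope, it observes that the $n-2$ vectors $\bs p_2+\bs p_j$ have \emph{strictly increasing} slopes lying strictly between those of $\bs p_2$ and $\bs p_3$, while the $n-2$ vectors $\bs p_1-\bs p_j$ have strictly decreasing slopes below that of $\bs p_1$. Together with $\bs p_1$, $\bs p_1+\bs p_2$, $\bs p_2$ this produces $2n-1$ vectors in $2n-1$ \emph{distinct} off-$\ell$ directions. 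Since there are only $n^2-(n-2)^2=4n-4$ off-$\ell$ vectors in $S_{\bs P}$, failure of the LVP would force them into at most $2n-2$ directions, a contradiction. The key idea you are missing is this explicit slope ordering that \emph{mixes} the two families (sums with $\bs p_2$, differences with $\bs p_1$) to guarantee distinctness, rather than trying to find a lonely vector within a single family $\{\bs p_1\pm\bs p_j\}$.
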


\begin{proof}
If all but one vector were parallel, say $\bs p_2$ is also parallel to $\bs p_3,\dotsc,\bs p_n$, then all vectors of the form $\bs p_j\pm\bs p_k$ are parallel to $\bs p_2$ as well,
for $2\leq j,k\leq n$. Then, obviously each vector $\bs p_1\pm \bs p_j$ is not parallel to any other vector in $S_{\bs P}$, for $2\leq j\leq n$.

If $\bs p_1$ and $\bs p_2$ were parallel, then we write every vector in $S_{\bs P}$ as a linear combination of $\bs p_1$ and $\bs p_3$. We may further assume that $\bs p_2=\la_2\bs p_1$,
with $\la_2>\la_1=1$, and $\bs p_j=\mu_j\bs p_3$ for $4\leq j\leq n$, with $1=\mu_3<\mu_4<\mu_5<\dotsb<\mu_n$. Then, it is clear that $\bs p_2+\bs p_3=\la_2\bs p_1+\bs p_3$ 
is not parallel to any other vector of $S_{\bs P}$; indeed, if there were such a vector in $S_{\bs P}$, it should have both coordinates positive, with respect to $\bs p_1$ and~$\bs p_3$, so,
it would be of the form $\bs p_i+\bs p_j=\la_i\bs p_1+\mu_j\bs p_3$, with $1\leq i\leq2$, $3\leq j\leq n$. However, if $(i,j)\neq(2,3)$, then these vectors cannot be parallel, as $\la_i-\mu_j\la_2<0$.

So, we reduce to the case where $\bs p_1$, $\bs p_2$ and $\bs p_3$ are pairwise not parallel.
 We first note that changing a $\bs p_i$ to its opposite or applying a linear transformation to $\bs P$ does not affect the LVP.
So, without loss of generality, we may assume that all $\bs p_j$ with $3\leq j\leq n$ lie on the positive $y$-axis, and $\bs p_1$, $\bs p_2$ in the first quadrant, such that~$\bs p_1$ has smaller slope than~$\bs p_2$;
 We also assume that
 \[
 v_3<v_4<\dotsb<v_n.
 \]
With this convention, the slopes of the vectors $\bs p_2+\bs p_j$, $3\leq j\leq n$, form a strictly increasing sequence of numbers strictly between the slope of~$\bs p_2$ and the slope of~$\bs p_3$, while those of $\bs p_1-\bs p_j$, $3\leq j\leq n$, form a strictly decreasing sequence of numbers strictly between the slope of~$\bs p_1$ and the slope of~$-\bs p_3$. Therefore, these $2(n-2)$ vectors along with $\bs p_1$, $\bs p_1+\bs p_2$, $\bs p_2$, define $2n-1$ distinct lines through the origin, none of them the $y$-axis.
There are a total of $(n-2)^2$ vectors of $S_{\bs P}$ lying on the $y$-axis, so failure of the LVP and the pigeonhole principle would imply them to lie in at most $2n-2$ lines, 
completing the proof that~$\bs P$ indeed satisfies the LVP.
\end{proof}

\begin{cor}
 Any set of three vectors spanning $\RR^2$, with no two equal or opposite, has the LVP.
\end{cor}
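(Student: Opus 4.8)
The plan is to deduce this immediately from Proposition~\ref{prop:allbuttwocollinear} by taking $n=3$. The key observation is that for $n=3$ the collinearity hypothesis of that proposition — that $\bs p_3,\dotsc,\bs p_n$ be parallel — is vacuous, since the list consists of the single vector $\bs p_3$, which is trivially parallel to itself. What is left of the hypotheses is exactly what the corollary assumes: that $\bs P=\set{\bs p_1,\bs p_2,\bs p_3}$ linearly spans $\RR^2$ and that no two of the $\bs p_i$ are equal or opposite. Proposition~\ref{prop:allbuttwocollinear} then says precisely that $\bs P$ satisfies the Lonely Vector Property. Moreover, since the proof of Proposition~\ref{prop:allbuttwocollinear} never uses rationality, neither does this deduction, so the corollary holds for arbitrary real vectors.

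I do not anticipate any obstacle here; all of the work has already been carried out inside the proof of Proposition~\ref{prop:allbuttwocollinear}, and for $n=3$ its case distinction shrinks substantially. If one preferred a self-contained argument one would just specialize that proof. Three vectors spanning $\RR^2$ cannot all be mutually parallel. If exactly two of them are parallel, say $\bs p_1$ and $\bs p_2$, then using $\bs p_1,\bs p_3$ as a basis one checks directly that $\bs p_2+\bs p_3$ is parallel to no other member of $S_{\bs P}$. Finally, if no two of $\bs p_1,\bs p_2,\bs p_3$ are parallel, then after applying a linear change of coordinates and possibly replacing some $\bs p_i$ by its opposite — operations that do not affect the LVP — one may assume $\bs p_3$ lies on the positive $y$-axis and $\bs p_1,\bs p_2$ lie in the open right half-plane with $\bs p_1$ of strictly smaller slope; then the five vectors $\bs p_1$, $\bs p_2$, $\bs p_1+\bs p_2$, $\bs p_2+\bs p_3$, $\bs p_1-\bs p_3$ have pairwise distinct slopes and none of them is the $y$-axis, so they span five distinct lines through the origin. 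Since at least one vector of $S_{\bs P}$ (namely $\bs p_3$) lies on the $y$-axis, at most eight vectors of $S_{\bs P}$ remain off it; if the Lonely Vector Property failed, each of these off-axis vectors would be parallel to another off-axis vector, forcing them onto at most four lines and contradicting the existence of those five. Invoking Proposition~\ref{prop:allbuttwocollinear} directly is the cleaner route and the one I would take.
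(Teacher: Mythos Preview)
Your proposal is correct and is exactly the paper's intended argument: the corollary is stated immediately after Proposition~\ref{prop:allbuttwocollinear} with no separate proof, so the deduction is precisely the observation that for $n=3$ the parallelism hypothesis on $\bs p_3,\dotsc,\bs p_n$ is vacuous. Your optional self-contained specialization is also fine but, as you say, unnecessary.
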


We now deal with the case of four vectors, for which we need them to be rational.

\begin{prop}
Let $\bs P = \set{\bs p_1,\bs p_2,\bs p_3,\bs p_4}\subseteq \QQ^2$, with not all parallel and no two equal or opposite. Then, $\bs P$ has the LVP.
\end{prop}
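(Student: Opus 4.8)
The plan is to reduce everything to a finite check about the six pairwise determinants. If two of the four vectors are parallel, relabel so that $\bs p_3\parallel\bs p_4$ and apply Proposition~\ref{prop:allbuttwocollinear} directly; so from now on assume the $\bs p_i$ are pairwise non‑parallel (this also disposes of "no two equal or opposite" automatically). Set $d_{ij}:=\det(\bs p_i,\bs p_j)$, with the conventions $d_{ii}=0$ and $d_{ji}=-d_{ij}$. These six numbers are rational and nonzero, and by linear dependence of four vectors in the plane they satisfy the Plücker relation $d_{12}d_{34}=d_{13}d_{24}-d_{14}d_{23}$. The key point is that every pairwise determinant among the $16$ vectors of $S_{\bs P}$ is an explicit $\ZZ$‑linear form in the $d_{ij}$; for instance $\det(\bs p_i+\bs p_j,\bs p_k-\bs p_l)=d_{ik}-d_{il}+d_{jk}-d_{jl}$ and $\det(\bs p_i,\bs p_j+\bs p_k)=d_{ij}+d_{ik}$. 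Hence whether $\bs P$ satisfies the LVP depends only on $(d_{ij})$, and only on it up to a common scalar, so the rationality hypothesis simply becomes ``the $d_{ij}$ have rational ratios'' (automatic here). One may moreover normalize by a rational linear transformation to $\bs p_1=(1,0)$, $\bs p_2=(0,1)$ and split into the few cases dictated by the signs of the coordinates of $\bs p_3,\bs p_4$ (equivalently, by the angular order of the four directions).

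Now suppose, for contradiction, that the LVP fails, i.e.\ every vector of $S_{\bs P}$ is parallel to another vector of $S_{\bs P}$. The first batch of constraints comes from the four singletons: since $\bs p_i\parallel\bs p_j\pm\bs p_k$ is equivalent to $d_{ij}=\mp d_{ik}$, and $\bs p_i$ can be parallel neither to another $\bs p_j$ nor to $\bs p_i\pm\bs p_j$, failure of the LVP forces, for each $i$, two of the three numbers $\{|d_{ij}|:j\neq i\}$ to coincide. Reading the $|d_{ij}|$ on the edges of $K_4$, this says that the three edges at every vertex contain a repeated value — already a strong restriction on the $6$‑tuple. The second batch comes from the twelve vectors $\bs p_i\pm\bs p_j$: each of them must also be parallel to some other vector of $S_{\bs P}$, and in the normalized picture the angular order confines each such vector to a narrow cone (e.g.\ $\bs p_i+\bs p_j$ points strictly between $\bs p_i$ and $\bs p_j$), which cuts the list of admissible parallelisms down to a short explicit one.

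It then remains to run through the finitely many resulting combinatorial patterns, using the residual $S_4$ relabeling symmetry and the sign flips $\bs p_i\mapsto-\bs p_i$ to reduce to a handful of genuinely distinct cases. In each case the parallelism equations together with the Plücker relation either are inconsistent with the non‑degeneracy conditions $d_{ij}\neq0$ (typically they force two of the $\bs p_i$ to be parallel, which we have excluded), or, after eliminating variables, collapse to a relation forcing an irrational ratio of determinants — the prototype being the regular‑octagon configuration, where one is driven to something like $d_{13}^2=2d_{12}^2$, impossible over $\QQ$ (in agreement with Remark~\ref{rem:ngon}). I expect the main obstacle to be precisely the bookkeeping: organizing the case distinction so that it is provably exhaustive, and checking in the ``octagonal'' cases that rationality is genuinely violated; everything else is the mechanical dictionary between parallelism of vectors in $S_{\bs P}$ and vanishing of linear forms in the $d_{ij}$.
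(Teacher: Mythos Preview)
Your determinantal reformulation is sound --- the parallelisms among $S_{\bs P}$ really are governed by linear forms in the $d_{ij}$ together with the Pl\"ucker relation, and the observation that failure of the LVP forces, for each vertex of $K_4$, two incident edges with equal $|d_{ij}|$ is a good start. But as you yourself acknowledge, the proof is not finished: the ``run through the finitely many resulting combinatorial patterns'' is the whole content, and you have not done it. Without a concrete organization of the cases and the actual elimination, what you have is a plan rather than a proof.

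The paper's argument follows the same broad outline (normalize, assume failure, force the octagon, kill it by irrationality) but makes one choice that collapses the casework almost entirely: rather than taking $\bs p_1=(1,0)$, $\bs p_2=(0,1)$ and then branching on the signs of the other two vectors, it uses sign flips to put \emph{all four} vectors in the open first quadrant and labels them by increasing slope, with $\bs p_1=(1,0)$ and $\bs p_4=(0,1)$. This immediately exhibits eight vectors of $S_{\bs P}$ (the four $\bs p_i$, the three consecutive sums $\bs p_i+\bs p_{i+1}$, and $\bs p_4-\bs p_1$) with pairwise distinct slopes, so failure of the LVP forces the sixteen vectors into exactly eight parallel pairs. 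From there a short chain of ``which vector can possibly pair with which'' pins down $\bs p_2=(\lambda,1)$, $\bs p_3=(1,\lambda)$, and a single remaining pairing yields $\lambda=\sqrt{2}$. Your framework would eventually reach the same endpoint, but the paper's angular-ordering normalization is what turns an open-ended case split into a three-line cascade; if you pursue your route, that is the idea you should import.
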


\begin{proof}
 If two of the vectors are parallel then $\bs P$ has the LVP, by Proposition~\ref{prop:allbuttwocollinear}. So, we may assume that no two of them are parallel.

Since the LVP is invariant under linear transformation and under changing one or more vectors to their opposites, we can also assume that 
\[
\bs p_1=(1,0), \ \bs p_2=(x_2,y_2), \ \bs p_3=(x_3,y_3), \ \bs p_4=(0,1),
\]
with $x_2, x_3, y_2, y_3 >0$ and the slope of $\bs p_2$ smaller than that of $\bs p_3$. Indeed, first perform a rational linear transformation sending 
$\bs p_4$ to $(0,1)$; then assume without loss of generality that $\bs p_1, \bs p_2, \bs p_3$ have positive $x$-coordinate (changing them to their opposite if needed) and are ordered according to slope (relabelling them if needed); finally, send $\bs p_1$ to $(1,0)$ with a second rational linear transformation that fixes $\bs p_4$.

The vectors $\bs p_i$ ($i=1,2,3,4$),  $\bs p_i + \bs p_{i+1}$ ($i=1,2,3$) and $\bs p_4 - \bs p_{1}$ define eight distinct lines through the origin; the first seven have non-negative distinct slopes (including $0$ and $\infty$) since
 \[0 \le \frac{x_i}{y_i}<\frac{x_i+x_{i+1}}{y_i+y_{i+1}}<\frac{x_{i+1}}{y_{i+1}} \le \infty, \;\;\; 1\leq i\leq3,\]
 and the last one has negative slope, equal to $-1$.
 
Since $S_{\bs P}$ has $16$ elements, the only possibility for $\bs P$ not to satisfy the LVP would be if each of these eight lines contains \emph{exactly} two of the vectors; below we show that this possibility leads to a contradiction.

If one of $x_i$ ($i\in \{2,3\}$) is smaller than $1$ then $\bs p_i - \bs p_1$ has negative slope, hence it must have the slope of $\bs p_4 - \bs p_1$, which gives a contradiction since then the three vectors $\bs p_4 - \bs p_i$, $\bs p_4 - \bs p_1$ and $\bs p_i - \bs p_1$ are parallel.
So, $x_2,x_3\ge 1$. The same argument with $\bs p_4 - \bs p_i$ instead of $\bs p_i - \bs p_1$ implies $y_2,y_3\ge 1$.
This now implies that 
\begin{itemize}
\item The only vector of $S_{\bs P}$ that can have negative slope (in particular, the only one that can be parallel to $\bs p_4-\bs p_1$) is 
$\bs p_3-\bs p_2$; hence we assume $x_2+x_3 = y_2+y_3$.
\item The only vector that can be parallel to $\bs p_1$ is $\bs p_4-\bs p_2$; hence $y_2=1$.
\item  The only that can be parallel to $\bs p_4$ is $\bs p_3-\bs p_1$; hence $x_3=1$.
\end{itemize}
 So, we we have that
\begin{align}
\label{eq:octagon}
\bs p_2=(\la,1), \;\;\; \bs p_3=(1,\la),
\end{align}
for some \emph{rational} $\la >1$ ($\la=1$ would give $\bs p_2=\bs p_3)$.

So far we have four pairs of parallel vectors: the three pairs mentioned above (with slopes $0$, $\infty$ and $-1$), plus  $\bs p_1+\bs p_4=(1,1)$ and $\bs p_2+\bs p_3=(1+\la, 1+\la)$ (with slope $1$).
The eight unpaired vectors are the following:
\[
\begin{array}{llll}
\bs p_1 + \bs p_2, &  \bs p_2,   & \bs p_3, & \bs p_3 + \bs p_4, \\
\bs p_1 + \bs p_3, & \bs p_2 + \bs p_4, &
\bs p_2 - \bs p_1, & \bs p_4 - \bs p_3.
\end{array}
\]
The first four have different (and increasing) slopes, so we would need to pair the last four to them.

The vector $\bs p_1+\bs p_3=(2,\la)$ 
has slope strictly between 
those of $\bs p_1$ and $\bs p_3$. Hence, it must be parallel either to $\bs p_2 =(\la,1)$ or to $\bs p_1+\bs p_2=(\la+1,1)$. The first case yields $\la=\sqrt{2}$, a contradiction, since $\la \in\QQ$. The second case yields $\la^2+\la-2=0$, a contradiction, since the two solutions are $\lambda\in\{1,-2\}$ and we had
$\lambda >1$. Thus, we establish a contradiction if we assume that $\bs P$ does not have the LVP, concluding the proof.
\end{proof}

\begin{rem}
As we said in the introduction, if we remove the restriction that the given vectors are rational then the vertices of any regular $n$-gon other than the triangle, hexagon, and square provide a counter-example to the LVP. In fact, our proof of the LVP for four vectors shows that the only sets of four vectors, no two parallel or opposite, failing to have the LVP are those of the form expressed in~\eqref{eq:octagon} with $\lambda = \sqrt{2}$, which are (linearly isomorphic) to four consecutive vertices of 
the regular octagon.
\end{rem}


\section[Cosimple configurations and a generalization of sLRC]{Cosimple configurations and a generalization of sLRC, with a finite checking result}
\label{sec:cosimple}

Recall Definition~\ref{defn:cosimple}: A lattice zonotope spanning $\RR^d$ is called \emphd{cosimple} 
if there is a linear dependence among its generators  having coefficients that are all non-zero and with pairwise different absolute values.
In particular, a cosimple zonotope in $\RR^d$ with $d+1$
generators has its generators in linear general position, so that the
class of cosimple zonotopes with $d+1$ generators coincides with the
class of sLRZs.

\subsection{Proof of Theorem~\ref{thm:finite-checking-cosimple-conditional}}
\label{finiteness-cosimple}

The advantage of allowing for more than $d+1$ generators is that now any projection  of a cosimple zonotope is cosimple.
Observe that the projection along one of the generators will make that generator be zero.
We can still consider it part of the vector set although this is irrelevant both for the definition of cosimplicity (we can give the zero vector an arbitrary coefficient in a linear dependence) and for Conjecture~\ref{conj:cosimple} (the zero vector as a generator does not affect the covering radius).

Invariance under projection makes the analogue of Theorems~\ref{thm:main-points} and ~\ref{thm:mainslrc-points} be much easier to prove and allows us to remove the ``Lonely Vector Problem'' from the latter:

\begin{proof}[Proof of Theorem~\ref{thm:finite-checking-cosimple-conditional}]
Let $Z$ be our cosimple zonotope with more than ${\binom{d+2}2}^d$ lattice points.
By Lemma~\ref{lemma:minkowski-first-points} we have $\lambda_1(Z-Z) \le1/ \binom{d+2}2$ and by
Proposition~\ref{prop:CSS-sum-bound} and the induction hypothesis (using that the projection of a cosimple zonotope is cosimple)
\[
\mu(Z) \leq \lambda_1(Z-Z) + \mu\left(\pi(Z),\pi(\ZZ^d)\right) \le
\frac{1}{{\binom{d+2}2}} + \frac{d-1}{d+1} =\frac{d}{d+2}.\qedhere
\]
\end{proof}

\subsection{Remarks on cosimple zonotopes}
\label{remarks-cosimple}

We first show that in Conjecture~\ref{conj:cosimple}  there is no loss of generality in assuming the generators to be primitive, generalizing Proposition~\ref{prop:allbut1gcd}:

\begin{lemma}
\label{lemma:primitive}
Let $d\ge 2$.
If Conjecture~\ref{conj:cosimple} holds in dimension $d-1$ for every cosimple zonotope with $n-1$ generators, then it holds in dimension $d$ for all cosimple zonotopes with~$n$ generators one of which is not primitive or two of which agree or are opposite to one another.
\end{lemma}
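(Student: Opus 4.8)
The plan is to imitate the proof of Proposition~\ref{prop:allbut1gcd}: project along a well-chosen primitive direction, estimate the one-dimensional fibers by an elementary observation, and control the projection by the inductive hypothesis through Proposition~\ref{prop:max-bound}.

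Let $Z=\sum_{i=1}^n[\bs 0,\bs u_i]\subseteq\RR^d$ be cosimple, with a linear dependence $\sum_{i=1}^n c_i\bs u_i=\bs 0$ having all $c_i\ne0$ and the $\abs{c_i}$ pairwise distinct. I would split into two cases. If some generator, say $\bs u_n$, is not primitive, write $\bs u_n=k\bs e$ with $\bs e$ primitive and $k\ge2$, and set $Z''=\sum_{i<n}[\bs 0,\bs u_i]$. Otherwise, by hypothesis two generators, say $\bs u_{n-1}$ and $\bs u_n$, are equal or opposite; then $[\bs 0,\bs u_{n-1}]+[\bs 0,\bs u_n]$ is a lattice translate of $[\bs 0,2\bs u_n]$, and I put $\bs e:=\bs u_n$ (which is primitive), $k:=2$, and $Z''=\sum_{i\le n-2}[\bs 0,\bs u_i]$. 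In either case $Z$ is a lattice translate of $[\bs 0,k\bs e]+Z''$, so, $\mu$ and cosimplicity being translation invariant, I may assume $Z=[\bs 0,k\bs e]+Z''$; I let $\pi\colon\RR^d\to\RR^{d-1}$ be the projection along $\bs e$, noting $\pi(\ZZ^d)=\ZZ^{d-1}$ because $\bs e$ is primitive and $\pi(Z)=\pi(Z'')$ because $\pi$ kills $[\bs 0,k\bs e]$.

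The two things to check are then: \textbf{(a)} $\pi(Z)$ is a cosimple zonotope in $\RR^{d-1}$; and \textbf{(b)} every fiber is short. For (a): the nonzero generators of $\pi(Z)=\pi(Z'')$ are those $\pi(\bs u_i)$ with $\bs u_i$ not parallel to $\bs e$ (at most $n-1$ of them), they span $\RR^{d-1}$ since the $\bs u_i$ span $\RR^d$, and deleting the vanishing terms from $\sum_i c_i\pi(\bs u_i)=\bs 0$ leaves a linear dependence among them whose coefficients form a sub-multiset of $\{c_1,\dots,c_n\}$, hence are nonzero with pairwise distinct absolute values; padding with zero generators carrying fresh coefficients (harmless by the remark after Definition~\ref{defn:cosimple}) exhibits $\pi(Z)$ as a cosimple zonotope with exactly $n-1$ generators, so the hypothesis gives $\mu(\pi(Z),\pi(\ZZ^d))\le\frac{d-1}{d+1}$. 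For (b): for every $\bs y\in\pi(Z)=\pi(Z'')$ the fiber $Z\cap\pi^{-1}(\bs y)$ contains a translate of $[\bs 0,k\bs e]$, so against the lattice $\ZZ^d\cap\pi^{-1}(\bs 0)=\ZZ\bs e$ it has lattice length at least $k\ge2$, whence $\mu(Z\cap\pi^{-1}(\bs y))\le\frac1k\le\frac12$. Inserting (a) and (b) into Proposition~\ref{prop:max-bound} gives
\[
\mu(Z)\ \le\ \max\Bigl\{\tfrac{d-1}{d+1},\ \tfrac12\Bigr\}\ \le\ \tfrac{d}{d+2},
\]
the last inequality because $(d-1)(d+2)<d(d+1)$ and because $d\ge2$ forces $\tfrac12\le\tfrac{d}{d+2}$.

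The step I expect to require the most care is (a): one must be sure that the surviving generators still span the smaller space and that the truncated dependence keeps all of its coefficients, and then reconcile the possibly smaller number of nonzero generators with the ``$n-1$ generators'' demanded by the statement by adjoining zero generators. Everything else is a direct transcription of the argument for Proposition~\ref{prop:allbut1gcd}, now using Case~2 to handle the ``equal or opposite'' alternative.
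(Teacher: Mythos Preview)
Your proof is correct and follows essentially the same route as the paper's: project along the primitive direction underlying the special generator(s), observe that every fiber has lattice length at least $k\ge2$, verify that the projected zonotope is cosimple with at most $n-1$ generators, and conclude via Proposition~\ref{prop:max-bound}. Your treatment of the equal-or-opposite case (noting that $[\bs 0,\bs u_{n-1}]+[\bs 0,\bs u_n]$ is a lattice translate of $[\bs 0,2\bs u_n]$) is in fact more careful than the paper's phrase ``substitute their sum,'' which is imprecise when the generators are opposite; and your explicit check that the projected dependence retains nonzero coefficients of distinct absolute value fills in what the paper takes for granted.
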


\begin{proof}
If two generators agree or are opposite to one another, then we can substitute their sum for the two of them, so we are in the case of a non-primitive generator.

Hence, let $Z$ be a cosimple $d$-zonotope with $n$ generators $\bs u_1, \dotsc, \bs u_n \in \ZZ^d$ and suppose without loss of generality that $\bs u_{n}=k \bs e_d$, for some $k\in \ZZ_{\ge 2}$.
Let $\pi:\RR^d\to \RR^{d-1}$ be the projection that forgets the last coordinate.
Then $Z=Z_1+ Z_2$, where~$Z_1$ is the segment of length~$k$ in the last coordinate direction and~$\pi(Z_2)$ is a cosimple $(d-1)$-zonotope with $n-1$ generators.
Hence, Proposition~\ref{prop:max-bound} plus the inductive hypothesis gives
\[
\mu(Z) \le \max\left\{\mu(Z_1), \mu(\pi(Z_2)) \right\} \le \max\left\{\frac1k, \frac{d-1}{d+1}\right\} \le \frac{d}{d+2},
\]
since $d \geq 2$.
\end{proof}

\begin{cor}
\label{cor:primitive}
Let $d\ge 2$.
If Conjecture~\ref{conj:cosimple} holds in dimension $d-1$ for every cosimple zonotope with $n-1$ generators, then it holds in dimension $d$ for all cosimple zonotopes with~$n$ generators one of which is not primitive or two of which agree or are opposite to one another.
\end{cor}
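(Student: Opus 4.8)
The statement is, as printed, identical to Lemma~\ref{lemma:primitive}, so the most economical plan is simply to invoke that lemma: there is nothing to prove beyond the argument already given there. I suspect the Corollary is meant to package Lemma~\ref{lemma:primitive} into its fully reduced form, namely that in order to verify Conjecture~\ref{conj:cosimple} in dimension~$d$ one may restrict attention to cosimple $d$-zonotopes all of whose generators are primitive and no two of which are equal or opposite. Either way, the engine is the same as in the proof of Lemma~\ref{lemma:primitive}: given a cosimple $Z=\sum_{i=1}^n[\bs 0,\bs u_i]$, if two generators agree or are opposite one replaces them by their sum (reducing to the non-primitive case); if $\bs u_n=k\bs e_d$ with $k\ge 2$ one projects along~$\bs u_n$, writing $Z=Z_1+Z_2$ with $Z_1$ a segment of lattice length~$k$ and $\pi(Z_2)$ a cosimple $(d-1)$-zonotope with $n-1$ generators, and then Proposition~\ref{prop:max-bound} together with the inductive hypothesis gives $\mu(Z)\le\max\{1/k,(d-1)/(d+1)\}\le d/(d+2)$.

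For the reduced form I would argue by a (lexicographic) double induction on the pair $(d,n)$. Starting from an arbitrary cosimple $d$-zonotope, as long as some generator is non-primitive or some pair of generators is parallel we apply the reduction step above: merging a parallel pair decreases~$n$ while keeping~$d$ fixed, and the projection step decreases both $d$ and $n$ by one. Since both parameters are bounded below, the process terminates, and it can only terminate at a configuration whose generators are all primitive and pairwise non-parallel (in particular no two equal or opposite). At each step the covering radius of the current instance is bounded by that of a strictly smaller instance together with the contribution of the split-off segment, and all these contributions are at most $d/(d+2)$ in the appropriate dimension, so the bound propagates back.

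The one point that genuinely needs care — and the step I expect to be the main obstacle — is that \emph{cosimplicity is preserved} under both operations. For the projection this is already recorded in Section~\ref{finiteness-cosimple}: forgetting a coordinate carries a linear dependence to a linear dependence, and the (now zero) non-primitive generator may simply be left out of, or given an arbitrary coefficient in, that dependence. For the merge step one must check that if $\sum_i c_i\bs u_i=\bs 0$ with the $|c_i|$ pairwise distinct and $\bs u_{n-1}=\pm\bs u_n$, then replacing $\bs u_{n-1},\bs u_n$ by $\bs u_{n-1}\pm\bs u_n$ still admits a dependence with pairwise distinct non-zero absolute values; this follows by assigning the merged generator an independent coefficient and, if a collision of absolute values is forced, rescaling the dependence by a generic rational, exactly as in the sLRZ analysis of Section~\ref{sec:sLRC}. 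Once this closure property is in hand, the rest is the same bookkeeping as in the proofs of Lemma~\ref{lemma:primitive} and Theorem~\ref{thm:finite-checking-cosimple-conditional}.
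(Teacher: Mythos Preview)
Your first sentence is exactly right and is all that is needed: the Corollary is verbatim the same statement as Lemma~\ref{lemma:primitive}, and the paper gives no separate proof for it (there is no proof environment following the corollary). So invoking the lemma is both correct and identical to what the paper does.

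The remainder of your proposal --- the speculation about an intended ``reduced form'' and the lexicographic double induction --- is unnecessary here, and in fact introduces a subtle issue you yourself flag: preservation of cosimplicity under merging two parallel generators is \emph{not} automatic and is not established in the paper (your sketch of ``rescaling the dependence by a generic rational'' does not work, since rescaling a single dependence multiplies all coefficients by the same factor and cannot break a collision of absolute values). Fortunately none of this is needed: the actual statement only asks you to handle a single non-primitive or repeated/opposite generator, and Lemma~\ref{lemma:primitive} already does exactly that by reducing the repeated/opposite case to the non-primitive case and then projecting once. No further induction on~$n$ is required.
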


We now characterize cosimple configurations:

\begin{lemma}
\label{lemma:cosimple}
Let $\bs A$ be a finite collection of vectors spanning $\RR^d$. Then, $\bs A$ is cosimple if and only if neither of the following conditions hold:
\begin{enumerate}[(i)]
\item There is a hyperplane $H$ containing all but one of the elements of $\bs A$.
\item There is a hyperplane $H$ containing all but two of the elements of $\bs A$, and the two elements are at the same distance from $H$.
\end{enumerate}
\end{lemma}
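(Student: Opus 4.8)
The plan is to phrase everything in terms of the space of linear dependences among $\bs A$, and to translate each way in which cosimplicity can fail into one of the two geometric conditions. Write $\bs A=\{\bs a_1,\dots,\bs a_n\}$, let $M$ be the $d\times n$ matrix with columns $\bs a_1,\dots,\bs a_n$, and let $D:=\ker M\subseteq\RR^n$ be the space of linear dependences among the $\bs a_i$; since $\bs A$ spans $\RR^d$ we have $\rank M=d$, so $\dim D=n-d$. By definition $\bs A$ is cosimple exactly when $D$ contains a vector $\bs c=(c_1,\dots,c_n)$ with $c_k\neq 0$ for all $k$ and $|c_i|\neq|c_j|$ for all $i\neq j$; equivalently, a $\bs c$ avoiding all of the linear subspaces
\[
D_k^0:=\{\bs c\in D:c_k=0\}\ (1\le k\le n),\qquad
D_{ij}^{\pm}:=\{\bs c\in D:c_i=\pm c_j\}\ (1\le i<j\le n).
\]
Since a finite-dimensional real vector space is never the union of finitely many of its proper subspaces, such a $\bs c$ exists if and only if none of $D_k^0$, $D_{ij}^{+}$, $D_{ij}^{-}$ coincides with all of $D$. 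So the task reduces to deciding, for each of these subspaces, when it equals $D$.

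For that I would use the duality $(\ker M)^{\perp}=\operatorname{rowspace}(M)$: a functional on $\RR^n$, identified with its coordinate vector $\bs\varphi\in\RR^n$, vanishes on $D$ if and only if $\bs\varphi=(\langle\bs h,\bs a_1\rangle,\dots,\langle\bs h,\bs a_n\rangle)$ for some $\bs h\in\RR^d$. Taking $\bs\varphi=\bs e_k$ shows that $D_k^0=D$ if and only if there is $\bs h\in\RR^d$ with $\langle\bs h,\bs a_k\rangle=1$ and $\langle\bs h,\bs a_l\rangle=0$ for $l\neq k$, i.e.\ all $\bs a_l$ with $l\ne k$ lie on the linear hyperplane $H=\bs h^{\perp}$ (and then $\bs a_k\notin H$, forced since $\bs A$ spans $\RR^d$); this is exactly condition~(i). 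Taking $\bs\varphi=\bs e_i\mp\bs e_j$ shows that $D_{ij}^{\pm}=D$ for one of the two signs if and only if there is $\bs h\in\RR^d$ with $\langle\bs h,\bs a_l\rangle=0$ for $l\neq i,j$ and $|\langle\bs h,\bs a_i\rangle|=|\langle\bs h,\bs a_j\rangle|=1$, i.e.\ all $\bs a_l$ with $l\ne i,j$ lie on $H=\bs h^{\perp}$ and $\bs a_i,\bs a_j$ are at the same distance from $H$; this is exactly condition~(ii). In each translation one implication is immediate, and for the converse one takes a normal $\bs g$ of the hyperplane furnished by~(i) or~(ii) and rescales it to $\bs h:=\bs g/\langle\bs g,\bs a_k\rangle$, respectively $\bs h:=\bs g/\langle\bs g,\bs a_i\rangle$, using the spanning hypothesis once more to know this denominator is nonzero. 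Combining the two translations with the criterion from the first paragraph gives that $\bs A$ fails to be cosimple if and only if~(i) or~(ii) holds.

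I do not expect a serious obstacle here; the proof is essentially bookkeeping once the dependence space $D$ and the identification $(\ker M)^{\perp}=\operatorname{rowspace}(M)$ are set up. The one step that needs a little care is matching the analytic phrasing ``at the same distance from $H$'' in~(ii) with the algebraic statement $|\langle\bs h,\bs a_i\rangle|=|\langle\bs h,\bs a_j\rangle|$, together with the remark, needed for the converse, that this common distance is automatically positive because $\bs A$ cannot be contained in a hyperplane. It is also worth stating explicitly that ``hyperplane'' means a \emph{linear} hyperplane: the affine version of~(i) may hold for a cosimple configuration — for instance $\{(1,0),(2,1),(4,3)\}$ is collinear (hence affinely dependent) yet its unique dependence $(2,-3,1)$ has nonzero coordinates of pairwise distinct absolute value.
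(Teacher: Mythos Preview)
Your proof is correct and follows essentially the same route as the paper's: both set up the space of linear dependences, observe that cosimplicity fails precisely when this space lies in one of the coordinate or sign hyperplanes $\{c_k=0\}$, $\{c_i=\pm c_j\}$, and translate each such containment into condition~(i) or~(ii). The paper is terser (it simply asserts the translation and then takes a generic dependence vector), whereas you make the mechanism explicit via the identity $(\ker M)^{\perp}=\operatorname{rowspace}(M)$; this is a welcome elaboration rather than a different argument.
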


\begin{proof}
The ``only if'' is easy: if (i) happens then the element in question has coefficient zero in every linear dependence.
If (ii) happens then the coefficients of the two elements in question have the same absolute value in every linear dependence.

For the converse, suppose that none of (i) and (ii) happens. Let $L \subseteq \RR^{\bs A}$ be the linear space of linear dependence vectors in $\bs A$. The fact that (i) and (ii) do not hold means that $L$ is not contained in any of the hyperplanes where a coordinate is zero or where two coordinates have the same absolute value (this arrangement of hyperplanes happens to be the Coxeter arrangement of type $B_n$, although we do not need this).
Then, a generic vector $\lambda$ from $L$ does not belong to any of those hyperplanes, and certifies that~$\bs A$ is cosimple.
\end{proof}

\begin{rem}
\label{rem:gale}
Let us call a vector configuration \emphd{simple} if no element is zero or equal or opposite to another one.\footnote{This is close, but not the same, as what simple means in matroid theory. Since a matroid forgets the lengths of vectors and remembers only their (in)dependence, in matroid theory the word ``simple'' excludes also proportional vectors, not only those that are equal or opposite.}
The characterization in Lemma~\ref{lemma:cosimple} then says that~$\bs A$ is cosimple if and only if its \emphd{Gale transform} is simple, which explains the name.

To understand this connection, let us briefly review Gale duality; see, e.g., \cite[Section 4.1]{DLRS2010triangulations} for more details.
If $\bs A = \{\bs u_1, \dots, \bs u_n\} $ is a finite set of~$n$ vectors in~$\RR^d$, 
we call \emphd{linear evaluations} and \emphd{linear dependences} of $\bs A$ the following two linear subspaces of $\RR^n\cong \RR^{\bs A}$:
\begin{align*}
\eval(\bs A)&:=\left\{\left(f(\bs u_1), \dots, f(\bs u_n)\right) :  f \in ({\RR^d})^*\right\}, \\
\dep(\bs A)&:=\left\{\left(\la_1,\dots,\la_n\right) \in \RR^n: \la_1 \bs u_1 +  \dots +  \la_n \bs u_n = \bs 0 \right\}.
\end{align*}
These two subspaces are orthogonal complements of one another, and have ranks equal to $k$ and $n-k$, where $k$ is the rank of $\bs A$. (In what follows we assume $k=d$).
A subset $\bs B\subseteq \RR^{n-d}$ of size $n$ is called a \emphd{Gale transform} or a \emphd{Gale dual} of $\bs A$ if it has the following two equivalent properties:%
\footnote{Although this is not relevant in this paper, the definition of Gale duality includes an implicit bijection between the elements of $\bs A$ and~$\bs B$. That is to say, $\bs A$ and $\bs B$ are considered labelled and Gale duality takes the labelling into account. Also, the Gale dual of a \emph{set} of vectors may have repeated vectors, hence being a \emph{multiset}. These two aspects, labelling and the possibility of repeated elements, can simultaneously be taken into account regarding $\bs A$ and $\bs B$ not as sets of vectors but rather as matrices of sizes $d\times m$ and $(m-d)\times m$, whose columns are the ``vector configurations'' we are interested in. This is the point of view taken in~\cite{DLRS2010triangulations}; see, e.g., Sections 2.1 and 2.5 in that book.}
\[
\eval(\bs A) = \dep(\bs B), 
\qquad
\eval(\bs B) = \dep(\bs A). 
\]
In this language,  Lemma~\ref{lemma:cosimple} says that $\bs A$ is cosimple if and only if none of $\bs e_i$ or $\bs e_i \pm \bs e_j$ lie in $\eval(\bs A)$, where $\bs e_i$ denotes the $i$-th standard basis vector. The same conditions for $\dep(\bs B)$ are equivalent to $\bs B$ being simple. 
\end{rem}

\begin{cor}
\label{coro:d+2vectors}
Any set of integer vectors in linear general position with at least two more vectors than its dimension is cosimple.
\end{cor}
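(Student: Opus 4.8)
The plan is to derive the statement directly from the characterization of cosimplicity given in Lemma~\ref{lemma:cosimple}, rather than exhibiting a suitable linear dependence by hand. Recall that by that lemma a spanning collection $\bs A \subseteq \RR^d$ fails to be cosimple exactly when one of two conditions holds: (i) some hyperplane $H$ contains all but one of the vectors of $\bs A$, or (ii) some hyperplane $H$ contains all but two of them. As Remark~\ref{rem:gale} makes clear, the relevant hyperplanes are \emph{linear}, i.e.\ they pass through the origin. So the only real step is to observe that, under the hypotheses, neither (i) nor (ii) can occur.

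Concretely, I would fix $\bs A = \{\bs u_1, \dots, \bs u_n\}$ consisting of integer vectors in linear general position in $\RR^d$ with $n \ge d+2$, and assume for contradiction that $\bs A$ is not cosimple. In case (i) the hyperplane $H$ contains $n-1 \ge d+1$ of the vectors; in case (ii) it contains $n-2 \ge d$ of them. Either way, $H$ contains at least $d$ vectors of $\bs A$. Since $\dim H = d-1$, any such $d$ vectors are linearly dependent, which contradicts the assumption that every $d$ of the $\bs u_i$ are linearly independent. This contradiction finishes the proof.

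There is essentially no obstacle here; the only point requiring a little care is making sure the hyperplanes in question are linear subspaces, so that containment in $H$ forces a genuine linear dependence, which is built into the Gale-duality viewpoint of Lemma~\ref{lemma:cosimple} and Remark~\ref{rem:gale}. I would also add a remark that the bound $n \ge d+2$ is precisely what the argument needs: for $n = d+1$ condition~(ii) would only confine $d-1$ vectors to a hyperplane, which linear general position permits, matching the earlier observation that cosimple $d$-zonotopes with exactly $d+1$ generators are nothing but the sLRZs. Integrality of the vectors plays no role in the argument and is present only because cosimple zonotopes are lattice zonotopes by definition.
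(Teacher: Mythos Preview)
Your argument is correct and is essentially identical to the paper's: both proceed by contradiction via Lemma~\ref{lemma:cosimple}, observing that either obstruction forces at least $d$ of the vectors into a linear hyperplane, contradicting linear general position. Your additional remarks on the linearity of~$H$, the sharpness of $n \ge d+2$, and the irrelevance of integrality are accurate embellishments, but the core proof matches the paper's one-line argument.
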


\begin{proof}
Assuming, for contradiction, that the vectors are not cosimple, the obstructions in Lemma~\ref{lemma:cosimple} imply that all but (at most) two of them lie in a hyperplane, hence they are not in linear general position.
\end{proof}

\begin{cor}
\label{cor:widthge3}
Every cosimple zonotope has lattice-width three or more. 
Conversely, if $Z$ is a lattice $d$-zonotope of width three or more, not a parallelepiped, and its generators span the lattice, then $Z$ is cosimple.
\end{cor}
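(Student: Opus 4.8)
The plan is to combine the characterization of cosimplicity in Lemma~\ref{lemma:cosimple} with the elementary description of the lattice-width of a zonotope. For a lattice $d$-zonotope $Z=\sum_{i=1}^n[\bs 0,\bs u_i]$ and $\bs z\in\ZZ^d\setminus\{\bs 0\}$, additivity of width under Minkowski sums gives $w(Z,\bs z)=\sum_{i=1}^n\abs{{\bs z}^\intercal\bs u_i}$, so $w(Z)$ is the minimum of this integer sum over all nonzero $\bs z\in\ZZ^d$. Both implications will be extracted from how small such a sum can possibly be.

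For the first assertion, suppose $Z$ is cosimple and fix $\bs z\in\ZZ^d\setminus\{\bs 0\}$. The sum $\sum_i\abs{{\bs z}^\intercal\bs u_i}$ is nonzero because the generators span $\RR^d$. If it equalled $1$, exactly one generator would fail to lie on the linear hyperplane ${\bs z}^\perp$, which is precisely obstruction~(i) of Lemma~\ref{lemma:cosimple}. If it equalled $2$, then either a single generator contributes $2$ (obstruction~(i) again) or exactly two generators contribute $1$ each, in which case ${\bs z}^\perp$ contains all the remaining generators while the two exceptional ones sit at equal distance $1/\norm{\bs z}$ from it --- obstruction~(ii). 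Since $Z$ is cosimple, neither can occur, so $w(Z,\bs z)\ge 3$ for every $\bs z$, that is, $w(Z)\ge 3$.

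For the converse I would argue contrapositively: assuming $Z$ is a lattice $d$-zonotope whose generators generate $\ZZ^d$ and that $Z$ is not cosimple, I will exhibit a lattice direction in which $Z$ has width at most $2$. By Lemma~\ref{lemma:cosimple}, either (i) some hyperplane $H$ contains all generators but one, $\bs u_j$, or (ii) some hyperplane $H$ contains all generators but two, $\bs u_j$ and $\bs u_k$, which lie at the same distance from $H$. The crucial reduction is that if, in case~(ii), the generators lying in $H$ span only a $(d-2)$-dimensional subspace $V$, then $\bs u_j\notin V$ --- otherwise all generators but $\bs u_k$ would lie in $V$, contradicting that the generators span $\RR^d$ --- so $\lin(V\cup\{\bs u_j\})$ is a hyperplane containing all generators but $\bs u_k$, and we are back in case~(i). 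Thus we may assume the generators lying in $H$ span $H$ itself, which forces $H$ to be rational, say $H={\bs z}^\perp$ with $\bs z\in\ZZ^d$ primitive. Applying ${\bs z}^\intercal$ to a generating set of $\ZZ^d$ produces all of $\ZZ$, so the $\gcd$ of the integers ${\bs z}^\intercal\bs u_i$ equals $1$; yet these integers vanish except for one of them in case~(i), or two in case~(ii). In case~(i) this forces $\abs{{\bs z}^\intercal\bs u_j}=1$, hence $w(Z,\bs z)=1$; in case~(ii) the two surviving values are equal in absolute value (the equidistance) and have $\gcd$ one, so both equal $1$ and $w(Z,\bs z)=2$. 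Either way $w(Z)\le 2$, contradicting $w(Z)\ge 3$. As a by-product this shows that the hypothesis ``not a parallelepiped'' is redundant here: a lattice $d$-parallelepiped whose generators generate $\ZZ^d$ is unimodular, hence has lattice-width $1$.

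The width formula and the $\gcd$ bookkeeping are routine. The step I expect to require genuine care --- and the only place where lattice-generation and the rationality of $H$ truly enter --- is the reduction of the ``codimension-two'' instance of obstruction~(ii) to obstruction~(i); once that is in place the argument closes immediately.
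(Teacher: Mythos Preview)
Your proof is correct and follows essentially the same route as the paper's: both directions are obtained by matching the two obstructions of Lemma~\ref{lemma:cosimple} against the width formula $w(Z,\bs z)=\sum_i\abs{\bs z^\intercal\bs u_i}$. Your argument is in fact slightly more careful than the paper's in one respect --- you explicitly handle the possibility that in case~(ii) the generators lying in~$H$ span only a $(d-2)$-dimensional subspace (so that~$H$ is not \emph{a priori} rational), by reducing to case~(i); the paper glosses over this point. Your observation that ``not a parallelepiped'' is redundant under the lattice-spanning hypothesis is also correct.
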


\begin{proof}
For the first part we argue by contradiction.
If $f$ is an integer linear functional giving width $w\in\{1,2\}$ to a lattice zonotope~$Z$ there are two possibilities: either $f$ is zero in all but one of the generators, or it has value $\pm 1$ in two of them and is zero in the rest. Both cases imply $Z$ not to be cosimple, by Lemma~\ref{lemma:cosimple}.

For the second part, suppose that $Z$ is not cosimple and that the generators of $Z$  integrally span $\ZZ^d$. By Lemma~\ref{lemma:cosimple} one of the following happens:
\begin{enumerate}
\item All but one of the generators lie in a hyperplane. Then the condition that the generators span the lattice implies that $Z$ has width one with respect to that hyperplane.

\item All but two generators lie in a hyperplane, and the functional $f$ vanishing on that hyperplane has the same value on the other two generators. Again, the condition that generators span the lattice implies that $f$ has value $\pm 1$ on those two generators, hence $Z$ has width two.\qedhere
\end{enumerate}
\end{proof}

\noindent Lemma~\ref{lemma:cosimple} implies that being cosimple is closed under extending the set.
Since the covering radius is non-increasing with respect to inclusion, minimal counter-examples to Conjecture~\ref{conj:cosimple} must be also \emphd{minimal cosimple configurations}; that is, cosimple configurations with the property that removing an element produces a non-cosimple one.
To this end, in the following result we characterize minimal cosimple configurations, except that the characterization is easier to express in terms of Gale duality:

\begin{cor}
Let $\bs A$ be a cosimple configuration and let $\bs B$ be its Gale transform.
Then, $\bs A$ is minimal cosimple if and only if in $\bs B$ every element is parallel to either another element or to the sum or difference of some other two elements.
\end{cor}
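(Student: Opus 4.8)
The statement to prove characterizes minimality of a cosimple configuration $\bs A$ in terms of its Gale dual $\bs B$. By Lemma~\ref{lemma:cosimple} together with the dictionary in Remark~\ref{rem:gale}, $\bs A$ is cosimple iff $\bs B$ is \emph{simple} (no element of $\bs B$ is zero, and no two are equal or opposite). So the whole question translates to: when is $\bs A$ minimal with respect to being cosimple, read through $\bs B$? The key observation is that deleting the $i$-th element of $\bs A$ corresponds, on the Gale side, to \emph{projecting $\bs B$ to the hyperplane $\bs e_i^\perp$} along $\bs b_i$ (equivalently, to passing to the quotient by the line $\RR\bs b_i$, after deleting $\bs b_i$ itself), since deleting a column of $\bs A$ enlarges $\dep(\bs A)$ by one coordinate and correspondingly shrinks $\eval(\bs A)=\dep(\bs B)$; I would make this projection statement precise first, citing the standard behavior of Gale duality under deletion/contraction (e.g.\ \cite[Section 4.1]{DLRS2010triangulations}).

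**The two directions.** Given that dictionary, $\bs A\setminus\{\bs u_i\}$ is \emph{still} cosimple iff the projected configuration $\bs B'=\pi_i(\bs B\setminus\{\bs b_i\})$ is still simple, i.e.\ no $\bs b_j$ ($j\ne i$) projects to zero and no two of them project to equal or opposite vectors. Now $\pi_i(\bs b_j)=\bs 0$ exactly when $\bs b_j\parallel \bs b_i$; and $\pi_i(\bs b_j)=\pm\pi_i(\bs b_k)$ exactly when $\bs b_j\mp\bs b_k\in\RR\bs b_i$, i.e.\ when $\bs b_i$ is parallel to $\bs b_j-\bs b_k$ or to $\bs b_j+\bs b_k$. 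Therefore: deleting $\bs u_i$ destroys cosimplicity iff $\bs b_i$ is parallel to some other $\bs b_j$, or to the sum or difference of two other elements $\bs b_j,\bs b_k$. Quantifying over $i$, the configuration $\bs A$ is minimal cosimple iff for \emph{every} $i$ this parallelism holds, which is precisely the stated condition on $\bs B$. I would present the argument as a chain of equivalences, handling the "parallel to a single $\bs b_j$" and the "parallel to $\bs b_j\pm\bs b_k$" cases side by side.

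**The anticipated obstacle.** The one point that needs care is bookkeeping around degenerate/repeated elements: $\bs B$ may be a multiset (a single $\bs u_i$ can have a repeated Gale image), and a projection $\pi_i$ can create a zero vector or a repeated/opposite pair even when the original configuration was simple. One must also be careful that "simple" on the Gale side already forbids $\bs b_i=\bs 0$, so $\pi_i$ is genuinely a projection along a nonzero line, and that a vector being parallel to $\bs b_j-\bs b_k$ when $\bs b_j=\bs b_k$ (difference zero) is vacuous and should be excluded — this matches the hypothesis "some \emph{other two} elements" which implicitly requires $\bs b_j\ne\pm\bs b_k$ by simplicity. I would spell this out once and then the equivalences go through cleanly. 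A secondary, purely expository concern is getting the sign/labelling conventions of Gale duality consistent with Remark~\ref{rem:gale}, but that is routine.

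**Summary of steps.**
\begin{enumerate}[(1)]
\item Recall from Remark~\ref{rem:gale} and Lemma~\ref{lemma:cosimple} that $\bs A$ cosimple $\iff$ $\bs B$ simple.
\item Show that the Gale dual of $\bs A\setminus\{\bs u_i\}$ is (a translate by deletion of $\bs b_i$ of) the projection $\pi_i(\bs B)$ along $\RR\bs b_i$ onto $\bs e_i^\perp$.
\item Translate "$\pi_i(\bs B\setminus\{\bs b_i\})$ is simple" into: no $\bs b_j$ ($j\ne i$) is parallel to $\bs b_i$, and no $\bs b_i$ is parallel to $\bs b_j\pm\bs b_k$ for distinct $j,k\ne i$.
\item Negate and quantify over $i$ to obtain the claimed characterization of minimal cosimplicity; address the multiset/degeneracy caveats from the previous paragraph.
\end{enumerate}
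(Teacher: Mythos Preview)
Your proposal is correct and follows essentially the same route as the paper: both use that deletion in $\bs A$ corresponds under Gale duality to contraction (projection along $\bs b_i$) in $\bs B$, so that $\bs A\setminus\{\bs u_i\}$ fails to be cosimple iff the projected $\bs B\setminus\{\bs b_i\}$ fails to be simple, and then translate this failure into the stated parallelism conditions. Your write-up is more explicit about the multiset and degeneracy caveats, but the argument is the same.
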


\begin{proof}
Via Gale duality, deleting an element $\bs u$ in a configuration $\bs A$ is equivalent to \emph{contracting} the corresponding element $\bs v$ in its Gale dual $\bs B$, which geometrically amounts to projecting $\bs B$ along the direction of $\bs v$ (see \cite[Section 4.2]{DLRS2010triangulations}). Hence, we want to characterize the simple configurations $\bs B$ with the property that the projection of $\bs B \setminus \bs v$ along the direction of $\bs v$
fails to be simple for every $\bs v \in \bs B$. 

The failure may happen in two ways: either $\bs B$ has two parallel elements, so that projecting along one of them makes the other one zero, or $\bs B$ has two elements with sum or difference  parallel to a third element, so that projecting along the latter makes the first two equal or opposite. 
\end{proof}

This characterization of minimal cosimple configurations shows that the zonotopes of type~\eqref{eq:TypeI}, \eqref{eq:typeIIa} and~\eqref{eq:typeIIb} are the natural ones to consider in Sections~\ref{sec:LRC} and~\ref{sec:sLRC}.


\section{Dimension two: proof of sLRC and its cosimple generalization}
\label{sec:dim2}

That Conjecture~\ref{slrcgeom} holds in dimension two (equivalently, that the shifted LRC holds for four runners) has been proved in~\cite{cslovjecsekmalikiosisnaszodischymura2022computing}.
Here, we give a proof of the stronger Conjecture~\ref{conj:cosimple} in dimension two and for an arbitrary number~$n$ of generators.
The idea is to show that we only need to look at zonotopes of very small volume.
For this, we use two immediate consequences of Lemma~\ref{lemma:AverkovWagner}:

\begin{cor}\ 
\label{cor:AverkovWagner}
\begin{enumerate}[(i)]
 \item Let $C \subseteq \RR^2$ be a convex body of lattice-width at least $w$ and covering radius greater than $\mu$, with $\mu w >1$. Then,
\[
\vol(C) < \frac{w^2}{2\mu w - 2}.
\]
 \item Let $Z$ be a lattice $2$-zonotope of lattice-width $w \ge 3$ and covering radius $\mu > 1/2$. Then,
\[
w = 3 \quad , \quad \mu \leq \frac23 \quad \textrm{and} \quad \vol(Z) \leq 8.
\]
\end{enumerate}
\end{cor}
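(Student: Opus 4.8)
For (i) the plan is to apply Lemma~\ref{lemma:AverkovWagner} not to $C$ directly but to its extremal dilate. Set $\rho:=\mu(C)$, which by hypothesis satisfies $\rho>\mu$. As recalled in Section~\ref{sec:prelim}, the covering radius of $C$ is the largest factor for which a dilate of $C$ admits a hollow translate, so $\rho C$ has a hollow translate $\rho C+\bs t$. This is a hollow convex body of lattice-width $\rho\,w(C)\ge\rho w>\mu w>1$, hence Lemma~\ref{lemma:AverkovWagner}, applied with the parameter $\rho w$ (legitimate since the actual lattice-width is $\ge\rho w$), gives $\vol(\rho C+\bs t)\le\frac{(\rho w)^2}{2(\rho w-1)}$. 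Since $\vol(\rho C+\bs t)=\rho^2\vol(C)$ this reduces to $\vol(C)\le\frac{w^2}{2(\rho w-1)}$, and because $\rho>\mu$ we get the strict bound $\vol(C)<\frac{w^2}{2(\mu w-1)}=\frac{w^2}{2\mu w-2}$. This settles~(i).

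For (ii) I would first feed $C=Z$, $w=w(Z)\ge 3$ and $\mu=\tfrac12$ into~(i) — the condition $\mu w>1$ holds since $\tfrac12 w(Z)\ge\tfrac32$ — obtaining
\[
\vol(Z)<\frac{w(Z)^2}{w(Z)-2}.
\]
In the opposite direction, $Z$ is centrally symmetric about $\tfrac12\sum_i\bs u_i$, and a centrally symmetric planar convex body $K$ satisfies $\vol(K)\ge\tfrac12 w(K)^2$: translating $K$ to be $\bs 0$-symmetric (which changes neither its volume nor its lattice-width), the first successive minimum of the polar $K^*$ equals $\tfrac12 w(K)$, so Minkowski's first theorem (Theorem~\ref{thm:minkowski-first}) gives $\vol(K^*)\le 16/w(K)^2$, and then Mahler's inequality $\vol(K)\vol(K^*)\ge 8$ yields $\vol(K)\ge w(K)^2/2$. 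Combining,
\[
\tfrac12 w(Z)^2\le\vol(Z)<\frac{w(Z)^2}{w(Z)-2}
\]
forces $w(Z)-2<2$, and since $Z$ is a lattice polytope its lattice-width is an integer, so $w(Z)=3$. Plugging $w(Z)=3$ back in gives $\vol(Z)<9$, and as $\vol(Z)=\sum_{i<j}\abs{\det(\bs u_i,\bs u_j)}$ is a positive integer we conclude $\vol(Z)\le 8$. Finally, were $\mu(Z)>\tfrac23$, applying~(i) with $w=3$ and $\mu=\tfrac23$ would give $\vol(Z)<\frac{9}{2\cdot 2-2}=\tfrac92$, contradicting $\vol(Z)\ge\tfrac12 w(Z)^2=\tfrac92$; hence $\mu(Z)\le\tfrac23$.

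The only ingredient that is not pure bookkeeping is the reverse estimate $\vol(Z)\ge\tfrac12 w(Z)^2$ for centrally symmetric bodies, and I expect it to be the main point to get right. The constant $\tfrac12$ is essential here: the Averkov--Wagner bound $w^2/(w-2)$ and the symmetric-body bound $w^2/2$ cross exactly at $w=4$, which is precisely what pins down $w(Z)=3$ (the cruder estimate $\vol(Z)\ge\binom{w}{2}$ would only give $w(Z)\le 4$). A secondary point is the strictness in~(i): it is what forces one to work with the extremal dilate $\mu(C)C$, so one needs that the covering-radius dilation is attained and produces a genuinely hollow translate, which is exactly the standard characterization of $\mu$ recalled in Section~\ref{sec:prelim}.
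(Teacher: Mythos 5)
Your part (i) is correct and is essentially the paper's own argument: dilate by the covering radius $\rho=\mu(C)$, apply Lemma~\ref{lemma:AverkovWagner} to the resulting hollow translate of $\rho C$, and use strict monotonicity in the dilation parameter to pass from $\rho$ to $\mu$. For part (ii), however, you take a genuinely different route. The paper invokes the two-dimensional flatness theorem for $\bs 0$-symmetric bodies (\cite[Corollary~2.7]{AverkovWagner}), which gives $\mu w \le 2$ directly; together with $\mu>\tfrac12$, $w\ge 3$ and integrality of $w$ this yields $w=3$ and $\mu\le\tfrac23$ in one stroke, and the volume bound then comes from Lemma~\ref{lemma:AverkovWagner} applied at $\mu w\in(\tfrac32,2]$, maximizing $x^2/(x-1)$ on that interval. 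You instead establish the reverse volume bound $\vol(Z)\ge\tfrac12 w(Z)^2$ for centrally symmetric planar bodies --- via $\lambda_1(Z^*)=\tfrac12 w(Z)$ after translating $Z$ to be $\bs 0$-symmetric, Minkowski's first theorem (Theorem~\ref{thm:minkowski-first}) applied to the polar, and Mahler's planar inequality $\vol(K)\vol(K^*)\ge 8$ --- and then play it against part (i) evaluated at $\mu=\tfrac12$ (to force $w=3$ and $\vol(Z)\le 8$) and at $\mu=\tfrac23$ (to force $\mu(Z)\le\tfrac23$). This is correct, and your remark about the sharp constant $\tfrac12$ (tight for the diamond) is exactly right: the cruder bound $\binom{w}{2}$ would only give $w\le 4$. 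Note that combining your lower bound with part (i) is in effect a re-derivation of the symmetric flatness bound $\mu(K)w(K)\le 2$, so the two proofs rest on the same underlying fact; the paper imports it as a citation, whereas you import Mahler's inequality, an ingredient external to the paper that should be cited explicitly if this is written up. Both routes end with $\vol(Z)<9$ and conclude $\vol(Z)\le 8$ by integrality of the volume of a lattice zonotope.
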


\begin{proof}
(i): This is basically a rephrasing of Lemma~\ref{lemma:AverkovWagner}.
Let $\mu'>\mu$ be the covering radius of $C$ and let $C'=\mu' C$, which has a hollow translate and lattice-width at least~$\mu' w > 1$. Then, Lemma~\ref{lemma:AverkovWagner} gives
\[
\vol(C) = \frac{\vol(C')}{{\mu'}^2} \le \frac{{\mu'}^2 w^2}{{\mu'}^2 2({\mu'} w-1)} = \frac{w^2}{ 2\mu' w-2}  < \frac{w^2}{ 2\mu w-2}.
\]
(ii): As before, the zonotope $Z' = \mu Z$ has a hollow translate, and by the assumptions, its lattice-width equals $\mu w$ and is strictly greater than $3/2$.
Moreover, we also have $\mu w \leq 2$ by the two-dimensional ``flatness theorem'' for $\bs 0$-symmetric convex bodies due to~\cite[Corollary~2.7]{AverkovWagner}.
In combination, the inequalities $3/2 < \mu w \leq 2$ and $\mu > 1/2$ give $w=3$ and $\mu \leq 2/3$, because the lattice-width of any lattice zonotope is an integer.

Regarding the volume, Lemma~\ref{lemma:AverkovWagner} together with $1/\mu < 2$ gives
\[
\vol(Z) = \frac{\vol(Z')}{\mu^2} < \frac{4(\mu w)^2}{2(\mu w - 1)} \leq 9.
\]
The last inequality holds, since the function $x \mapsto \frac{x^2}{x-1}$ has maximum value equal to~$9/2$ in the interval $x \in [\frac32,2]$.
Finally, the volume of any lattice zonotope is an integer, so that $\vol(Z) \leq 8$ as claimed.
\end{proof}

For the proof of our main two-dimensional result we need the following classification of lattice parallelograms with primitive generators.
The classification is up to affine transformations that do not change the lattice~$\ZZ^d$.
More precisely, two lattice polytopes $P,Q \subseteq \RR^d$ are called \emphd{unimodularly equivalent} if there is a unimodular matrix $U \in \ZZ^{d \times d}$, that is, $|\det(U)|=1$, and a translation vector $\bs t \in \ZZ^d$ such that $P = UQ + \bs t$; in symbols $P \cong Q$.

\begin{lemma}
\label{lemma:p-q}
Let $P$ be a lattice parallelogram of area $q$ with primitive generators. Then, $P$ is unimodularly equivalent to 
\[
P_{p,q}:=[\bs 0, \bs e_1] + [\bs 0, (p,q)] ,
\]
for some $p\in \ZZ$ with $\gcd(p,q)=1$. Moreover, $P_{p,q}$ and $P_{p',q}$ are unimodularly equivalent 
if  $p'=\pm p^{\pm 1} \bmod q$.

Hence, if $q\le 8$, then $P$ is equivalent to either $P_{1,q}$ or to one of 
$P_{2,5}$, $P_{2,7}$, $P_{3,8}$. 
\end{lemma}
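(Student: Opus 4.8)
The plan is to proceed in three steps: put every such parallelogram into the claimed normal form, list enough unimodular equivalences among the $P_{p,q}$, and then enumerate the classes for $q\le 8$.

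First I would establish the normal form. Write $P=\bs t+[\bs 0,\bs u_1]+[\bs 0,\bs u_2]$ with $\bs u_1,\bs u_2\in\ZZ^2$ primitive and $|\det(\bs u_1,\bs u_2)|=q$. Since $\bs u_1$ is primitive it extends to a lattice basis, so some unimodular matrix sends $\bs u_1$ to $\bs e_1$; applying it and dropping the (irrelevant) translation we may assume $\bs u_1=\bs e_1$ and $\bs u_2=(p,\pm q)$, the second coordinate being $\pm q$ because the determinant is preserved. Applying $(x,y)\mapsto(x,-y)$ if needed gives $\bs u_2=(p,q)$ with $q>0$, and a shear $(x,y)\mapsto(x-ky,y)$ lets us take $0\le p<q$; primitivity of $\bs u_2$ is exactly $\gcd(p,q)=1$. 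Hence $P\cong P_{p,q}$ (for $q=1$ this forces $p=0$ and $P_{0,1}$ is the unit square, which is $\cong P_{1,1}$ via one more shear).

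Next I would record the equivalences. Three families of unimodular maps suffice: (a) the shear $(x,y)\mapsto(x-ky,y)$ fixes $\bs e_1$ and sends $(p,q)$ to $(p-kq,q)$, so $P_{p,q}\cong P_{p',q}$ whenever $p'\equiv p\pmod q$; (b) the map $(x,y)\mapsto(-x+y,y)$, of determinant $-1$, sends $\bs e_1\mapsto-\bs e_1$ and $(p,q)\mapsto(q-p,q)$, and $[\bs 0,-\bs e_1]+[\bs 0,(q-p,q)]=-\bs e_1+P_{q-p,q}$, giving $P_{p,q}\cong P_{p',q}$ whenever $p'\equiv-p\pmod q$; (c) picking B\'ezout integers $p^*,k$ with $pp^*-kq=1$, the integral matrix $U=\left(\begin{smallmatrix}p^*&-k\\ q&-p\end{smallmatrix}\right)$ has $\det U=-1$, $U\bs e_1=(p^*,q)$ and $U(p,q)=\bs e_1$, so $U(P_{p,q})=[\bs 0,(p^*,q)]+[\bs 0,\bs e_1]=P_{p^*,q}$ and hence (after normalising $p^*$ modulo $q$ with (a)) $P_{p,q}\cong P_{p',q}$ whenever $p'\equiv p^{-1}\pmod q$. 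Composing (a)--(c) yields $P_{p,q}\cong P_{p',q}$ for all $p'\equiv\pm p^{\pm1}\pmod q$.

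Finally, for $q\le 8$ I would enumerate. By the previous two steps the classes of area-$q$ parallelograms with primitive generators ($q\ge2$) are obtained from orbit representatives of the group $\langle x\mapsto-x,\ x\mapsto x^{-1}\rangle$ acting on $(\ZZ/q\ZZ)^{\times}$, and for $q=1$ the only class is the unit square $=P_{1,1}$. For $q\in\{2,3,4,6\}$ the group $(\ZZ/q\ZZ)^{\times}$ is a single such orbit, so one only gets $P_{1,q}$. For $q=5$ the orbits are $\{1,4\}$ and $\{2,3\}$ (as $2^{-1}\equiv3$), giving $P_{1,5}$ and $P_{2,5}$; for $q=7$ they are $\{1,6\}$ and $\{2,3,4,5\}$ (as $2^{-1}\equiv4$, $3^{-1}\equiv5$), giving $P_{1,7}$ and $P_{2,7}$; for $q=8$ they are $\{1,7\}$ and $\{3,5\}$ (as $3^{-1}\equiv3$), giving $P_{1,8}$ and $P_{3,8}$. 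This is precisely the asserted list. The only places needing genuine care are part (c) above --- checking that the swap-of-generators matrix is integral, unimodular, and literally carries $P_{p,q}$ to $P_{p^*,q}$ (not merely to a parallelogram with the same edge directions) --- and the hidden lattice translation in part (b); the enumeration itself is routine but must be verified to be exhaustive.
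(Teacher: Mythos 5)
Your proposal is correct and follows essentially the same route as the paper: reduce to $\bs u_1=\bs e_1$ using primitivity, use the shear for $p\mapsto p\pm q$, a reflection (plus lattice translation) for $p\mapsto -p$, and the B\'ezout matrix $\left(\begin{smallmatrix}p^*&-k\\ q&-p\end{smallmatrix}\right)$ swapping the generators for $p\mapsto p^{-1}\bmod q$, followed by the orbit enumeration for $q\le 8$. The only difference is that you spell out the orbit computation explicitly, which the paper leaves implicit.
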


\begin{proof}
Since our generators are primitive, there is no loss of generality in assuming that the first one is $\bs e_1$, which already implies that our parallelogram is a lattice translation of $P_{p,q}$ for some $p$. The condition $\gcd(p,q)$ is neccessary (and sufficient) for the generator $(p,q)$ to be primitive.

The unimodular transformation $\binom{1\ 1}{0 \ 1}$ shows that $P_{p,q} \cong P_{p+q,q}$; hence  $p$ is only important modulo $q$.
That $P_{p,q} \cong P_{-p,q}$ follows by reflection on the $y$-axis (followed by the translation by the vector~$\bs e_1$) and that $P_{p,q} \cong P_{p',q}$ when $p'=p^{-1} \bmod q$ follows from the fact that if $p'p =1 +aq$ for some integer $a$, then the unimodular transformation $\binom{p'\ -a}{q\ \ -p}$ sends $\{\bs e_1, (p,q)\}$ to $\{(p',q), \bs e_1\}$.
\end{proof}

\begin{thm}
\label{thm:cosimple-dim2}
All lattice $2$-zonotopes of covering radius greater than 1/2 are unimodularly equivalent to one of the following:
\begin{enumerate}
\item Parallelograms of lattice-width one, generated by $\{(1,0), (0,k)\}$ for some $k\ge 1$.
\item Parallelograms $P_{1,k}$ of lattice-width two and area $k$, $k\ge 2$.
\item Hexagons of lattice-width two, with volume vector $(1,1,k)$ for some $k\ge 1$.
\item The parallelogram $P_{2,5}$ of lattice-width three and volume~$5$.
\end{enumerate}
In particular, Conjecture~\ref{conj:cosimple} holds in dimension two for any number of generators.
\end{thm}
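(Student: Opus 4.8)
The plan is to normalize the zonotope by a few easy reductions and then split into cases according to the lattice-width, handling widths one and two by hand and reducing the case of width at least three to a finite enumeration via Corollary~\ref{cor:AverkovWagner}(ii). First I would record that for a zonotope $Z=\sum_i[\bs0,\bs u_i]$ one has $w(Z,\bs z)=\sum_i|{\bs z}^\intercal\bs u_i|$. Now suppose some generator is non-primitive, say $\bs u_j=k\bs u_j'$ with $k\ge 2$ and $\bs u_j'$ primitive (two equal or opposite generators can be merged into such a generator). Projecting along $\bs u_j'$ sends the remaining generators to a lattice $1$-zonotope, i.e.\ a segment of some lattice length $c\ge 0$; if $c=0$ then $Z$ is degenerate, and otherwise Proposition~\ref{prop:max-bound} gives $\mu(Z)\le\max\{1/c,1/k\}$, because every fibre of the projection contains a translate of $[\bs0,k\bs u_j']$ and so has lattice length at least $k$. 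Hence $\mu(Z)\le\tfrac12$ unless $c=1$, and in the latter case exactly one of the remaining generators leaves the fibre, so after a shear $Z\cong[\bs0,\bs e_1]+[\bs0,(0,k')]$, which is the first family. From now on I may therefore assume that all generators are primitive and pairwise non-parallel; in particular $w(Z)\ge 2$, since $w(Z)=1$ would force two parallel generators.

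Next I would dispose of lattice-width two. Writing out $w(Z,\bs z)=2$ with $\bs z$ primitive, pairwise non-parallelism forces one of two situations: either $n=2$, or $n=3$ with $\bs z$ orthogonal to exactly one generator $\bs u_3$. In the first case $Z$ is a parallelogram with primitive generators, so by Lemma~\ref{lemma:p-q} it is $\cong P_{p,q}$, and an elementary width computation shows that the ones of width at most two are exactly the $P_{1,q}$: this produces the unit square $P_{1,1}$ (first family) and the parallelograms $P_{1,k}$ with $k\ge2$ (second family). In the second case $|{\bs z}^\intercal\bs u_1|=|\det(\bs u_1,\bs u_3)|=v_{13}$ and likewise $v_{23}=1$, so the volume vector is $(1,1,k)$ with $k=|\det(\bs u_1,\bs u_2)|$, which is precisely the third family of hexagons. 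Conversely each of these is genuinely of width two, so the width-$\le 2$ case is completely accounted for by families (1), (2), (3).

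The core of the proof is lattice-width at least three. Here Corollary~\ref{cor:AverkovWagner}(ii) gives $w(Z)=3$, $\mu(Z)\le\tfrac23$ and $\vol(Z)\le 8$. Since the generators are pairwise non-parallel, every $2\times 2$ minor of the generator matrix is at least $1$, so $\binom n2\le\vol(Z)\le 8$ and $n\le 4$; moreover for $n=4$ one checks $\vol(Z)\ge 7$ (no four primitive, pairwise non-parallel vectors have all pairwise minors equal to one). It then remains to enumerate, up to unimodular equivalence, the finitely many lattice $2$-zonotopes of width three and volume at most eight and to compute their covering radii. For $n=2$, Lemma~\ref{lemma:p-q} plus a width computation isolates $P_{2,5}$ and $P_{2,7}$. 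For $n=3$, $w(Z)\ge3$ forces each pair of entries of the volume vector to sum to at least three which, together with total volume $\le 8$ and the gcd constraints imposed by primitivity of the generators, leaves only the volume vectors $(1,2,3)$, $(1,2,5)$, $(1,3,4)$ and $(2,2,4)$, hence a handful of explicit hexagons; for $n=4$ one similarly gets only a handful of octagons of volume seven or eight. For each zonotope on this list I would compute $\mu$ directly: for a parallelogram $P_{p,q}$ via the identity $\mu(P_{p,q})=\mu\bigl([-\tfrac12,\tfrac12]^2,\,M^{-1}\ZZ^2\bigr)$ with $M=\left(\begin{smallmatrix}1&p\\0&q\end{smallmatrix}\right)$, and for the hexagons and octagons either by exhibiting a deep hole, or by exhibiting an explicit $\ZZ^2$-covering by $\tfrac12 Z$, possibly aided by the bounds of Propositions~\ref{prop:CSS-sum-bound} and~\ref{prop:max-bound}. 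One finds $\mu(P_{2,5})=\tfrac35>\tfrac12$ (the fourth family), while every other zonotope on the list --- in particular $P_{2,7}$, and all the width-three hexagons and octagons --- has $\mu\le\tfrac12$. This completes the classification.

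Finally, to deduce Conjecture~\ref{conj:cosimple} in dimension two, note that the families (1), (2), (4) consist of parallelograms, whose two generators are linearly independent and therefore admit no non-trivial linear dependence at all, so they are not cosimple; and a family-(3) hexagon, with generators normalizable to $\bs e_1,(1,k),\bs e_2$, has primitive linear dependence $(-1,1,-k)$, whose coefficients are not pairwise distinct in absolute value, so it is not cosimple either. Hence no lattice $2$-zonotope with $\mu>\tfrac12$ is cosimple, so every cosimple lattice $2$-zonotope satisfies $\mu\le\tfrac12<\tfrac23=\tfrac{d}{d+2}$, which is Conjecture~\ref{conj:cosimple} for $d=2$. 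The only real difficulty is the width-three step: the enumeration of lattice $2$-zonotopes of width three and volume at most eight, though finite, is not short, and the exact covering-radius computations are delicate, the subtle point being to separate the single exceptional parallelogram $P_{2,5}$ with $\mu=\tfrac35$ from the otherwise uniform bound $\mu\le\tfrac12$.
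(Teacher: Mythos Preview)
Your approach is essentially the paper's: split by lattice-width, handle $w\le 2$ by direct classification, and for $w\ge 3$ invoke Corollary~\ref{cor:AverkovWagner}(ii) to force $w=3$ and $\vol(Z)\le 8$, then enumerate by the number of generators (your upfront reduction to primitive, pairwise non-parallel generators is a minor reorganization that lets you drop $(1,2,4)$ and $P_{3,8}$ from the final lists, whereas the paper keeps them and checks them directly). One small slip: the claim ``$w(Z)=1$ would force two parallel generators'' fails for $n=2$ --- the unit square has primitive non-parallel generators and width one --- though you do recover $P_{1,1}$ later by taking ``width at most two'' in the parallelogram case, so the conclusion stands; just handle the $n=2$, $w=1$ case explicitly to make the logic consistent.
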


\begin{proof}
Let $Z$ be a lattice $2$-zonotope.
We argue depending on the lattice-width of~$Z$.
Lattice-width one implies that~$Z$ is (up to unimodular equivalence) a parallelogram of the type in part~(1).

Parallelograms of lattice-width two must attain their width either with respect to a diagonal direction or with respect to an edge~$e$. The former are the parallelograms in part (2), and the latter have $\mu(Z) = 1/2$ unless the edge $e$ is primitive, in which case they have area two and are either in part (1) or part (2), with $k=2$.

The non-parallelograms of lattice-width two necessarily have three generators which can be assumed to be $(a,0), (b,1), (c,1)$, as we may apply a unimodular transformation so that the lattice-width is attained in the direction of the second coordinate.
Such a hexagon contains the parallelogram with generators $(a,0)$ and $(b+c,2)$, whose covering radius is $\max\{\frac12,\frac1a\}$, so $\mu(Z) > 1/2$ implies $a=1$ and the volume vector is $(1,1,k)$, with $k=|b-c|$.

Hence, for the rest of the proof we assume that~$Z$ has lattice-width at least three and covering radius $\mu = \mu(Z) > 1/2$, which implies by Corollary~\ref{cor:AverkovWagner}~(ii) that $\vol(Z) \le 8$.
 
We now argue depending on the number of generators in $Z$.
This number must be less than five, since the volume of a lattice zonotope with $n$ generators (in linear general position) is at least $\binom{n}{2}$, and $\binom52 > 8$. So, we have three cases:

\begin{itemize}
\item Suppose that~$Z$ has two generators, that is, it is a parallelogram.
If one of the generators is not primitive, then the fact that this edge has length at least two and that the width of $Z$ with respect to the functional constant on this edge is more than two implies $\mu(Z)\le 1/2$. (This is a particular case of Proposition~\ref{prop:max-bound}, where we project $Z$ along the functional $f$).

If $Z$ is a parallelogram with both edges primitive, Lemma~\ref{lemma:p-q} implies that $Z$ is either in part (2) or it is equivalent to one of $P_{2,5}$, $P_{2,7}$, $P_{3,8}$. Since $P_{2,5}$ is in part (4), we only need to check that 
$P_{2,7}$ and  $P_{3,8}$ have $\mu\le 1/2$. For this:
\begin{itemize}
\item $P_{2,7}$ contains the parallelogram $P'$ generated by $\bs u_1=(2,6)$ and $\bs u_2=(1,1)$; $\bs u_1$ is not primitive and $P'$ has lattice-width two with respect to the functional $f(x,y) = 3x-y$ vanishing on it, so $\mu(P_{2,7}) \le \mu(P') \le 1/2$.
\item $P_{3,8}$ is equivalent to $P_{5,8}$, which contains the parallelogram $P'$ generated by $(2,2)$ and $(4,6)$; since none of them is primitive, $\mu(P_{5,8}) \le \mu(P') \le 1/2$.
\end{itemize}

\item Suppose that~$Z$ has three generators. The volume vector cannot be $(2,2,2)$ or of the form $(a,a,b)$ with $\gcd(a,b)=1$ since that implies lattice-width two. Indeed, without loss of generality assume that the generator separating the two $a$'s (with $a=b=2$ in the first case) is of the form $(p,0)$. Then the other two generators are $(q,h)$ and $(r,h)$ with $a = ph$, so that $b=(q-r)h$. Then, $\gcd(a,b)=1$ implies $h=1$, hence width two. The case $a=b=2$, using $b=(q-r)h$ leads to either $h=1$ (hence width two) or $q-r=1$, implying that one of $q$ or $r$ is even and the corresponding generator is not primitive.

This leaves only the following possible volume vectors $(v_1,v_2,v_3)$ satisfying $v_1+v_2+v_3\le 8$:
\[
(v_1,v_2,v_3) \in \{
(1,2,3), 
(1,2,4), 
(1,2,5), 
(1,3,4),
(2,2,4)
 \} .
\]
That these five zonotopes have $\mu\le \frac12$ is illustrated in Figure~\ref{fig:dimtwo}, where they are shown to contain a parallelepiped with horizontal base of length two and height two. That parallelepiped has $\mu=\frac12$, so a polygon containing it has $\mu$ bounded by that. (For $(1,2,3)$ we show that such a parallelpiped is contained in $Z_1\cup Z_2$ where $Z_1$ and $Z_2$ are translated to one another by the vector $(2,2)\in 2\ZZ^2$, which is enough for the implication.) In fact, their exact covering radii are $\frac12$, $\frac37$, $\frac37$, $\frac37$, and $\frac12$ (see~\cite[Table~2]{cslovjecsekmalikiosisnaszodischymura2022computing} for the first four).
\begin{figure}[htb]
\begin{tabular}{ccccc}
\includegraphics[scale=0.015]{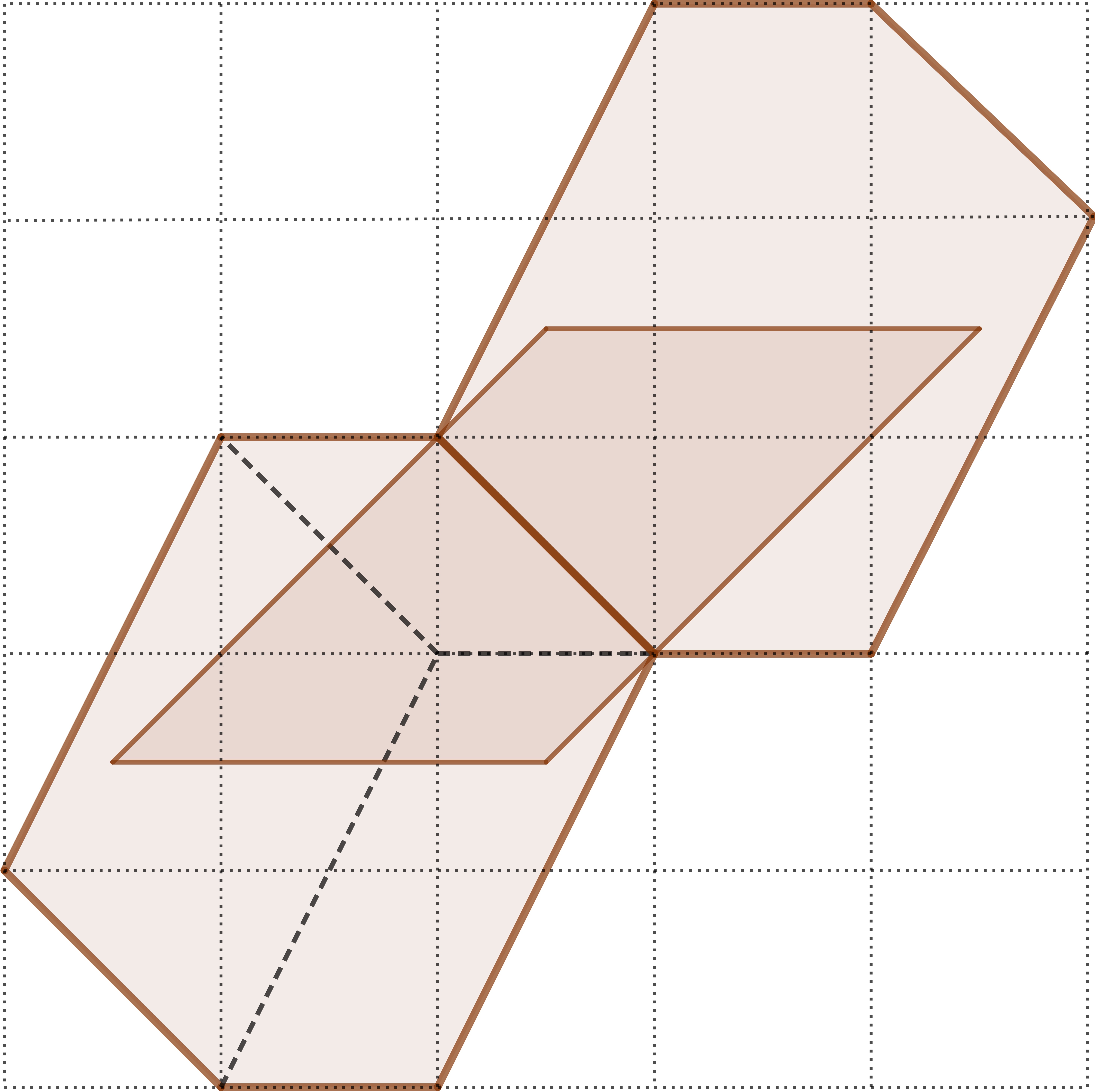}&
\includegraphics[scale=0.015]{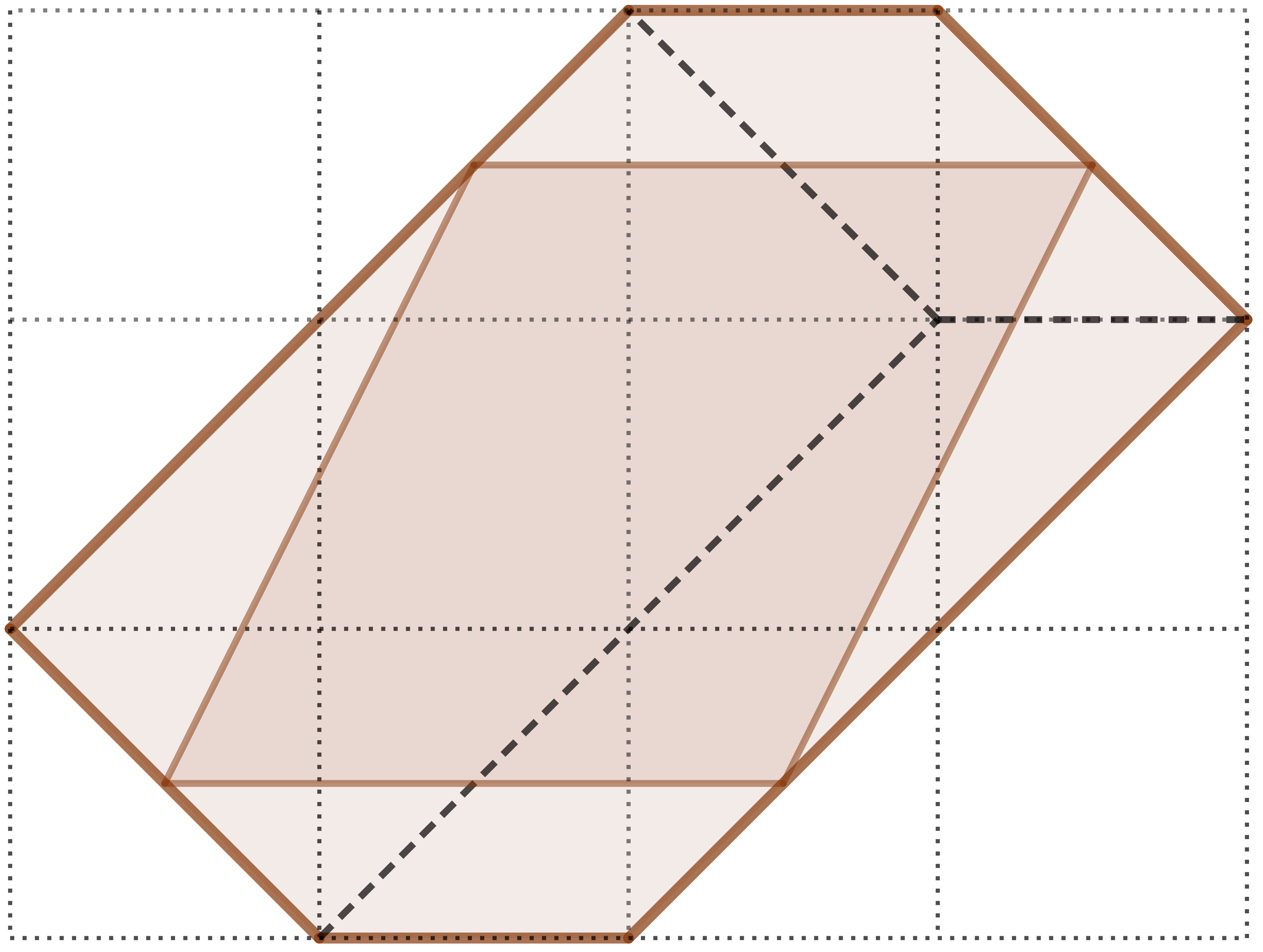}&
\includegraphics[scale=0.015]{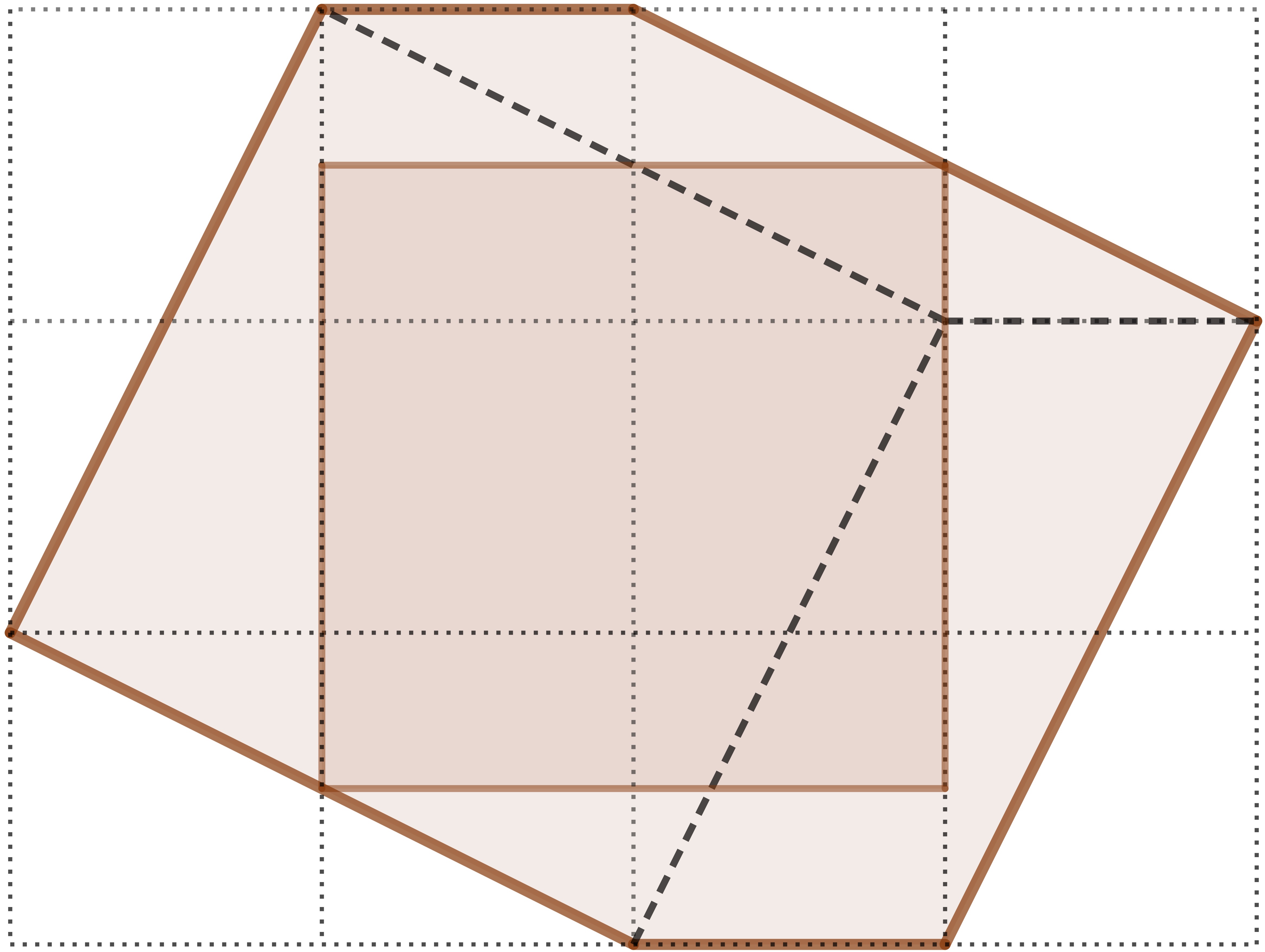}&
\includegraphics[scale=0.015]{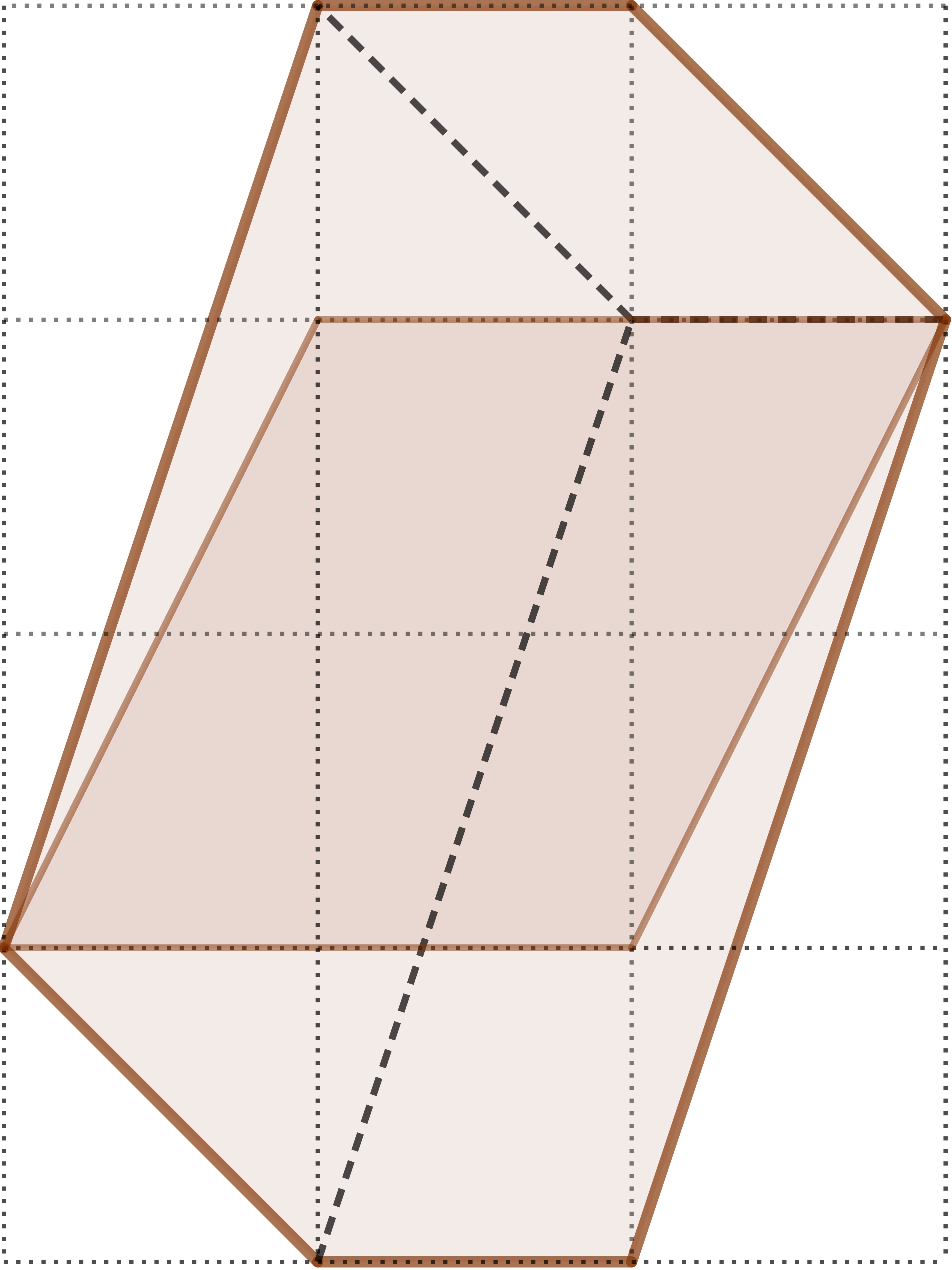}&
\includegraphics[scale=0.015]{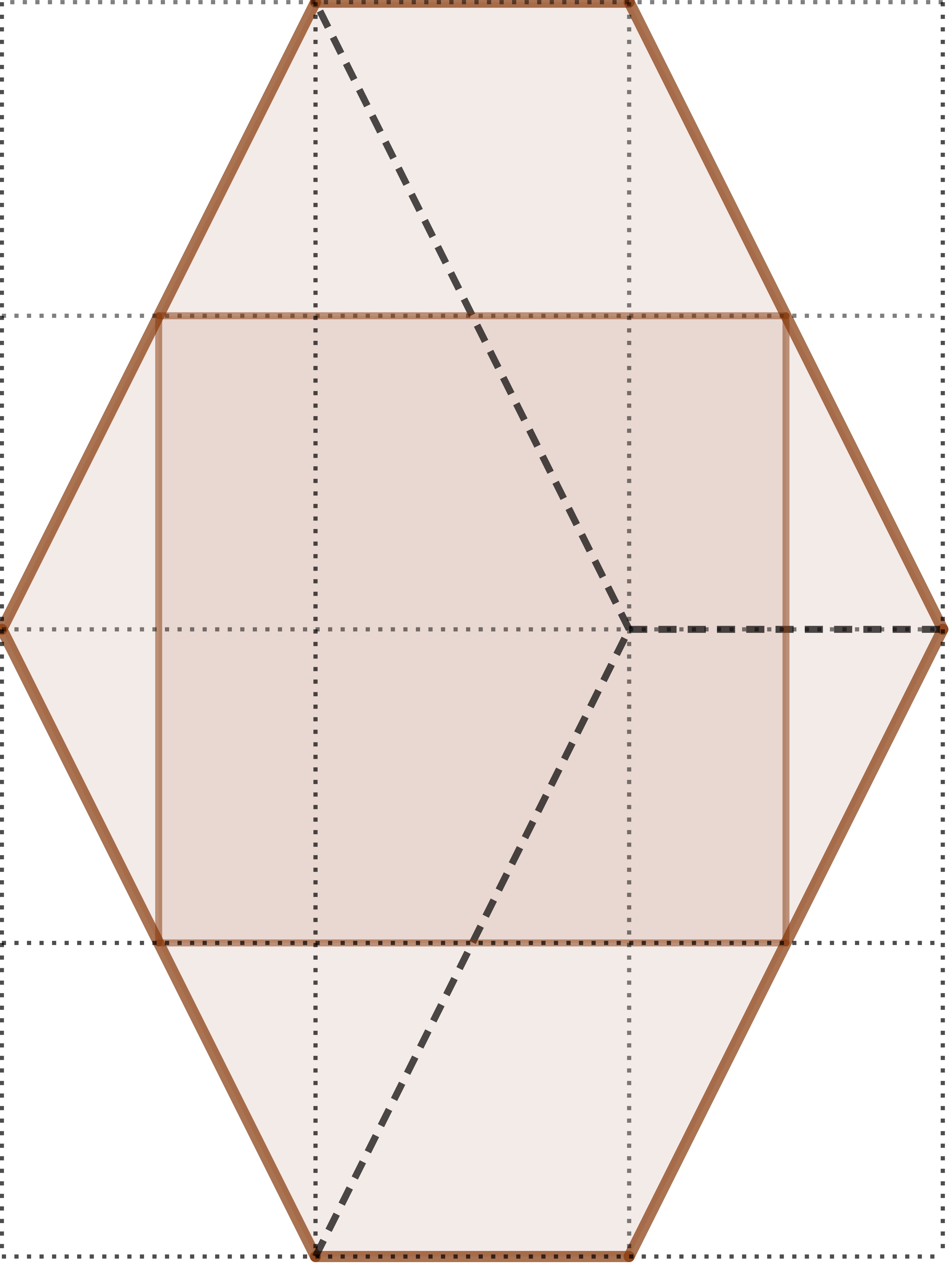}\\
(1,2,3) &(1,2,4)&(1,2,5)&(1,3,4)&(2,2,4)\\
\end{tabular}
\caption{The five hexagons in the proof of Theorem~\ref{thm:cosimple-dim2}, with their volume vectors. In each of them a parallelepiped with base and height equal to 2 is shown, to illustrate that the hexagons have $\mu\le \frac12$}
\label{fig:dimtwo}
\end{figure}

\item If $Z$ has four pairwise non-proportional generators, then the volume of $Z$ equals the sum of the six volumes of the parallelograms generated by the $2$-element subsets of the generators (cf.~\cite[Eq.~(57)]{shephard1974combinatorial}). The only possibilities for the volume $6$-tuples adding up to eight or less are
\[
\{(1,1,1,1,1,1) ,(1,1,1,1,1,2) , (1,1,1,1,1,3), (1,1,1,1,2,2)\}.
\]
These numbers, modulo sign and reordering, are the six $2\times 2$ minors $(p_{ij})_{1 \le i < j \le 4}$ of a $2\times 4$ matrix, hence they have to satisfy 
the following Pl\"ucker relation (see, e.g.~\cite[Section 4.2]{MillerSturmfels}):
\[
p_{14}p_{23} - p_{13}p_{24} + p_{12}p_{34} = 0.
\] 
The only one where this is possible is $(1,1,1,1,1,2)$, corresponding uniquely (modulo unimodular transformation) to the 
zonotope generated by $(1,0)$, $(0,1)$, $(1,1)$, and $(1,-1)$. 
This has covering radius exactly $1/2$ (see Figure~\ref{fig:111112}).
\begin{figure}[htb]
	\begin{center}
\includegraphics[scale=0.015]{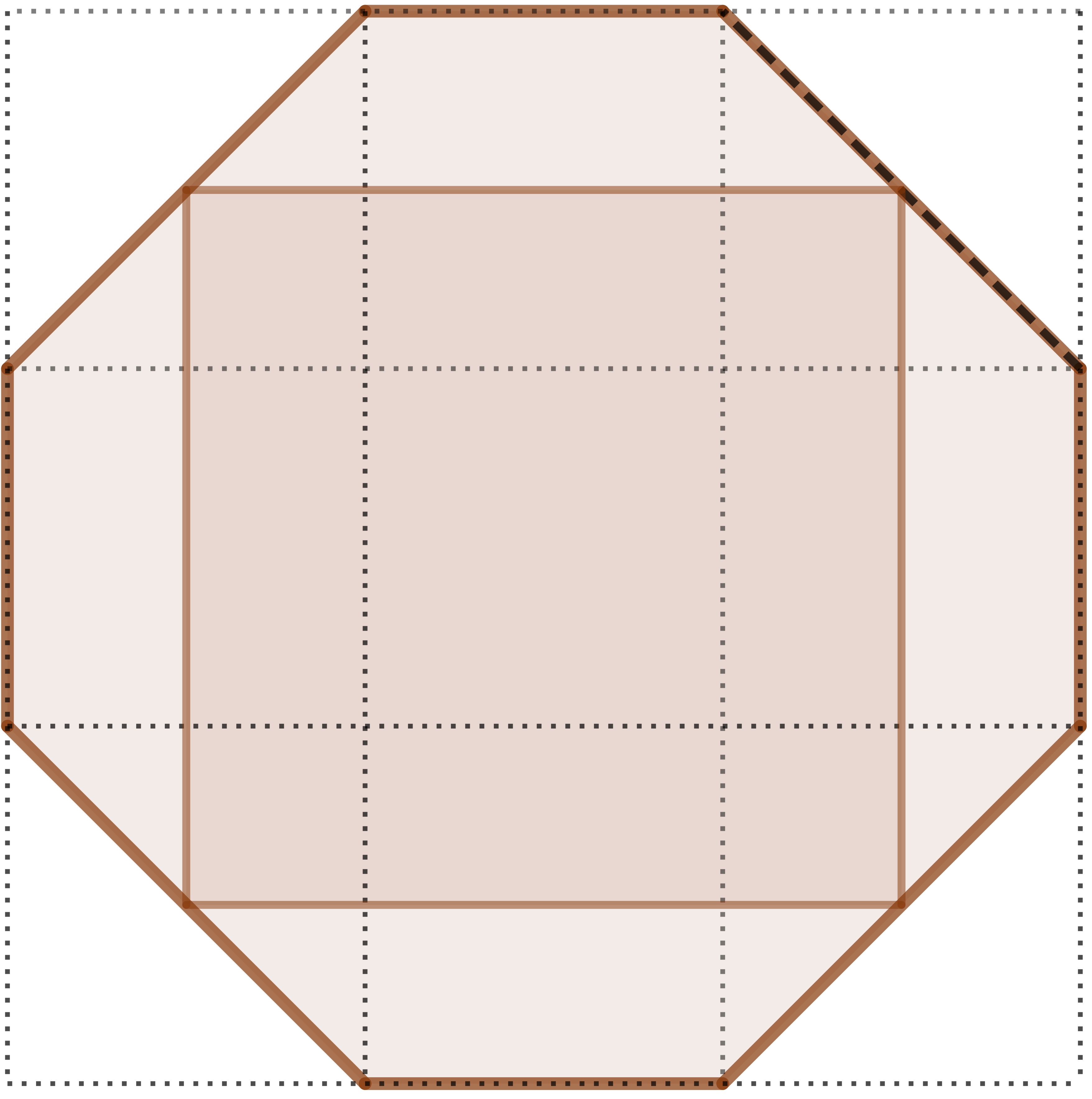}
\end{center}
\caption{The  octagon in the proof of Theorem~\ref{thm:cosimple-dim2}, with an inscribed square implying $\mu\le \frac12$}
\label{fig:111112}
\end{figure}
\qedhere
\end{itemize}
\end{proof}

\begin{rem}
\label{rem:cosimple-dim2-covrads}
The covering radii of the zonotopes in Theorem~\ref{thm:cosimple-dim2} can be computed to be as follows:
\begin{enumerate}
 \item $\mu=1$;
 \item $\mu = 1/2 + 1/k$, if $k$ is even, and $\mu = 1/2 + 1/(2k)$, if $k$ is odd;
 \item $\mu = 1/2 + 1/(2k+2)$, if $k$ is even, and $\mu = 1/2 + 1/(4k+2)$, if $k$ is odd;
 \item $\mu = 3/5$.
\end{enumerate}
In our calculations below (proof of Proposition~\ref{prop:width33}) we need the last one of them, so let us  prove it.
Using Lemma~\ref{lemma:p-q} it is easy to see that $P_{2,5}$ is isomorphic to the parallelogram~$Q$ generated by $(2,-1)$ and $(1,2)$, depicted in the left picture of Figure~\ref{fig:parall5}.
The caption of the figure explains why, indeed, $\mu(P_{2,5}) = \mu(Q) = 3/5$.

\begin{figure}[htb]
	\begin{center}
\includegraphics[scale=0.25]{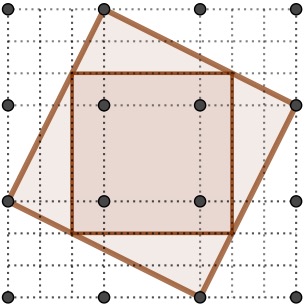}
\qquad\qquad
\raisebox{0.55cm}{\includegraphics[scale=0.15]{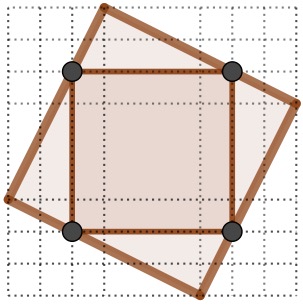}}
\end{center}
\caption{The covering radius of the parallelogram $Q \cong P_{2,5}$ equals~$3/5$: 
The left picture shows that~$Q$ contains a translation of the square $[0,5/3]^2$; hence $\mu(P_{2,5}) \le 3/5$. 
For the equality, consider the right picture, where we scale down~$Q$ by~$3/5$ about its center, so that the axes-parallel square in it becomes a lattice unit square with its vertices in the boundary of~$(3/5)Q$.
Since any smaller dilation will fail to contain points from~$\ZZ^2$, we have that $\mu \cdot P_{2,5} + \ZZ^{2}$ does not cover~$\RR^2$ for any $\mu < 3/5$}
\label{fig:parall5}
\end{figure}
\end{rem}

\begin{prop}\label{prop:allbut2gcd}
	Let $n \geq 4$. Suppose that Conjecture~\ref{slrc} holds for $n-1$ velocities and that we have an instance with $n$ velocities satisfying  $\gcd(v_1,\dots, v_{n})=1$.
	If $\delta = \gcd(v_3,\dots, v_{n})$ satisfies
	\begin{align*}
		\delta \ge 2 + \frac8{n-3} & \quad\text{and $\delta$ is even, or} \\
		\delta \ge 1 + \frac4{n-3} & \quad\text{and $\delta$ is odd,} 
	\end{align*}
	then Conjecture~\ref{slrc} holds for $v_1,\dotsc,v_n$.
\end{prop}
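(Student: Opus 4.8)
We outline the argument. Throughout, let $Z = Z_{\bs v} = \sum_{i=1}^n [\bs 0, \bs u_i] \subseteq \RR^{n-1}$ be the sLRZ associated with $\bs v = (v_1,\dots,v_n)$; by Proposition~\ref{prop:slrcgeom} the claim is equivalent to $\mu(Z) \le \frac{n-1}{n+1}$.

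First I would dispose of the non-primitive case: if some generator $\bs u_i$ is not primitive, then sLRC for $\bs v$ already follows from Proposition~\ref{prop:allbut1gcd}, so from now on all $\bs u_i$ are primitive. The plan then hinges on the two-dimensional parallelogram $P := Z_{\{1,2\}} = [\bs 0, \bs u_1] + [\bs 0, \bs u_2]$. By Lemma~\ref{lemma:gcd-geom} applied to $S = \{1,2\}$ it has area $\vol(P) = v_{\{3,\dots,n\}} = \delta$, measured in $L := \lin(\bs u_1,\bs u_2)$ with respect to $L_\ZZ := \ZZ^{n-1} \cap L$. Since $\bs u_1,\bs u_2$ are primitive (hence primitive in $L_\ZZ$), Lemma~\ref{lemma:p-q} identifies $P$, up to unimodular equivalence of $L_\ZZ$, with some $P_{p,\delta}$; note that $\delta \ge 3$ under either hypothesis on $\delta$.

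Next I would apply Proposition~\ref{prop:max-bound} to the rational projection $\pi \colon \RR^{n-1} \to \RR^{n-1}/L \cong \RR^{n-3}$ along $L$, with lattice $\Lambda := \pi(\ZZ^{n-1})$. Writing $Z = P + Q$ with $Q = \sum_{i=3}^n [\bs 0, \bs u_i]$ and $P \subseteq L = \ker\pi$, for every $\bs y \in \pi(Z)$ the fiber $Z \cap \pi^{-1}(\bs y) = P + \bigl(Q \cap \pi^{-1}(\bs y)\bigr)$ contains a translate of $P$; since the covering radius is translation-invariant and non-increasing under inclusion, $\mu\bigl(Z \cap \pi^{-1}(\bs y)\bigr) \le \mu(P, L_\ZZ) = \mu(P_{p,\delta})$ for all $\bs y$. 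Hence Proposition~\ref{prop:max-bound} gives $\mu(Z) \le \max\bigl\{\mu(\pi(Z),\Lambda),\, \mu(P_{p,\delta})\bigr\}$, and it remains to bound both terms by $\frac{n-1}{n+1}$. For the fiber term, Theorem~\ref{thm:cosimple-dim2} tells us that a lattice $2$-zonotope with primitive generators and area $\delta \ge 3$ has $\mu \le 1/2$ unless it is of type (2), i.e.\ some $P_{1,\delta}$, or of type (4), i.e.\ $P_{2,5}$; using the covering radii in Remark~\ref{rem:cosimple-dim2-covrads} this yields $\mu(P_{p,\delta}) \le \frac12 + \frac1\delta$ when $\delta$ is even and $\mu(P_{p,\delta}) \le \frac12 + \frac1{2\delta}$ when $\delta$ is odd, and a short computation shows that the thresholds $\delta \ge 2 + \frac{8}{n-3}$ and $\delta \ge 1 + \frac{4}{n-3}$ are exactly the ones making these bounds $\le \frac{n-1}{n+1}$. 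For the image term, the vectors $\pi(\bs u_3),\dots,\pi(\bs u_n)$ are $n-2$ nonzero vectors of $\RR^{n-3}$ in linear general position (any $n-3$ of $\bs u_3,\dots,\bs u_n$ together with $\bs u_1, \bs u_2$ are independent in $\RR^{n-1}$), and they satisfy the linear dependence with coefficients $(v_3,\dots,v_n)$, read off from $\sum_{i=3}^n v_i \bs u_i = -v_1\bs u_1 - v_2\bs u_2 \in L$. As $v_3,\dots,v_n$ are nonzero with pairwise distinct absolute values, $\pi(Z)$ is cosimple, and having $(n-3)+1$ generators it is an sLRZ of dimension $n-3$ (with respect to the sublattice of $\Lambda$ that its generators span; passing to the coarser $\Lambda$ only decreases $\mu$). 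By sLRC for $n-2$ velocities, which follows from the assumed sLRC for $n-1$ velocities, $\mu(\pi(Z),\Lambda) \le \frac{n-3}{n-1} < \frac{n-1}{n+1}$, and combining the two estimates gives $\mu(Z) \le \frac{n-1}{n+1}$.

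I expect the main obstacle to be the fiber term: one needs the full two-dimensional classification of Theorem~\ref{thm:cosimple-dim2} (not merely a volume bound) to control $\mu(P_{p,\delta})$ for \emph{every} admissible $p$, and it is the precise covering-radius formulas of Remark~\ref{rem:cosimple-dim2-covrads} — in particular the appearance of $P_{2,5}$ with $\mu = 3/5$ in the odd case — that force the exact arithmetic thresholds on $\delta$ appearing in the statement. A secondary technical point is verifying that the projected configuration $\pi(Z)$ is genuinely an sLRZ (linear general position and distinctness of its velocity vector), which for the small cases $n=4,5$ can also be checked by hand.
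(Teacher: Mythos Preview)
Your proof is correct and follows essentially the same route as the paper: both apply Proposition~\ref{prop:max-bound} to the projection along $\lin(\bs u_1,\bs u_2)$, bound the image term via the inductive hypothesis (the projection being an sLRZ of dimension $n-3$), and bound the fiber term $\mu(Z_{\{1,2\}})$ using the two-dimensional classification of Theorem~\ref{thm:cosimple-dim2} together with the explicit covering radii in Remark~\ref{rem:cosimple-dim2-covrads}. The only cosmetic difference is that you dispose of the non-primitive-generator case up front via Proposition~\ref{prop:allbut1gcd}, whereas the paper folds it into the case analysis (type~(1) parallelograms); your parenthetical about the sublattice of~$\Lambda$ is in fact unnecessary, since $\bs u_1,\dots,\bs u_n$ span $\ZZ^{n-1}$ and $\pi(\bs u_1)=\pi(\bs u_2)=\bs 0$, so $\pi(\bs u_3),\dots,\pi(\bs u_n)$ already span $\Lambda=\pi(\ZZ^{n-1})$.
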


\begin{proof}
In the light of Lemma~\ref{lemma:gcd-geom}, the condition  
$\delta=\gcd(v_3,\dots, v_{n})$ is equivalent to saying that the parallelogram $Z_{\{1,2\}}$ spanned by $\bs u_1$ and~$\bs u_2$ has area $\delta$.

If $\mu(Z_{\{1,2\}}) \le \tfrac{n-1}{n+1}$ then sLRZ holds for $(v_1,\dots,v_n)$ by Proposition~\ref{prop:max-bound} applied to the projection along the plane $\lin(\{\bs u_1, \bs u_2\})$. 
Indeed, every fiber of the projection contains a translated copy of $Z_{\{1,2\}}$, and the image of the projection is an sLRZ with two less generators. 

Hence, we have to consider only the parallelograms with $\mu(Z_{\{1,2\}})>\tfrac{n-1}{n+1} \geq \tfrac35$, listed in Theorem~\ref{thm:cosimple-dim2} and whose covering radii are given in Remark~\ref{rem:cosimple-dim2-covrads}.
Those of part~(1) are discarded by Lemma~\ref{lemma:primitive}, since they have a non-primitive generator amd the one in part~(3) has covering radius $\tfrac35 \leq\tfrac{n-1}{n+1}$, so only those in part~(2) need to be considered. 
These have covering radius  $1/2 +1/\delta$ if their area $\delta$ is even, and $1/2+1/2\delta$, if it is odd. The inequalities
\[
\frac12 + \frac1\delta \leq \frac{n-1}{n+1}
\quad\text{ and }\quad 
\frac12 + \frac1{2\delta} \leq \frac{n-1}{n+1}
\]
are, respectively, equivalent to the ones in the statement.
\end{proof}

The values of $\delta$ not covered by Proposition~\ref{prop:allbut2gcd} quickly decrease with $n$. In fact, 
since~Conjecture~\ref{slrc} holds for $n \leq 4$ (see Remark~\ref{rem:alcantara}), the proposition implies the following:

\begin{cor}
\label{cor:small-gcd}
	Suppose that Conjecture~\ref{slrc} holds for $n-1$ velocities but fails for some vector with $n$ velocities and satisfying $\gcd(v_1,\dots, v_{n})=1$. Then either:
	\begin{enumerate}
	    \item $n\in \{5,6\}$ and $\gcd(v_3,\dots, v_{n}) \in \{1,2,4\}$, or
	    \item $n\ge 7$ and $\gcd(v_3,\dots, v_{n}) \in \{1,2\}$.
	\end{enumerate}
\end{cor}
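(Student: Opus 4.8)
The plan is to obtain Corollary~\ref{cor:small-gcd} as an immediate consequence of Proposition~\ref{prop:allbut2gcd} by contraposition, once the range of $n$ has been pinned down.

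First I would note that the shifted LRC is already known to hold for every number of velocities $m\le 4$: the cases $m\le 3$ are recalled in the introduction, and $m=4$ in Remark~\ref{rem:alcantara}. Hence, if sLRC holds for $n-1$ velocities but fails for some vector with $n$ velocities and $\gcd(v_1,\dots,v_n)=1$, we must have $n\ge 5$. In particular $n-3\ge 2$, so all the hypotheses of Proposition~\ref{prop:allbut2gcd} (which requires $n\ge 4$) are met, and we may apply it to the failing vector.

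Next, put $\delta:=\gcd(v_3,\dots,v_n)$. Since the conclusion of Proposition~\ref{prop:allbut2gcd} fails for our vector, neither of that proposition's two sufficient conditions can hold: either $\delta$ is even with $\delta<2+\tfrac{8}{n-3}$, or $\delta$ is odd with $\delta<1+\tfrac{4}{n-3}$. It then only remains to read off these inequalities. For $n\in\{5,6\}$ one has $2+\tfrac{8}{n-3}\le 6$ and $1+\tfrac{4}{n-3}\le 3$, so the even alternative yields $\delta\in\{2,4\}$ and the odd alternative yields $\delta=1$, giving $\delta\in\{1,2,4\}$. For $n\ge 7$ one has $2+\tfrac{8}{n-3}\le 4$ and $1+\tfrac{4}{n-3}\le 2$, so the even alternative forces $\delta=2$ and the odd alternative forces $\delta=1$, giving $\delta\in\{1,2\}$. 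This is exactly the stated dichotomy.

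There is no genuine obstacle here: the argument is a short contrapositive plus elementary bookkeeping with the two displayed inequalities. The only point requiring a little care is the preliminary reduction to $n\ge 5$ — without it the borderline value $n=4$ (where $n-3=1$) would have to be treated separately — which is precisely why the already-known small cases of sLRC are invoked at the outset.
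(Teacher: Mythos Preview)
Your proposal is correct and follows essentially the same approach as the paper: the paper simply remarks that since Conjecture~\ref{slrc} holds for $n\le 4$ (Remark~\ref{rem:alcantara}), the values of $\delta$ not covered by Proposition~\ref{prop:allbut2gcd} are precisely those listed, and you have carried out that contrapositive bookkeeping explicitly and correctly.
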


\section{Dimension three: volume bound for potential counterexamples}
\label{sec:sLRC-dim3}
\label{sec:dim3}

We here prove that any potential counter-example to Conjecture~\ref{conj:cosimple} for dimension $3$ (and hence any potential counterexample to Conjectures~\ref{slrc} and~\ref{slrcgeom} for  $n=4$) has volume bounded by 195.
Independently of the zonotope to be cosimple, we show the following stronger result:

\begin{thm}
\label{thm:dim3-b}
Let $Z$ be a lattice $3$-zonotope with $\mu(Z) \geq 3/5$ and let $w$ be its lattice-width.
Then, $w \leq 6$ and if $w \ge 3$, then either:
\begin{enumerate}
\item $w=3$ and either $\vol(Z) \le 120$ or $Z$ is a parallelepiped projecting to the parallelogram $P_{2,5}$ of Theorem~\ref{thm:cosimple-dim2}.
If $Z$ is an sLRZ then $\vol(Z) \le 80$.
\item $w=4$ and $\vol(Z) \le 195$. 
\item $w=5$ and $\vol(Z) \le 125$. 
\item $w=6$ and $\vol(Z) \le 98$. 
\end{enumerate}
\end{thm}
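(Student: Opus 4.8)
\smallskip
\noindent\emph{Plan.} I would feed the scaled-down zonotope $Z':=\tfrac35 Z$ into the known width--volume--covering-radius inequalities for hollow bodies. Since $\mu(Z)$ is the largest dilation factor for which a dilate of $Z$ has a hollow translate, and hollowness passes to subsets, the hypothesis $\mu(Z)\ge\tfrac35$ makes $Z'$ admit a hollow translate; it has lattice-width $w'=\tfrac35 w$ and $\vol(Z)=\bigl(\tfrac53\bigr)^3\vol(Z')$. Throughout one uses that $\vol(Z)=\sum_{S\subseteq[n],\,|S|=3}|\det(\bs u_i:i\in S)|$ is a non-negative integer, so every upper bound on it may be rounded down.

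\smallskip
\noindent\emph{Width bound and the cases $w\in\{4,5,6\}$.} A flatness theorem in dimension three --- a hollow convex body in $\RR^3$ has lattice-width less than a known constant strictly below $4$ --- gives $w'<4$, hence $w=\tfrac53 w'<\tfrac{20}3<7$ and $w\le6$. If $w\ge4$ then $w'=\tfrac35 w\ge\tfrac{12}5>1+\tfrac2{\sqrt3}$, putting $Z'$ in the range where Lemma~\ref{lemma:IglesiasSantos} applies; substituting $w'=\tfrac35 w$ and $\vol(Z')=\bigl(\tfrac35\bigr)^3\vol(Z)$ converts its bound into one on $\vol(Z)$. For $w\in\{5,6\}$ we have $w'\ge3>\tfrac2{\sqrt3}(\sqrt5-1)+1$, so the first branch gives $\vol(Z)\le 1000\,w^3/(3w-5)^3$, equal to $125$ for $w=5$ and less than $99$ for $w=6$; for $w=4$ we have $w'=\tfrac{12}5$, below the threshold $\tfrac2{\sqrt3}(\sqrt5-1)+1\approx2.427$, so the second branch gives $\vol(Z)\le 720/(21-10\sqrt3)=720(21+10\sqrt3)/141<196$. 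Rounding down yields exactly $195$, $125$, $98$.

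\smallskip
\noindent\emph{The case $w=3$ (the main obstacle).} Here $w'=\tfrac95<1+\tfrac2{\sqrt3}$, so Lemma~\ref{lemma:IglesiasSantos} is useless and one must descend to dimension two by hand. I would take $\bs w\in\ZZ^3\setminus\{\bs0\}$ attaining $\lambda_1(Z-Z)$ and a lattice projection $\pi:\RR^3\to\RR^2$ along $\bs w$; then $\pi(Z)$ is a full-dimensional lattice $2$-zonotope, Proposition~\ref{prop:CSS-sum-bound} gives $\mu(Z)\le\lambda_1(Z-Z)+\mu(\pi(Z),\pi(\ZZ^3))$, and a Fubini argument gives $\vol(Z)\le\lambda_1(Z-Z)^{-1}\vol(\pi(Z))$ (volumes normalized to $\ZZ^3$ and $\pi(\ZZ^3)$, using that the longest fiber of $\pi$ in $Z$ has lattice length $\le\lambda_1(Z-Z)^{-1}$). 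If $\mu(\pi(Z),\pi(\ZZ^3))\le\tfrac12$, then $\mu(Z)\ge\tfrac35$ forces $\lambda_1(Z-Z)\ge\tfrac1{10}$, so Lemma~\ref{lemma:minkowski-first-points} bounds $|Z\cap\ZZ^3|$ and hence $\vol(Z)$; this crude bound is then sharpened to $\vol(Z)\le120$ using that a width-$3$ zonotope sits in a slab containing only four lattice hyperplanes. If $\mu(\pi(Z),\pi(\ZZ^3))>\tfrac12$, then $\pi(Z)$ is one of the $2$-zonotopes of Theorem~\ref{thm:cosimple-dim2}, with covering radius as in Remark~\ref{rem:cosimple-dim2-covrads}, so a short explicit list of shapes has to be examined (the parallelograms $P_{1,k}$, the hexagons with volume vector $(1,1,k)$, and $P_{2,5}$); for each one a direct argument --- combining the two inequalities above with the special position forced on the generators of $Z$ --- caps $\vol(Z)$ at $120$, the only family escaping the bound being parallelepipeds projecting onto $P_{2,5}$, mirroring the exception already present in Theorem~\ref{thm:cosimple-dim2}. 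Finally, if $Z$ is an sLRZ it has exactly four generators, hence is never a parallelepiped, and distinctness of the entries of its volume vector removes the remaining large-volume candidates, improving the bound to $\vol(Z)\le80$.

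\smallskip
All the genuine work sits in the $w=3$ analysis: with Lemma~\ref{lemma:IglesiasSantos} out of reach one must run through the possible two-dimensional projections $\pi(Z)$ case by case, and it is precisely here that both the parallelepiped-over-$P_{2,5}$ exception and the sharper sLRZ bound are extracted; the cases $w\ge4$ fall out of the hollow-body inequalities, and the case $w\le2$ carries no volume bound at all.
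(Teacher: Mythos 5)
Your width bound and the cases $w\in\{4,5,6\}$ are correct and essentially identical to the paper's argument (fixing the dilation factor at $\tfrac35$ instead of $\mu(Z)$ is a harmless simplification that lets you use a single branch of Lemma~\ref{lemma:IglesiasSantos}); the numbers $195$, $125$, $98$ come out the same as in Corollary~\ref{coro:volume-dim3}.

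The gap is in the case $w=3$, which you correctly identify as the heart of the theorem but do not actually prove. Concretely: in your branch $\mu(\pi(Z),\pi(\ZZ^3))\le\tfrac12$, the inequality $\lambda_1(Z-Z)\ge\tfrac1{10}$ combined with Lemma~\ref{lemma:minkowski-first-points} (or with a slice-wise pigeonhole over the four lattice planes of the width-three slab) only yields $|Z\cap\ZZ^3|$ of order several hundred to a thousand, hence volume bounds of that size; the claimed sharpening to $120$ ``using that a width-$3$ zonotope sits in a slab containing only four lattice hyperplanes'' is an assertion, not an argument, and it does not follow from the inequalities you set up --- note also that in this branch $\vol(\pi(Z))$ is not bounded at all, so $\vol(Z)\le\lambda_1^{-1}\vol(\pi(Z))$ gives nothing. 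The paper reaches $120$ by a different mechanism: when the width-three functional $f$ is unique, it proves $\mu(Z_1)\ge\tfrac3{10}$ and $w(Z_1)\ge4$ for the slice $Z_1=Z\cap f^{-1}(1)$ (Lemma~\ref{lemma:mu-width-vol}, Lemma~\ref{lemma:width3.case2}), applies the two-dimensional bound of Corollary~\ref{cor:AverkovWagner} to that slice to get $\vol(Z_1)\le40$, and uses $\vol(Z)\le3\vol(Z_1)$; when the width is attained by two independent functionals it projects along the common coordinate direction, uses Theorem~\ref{thm:cosimple-dim2} for the width-$\ge3$ image and bounds the fibre length via Proposition~\ref{prop:max-bound}, which is where the $P_{2,5}$ exception arises and where the bound is in fact $80$. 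In your other branch, $\mu(\pi(Z),\pi(\ZZ^3))>\tfrac12$, everything is delegated to an unspecified ``direct argument for each shape''; observe that $w(\pi(Z),\pi(\ZZ^3))\ge w(Z)=3$ (lattice functionals on the image pull back), so by Theorem~\ref{thm:cosimple-dim2} only $P_{2,5}$ can occur there and your width-one and width-two cases are vacuous, but the remaining step --- that $\pi(Z)\cong P_{2,5}$ with primitive generators forces $Z$ to be a parallelepiped --- is still not written. Finally, the sLRZ improvement to $80$ is justified only by ``distinctness of the entries of the volume vector removes the remaining candidates'', which is not the actual mechanism: in the paper it follows because an sLRZ in $\RR^3$ has exactly four generators, so at most one is orthogonal to $f$, and then $\vol(Z)=2\vol(Z_1)\le80$ by part (3)(b) of Lemma~\ref{lemma:mu-width-vol}. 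As it stands, the bound $120$, the $P_{2,5}$-parallelepiped exception, and the sLRZ bound $80$ are all left unproved.
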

\noindent That all cosimple zonotopes (hence all sLRZ) have lattice-width at least three follows from Corollary~\ref{cor:widthge3}.
In what follows, we treat separately the cases of lattice-width at least four and equal to three.

\subsection{Zonotopes of lattice-width at least four}\label{subsec:width4+}

To deal with zonotopes of lattice-width at least four, we argue similarly as we did in Corollary~\ref{cor:AverkovWagner} and make use of Lemma~\ref{lemma:IglesiasSantos}, which is the corresponding three-dimensional volume bound for hollow convex bodies.

\goodbreak
\begin{cor}
\label{coro:volume-dim3}
Let $Z$ be a lattice $3$-zonotope of lattice-width $w \ge 4$ and covering radius $\mu \ge 3/5$.
Then, $w \le 6$ and the volume of~$Z$ is upper bounded by 
\begin{enumerate}[(i)]
\item $195$ if $w=4$,
\item $125$ if $w=5$,
\item $98$ if $w=6$.
\end{enumerate}
\end{cor}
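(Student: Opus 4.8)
The plan is to run the argument of Corollary~\ref{cor:AverkovWagner}(ii) one dimension higher, with Lemma~\ref{lemma:IglesiasSantos} playing the role of Lemma~\ref{lemma:AverkovWagner}. Set $\mu:=\mu(Z)\ge 3/5$. By the description of the covering radius from Section~\ref{sec:prelim}, the dilate $\mu Z$ has a hollow lattice translate $Z'$; since $Z$ (hence $\mu Z$, hence $Z'$) is centrally symmetric, $Z'$ is a hollow centrally symmetric convex body with $\vol(Z')=\mu^3\vol(Z)$ and lattice-width $w(Z')=\mu\,w$. As $w\ge 4$ and $\mu\ge 3/5$ we have $w(Z')=\mu w\ge 12/5$, so Lemma~\ref{lemma:IglesiasSantos} applies to $Z'$ and, after dividing the two resulting estimates by $\mu^3$, yields
\[
\vol(Z)\;\le\;\frac{8w^3}{(\mu w-1)^3}
\quad\text{if }\mu w\ge\tfrac{2}{\sqrt3}(\sqrt5-1)+1,
\qquad
\vol(Z)\;\le\;\frac{3w^3}{4\,(\mu w-(1+2/\sqrt3))}
\quad\text{otherwise.}
\]
Both bounds are decreasing in $\mu$, so I would evaluate them at $\mu=3/5$; and because $\vol(Z)$ is a positive integer (a sum of absolute values of $3\times 3$ integer minors, cf.~\eqref{eq:volZ}), the resulting numbers may be rounded down.

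For $w=5$ and $w=6$ one has $\mu w\ge 3w/5\ge 3>\tfrac{2}{\sqrt3}(\sqrt5-1)+1\approx 2.427$, so only the first bound is relevant; at $\mu=3/5$ it gives $\vol(Z)\le \tfrac{8\cdot 125}{2^3}=125$ for $w=5$, and $\vol(Z)\le\tfrac{8\cdot 216}{(13/5)^3}=\tfrac{216000}{2197}<99$, hence $\vol(Z)\le 98$, for $w=6$. For $w=4$ the quantity $\mu w=4\mu$ can lie on either side of the dichotomy threshold: if $4\mu\ge 2.427$ the first bound gives $\vol(Z)\le\tfrac{512}{(4\mu-1)^3}\le\tfrac{512}{1.427^3}<177$, and if $3/5\le\mu$ with $4\mu<2.427$ the second bound gives $\vol(Z)\le\tfrac{192}{4(4\mu-(1+2/\sqrt3))}\le\tfrac{192}{4(12/5-(1+2/\sqrt3))}<195.7$; either way $\vol(Z)\le 195$. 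Finally, to get $w\le 6$ I would argue by contradiction: if $w\ge 7$ then $w(Z')=\mu w\ge 21/5$, whereas $Z'$ is a hollow centrally symmetric convex body in $\RR^3$, so the flatness theorem in dimension three, which bounds the lattice-width of such a body by a universal constant strictly below $21/5$, gives a contradiction.

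I expect the genuinely delicate point to be the case $w=4$ with $\mu$ near $3/5$: there $\mu w\approx 2.4$ falls just below the threshold $\approx 2.427$, so the weaker branch of Lemma~\ref{lemma:IglesiasSantos} must be used, and the resulting numerical bound is about $195.7$ — only barely under $196$ — so integrality of $\vol(Z)$ is essential to land on $195$. The only other thing requiring care is pinning down the precise form (and reference) of the three-dimensional flatness theorem invoked for $w\le 6$; note also that the same method fails for $w=3$, where $\mu w\ge 9/5$ is below the useful range of Lemma~\ref{lemma:IglesiasSantos}, which is why that case is handled separately in Section~\ref{subsec:width3}.
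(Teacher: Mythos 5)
Your proposal is correct and follows essentially the same route as the paper's own proof: dilate by $\mu$, apply both branches of Lemma~\ref{lemma:IglesiasSantos} to the hollow translate of $\mu Z$ (with the delicate $w=4$ case decided by which side of the threshold $\mu w$ falls on, and integrality of $\vol(Z)$ turning $195.7$ into $195$), and exclude $w\ge 7$ via a three-dimensional flatness bound. The reference you left open for that last step is exactly the paper's citation of~\cite{AverkovCMS} (a hollow convex $3$-body has lattice-width at most $3.972$, with no central symmetry needed), which gives $w\le 3.972\cdot\tfrac{5}{3}<7$ as you intended.
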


\begin{proof}
It is proven in~\cite[Theorem~5.2]{AverkovCMS} that a hollow convex 3-body has lattice-width bounded by $3.972$. 
Hence, for a convex body of covering radius $\mu \ge 3/5$ we have
\[
w\le 3.972 \mu^{-1} \le 3.972 \cdot \frac{5}{3} = 6.62.
\]
This shows $w \le 6$, since the lattice-width of the lattice zonotope~$Z$ is an integer.

For the volume, let $Z'=\mu Z$, which has a hollow translate and lattice-width~$\mu w$.
For $w \ge 5$, we have $\mu w \ge 3$ so we can apply the first bound in Lemma~\ref{lemma:IglesiasSantos}, which gives
\[
\vol(Z) = \mu^{-3} \vol(Z') \le 8 \mu^{-3} \frac{\mu^{3}w^3}{(\mu w-1)^3} = \left(\frac{2w}{\mu w-1}\right)^3 \le \left(\frac{10w}{3w-5}\right)^3.
\]
Plugging in $w=5$ and $w=6$ gives bounds of $(50/10)^3=125$ and $(60/13)^3 = 98.32$, respectively.
The observation that $\vol(Z)$ is an integer gives the bound for cases~(ii) and~(iii).

For $w=4$, we may need to use the first bound of Lemma~\ref{lemma:IglesiasSantos} or the second one, depending on $\mu$, since for $\mu \approx 3/5$ we have that $\mu w \approx 12/5 = 2.4$.
Thus, we use the maximum of the two bounds.
Using $\mu \ge 3/5$, the first bound gives
\[
\vol(Z) = \mu^{-3} \vol(Z') \le  
\mu^{-3} \frac{8 \mu^3w^3}{(\mu w - 1)^3} = 
\frac{8 \cdot 4^3}{(4\mu - 1)^3} \le
\frac{40^3}{7^3} = 186.59,
\]
and the second one gives 
\[
\vol(Z) = \mu^{-3} \vol(Z') \le  
\frac{3 w^3}{4(\mu w - (1+2/\sqrt3))} \le
\frac{48}{12/5 - (1+2/\sqrt3)} = 195.68,
\]
finishing the proof.
\end{proof}

\begin{rem}
The same ideas from the proof of~\cite[Theorem 2.1]{IglesiasSantos}, but assuming~$C$ to be $\bs 0$-symmetric, imply that the first bound in Lemma~\ref{lemma:IglesiasSantos} applies for $w\ge \sqrt{5} \approx 2.236$. Using this extended bound in the proof of Corollary~\ref{coro:volume-dim3} would reduce the volume bound for lattice-width $w=4$ to $186$, instead of $195$.
\end{rem}

\subsection{Zonotopes of lattice-width three}\label{subsec:width3}

For the rest of the section, let $Z$ be a lattice $3$-zonotope of lattice-width three.
We first treat the case where the lattice-width is attained with respect to (at least) two different integer linear functionals, and then we see how the case of a single one splits into two subcases.

\subsubsection{Zonotopes of lattice-width three for two different functionals}

We here assume that $Z$ is a lattice 3-zonotope of lattice-width three with respect to two (linearly independent) functionals $f_1,f_2$. 
Let us first see that there is no loss of generality in assuming that these are the first two coordinates.
Think of $f_1$ and $f_2$ as elements of the dual lattice $(\ZZ^3)^*$. Since
\[
w(Z, \lambda_1 f_1+ \lambda_2 f_2) \le \lambda_1 w(Z,  f_1)  + \lambda_2 w(Z,  f_2), 
\]
there is no loss of generality in assuming that the triangle formed by $f_1$, $f_2$ and the origin contains no other lattice points. Equivalently, that this triangle is unimodular, hence part of a lattice basis (see, for instance, \cite[Chapter~1]{gruberlekkerkerker1987geometry}). Then, a change of basis sends $f_1$ and $f_2$ to the first two coordinates.
 
Then, if we let $\pi:\RR^3\to \RR^2$ be the projection forgetting the third coordinate, we have that $Z':=\pi(Z)$
is a two-dimensional lattice zonotope of lattice-width three that fits in the square $[0,3]^2$.

%

\begin{prop}
\label{prop:width33}
Let $Z$ be a lattice $3$-zonotope of width three attaining its width w.r.t.~two linearly independent functionals, and with $\mu(Z) \ge\frac35$.
Assume that the projection $Z'=\pi(Z)$  is contained in the square $[0,3]^2$ but is different from the parallelogram $P_{2,5}$ of Theorem~\ref{thm:cosimple-dim2}.

Then 
\[
\vol(Z) \le \frac{5\vol(Z')}{3-5\mu(Z')} \le 10\vol(Z').
\]
\end{prop}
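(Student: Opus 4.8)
The plan is to obtain everything from one application of the fibre inequality for covering radii that underlies Proposition~\ref{prop:CSS-sum-bound}, taken along the projection $\pi$ that forgets the third coordinate. First I would dispose of the second inequality (and of the positivity of $3-5\mu(Z')$, which is what makes the first one meaningful): recall that $Z'=\pi(Z)$ is a lattice $2$-zonotope of lattice-width three contained in $[0,3]^2$, and that among the zonotopes listed in Theorem~\ref{thm:cosimple-dim2} the only one of lattice-width three is $P_{2,5}$. Hence the hypothesis $Z'\neq P_{2,5}$ forces $\mu(Z')\le\tfrac12$, so $3-5\mu(Z')\ge\tfrac12>0$ and $\frac{5\vol(Z')}{3-5\mu(Z')}\le 10\,\vol(Z')$.

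For the first inequality the key observation is that $\vol(Z)/\vol(Z')$ is the average length of the vertical fibres of $\pi$, so it suffices to bound the \emph{maximal} fibre length $h_{\max}:=\max_{\bs y\in Z'}\ell\bigl(Z\cap\pi^{-1}(\bs y)\bigr)$, which is attained and positive because $Z$ is a compact full-dimensional zonotope; indeed $\vol(Z)=\int_{Z'}\ell\bigl(Z\cap\pi^{-1}(\bs y)\bigr)\,d\bs y\le h_{\max}\vol(Z')$ by Fubini. To bound $h_{\max}$ I would translate $Z$ (changing neither $\mu(Z)$ nor $\vol(Z)$) so that a longest fibre lies on the axis $\RR\bs e_3$, that is $Z\cap\pi^{-1}(\bs 0)=[\bs 0,h_{\max}\bs e_3]$. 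Since $\bs e_3$ is a primitive vector spanning $\ker\pi$ and $\pi(\ZZ^3)=\ZZ^2$, the general statement quoted in the proof of Proposition~\ref{prop:CSS-sum-bound} (namely \cite[Lemma~2.1]{codenottisantosschymura2019the}), applied to $\pi$, gives
\[
\tfrac35\le\mu(Z)\le\mu\bigl([\bs 0,h_{\max}\bs e_3],\ZZ\bs e_3\bigr)+\mu\bigl(\pi(Z),\pi(\ZZ^3)\bigr)=\frac{1}{h_{\max}}+\mu(Z').
\]
Rearranging gives $h_{\max}\le 1/(3/5-\mu(Z'))=5/(3-5\mu(Z'))$, and substituting into the Fubini estimate yields $\vol(Z)\le\frac{5\vol(Z')}{3-5\mu(Z')}$, which together with the first paragraph completes the argument.

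None of this is computationally heavy. The one point requiring care is that the fibre entering the cited lemma is the one over $\bs 0$, so the translation of $Z$ is genuinely needed to make it coincide with the longest fibre $[\bs 0,h_{\max}\bs e_3]$, and the normalizations must be kept straight so that the one-dimensional covering radius of a segment of Euclidean length $h_{\max}$ in the primitive direction $\bs e_3$ with respect to $\ZZ\bs e_3$ is exactly $1/h_{\max}$. The only mildly delicate issue --- that Proposition~\ref{prop:CSS-sum-bound} as stated wants the projection to be along a direction attaining $\lambda_1(Z-Z)$, which here need not be $\bs e_3$ --- is precisely why I would invoke the more flexible \cite[Lemma~2.1]{codenottisantosschymura2019the}, valid for any rational projection, rather than the proposition verbatim. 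I do not foresee a real obstacle.
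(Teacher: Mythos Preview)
Your argument is correct. Both you and the paper begin with the same first paragraph (using Theorem~\ref{thm:cosimple-dim2} to get $\mu(Z')\le\tfrac12$), and both finish via the Fubini estimate $\vol(Z)\le h_{\max}\vol(Z')$; the difference lies in how the bound on $h_{\max}$ is obtained.

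The paper uses Proposition~\ref{prop:max-bound} (the ``max-bound''): it shrinks $Z'$ about the point of maximal fibre to a copy $Z'_k$ with $\mu(Z'_k)=3/5$, notes that every fibre over $Z'_k$ has length at least $(1-k)h_{\max}$, and then deduces from the max-bound (implicitly applied with the subset $Z'_k$ in place of $Z'$) that some such fibre must have covering radius $\ge 3/5$, i.e.\ length $\le 5/3$. You instead use the ``sum-bound'' from~\cite[Lemma~2.1]{codenottisantosschymura2019the} directly, translating $Z$ so that the longest fibre sits over the origin and reading off $3/5\le\mu(Z)\le 1/h_{\max}+\mu(Z')$. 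Your route is shorter and avoids both the auxiliary shrinking $Z'_k$ and the implicit strengthening of Proposition~\ref{prop:max-bound} to a subset of the projection; the paper's route, on the other hand, keeps the argument self-contained within the two propositions already stated in Section~\ref{sec:prelim}. Your care about invoking the underlying CSS lemma rather than Proposition~\ref{prop:CSS-sum-bound} verbatim is well placed, since the projection direction here need not realise $\lambda_1(Z-Z)$.
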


\begin{proof}
The width of $Z'$ is also at least three, since a functional giving a certain width to $Z'$ lifts to a functional with the same width on $Z$. Hence, the fact that $Z'$ does not equal $P_{2,5}$ implies,  by Theorem~\ref{thm:cosimple-dim2}, that $\mu(Z') \le 1/2$.

Let $h$ denote the maximum length among the fibers $\{\pi^{-1}( \bs x) \cap Z :\bs x\in Z'\}$, and let $\bs x \in Z'$ be a point attaining this maximum~$h$.
Then, for each $k \in (0,1]$, we find that the zonotope $Z'_k:=\bs x+ k (Z'-\bs x)\subseteq Z'$ has $\mu(Z'_k) = \mu(Z')/k$ and for every $\bs y \in Z'_k$ the length of $\pi^{-1}( \bs y)$ is at least $(1-k) h$.

Taking $k= 5\mu(Z')/3 \leq 5/6$, we have that $\mu(Z'_k) = 3/5$ and that every fiber over a point $\bs y \in Z'_k$ has length at least $\frac{3-5\mu(Z')}{3}h$.
By Proposition~\ref{prop:max-bound}, $\mu(Z) \ge 3/5$ and $\mu(Z') < 3/5$ implies that some $\bs y\in Z'_k$ must have length bounded by $5/3$, that is,
\[
\frac{3-5\mu(Z')}{3}h \le 5/3
\quad\Rightarrow \quad
h \le \frac{5}{3-5\mu(Z')} \leq 10 \,,
\]
also using $\mu(Z') \leq 1/2$.
Since, obviously, $\vol(Z) \le h \vol(Z')$, the result follows.
\end{proof}

\begin{cor}
\label{cor:width3}
Let $Z$ be a lattice $3$-zonotope that has lattice-width three w.r.t.~two linearly independent functionals.
If $\mu(Z) \ge\frac35$, then 
$
\vol(Z) \le 80,
$
unless $Z$ is a parallelepiped projecting to $P_{2,5}$.
\end{cor}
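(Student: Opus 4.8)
The plan is to combine Proposition~\ref{prop:width33} with an elementary area estimate for the planar projection. Following the normalization set up at the beginning of Section~\ref{subsec:width3}, we may assume that the two width-three functionals are the first two coordinates; writing $\pi\colon\RR^3\to\RR^2$ for the projection forgetting the last coordinate, and translating suitably, $Z':=\pi(Z)$ is then a lattice $2$-zonotope of lattice-width three contained in the square $[0,3]^2$.

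Since $Z'\subseteq[0,3]^2$ we have $\vol(Z')\le 9$, with equality only for $Z'=[0,3]^2$ (two full-dimensional closed convex bodies of equal volume, one contained in the other, must coincide). As $\vol(Z')$ is a positive integer, it suffices to treat three cases. If $\vol(Z')\le 8$ and $Z'\not\cong P_{2,5}$, then Proposition~\ref{prop:width33} gives directly $\vol(Z)\le 10\vol(Z')\le 80$. If $Z'=[0,3]^2$, then Proposition~\ref{prop:width33} still applies (as $[0,3]^2\not\cong P_{2,5}$), and, plugging $\mu([0,3]^2)=\tfrac13$ into the sharper form of its conclusion, $\vol(Z)\le \frac{5\cdot 9}{3-5/3}=\frac{135}{4}<80$.

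The remaining case is $Z'\cong P_{2,5}$, to which Proposition~\ref{prop:width33} does not apply; here I claim that $Z$ must be a parallelepiped, i.e.\ the exception permitted in the statement. Since a lattice zonotope in the plane can be a parallelogram only if every generator is parallel to one of its two (primitive) edge directions $g_1,g_2$, each generator of $Z$ projects to $\bs 0$, or to a non-zero integer multiple of $g_1$, or to a non-zero integer multiple of $g_2$. Among the generators projecting onto $\RR g_1$, the segments $[\bs 0,k_i g_1]$ of the projections must have Minkowski sum equal to the edge $[\bs 0,g_1]$ of $Z'$, which forces a single such generator with $k_i=1$; symmetrically there is exactly one generator projecting onto $\RR g_2$, namely onto $g_2$ itself. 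Finally, the generators projecting to $\bs 0$ are vertical, hence mutually parallel, and there is at least one of them because $Z$ is three-dimensional while $g_1,g_2$ span only a plane; their Minkowski sum is a single vertical segment. Therefore $Z$ is, up to translation, the parallelepiped spanned by a $\pi$-preimage of $g_1$, a $\pi$-preimage of $g_2$, and this vertical segment, and it projects onto $P_{2,5}$.

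The area estimate and the bookkeeping with the covering-radius inequalities are routine; the main obstacle is this last case, namely verifying carefully that $\pi(Z)\cong P_{2,5}$ rigidly constrains the generators of $Z$ — in particular excluding any generator that projects to a direction other than $g_1$ or $g_2$, and pinning down that exactly one generator lies in each of the directions $g_1$ and $g_2$.
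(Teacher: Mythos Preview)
Your proof is correct and follows essentially the same approach as the paper: the same normalization, the same case split on $\vol(Z')$ (at most $8$ versus equal to $9$), and the same use of Proposition~\ref{prop:width33} in both non-exceptional cases. The only difference is that you spell out in detail why $\pi(Z)\cong P_{2,5}$ forces $Z$ to be a parallelepiped, whereas the paper dispatches this in a single sentence by noting that $P_{2,5}$ has primitive generators.
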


\begin{proof}
If $Z$ projects to $P_{2,5}$, the fact that $P_{2,5}$ is a parallelogram with primitive generators implies that $Z$ is a parallelepiped. Since we assume this does not happen, we can apply the bound of Proposition~\ref{prop:width33}.
Let $Z'=\pi(Z)$, as in that statement.
If $Z'=[0,3]^2$, then $\vol(Z')=9$ and $\mu(Z')=1/3$, so Proposition~\ref{prop:width33} gives $\vol(Z) \le \frac{45}{3-\frac53} = 33.75$.
If $Z' \ne [0,3]^2$, then $\vol(Z') \le8$ and the same result gives $\vol(Z) \le 10 \vol(Z') \leq 80$.
\end{proof}

\subsubsection{Zonotopes of lattice-width three for a unique functional}
\label{sect:width-three-unique-fct}

We first prove some technical lemmas.
In the first one, we say that a polytope~$P$ is \emphd{centrally symmetric} if there is a point $\bs x \in P$ such that $P - \bs x = \bs x - P$.
For example, all zonotopes are centrally symmetric, but the lemma holds without the zonotopal assumption.

\begin{lemma}
\label{lemma:mu-width-vol}
Let $P$ be a centrally symmetric lattice $3$-polytope of lattice-width three with respect to a unique lattice functional $f$.
We assume $f(P)=[0,3]$ and denote $P_k:= P \cap f^{-1}(k)$, for each $k\in [0,3]$.
\begin{enumerate}
\item $\mu(P_k) \le 2 \mu (P_1)$, for every $k\in [2/3,7/3]$.
\item If $P_1$ is a lattice polytope, then 
\[
w_{f}(P) = w(P_1),
\]
where $w_{f}(P)$ denotes the minimum width of $P$ with respect to lattice functionals not proportional to $f$.
\item Assume that $P$ is a zonotope. Then,
 \begin{enumerate}
  \item $\vol(P) \le 3 \vol(P_1)$, and
  \item if, moreover, $P$ has at most one generator~$\bs u$ orthogonal to~$f$, that is, with $f(\bs u)=0$, then $\vol(P) = 2 \vol(P_1)$.
 \end{enumerate}
\end{enumerate}
\end{lemma}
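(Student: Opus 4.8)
The plan is to treat the four assertions in turn, throughout working with the slices $P_k:=P\cap f^{-1}(k)$ after translating $P$ so that $f(P)=[0,3]$. Note that $f$ is automatically primitive (a proper divisor of $f$ would give $P$ width less than three), so on each fibre $f^{-1}(k)$ the relevant lattice is $\Lambda:=\ZZ^3\cap f^{-1}(0)$. Since $P$ is centrally symmetric about a point $\bs c$ with $f(\bs c)=\tfrac32$ and $2\bs c\in\ZZ^3$, the reflection $\bs x\mapsto 2\bs c-\bs x$ is a lattice isomorphism carrying $(P_k,\Lambda)$ onto $(P_{3-k},\Lambda)$; hence $\mu(P_k)=\mu(P_{3-k})$, and if $P_1$ is a lattice polytope then so is $P_2=2\bs c-P_1$.

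\emph{Part (1)} uses only convexity. For $k\in(0,1]$ and any $\bs b_0\in P_0$ (nonempty since $f(P)=[0,3]$), convexity gives $P_k\supseteq kP_1+(1-k)\bs b_0$, a lattice translate of the dilate $kP_1$; hence $\mu(P_k)\le\mu(P_1)/k\le\tfrac32\mu(P_1)$ whenever $k\ge\tfrac23$. Symmetrically, for $k\in[1,\tfrac32]$ and $\bs b_2\in P_2$ one has $P_k\supseteq(2-k)P_1+(k-1)\bs b_2$, so $\mu(P_k)\le\mu(P_1)/(2-k)\le 2\mu(P_1)$. Together with $\mu(P_k)=\mu(P_{3-k})$ this covers the whole range $k\in[\tfrac23,\tfrac73]$.

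\emph{Part (2)}: the inequality $w_f(P)\ge w(P_1)$ is immediate, since a lattice functional not proportional to $f$ restricts to a nontrivial lattice functional on the affine plane $f^{-1}(1)$ and its width on $P$ is at least its width on $P_1$. For the reverse inequality I would lift an optimal functional: pick a lattice functional $g_0$ with $w(P_1,g_0)=w(P_1)$, extend it to $\RR^3$, and put $c^\ast:=\max_{P_1}g_0-\max_{P_2}g_0$, which is an \emph{integer} because $P_1$ and $P_2$ are lattice polytopes. Set $\tilde g:=g_0+c^\ast f$ (still a lattice functional not proportional to $f$). Central symmetry gives $w(P_1)=\max_{P_1}g_0+\max_{P_2}g_0-2g_0(\bs c)$, and a short computation shows that $\phi(k):=\max_{P_k}\tilde g-\tilde g(\bs c)$, which is concave in $k$, satisfies $\phi(1)=\phi(2)=\tfrac12 w(P_1)$. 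The crucial point is that $\phi$ is in fact \emph{constant} on $[1,2]$: the combinatorial type of $P_k$ is the same for all $k\in(1,2)$ (an edge of $P$ meets this slab iff one endpoint has $f$-value $\le 1$ and the other $\ge 2$), so each support value $h_{P_k}(\theta)$ is a maximum of functions affine in $k$, hence convex in $k$, while it is also concave in $k$; it is therefore affine on $[1,2]$, and the symmetry $w(P_k,\cdot)=w(P_{3-k},\cdot)$ forces $w(P_k,\tilde g)$, and so $\phi$, to be constant there. Concavity of $\phi$ now yields $\phi(k)\le\tfrac12 w(P_1)$ for all $k\in[0,3]$, so $w(P,\tilde g)=2\max_{[0,3]}\phi=w(P_1)$, proving $w_f(P)\le w(P_1)$.

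\emph{Part (3)} rests on $\vol(P)=\int_0^3\vol(P_k)\,dk$, with $\vol(P_k)$ the $\Lambda$-normalized area. Split the generators of $P$ into those with $f(\bs u_i)\ne 0$ — after sign changes their $f$-values are positive integers summing to $3$, so there are at most three — and those with $f(\bs u_i)=0$, which generate a zonotope $P'\subseteq f^{-1}(0)$; then $P_k=\Delta_k+P'$ with $\Delta_k$ the slice of the ``vertical'' part, so $\vol(P_k)=\vol(\Delta_k)+2V(\Delta_k,P')+\vol(P')$ with $V$ the mixed area. Depending on whether there are $1$, $2$, or $3$ nonzero-$f$ generators, $\Delta_k$ is a point, a segment, or a triangle/hexagon/triangle; in every case, by the same concave-versus-convex reasoning as in Part (2), $\Delta_k$ scales linearly on $[0,1]$ and on $[2,3]$ and has support function — hence width and mixed area against $P'$ — constant in $k$ on $[1,2]$. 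Integrating these structural facts gives $\int_0^3\vol(\Delta_k)\,dk=2\vol(\Delta_1)$ and $\int_0^3 V(\Delta_k,P')\,dk=2V(\Delta_1,P')$, so $\vol(P)=2\vol(\Delta_1)+4V(\Delta_1,P')+3\vol(P')$; since $V(\Delta_1,P')\ge 0$ this is at most $3\vol(\Delta_1)+6V(\Delta_1,P')+3\vol(P')=3\vol(P_1)$, proving (3a). For (3b), ``at most one generator orthogonal to $f$'' forces $P'$ to be at most a segment, so $\vol(P')=0$ and the displayed identity collapses to $\vol(P)=2[\vol(\Delta_1)+2V(\Delta_1,P')]=2\vol(P_1)$. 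I expect the slice-structure facts invoked in Parts (2) and (3) — that $h_{P_k}(\theta)$ is constant in $k$ on $[1,2]$ for every $\theta$, and scales linearly outside — to be the technical heart of the argument, everything else being bookkeeping with concave functions of one variable and one-dimensional integrals.
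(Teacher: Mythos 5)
Your parts (1) and (2) are fine. Part (1) is the paper's argument verbatim, and in part (2) your $\tilde g=g_0+c^\ast f$ is exactly the functional the paper uses (its $f'=g-mx_3$); you verify that its maximum over $P$ is attained on the slices at levels $1$ and $2$ via concavity plus affineness of $k\mapsto\max_{P_k}\tilde g$ on $[1,2]$, where the paper instead projects $P$ to a lattice polygon in $\RR\times[0,3]$ and uses elementary convexity. Both routes work (your "constant because the width is symmetric" phrasing is a shortcut, but affineness together with $\phi(1)=\phi(2)$, which you proved, already gives constancy).

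Part (3) has a genuine gap. The structural claim you lean on — that for $k\in[1,2]$ the slice $\Delta_k$ of the vertical zonotope has \emph{constant support function}, hence constant everything — is false in the case of three generators with $f$-value $1$. Take $\bs u_1=(1,0,1)$, $\bs u_2=(0,1,1)$, $\bs u_3=(0,0,1)$: then $\Delta_k=\{(a,b)\in[0,1]^2 : k-1\le a+b\le k\}$, whose support value in direction $(1,1)$ equals $k$, and whose area is $\tfrac12$ at $k=1,2$ but $\tfrac34$ at $k=\tfrac32$ (and a level-preserving linear map shows this $3{:}2$ bulge occurs for \emph{every} parallelepiped with three level-one generators). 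What your max-of-affine argument plus the symmetry about $k=\tfrac32$ actually gives is constancy of the \emph{widths} of $\Delta_k$ on $[1,2]$; since $P'$ is centrally symmetric, $V(\cdot,P')$ depends only on widths, so $\int_0^3 V(\Delta_k,P')\,dk=2V(\Delta_1,P')$ survives. But the other identity, $\int_0^3\vol(\Delta_k)\,dk=2\vol(\Delta_1)$, does not follow from your stated facts at all — taken literally (slices scaling like $k\Delta_1$ on $[0,1]$ and constant on $[1,2]$) they would yield $\tfrac53\vol(\Delta_1)$. That identity is precisely the nontrivial input: it says $\vol(V)=2\vol(\Delta_1)$ for a parallelepiped all of whose generators have $f$-value $1$, which the paper establishes by a determinant computation (the base case $m=0$ of its induction on the number of generators orthogonal to $f$, combined with a separate two-dimensional claim for the projected zonotope). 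Once you supply that computation — or integrate the true quadratic slice-area profile — your formula $\vol(P)=2\vol(\Delta_1)+4V(\Delta_1,P')+3\vol(P')$ is correct and does give (3a) and (3b); but as written the argument for it is not valid.
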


\begin{proof}
Let us start with part~(1):
\begin{itemize}
\item If $k\in [2/3,1]$, then
\[
k P_1 \subseteq (1-k)P_0 + k P_1 \subseteq P_k 
\quad \Rightarrow \quad
\mu(P_k) \le \frac1{k}\mu(P_1) \le \frac32 \mu(P_1) < 2 \mu(P_1).
\]
\item If $k\in [1, 3/2]$, then 
\[
(2-k) P_1 \subseteq (2-k)P_1 + (k-1) P_2 \subseteq P_k 
\quad \Rightarrow \quad
\mu(P_k) \le \frac1{2-k}\mu(P_1) \le 2 \mu(P_1),
\]
where equality can possibly hold only for $k = 3/2$.
\item If $k \in [3/2,7/3]$, then by central symmetry of~$P$ around a point~$\bs x \in P$ with $f(\bs x) = 3/2$, we get $\mu(P_k) = \mu(P_{3-k}) \leq 2 \mu(P_1)$.
\end{itemize}
Let us now prove part~(2).
That $w_{f}(P) \ge w(P_1)$ is clear: every lattice functional~$f'$ not proportional to~$f$ restricts to a non-zero lattice functional on $P_1$, so the width of $P_1$ is smaller than or equal to the width of~$P$ with respect to~$f'$.

For the converse, let us assume without loss of generality that~$f$ equals the third coordinate, so that we identify each $P_k\subseteq \RR^2\times\{k\}$ with its projection along that coordinate.

Let $g: \RR^2 \to \RR$ be a lattice functional attaining the lattice-width of $P_1$.
By central symmetry of~$P$ and the hypothesis on~$P_1$ being a lattice polytope, $g(P_1)$ and~$g(P_2)$ are integer segments of the same length, equal to $w := w(P_1)$.
Let $m$ be the integer with
\[
g(P_2) = m + g(P_1)
\]
and consider the functional 
\[
f'(x_1,x_2,x_3) := g(x_1,x_2)  - m x_3.
\]
Let $S$ be the segment $S:=g(P_1) - m$.
By construction, $f'(P_1)= f'(P_2)=S$; and we only need to prove $f'(P)\subseteq S$ and get $w_{f}(P) \le \length(f'(P)) \le \length(S) = w(P_1)$.

{\bf Claim:} \emph{$f'(P_1)= f'(P_2)=S$ implies $f'(P)\subseteq S$.}

\noindent Consider the 2-dimensional image $Q:= F(P)$ under the map
\[
F: \RR^3 \to \RR^2 \quad \textrm{ with } \quad \bs p \mapsto (f'(\bs p), f(\bs p)).
\]
$Q$ is a 2-dimensional lattice polytope contained in $\RR\times [0,3]$ and with $Q_1= Q_2 =S$, where we define $Q_k := Q \cap f^{-1}(k)$ analogously to~$P_k$.
This implies $Q \subseteq S \times [0,3]$, that is, $f'(P) \subseteq S$, as desired.

To prove part (3) we first claim a similar result in dimension two:

{\bf Claim:} \emph{Let $P'$ be a lattice $2$-zonotope of lattice-width three with respect to a lattice functional~$\bar f$, with $\bar f(P')=[0,3]$ and denote $P'_k:= P' \cap {\bar f}^{-1}(k)$.
Then, $\vol(P') \le 3 \vol(P'_1)$, and if, moreover, $P'_0$ is a single point, then $\vol(P') = 2 \vol(P'_1)$}.

\noindent To prove it, consider the parallelogram $P'':=P'\cap {\bar f}^{-1}([1,2])$. $P'$ is contained in the parallelogram obtained by extending~$P''$ along the edges not orthogonal to~$f'$, so $\vol(P') \le 3 \vol(P'') = 3\vol(P'_1)$.
If $P'_0$ (and hence $P'_3$) is a single point, then $P' \setminus P''$ is the union of two triangles of area~$\frac12\vol(P'')$, thus $\vol(P') = 2 \vol(P'') = 2\vol(P'_1)$.

We now use induction on the number~$m$ of generators of the zonotope~$P$ orthogonal to~$f$, with base case $m=0$.
Since the (absolute) values of $f$ on generators \emph{not} orthogonal to~$f$ add up to three, $m=0$ implies that~$P$ has only three generators and~$f$ takes value~$1$ in each of them.
The volume of~$P$ is then the determinant of the three generators and the area of the triangle~$P_1$ is half the determinant, so statement~(b) holds in this case.

For the induction step, let $\bs u$ be a generator orthogonal to~$f$.
Let~$Q$ be the (perhaps two-dimensional) zonotope generated by the remaining  generators of~$P$, and let~$P'$ be the projection of~$P$ along the direction of~$\bs u$.
Then,
\begin{align}
\vol(P) &= \vol(Q) + \ell(\bs u) \vol(P'),\label{eqn:auxlemma-1}
\end{align}
where $\ell(\bs u)$ denotes lattice length (see Section~\ref{sec:projections-lrz}).
Similarly, with the obvious notations,
\begin{align}
\vol(P_1) &= \vol(Q_1) + \ell(\bs u) \vol(P'_1).\label{eqn:auxlemma-2}
\end{align}
Notice that, if~$Q$ is two-dimensional, then $\vol(Q)=\vol(Q_1)=0$.
Now, if $\bs u$ is the only generator orthogonal to~$f$, then~$P'_0$ is a single point, so that $\vol(P') = 2 \vol(P'_1)$ by the claim, and the induction hypothesis is the case $m=0$ implying $\vol(Q) = 2 \vol(Q_1)$.
Combining this with the identities~\eqref{eqn:auxlemma-1} and~\eqref{eqn:auxlemma-2} proves statement~(b).
If $m \geq 2$, we have $\vol(P') \le 3 \vol(P'_1)$ by the claim and $\vol(Q) \leq 3 \vol(Q_1)$ by induction hypothesis.
Again combining this with~\eqref{eqn:auxlemma-1} and~\eqref{eqn:auxlemma-2} then proves statement~(a).
\end{proof}

%

Now, let $Z$ be a lattice $3$-zonotope of lattice-width three attained by a unique functional~$f$.
As before, we denote $Z_k = Z \cap f^{-1}(k)$ below, for $k \in \RR$.
Since the width of~$Z$ for the functional~$f$ equals the sum of the (absolute) values of~$f$ on the generators, lattice-width three implies one of the following possibilities for the generators of~$Z$ that are not orthogonal to~$f$:

\begin{enumerate}
\item There are three of them, and $f$ takes value $1$ on each of the three.
\item There are two of them, and $f$ takes values $1$ and $2$ on them, respectively.
\item There is a single one, and $f$ takes value $3$ on it.
\end{enumerate}
\noindent The last case is easy to discard:

\begin{prop}[Case (3) for $f$]
If $Z$ is a lattice 3-zonotope of lattice-width three with respect to a unique functional~$f$ and has exactly one generator not orthogonal to~$f$, then $\mu(Z) < 3/5$.
\end{prop}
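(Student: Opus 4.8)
In this case $Z$ is a sheared prism of height $3$ over a $2$-dimensional lattice zonotope, so the plan is to slice $Z$ by the level sets of $f$, bound $\mu(Z)$ in terms of the covering radius of a slice via Proposition~\ref{prop:max-bound}, and then invoke the two-dimensional classification of Theorem~\ref{thm:cosimple-dim2}.

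\textbf{Normalisation and the slice picture.} After a unimodular change of coordinates I would take $f = x_3$. The hypothesis then says that $Z$ has a single generator $\bs u_0 = (\bs a, 3)$ with $\bs a \in \ZZ^2$, while all remaining generators lie in the plane $\{x_3 = 0\}$; let $Z' \subseteq \RR^2$ be the lattice $2$-zonotope they generate, which is $2$-dimensional because $Z$ is $3$-dimensional. Then $Z = [\bs 0, \bs u_0] + Z'$, and for each $t \in [0,3]$ the slice $Z \cap \{x_3 = t\}$ is the translate $Z' + \tfrac t3 \bs a$ of $Z'$, hence has covering radius $\mu(Z')$ with respect to $\ZZ^2$. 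Applying Proposition~\ref{prop:max-bound} to $\pi = f : \RR^3 \to \RR$, whose image is $[0,3]$ with $\mu([0,3],\ZZ) = \tfrac13$, gives
\[
\mu(Z) \ \le\ \max\Bigl\{\,\tfrac13,\ \mu(Z')\,\Bigr\}.
\]

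\textbf{The slice is wide, and conclusion.} Next I would show $w(Z') \ge 3$. Pick a primitive $g' \in (\ZZ^2)^*$ with $w(Z', g') = w(Z')$ and lift it to $g = (g', m) \in (\ZZ^3)^*$, with $m \in \ZZ$ chosen so that $|g(\bs u_0)| = |g'(\bs a) + 3m| \le 1$. Then $g$ is primitive and not proportional to $f$, so $w(Z, g) = |g(\bs u_0)| + w(Z', g') \le 1 + w(Z')$; since $w(Z) = 3$ is attained by $f$ alone, the integer $w(Z,g)$ is $>3$, i.e.\ $\ge 4$, whence $w(Z') \ge 3$. By Theorem~\ref{thm:cosimple-dim2}, a lattice $2$-zonotope of lattice-width $\ge 3$ has covering radius $\le \tfrac12$ unless it is unimodularly equivalent to $P_{2,5}$ (which has $\mu = \tfrac35$). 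If $\mu(Z') \le \tfrac12$, the displayed bound gives $\mu(Z) \le \tfrac12 < \tfrac35$, as required. In the remaining case $Z' \cong P_{2,5}$, the zonotope $Z = [\bs 0, \bs u_0] + Z'$ is a parallelepiped that projects onto $P_{2,5}$; I would dispose of it by the same arithmetic used for the uniqueness of $f$, tracking the two lattice-width directions of $P_{2,5}$ against the congruence constraints on $\bs a$ and the reduction to generators spanning $\ZZ^3$, so that this borderline configuration does not actually satisfy the hypotheses (it is precisely the exceptional family singled out in Theorem~\ref{thm:dim3-b}, treated there).

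\textbf{Main obstacle.} The delicate step is the last one: the prism bound only yields $\mu(Z) \le \tfrac35$, not the strict inequality, so the configuration $Z' \cong P_{2,5}$ must be excluded by a dedicated argument rather than by the general estimate — everything else (the normalisation, the slice identification, the max-bound estimate, and the proof that $w(Z')\ge 3$) is routine once the sheared-prism description of $Z$ is made explicit.
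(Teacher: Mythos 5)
Your normalisation, the estimate $\mu(Z)\le\max\{\tfrac13,\mu(Z')\}$ via Proposition~\ref{prop:max-bound}, and the lifting argument giving $w(Z')\ge 3$ are all correct, and this much is essentially the paper's route (the paper instead reduces the shear vector modulo~$3$ and exhibits a second width-three functional when $w(Z')\le 2$; your version is, if anything, cleaner). The genuine gap is exactly the step you flag and leave as a sketch: the case $Z'\cong P_{2,5}$ \emph{cannot} be excluded by the uniqueness of~$f$. Take $Z=[\bs 0,(1,0,3)]+[\bs 0,(1,0,0)]+[\bs 0,(2,5,0)]$, so that $Z'=P_{2,5}=[\bs 0,\bs e_1]+[\bs 0,(2,5)]$ and $\bs a=(1,0)$. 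For a functional $(g',m)$ one has $w\bigl(Z,(g',m)\bigr)=w(P_{2,5},g')+|g'(\bs a)+3m|$, and $w(P_{2,5},g')=|g'_1|+|2g'_1+5g'_2|$ equals $3$ only for $g'=\pm(1,0)$ and $g'=\pm(2,-1)$. These take the values $1$ and $2$ on $\bs a$, neither divisible by~$3$, so every lift of them has width at least~$4$ on~$Z$, and all other nonzero choices give width at least~$4$ as well; hence $f=x_3$ is the unique width-three functional, $Z$ has exactly one generator not orthogonal to~$f$, and yet $Z'\cong P_{2,5}$. So the ``congruence constraints on $\bs a$'' do not rule the configuration out.

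Worse, for this $Z$ one has $\mu(Z)=\tfrac35$ exactly, so no argument can restore the strict inequality for it: every fiber $Z\cap\{x_3=t\}$, $t\in[0,3]$, is a full translate of $P_{2,5}$, whence $\mu(Z)\le\tfrac35$; conversely, for $\rho<\tfrac35$ pick $c$ so that exactly one integer $k$ lies in $[c-3\rho,c]$ (possible since $3\rho<2$); the plane $x_3=c$ then meets only the translates $\rho Z+(\bs z,k)$, $\bs z\in\ZZ^2$, and their trace on that plane is a translate of $\rho P_{2,5}+\ZZ^2$, which is not all of $\RR^2$ because $\rho<\mu(P_{2,5})$. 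Thus sheared prisms of height three over $P_{2,5}$ satisfy all the hypotheses and have covering radius exactly $\tfrac35$, so the conclusion can only be salvaged as ``$\mu(Z)\le\tfrac35$'' or with these prisms excluded. You should be aware that the paper's own proof stumbles at the same spot: from the max-bound it only gets $\mu(Z_0)\ge\tfrac35$ (not the asserted strict inequality), and its parenthetical claim that $Z_0=P_{2,5}$ would force a second width-three functional is refuted by the example above. So your instinct about where the difficulty lies is right, but the proposal as written does not close it, and cannot be closed in the form you describe; note that the volume bound in Theorem~\ref{thm:dim3-b} is unaffected, since in this exceptional situation $\vol(Z)=15$.
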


\begin{proof}
Assume that the linear functional~$f$ is the third coordinate and hence the non-orthogonal generator is of the form $\bs u=(p,q,3)$. 
By applying the unimodular transformation
\[
\begin{pmatrix}
x_1\\x_2\\x_3 
\end{pmatrix}
\mapsto 
\begin{pmatrix}
x_1-\floor{{p}/3} x_3 \\ x_2-\floor{{q}/3} x_3 \\ x_3
\end{pmatrix}
\]
there is no loss of generality in assuming that $p,q\in \{0,1,2\}$.

To seek a contradiction suppose that $\mu(Z) \ge 3/5$.
Since $Z=Z_0 + \ent{\bs 0,\bs u}$, Proposition~\ref{prop:max-bound} gives
\[
\frac35 \le \mu(Z) \le \max\left\{\mu(Z_0),\frac13\right\},
\]
implying that $\mu(Z_0) >3/5$.
Then, Theorem~\ref{thm:cosimple-dim2} implies that~$Z_0$ has lattice-width one or two, since $Z_0 =P_{2,5}$ would imply $Z$ to have width two with respect to two different functionals.

We can assume the lattice-width $w\le 2$ of $Z_0$ to be attained with respect to the first coordinate, that is, $Z_0 \subseteq [0,w]\times \RR\times \{0\}$.
Then, the lattice-width of~$Z$ with respect to the first coordinate is $w+p$, which is at most $3$ except if $w=p=2$.
But in that case~$Z$ has width three with respect to $f'(\bs x) = x_1-x_3$, since $f'(Z_0)=[0,2]$ and $f'(Z) = f'(Z_0 + [\bs 0,\bs u]) =[-1,2]$.
The contradiction is that in both cases we have a second functional giving~$Z$ width three.
\end{proof}

Case (1) is also easy, since in this case~$Z_1$ is a lattice polytope and we can readily apply to~$Z$ the three parts of Lemma~\ref{lemma:mu-width-vol}.

\begin{prop}[Case (1) for $f$]
\label{prop:width3.case1}
Let $Z$ be a lattice $3$-zonotope of lattice-width three with respect to a single functional~$f$ and with $\mu(Z) \ge 3/5$.
If $Z$ has three generators not orthogonal to~$f$, then
\[
\vol(Z_1) \le 40.
\]
\end{prop}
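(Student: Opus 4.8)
The plan is to locate the covering radius and the lattice-width of the planar slice $Z_1$ within narrow ranges and then close the argument with the two-dimensional hollow-body volume estimate of Corollary~\ref{cor:AverkovWagner}(i). Write the three generators of $Z$ not orthogonal to $f$ as $\bs u_1,\bs u_2,\bs u_3$ (so $f(\bs u_i)=1$) and the remaining ones, lying in $f^{-1}(0)$, as $\bs u_4,\dots,\bs u_n$, and set $Z_0:=\sum_{i\ge4}[\bs 0,\bs u_i]$. Since a point $\sum_i t_i\bs u_i$ of $Z$ has $f$-value $t_1+t_2+t_3$, its level is $1$ precisely when $(t_1,t_2,t_3)$ runs over the standard $2$-simplex, so $Z_1=\conv\{\bs u_1,\bs u_2,\bs u_3\}+Z_0$ is a \emph{lattice} polygon and Lemma~\ref{lemma:mu-width-vol} applies with $P=Z$, $P_1=Z_1$. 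Part~(2) of that lemma, together with the hypothesis that $f$ is the \emph{unique} integral functional realising $w(Z)=3$ (so every non-proportional integral functional gives $Z$ width $\ge 4$), yields $w(Z_1)=w_f(Z)\ge 4$.

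\textbf{A hollow planar slice.} Because $\mu(Z)\ge\frac35$, some translate $\frac35 Z+\bs t$ is hollow, and $f$ takes values in $\bigl(f(\bs t),f(\bs t)+\frac95\bigr)$ on it. I pick an integer $c$ in the unit-length interval $\bigl[f(\bs t)+\frac25,\,f(\bs t)+\frac75\bigr]$, which lies inside that range; the slice at level $c$ is then a translate of $\frac35 Z_k$ with $k:=\frac53\bigl(c-f(\bs t)\bigr)\in[\frac23,\frac73]\subset(0,3)$, so $Z_k$ is genuinely two-dimensional, and — $c$ being interior to the $f$-range — this slice is hollow in its plane. Hence $\frac35 Z_k$ has a hollow translate with respect to the planar lattice $L:=\ZZ^3\cap f^{-1}(0)$. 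By Lemma~\ref{lemma:mu-width-vol}(1) and the explicit inclusions in its proof, for every $k\in[\frac23,\frac73]$ the slice $Z_k$ contains a translate of a homothet, of ratio $\lambda\ge\frac12$, of either $Z_1$ or of its reflection $Z_2:=Z\cap f^{-1}(2)$ through the centre of $Z$ (which lies at level $\frac32$); reflection through a point preserves both covering radius and lattice-width, so $\mu(Z_2)=\mu(Z_1)$ and $w(Z_2)=w(Z_1)$. Scaling by $\frac35$, the hollow translate of $\frac35 Z_k$ therefore contains a hollow translate of $\frac{3\lambda}{5}Z_1$ (renaming $Z_1$ as $Z_2$ if necessary), so $\frac{3\lambda}{5}Z_1$ has a hollow translate. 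This gives $\mu(Z_1)\ge\frac{3\lambda}{5}\ge\frac3{10}$, and — by the two-dimensional flatness theorem for hollow convex bodies, whose lattice-width is $\le 1+\frac2{\sqrt3}$ — also $\frac{3\lambda}{5}\,w(Z_1)\le 1+\frac2{\sqrt3}$, whence $w(Z_1)\le\frac{10}{3}\bigl(1+\frac2{\sqrt3}\bigr)<8$, i.e.\ $w(Z_1)\le 7$.

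\textbf{Conclusion.} Now $Z_1$ is a planar convex body with $4\le w(Z_1)\le 7$ and $\mu(Z_1)\ge\frac3{10}$, in particular $\mu(Z_1)\,w(Z_1)>1$, so Corollary~\ref{cor:AverkovWagner}(i) (used with dilation factor arbitrarily close to $\frac3{10}$) gives $\vol(Z_1)\le\frac{w(Z_1)^2}{\frac35 w(Z_1)-2}$; over $w(Z_1)\in\{4,5,6,7\}$ this expression is largest at $w(Z_1)=4$, where it equals $40$, so $\vol(Z_1)\le 40$.

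\textbf{Main obstacle.} I expect the delicate points to be: (a) guaranteeing that the \emph{uncontrolled} hollow translate of $\frac35 Z$ can always be sliced at an \emph{integer} level with $k\in[\frac23,\frac73]$, so that Lemma~\ref{lemma:mu-width-vol}(1) delivers the crucial factor $\lambda\ge\frac12$; (b) the careful transport of ``hollow'', covering radius and lattice-width between $Z\subseteq\RR^3$, its planar slices, and scaled translates of sub-copies, each with respect to the correct lattice; and (c) tracking whether $Z_1$ or its central reflection $Z_2$ is the body that actually embeds into $Z_k$, using that the centre of $Z$ lies at level $\frac32$. The bound $w(Z_1)\ge 4$ — the one place where the uniqueness of $f$ enters — is indispensable, since $\frac{w^2}{\frac35 w-2}$ is meaningful only for $w>\frac{10}{3}$.
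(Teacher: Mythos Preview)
Your proof is correct and follows the same strategy as the paper: establish $w(Z_1)\ge 4$ via Lemma~\ref{lemma:mu-width-vol}(2), establish $\mu(Z_1)\ge 3/10$ by finding some $k\in[2/3,7/3]$ with $\mu(Z_k)\ge 3/5$ and invoking Lemma~\ref{lemma:mu-width-vol}(1), then conclude with Corollary~\ref{cor:AverkovWagner}(i). The one superfluous step is your upper bound $w(Z_1)\le 7$: since Corollary~\ref{cor:AverkovWagner}(i) applies with \emph{any} lower bound $w$ on the lattice-width, you can simply plug in $w=4$ and $\mu$ close to $3/10$ to obtain $\vol(Z_1)\le 16/(2/5)=40$ directly, without needing to bound $w(Z_1)$ from above or maximise over a range.
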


\begin{proof}
By Proposition~\ref{prop:max-bound}, $\mu(Z) \ge 3/5$ implies that there is a $k\in [2/3, 7/3]$ with $\mu(Z_k) \ge 3/5$. Part (1) of Lemma~\ref{lemma:mu-width-vol} then gives $\mu(Z_1) \ge 3/10$. 

Since $f$ is the only functional giving width three to $Z$ and $Z_1$ is a lattice polytope, part (2) of Lemma~\ref{lemma:mu-width-vol} says that~$Z_1$ has lattice-width at least four, so  Corollary~\ref{cor:AverkovWagner} with $\mu w \ge 6/5$ implies
\[
\vol\left(Z_1\right) \le \frac{16}{2/5} = 40,
\]
as claimed.
\end{proof}

\begin{cor}[Case (1) for $f$]
\label{cor:width3.case1}
Let $Z$ be a lattice $3$-zonotope of lattice-width three attained by a unique functional~$f$ and with $\mu(Z)\ge 3/5$.
If $Z$ has three generators not orthogonal to $f$, then
$ \vol(Z) \le 120$.
If, moreover, $Z$ is an sLRZ, then $\vol(Z) \le 80$.
\end{cor}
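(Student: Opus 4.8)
The plan is simply to combine Proposition~\ref{prop:width3.case1} with part~(3) of Lemma~\ref{lemma:mu-width-vol}. The hypotheses of the corollary are literally those of Proposition~\ref{prop:width3.case1}, so that proposition immediately gives $\vol(Z_1) \le 40$, where, after normalizing $f$ so that $f(Z) = [0,3]$ (harmless), we write $Z_k = Z \cap f^{-1}(k)$ as in the rest of the section.

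For the general bound I would invoke part~(3)(a) of Lemma~\ref{lemma:mu-width-vol}, which applies to any lattice $3$-zonotope of lattice-width three attained by a unique functional, to get $\vol(Z) \le 3\vol(Z_1) \le 120$.

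For the sharper bound under the sLRZ assumption, recall that an sLRZ in $\RR^3$ has exactly $n = 4$ generators, in linear general position. By hypothesis we are in Case~(1), so three of these generators are not orthogonal to~$f$ and $f$ takes value~$1$ on each of them; since the width of~$Z$ in the direction~$f$ equals the sum of the absolute values of~$f$ on the generators and is already~$3$, no fourth generator can have $f(\bs u) \ne 0$. Hence exactly one generator of~$Z$ is orthogonal to~$f$, so part~(3)(b) of Lemma~\ref{lemma:mu-width-vol} applies and yields $\vol(Z) = 2\vol(Z_1) \le 80$.

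I do not expect any genuine obstacle: the argument is a two-line bookkeeping combination of two already-proven statements. The only point requiring a moment's care is confirming, in the sLRZ case, that precisely one of the four generators is orthogonal to~$f$, which follows from the width being exactly three.
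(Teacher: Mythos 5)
Your proposal is correct and follows essentially the same route as the paper: combine Proposition~\ref{prop:width3.case1} with part~(3) of Lemma~\ref{lemma:mu-width-vol}, using for the sLRZ bound that a three-dimensional sLRZ has four generators, of which three are not orthogonal to~$f$, so at most (in fact exactly) one is, which makes part~(3)(b) applicable. Your extra remark that the fourth generator must be orthogonal because the $f$-values of the generators sum to the width three is a fine, slightly more explicit version of the paper's same observation.
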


\begin{proof}
The  bounds follow from Proposition~\ref{prop:width3.case1} and part~(3) of Lemma~\ref{lemma:mu-width-vol}.
In the second bound we use that an sLRZ of dimension three has four generators.
Since three of them are not orthogonal to~$f$ only one can be orthogonal. 
\end{proof}

For Case (2) we have a stronger form of part~(1) and a variation of part~(2) of Lemma~\ref{lemma:mu-width-vol}, since $Z_1$ may not be a lattice polytope.

\begin{lemma}
\label{lemma:width3.case2}
Let $P$ be a lattice $3$-zonotope of lattice-width three with respect to a unique functional~$f$, and suppose that it has two generators $\bs u_1$ and $\bs u_2$ with $f(\bs u_1)=1$ and $f(\bs u_2)=2$.
Assume further that $f(P) = [0,3]$.
Then, with the notations of Lemma~\ref{lemma:mu-width-vol}, it holds
\begin{enumerate}
 \item $\mu(P_k) \le \frac32 \mu (P_1)$, for every $k\in [2/3,7/3]$, and
 \item 
\[
w_{f}(P) = \lceil w(P_1)\rceil \le w(P_1) + \frac12.
\]
\end{enumerate}
\end{lemma}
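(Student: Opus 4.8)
The plan is to reduce everything to an explicit description of the two fibers $P_1$ and $P_2$. First I would translate $P$ so that it is based at the origin; since $w(P,f)$ equals the sum of the absolute values of $f$ on the generators and $|f(\bs u_1)|+|f(\bs u_2)|=3$ already, every generator other than $\bs u_1,\bs u_2$ must be killed by $f$. Hence $P=Z_0+[\bs 0,\bs u_1]+[\bs 0,\bs u_2]$, where $Z_0=P_0$ is the lattice $2$-zonotope generated by the $f$-orthogonal generators. Solving $t_1f(\bs u_1)+t_2f(\bs u_2)=k$ inside $[0,1]^2$ for $k=1,2$ gives $P_1=Z_0+[\bs u_1,\tfrac12\bs u_2]$ and $P_2=Z_0+[\bs u_1+\tfrac12\bs u_2,\bs u_2]$. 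The two segments here are parallel (both have direction $\tfrac12\bs u_2-\bs u_1$) and are translates of one another, so $P_2=P_1+\tfrac12\bs u_2$; note that $\tfrac12\bs u_2$ need not be a lattice vector, but this is harmless because covering radius and width are translation invariant.

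For part~(1) I would use the central symmetry of the zonotope $P$ about its center, which lies in $f^{-1}(3/2)$, to get $\mu(P_k)=\mu(P_{3-k})$ and thus reduce to $k\in[2/3,3/2]$. For $k\in[2/3,1]$, choosing any $\bs a_0\in P_0$ we have $(1-k)\bs a_0+kP_1\subseteq P_k$, which is a translate of $kP_1$, so $\mu(P_k)\le\tfrac1k\mu(P_1)\le\tfrac32\mu(P_1)$. For $k\in[1,3/2]$, convexity of $P_1$ together with $P_2=P_1+\tfrac12\bs u_2$ gives $(2-k)P_1+(k-1)P_2=P_1+\tfrac{k-1}{2}\bs u_2\subseteq P_k$, again a translate of $P_1$, so $\mu(P_k)\le\mu(P_1)$. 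Here one checks, exactly as in Lemma~\ref{lemma:mu-width-vol}, that scaling and translating inside the affine fiber does not change the covering radius taken with respect to the lattice $\ZZ^d\cap f^{-1}(0)$.

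For part~(2) I would pass to coordinates in which $f$ is the third coordinate; this is legitimate since the unique width-three functional is primitive. For a nonzero integer functional $g$ on the first two coordinates write $\psi(g):=\sum_i|g(\bar{\bs g}_i)|\in\ZZ_{\ge0}$ for the $g$-width of $Z_0$ and $c_j(g):=g(\bar{\bs u}_j)\in\ZZ$, where the bar denotes the projection to the first two coordinates. Using the zonotope width formula $w(Z,\cdot)=\sum_{\mathrm{gen}}|\cdot(\mathrm{gen})|$ one gets $w(P_1,g)=\psi(g)+|c_1(g)-\tfrac12 c_2(g)|$, and for an integer functional $h=g-cf$ not proportional to $f$ (equivalently $g\neq\bs 0$) one gets $w(P,h)=\psi(g)+|c_1(g)-c|+|c_2(g)-2c|$. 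A one-line weighted-median computation gives $\min_{c\in\ZZ}\bigl(|c_1-c|+|c_2-2c|\bigr)=\lceil|c_1-\tfrac12 c_2|\rceil$, so, using $\psi(g)\in\ZZ$,
\[
w_f(P)=\min_{g\neq\bs 0}\bigl\lceil\psi(g)+|c_1(g)-\tfrac12 c_2(g)|\bigr\rceil=\Bigl\lceil\min_{g\neq\bs 0}\bigl(\psi(g)+|c_1(g)-\tfrac12 c_2(g)|\bigr)\Bigr\rceil=\lceil w(P_1)\rceil,
\]
the middle equality being the elementary fact that $\min\lceil a(\cdot)\rceil=\lceil\min a(\cdot)\rceil$ for a function attaining its minimum (attainment holds because $1$ is interior to $f(P)=[0,3]$, so $P_1$ is two-dimensional and $w(P_1,g)\to\infty$). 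Finally $w(P_1)=\psi(g^*)+|c_1(g^*)-\tfrac12 c_2(g^*)|\in\tfrac12\ZZ$, hence $\lceil w(P_1)\rceil-w(P_1)\in\{0,\tfrac12\}$, which gives the last inequality.

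The bulk of the work — and essentially the only place one can slip — is bookkeeping: getting $P_1=Z_0+[\bs u_1,\tfrac12\bs u_2]$ and $P_2=P_1+\tfrac12\bs u_2$ right, being careful about which lattice each $\mu(P_k)$ and each $w(P_1,\cdot)$ is taken with respect to once $P_k$ sits in the affine plane $f^{-1}(k)$, and the ceiling/weighted-median juggling in part~(2); there is no conceptual obstacle beyond this. If one prefers, part~(2) can instead be organized exactly like Lemma~\ref{lemma:mu-width-vol}~(2): when $c_2(g)$ is even, the functional $g-\tfrac{c_2}{2}f$ satisfies $f'(P_1)=f'(P_2)$ and the image of $P$ under $(f',f)$ has a vertical generator, forcing $f'(P)\subseteq f'(P_1)$; when $c_2(g)$ is odd one loses exactly $\tfrac12$, accounting for the ceiling.
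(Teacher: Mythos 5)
Your proposal is correct. Part~(1) is essentially the paper's own argument: you identify $P_1=P_0+[\bs u_1,\tfrac12\bs u_2]$ and $P_2=P_1+\tfrac12\bs u_2$, use that $P_k$ contains a translate of $P_1$ (or of $kP_1$) for the relevant range of $k$, and invoke translation invariance of the covering radius plus the central symmetry of $P$ about a point at level $3/2$ --- this is exactly how the paper proceeds, leaning on the computation already done in Lemma~\ref{lemma:mu-width-vol}. Part~(2), however, takes a genuinely different route. The paper picks a lattice functional $g$ attaining $w(P_1)$ and, when $g(P_1)$ fails to be a lattice segment, enlarges $P$ to an auxiliary centrally symmetric lattice polytope $P'\supseteq P$ whose level-one fiber is a lattice polytope of width $\lceil w(P_1)\rceil$, and then applies part~(2) of Lemma~\ref{lemma:mu-width-vol} to $P'$; you instead exploit the zonotope structure directly, writing any integer functional not proportional to $f$ as $g-cf$, using $w(P,g-cf)=\psi(g)+|c_1(g)-c|+|c_2(g)-2c|$ and $w(P_1,g)=\psi(g)+|c_1(g)-\tfrac12 c_2(g)|$, and carrying out the weighted-median minimization over the integer lift $c$, which gives $\min_{c\in\ZZ}\bigl(|c_1-c|+|c_2-2c|\bigr)=\lceil|c_1-\tfrac12c_2|\rceil$ and hence $w_f(P)=\min_{g\neq\bs 0}\lceil w(P_1,g)\rceil=\lceil w(P_1)\rceil$ in one stroke (the attainment of the minimum and the half-integrality of $w(P_1)$, which you note, make the interchange of $\min$ and $\lceil\cdot\rceil$ and the final bound legitimate). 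Your computation needs $P$ to be a zonotope (so that widths are sums of absolute values over generators), which the hypotheses provide, and in exchange it avoids both the auxiliary construction $P'$ and the geometric convexity claim inside Lemma~\ref{lemma:mu-width-vol}~(2); it also never uses uniqueness of the width-three functional, which only matters in the later application. Both arguments yield the exact equality $w_f(P)=\lceil w(P_1)\rceil$, so the proposal is a valid, somewhat more self-contained alternative for part~(2).
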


\begin{proof}
To make things concrete, assume without loss of generality that $f$ equals the third coordinate  and let
\[
\bs u_1=(\bs p,1), \quad \bs u_2=(\bs q,2),
\]
for some $\bs p, \bs q \in \ZZ^2$.
Calling $T\subseteq \RR^2$ the segment with endpoints~$\bs p$ and~$\frac12\bs q$, we have that
\[
P_1 = P_0 + (T \times \{1\})
\quad \text{ and }\quad
P_2 =  \frac12 \bs q + P_1,
\]
where the last equality uses that $P_1$ is a (perhaps non-lattice) zonotope, hence centrally symmetric.
In fact, for all $k\in [1,2]$, we now have that $P_k$ is a translation of~$P_1$ by the vector $\frac{k-1}2 \bs q$.
This implies that $\mu(P_k) = \mu(P_1)$ for $k\in [1,2]$, and  the argument  in the proof of Lemma~\ref{lemma:mu-width-vol} gave $\mu(P_k) \le \frac32 \mu (P_1)$ for $k\in [2/3, 1) \cup (2,7/3]$. 
This proves part (1).

For part (2), as in the proof of part (2) of Lemma~\ref{lemma:mu-width-vol} we have that $w_{f}(P) \ge w(P_1)$ is obvious, and for the converse we let  $g: \RR^2 \to \RR$ be a lattice functional attaining the lattice-width of~$P_1$.
If $g(P_1)$ (hence $g(P_2)$, by central symmetry) are lattice segments then all we said in the proof of Lemma~\ref{lemma:mu-width-vol} remains valid, and we get
\[
w_{f}(P) = w(P_1).
\]
So, suppose that $g(P_1)$ is not a lattice segment.
Since one endpoint of~$T$ is a lattice point and the other is half-integral, we have that one endpoint of $g(P_1)$ is an integer and the other a half-integer.
Without loss of generality assume $g(P_1)=[a,b-1/2]$, with $a < b$.
Let $\bs p_1, \bs p_2 \in \ZZ^2$ be lattice points with $g(\bs p_1) = g(\bs p_2) =b$ and that lie sufficiently far from each other in opposite directions on the line $f^{-1}(1) \cap (g^{-1}(b) \times \{1\})$.
This implies that
\[
P'_1:= \conv\left(P_1 \cup \{(\bs p_1,1), (\bs p_2,1)\}\right)
\]
is a lattice polytope containing $P_1$, and 
\[
P':= \conv(P \cup \{(\bs p_1,1), (\bs p_2,1) , (\bs x- \bs p_1,2), (\bs x - \bs p_2,2) \}),
\]
where $\frac12\bs x$ is the (half-integral) center of~$P$, is a centrally symmetric lattice polytope containing $P$ and with $P'\cap f^{-1}(1) = P'_1$.
Also, by construction, $g(P'_1)= [a,b]$ so its width equals
\[
b-a =\lceil w(P_1)\rceil = w(P_1) + \frac12.
\]
The result follows from applying part (2) of Lemma~\ref{lemma:mu-width-vol} to $P'$.
\end{proof}

\begin{cor}[Case (2) for $f$]
\label{prop:width3.case2}
Let $Z$ be a lattice $3$-zonotope of lattice-width three with respect to a single functional~$f$ and with $\mu(Z) \geq 3/5$.
If $Z$ has exactly two generators not orthogonal to $f$, then
\[
\vol(Z_1) \le \frac{245}{16} ,
\]
hence
\[
\vol(Z) \le  3\cdot \frac{245}{16} <  46.
\]
\end{cor}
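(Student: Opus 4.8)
The plan is to reduce to a planar slice, exactly as in the proof of Proposition~\ref{prop:width3.case1}, and then feed that slice into the two-dimensional volume bound of Corollary~\ref{cor:AverkovWagner}(i). Normalize so that $f(Z)=[0,3]$, and recall that, since $Z$ is a lattice zonotope, each width $w(Z,\bs z)=\sum_i|{\bs z}^\intercal\bs u_i|$ is a non-negative integer; because $f$ is the \emph{unique} functional giving width three, every integer functional not proportional to $f$ gives $Z$ width at least four.

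First I would apply Proposition~\ref{prop:max-bound} to the projection $f\colon\RR^3\to\RR$. Since $\mu(f(Z),\ZZ)=\mu([0,3],\ZZ)=\tfrac13<\tfrac35\le\mu(Z)$, the same bookkeeping used in Proposition~\ref{prop:width3.case1} yields a level $k\in[2/3,7/3]$ with $\mu(Z_k)\ge\tfrac35$, the covering radius being taken with respect to the rank-two lattice $\ZZ^3\cap f^{-1}(0)$. Keeping $k$ in this central interval is essential: the slices $Z_k$ with $k\in[1,2]$ are translates of $Z_1$, whereas the extreme slices $Z_0,Z_3$ may be lower-dimensional and must be avoided. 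Then Lemma~\ref{lemma:width3.case2}(1) gives $\tfrac35\le\mu(Z_k)\le\tfrac32\mu(Z_1)$, hence $\mu(Z_1)\ge\tfrac25$ (and $Z_1$ is genuinely two-dimensional since $1$ is interior to $f(Z)$).

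The next step is to pin down $w(Z_1)$. By Lemma~\ref{lemma:width3.case2}(2), $\lceil w(Z_1)\rceil=w_f(Z)\ge4$. The key observation is that $w(Z_1)$ is a half-integer: up to a translation, the generators of $Z_1$ are the generators of $Z$ lying in $f^{-1}(0)$, which are integral, together with the single half-integral vector $\tfrac12(\bs u_2-2\bs u_1)$; hence $w(Z_1,g)\in\tfrac12\ZZ$ for every $g$ in the dual of $\ZZ^3\cap f^{-1}(0)$, and so $w(Z_1)\in\tfrac12\ZZ$. Combined with $\lceil w(Z_1)\rceil\ge4$ this forces $w(Z_1)\ge\tfrac72$. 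On the other hand, applying the planar flatness theorem for $\bs 0$-symmetric bodies (\cite[Corollary~2.7]{AverkovWagner}) to a hollow translate of the zonotope $\mu(Z_1)Z_1$ gives $\mu(Z_1)\,w(Z_1)\le2$, so $w(Z_1)\le2/\mu(Z_1)\le5$.

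Finally I would invoke Corollary~\ref{cor:AverkovWagner}(i) with $C=Z_1$, $w:=w(Z_1)\in[\tfrac72,5]$ and $\mu:=\mu(Z_1)\ge\tfrac25$ (so that $\mu w\ge\tfrac75>1$), obtaining $\vol(Z_1)\le\frac{w^2}{2\mu w-2}\le\frac{5w^2}{4w-10}$, where the last inequality uses $\mu\ge\tfrac25$. Since $w\mapsto\frac{5w^2}{4w-10}$ has derivative $\frac{20w(w-5)}{(4w-10)^2}<0$ on $[\tfrac72,5)$, it is decreasing there, so it attains its maximum on $[\tfrac72,5]$ at $w=\tfrac72$, where it equals $\tfrac{245}{16}$; thus $\vol(Z_1)\le\tfrac{245}{16}$. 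The bound $\vol(Z)\le3\vol(Z_1)<3\cdot\tfrac{245}{16}=\tfrac{735}{16}<46$ then follows from Lemma~\ref{lemma:mu-width-vol}(3)(a). The one step I expect to require care is the first one—extracting the central slice $Z_k$ from Proposition~\ref{prop:max-bound}, including checking that the possibly degenerate extreme slices do not interfere—which is handled exactly as in Proposition~\ref{prop:width3.case1}; the happy accident that produces the constant $\tfrac{245}{16}$ is the half-integrality of $w(Z_1)$, which lifts the naive bound $w(Z_1)>3$ to $w(Z_1)\ge\tfrac72$.
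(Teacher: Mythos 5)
Your argument is correct and is essentially the paper's own proof: the same chain Proposition~\ref{prop:max-bound} (a central slice $Z_k$, $k\in[2/3,7/3]$, with $\mu(Z_k)\ge 3/5$), then Lemma~\ref{lemma:width3.case2}(1) giving $\mu(Z_1)\ge 2/5$, then Lemma~\ref{lemma:width3.case2}(2) giving $w(Z_1)\ge 7/2$ (its content is exactly the half-integrality of $w(Z_1)$ that you re-derive), then Corollary~\ref{cor:AverkovWagner}(i) giving $\vol(Z_1)\le 245/16$, and finally Lemma~\ref{lemma:mu-width-vol}(3)(a) for the factor~$3$. The only deviation is your extra care in applying Corollary~\ref{cor:AverkovWagner}(i)---bounding $w(Z_1)\le 5$ via the symmetric planar flatness theorem and checking that the bound is maximized at $w=7/2$ over $[7/2,5]$---where the paper simply substitutes the lower bound $w=7/2$ into the corollary as stated; this refinement is sound and, if anything, tightens a point the corollary's phrasing glosses over.
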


\begin{proof}
The second inequality follows from the first one by part~(3) of Lemma~\ref{lemma:mu-width-vol}. 

For the first inequality we simply modify the proof of Proposition~\ref{prop:width3.case1} as indicated by Lemma~\ref{lemma:width3.case2}:

By Proposition~\ref{prop:max-bound}, $\mu(Z) \ge 3/5$ implies that there is a $k\in [2/3, 7/3]$ with $\mu(Z_k) \ge3/5$. 
Part (1) of Lemma~\ref{lemma:width3.case2} then gives $\mu(Z_1) \ge 2/5$. 

Since $f$ is the only functional giving width three to $Z$, part (2) of Lemma~\ref{lemma:width3.case2} says that 
$Z_1$ has width at least $7/2$, so  Corollary~\ref{cor:AverkovWagner} with $\mu w \ge 7/5$ implies
\[
\vol\left(Z_1\right) \le \frac{49/4}{4/5} = \frac{245}{16},
\]
as claimed.
\end{proof}


\bibliographystyle{amsplain}
\bibliography{mybib}

\providecommand{\bysame}{\leavevmode\hbox to3em{\hrulefill}\thinspace}
\providecommand{\MR}{\relax\ifhmode\unskip\space\fi MR }
\providecommand{\MRhref}[2]{%
  \href{http://www.ams.org/mathscinet-getitem?mr=#1}{#2}
}
\providecommand{\href}[2]{#2}
\begin{thebibliography}{10}

\bibitem{alcantaracriadosantos2025covering}
David Alc\'{a}ntara, Francisco Criado, and Francisco Santos, \emph{Covering
  radii of $3$-zonotopes and the shifted lonely runner conjecture}, 2025,
  \url{https://arxiv.org/abs/2506.13379}.

\bibitem{AverkovCMS}
Gennadiy Averkov, Giulia Codenotti, Antonio Macchia, and Francisco Santos,
  \emph{A local maximizer for lattice width of 3-dimensional hollow bodies},
  Discrete Appl. Math. \textbf{298} (2021), 129--142.

\bibitem{AverkovWagner}
Gennadiy Averkov and Christian Wagner, \emph{Inequalities for the lattice width
  of lattice-free convex sets in the plane}, Beitr. Algebra Geom. \textbf{53}
  (2012), no.~1, 1--23.

\bibitem{7lrc}
Javier Barajas and Oriol Serra, \emph{The lonely runner with seven runners},
  Electron. J. Combin. \textbf{15} (2008), \#48, 18 pp. (electronic).

\bibitem{sLRC}
Matthias Beck, Serkan Ho\c{s}ten, and Matthias Schymura, \emph{Lonely {R}unner
  {P}olyhedra}, Integers \textbf{19} (2019), \#A29, 13 pp.

\bibitem{BeckRobins}
Matthias Beck and Sinai Robins, \emph{Computing the continuous discretely},
  second ed., Undergraduate Texts in Mathematics, Springer, New York, 2015,
  Integer-point enumeration in polyhedra, With illustrations by David Austin.

\bibitem{betkehenkwills1993successive}
Ulrich Betke, Martin Henk, and J{\"o}rg~M. Wills, \emph{Successive-minima-type
  inequalities}, Discrete Comput. Geom. \textbf{9} (1993), no.~2, 165--175.

\bibitem{betkewills1972untere}
Ulrich Betke and J\"{o}rg~M. Wills, \emph{Untere {Schranken} f{\"u}r zwei
  diophantische {Approximations}-{Funktionen}}, Monatsh. Math. \textbf{76}
  (1972), 214--217 (German).

\bibitem{NamingLRC}
Wojciech Bienia, Luis Goddyn, Pavol Gvozdjak, Andr\'as Seb\H{o}, and Michael
  Tarsi, \emph{Flows, view obstructions, and the lonely runner}, J. Combin.
  Theory Ser. B \textbf{72} (1998), no.~1, 1--9.

\bibitem{sixrunners}
Tom Bohman, Ron Holzman, and Dan Kleitman, \emph{Six lonely runners}, Electron.
  J. Combin. \textbf{8} (2001), no.~2, \#R3, 49 pp. (electronic).

\bibitem{codenottisantosschymura2019the}
Giulia Codenotti, Francisco Santos, and Matthias Schymura, \emph{The covering
  radius and a discrete surface area for non-hollow simplices}, Discrete
  Comput. Geom. \textbf{67} (2022), 65--111.

\bibitem{cslovjecsekmalikiosisnaszodischymura2022computing}
Jana Cslovjecsek, Romanos~Diogenes Malikiosis, M{\'a}rton Nasz{\'o}di, and
  Matthias Schymura, \emph{Computing the covering radius of a polytope with an
  application to lonely runners}, Combinatorica \textbf{42} (2022), no.~4,
  463--490.

\bibitem{cusickviewob}
Thomas~W. Cusick, \emph{View-obstruction problems}, Aequationes Math.
  \textbf{9} (1973), 165--170.

\bibitem{DLRS2010triangulations}
Jes\'us~A. De~Loera, J\"org Rambau, and Francisco Santos,
  \emph{Triangulations}, Algorithms and Computation in Mathematics, vol.~25,
  Springer-Verlag, Berlin, 2010, Structures for algorithms and applications.

\bibitem{fansun2023amending}
Ho~Tin Fan and Alec Sun, \emph{Amending the lonely runner spectrum conjecture},
  2023, \url{https://arxiv.org/abs/2306.10417}.

\bibitem{girikravitz2023structurelonelyrunnerspectra}
Vikram Giri and Noah Kravitz, \emph{{The structure of Lonely Runner spectra}},
  March 2025, \url{https://arxiv.org/abs/2304.01462v4}.

\bibitem{gruber2007convex}
Peter~M. Gruber, \emph{Convex and discrete geometry}, Grundlehren der
  Mathematischen Wissenschaften [Fundamental Principles of Mathematical
  Sciences], vol. 336, Springer-Verlag, Berlin, 2007.

\bibitem{gruberlekkerkerker1987geometry}
Peter~M. Gruber and Cornelis~G. Lekkerkerker, \emph{Geometry of numbers. 2nd
  ed}, North-Holland Math. Libr., vol.~37, Elsevier (North-Holland), Amsterdam,
  1987.

\bibitem{zonorunners}
Matthias Henze and Romanos~Diogenes Malikiosis, \emph{On the covering radius of
  lattice zonotopes and its relation to view-obstructions and the lonely runner
  conjecture}, Aequationes Math. \textbf{91} (2017), no.~2, 331--352.

\bibitem{IglesiasSantos}
\'{O}scar Iglesias-Vali\~{n}o and Francisco Santos, \emph{Classification of
  empty lattice 4-simplices of width larger than two}, Trans. Amer. Math. Soc.
  \textbf{371} (2019), no.~9, 6605--6625.

\bibitem{MillerSturmfels}
Ezra Miller and Bernd Sturmfels, \emph{Combinatorial commutative algebra},
  Graduate Texts in Mathematics, vol. 227, Springer-Verlag, New York, 2005.

\bibitem{perarnauserra2024thelonely}
Guillem Perarnau and Oriol Serra, \emph{{The Lonely Runner Conjecture turns
  60}}, 2024, \url{https://arxiv.org/abs/2409.20160}.

\bibitem{Schoenberg76}
Isaac~J. Schoenberg, \emph{Extremum problems for the motions of a billiard
  ball. {II}. {T}he {$L\sb{\infty }$} norm}, Indag. Math. \textbf{38} (1976),
  no.~3, 263--279, Nederl. Akad. Wetensch. Proc. Ser. A {\bf 79}.

\bibitem{shephard1974combinatorial}
Geoffrey~C. Shephard, \emph{Combinatorial properties of associated zonotopes},
  Can. J. Math. \textbf{26} (1974), 302--321.

\bibitem{tao}
Terence Tao, \emph{Some remarks on the lonely runner conjecture}, Contrib.
  Discrete Math. \textbf{13} (2018), no.~2, 1--31.

\bibitem{willslrc}
J{\"o}rg~M. Wills, \emph{Zur simultanen homogenen diophantischen
  {A}pproximation. {I}}, Monatsh. Math. \textbf{72} (1968), 254--263 (German).

\end{thebibliography}
		
\end{document}